\documentclass[smallextended,final,numbook]{svjour3}     % onecolumn (ditto)

%% a method for the subfiles to have two versions of texts, for the technical report and not
%\newif\iftechreport % declare the switch
%\techreporttrue     % set the switch to true

\usepackage[colorlinks=true, linkcolor=blue, citecolor=blue, urlcolor=blue, pdfborder={0 0 0}]{hyperref}
\usepackage[letterpaper,top=2in, bottom=1.5in, left=1in, right=1in]{geometry}
\usepackage{graphicx}

\graphicspath{ {./figures/} }
\usepackage{amssymb}
\usepackage{amsmath}
\usepackage{enumitem}
%% tikz package and options
\usepackage{tikz}

\usepackage{booktabs}
\usepackage{multirow} % multi-row entry for table
\usepackage{arydshln} % dashed lines of
\usepackage[%
    font={small,sf},
    labelfont=bf,
    format=hang,
    format=plain,
    margin=0pt,
    width=0.8\textwidth,
]{caption}
\usepackage[list=true]{subcaption}
\usepackage{xcolor,colortbl}

%% macros for commenting
\usepackage{mathtools}
\usepackage[normalem]{ulem} % to use \sout
%\usepackage{algorithm,algorithmic}
%\usepackage{subfigure}
%\usepackage{url}

%% a light-weight algorithm environment

\newtheorem{algo}{Algorithm}

%% highlighting and commenting

 % Wotao Yin's comments
 % Wotao Yin's comments
\newcommand{\remove}[1]{{}}

%% macros for letters
%Serif S
\newcommand{\sfS}{\mathsf{S}}

\newcommand{\umu}{\overline{M}}
\newcommand{\lmu}{\underline{M}}

\newcommand{\vN}{{\mathbf{N}}}

\newcommand{\vR}{{\mathbf{R}}}

\newcommand{\cB}{{\mathcal{B}}}

\newcommand{\cE}{{\mathcal{E}}}
\newcommand{\cF}{{\mathcal{F}}}
\newcommand{\cG}{{\mathcal{G}}}
\newcommand{\cH}{{\mathcal{H}}}
\newcommand{\cI}{{\mathcal{I}}}

\newcommand{\cS}{{\mathcal{S}}}

\newcommand{\cX}{{\mathcal{X}}}

%% macros for math notions and operators

\newcommand{\RR}{\mathbb{R}}
\newcommand{\NN}{\mathbb{N}}

\newcommand{\ZZ}{\mathbb{Z}}

%%Probability symbols.
\newcommand{\EE}{\mathbb{E}}

\newcommand{\mkQ}{\mathfrak{Q}}
\newcommand{\pr}{\mathrm{prod}}

 % subject to
 % subject to
 % subscript for operator norm
 % subscript for optimal solution
%\newcommand{\supp}{{\mathrm{supp}}} % support
 % probability
 % vector -> diagonal matrix
%\newcommand{\diag}{{\mathrm{diag}}} % matrix diagonal -> vector
 % domain
 % domain
%\newcommand{\grad}{{\nabla}}    % gradient
 % trace
 % total variation

\newcommand{\prox}{\mathbf{prox}}

\newcommand{\minimize}{\text{minimize}}

 % shrinkage
\DeclareMathOperator*{\argmin}{arg\,min}

\DeclareMathOperator*{\Min}{minimize}

\DeclareMathOperator*{\Fix}{Fix}
\DeclareMathOperator*{\zer}{zer}

\DeclareMathOperator*{\as}{a.s.} 

%% macros for environments math equations

\newcommand{\bc}{\begin{center}}
\newcommand{\ec}{\end{center}}

\newcommand{\bdm}{\begin{displaymath}}
\newcommand{\edm}{\end{displaymath}}

\newcommand{\beq}{\begin{equation}}
\newcommand{\eeq}{\end{equation}}

\newcommand{\bfl}{\begin{flushleft}}
\newcommand{\efl}{\end{flushleft}}

\newcommand{\bt}{\begin{tabbing}}
\newcommand{\et}{\end{tabbing}}

\newcommand{\beqn}{\begin{align}}
\newcommand{\eeqn}{\end{align}}

\newcommand{\beqs}{\begin{align*}} % no equation numbers
\newcommand{\eeqs}{\end{align*}}  % no equation numbers

%% macros for theorem-like environments

%\newtheorem{theorem}{Theorem}
\newtheorem{assumption}{Assumption}
%\newtheorem*{theorem*}{Theorem}
%\newtheorem{condition}{Condition}
%\newtheorem{rul}{Rule}
%\newtheorem{definition}{Definition}
%\newtheorem{corollary}{Corollary}
%\newtheorem{remark}{Remark}
%\newtheorem{lemma}{Lemma}
%\newtheorem{proposition}{Proposition}
%\newtheorem{example}{Example}
%\newtheorem{proof}{Proof}

%\newtheorem{example}[remark]{Example}

 % Wotao' macros

\usepackage[lined,boxed,commentsnumbered, ruled,vlined]{algorithm2e}

\newcommand\numberthis{\addtocounter{equation}{1}\tag{\theequation}}

\DeclarePairedDelimiter{\dotp}{\langle}{\rangle}

\usepackage{booktabs}

\usepackage{mdframed}
\usepackage{etoolbox}
\AtBeginEnvironment{algo}{\begin{minipage}{\textwidth}}
\AtEndEnvironment{algo}{\end{minipage}}

\usepackage{empheq}
\definecolor{myblue}{rgb}{.8, .8, 1}
\newcommand*\mybluebox[1]{%
\colorbox{myblue}{\hspace{1em}#1\hspace{1em}}}

\def\cut#1{{}}
\smartqed

\title{SMART: The Stochastic Monotone Aggregated Root-Finding Algorithm \thanks{This material is based upon work supported by the National Science Foundation under Award No. 1502405.
}}

\author{Damek Davis}

\institute{D. Davis\at
              School of Operations Research and Information Engineering, Cornell University\\
              Ithaca, NY 16850, USA\\
              \email{dsd95@cornell.edu}}

\date{\today}
\journalname{Report} % Springer stuff

\begin{document}

\maketitle
\abstract{
We introduce the Stochastic Monotone Aggregated Root-Finding (SMART) algorithm, a new randomized operator-splitting scheme for finding roots of finite sums of operators. These algorithms are similar to the growing class of incremental aggregated gradient algorithms, which minimize finite sums of functions; the difference is that we replace gradients of functions with black-boxes called operators, which represent subproblems to be solved during the algorithm. By replacing gradients with operators, we increase our modeling power, and we simplify the application and analysis of the resulting algorithms. The operator point of view also makes it easy to extend our algorithms to allow arbitrary sampling and updating of blocks of coordinates throughout the algorithm. Implementing and running an algorithm like this on a computing cluster can be slow if we force all computing nodes to be synched up at all times. To take better advantage of parallelism, we allow computing nodes to delay updates and break synchronization. 

This paper has several technical and practical contributions. We prove the weak, almost sure convergence of a new class of randomized operator-splitting schemes in separable Hilbert spaces; we prove that this class of algorithms convergences linearly in expectation when a weak regularity property holds;  we highlight connections to other algorithms; and we introduce a few new algorithms for large-scale optimization.}

\section{Introduction}

The guiding problems in optimization are evolving. While all optimization problems can be reduced to minimizing functions over sets, prototypical optimization problems, like $\min_{x \in C} \,f(x)$, hide structure that is found throughout modern applications, and this structure is useful for designing algorithms for large-scale problems (e.g., problems with gigabytes to terabytes of data). Among these structures, the finite sum is the most pervasive:%\vspace{-1.5pt}
\begin{align}\label{eq:finsumfunc}
\minimize_{x \in C} \, \frac{1}{n}\sum_{i=1}^n f_i(x).
\end{align}
Large sums ($n \gg 0$) are common in statistical and machine learning, where the functions $f_i$ often, but not always, correspond one-to-one with points in a dataset. When the $f_i$ are summed together, the minimization problem~\eqref{eq:finsumfunc} grows in complexity, but the prevailing and realistic assumption in applications is that it is drastically simpler (in terms memory or computational complexity) to perform operations, like differentiation, on the $f_i$ than it is to perform the same operations on the entire sum. 

Large sums, like~\eqref{eq:finsumfunc}, come from large datasets, even low dimensional ones. But modern applications often involve high dimensional datasets. When the decision variable $x \in \RR^m$ is high-dimensional ($m \gg 0$), the prevailing and realistic assumption in applications is that it is drastically simpler to compute partial derivatives of the $f_i$, or other componentwise quantities, than it is to compute full derivatives of the $f_i$. 

These two structural assumptions, and the host of algorithms that adopt them, have led to big improvements in large-scale algorithm design (for examples, see ~\cite{doi:10.1137/100802001,approx,schmidt2013minimizing,strohmer2009randomized,shalev2014accelerated,defazio2014finito,johnson2013accelerating,defazio2014saga,liu2015asynchronous,peng2015arock,doi:10.1137/140971233,bertsekas2015incremental,combettes2015asynchronous,wang2013incremental,bertsekasincrementalproximal,doi:10.1137/S1052623499362111,bianchi2015ergodic,recht2011hogwild}). And we do not have to look far to find problems for which these assumptions make sense. A simple problem with $n \gg 0$ and $m \gg 0$ 
comes from regularized least squares problems with matrix $A = \begin{bmatrix} A_1^T, \ldots, A_n^T\end{bmatrix}^T\in \RR^{n\times m}$, vector $b \in \RR^n$, and regularizer induced by $K \in \RR^{m \times m}$:
\begin{align}\label{eq:linearequations}
\Min_{x\in \RR^m} \, \frac{1}{n}\sum_{i=1}^n \left(\frac{1}{2}\left\|A_i\begin{bmatrix}x_1 \\ \vdots \\ x_m \end{bmatrix} - b_i\right\|^2 + \frac{1}{2}\dotp{Kx, x}\right).
\end{align}
The two prevailing assumptions are clearly satisfied for this example when each row of $A_i$ is sparse: each term in the sum is simple and partial derivatives of the terms are easier to compute than full derivatives. Absent other special structure in the matrices $A$ and $K$ (like bandlimitedness), a conceptually simple operation like differentiating the objective function requires $O(\max\{mn, m^2\})$ operations. With problems of this scale, full gradient computations cannot lie at the heart of a practical iterative algorithm for solving~\eqref{eq:linearequations}. 

The algorithms that have sprung from these structural assumptions all look alike. They all form a sequence of points $\{x^k\}_{k \in \NN}$ that converge (in some appropriate sense of the word) to a minimizer of~\eqref{eq:finsumfunc}. They all make a series of choices at each time step: given $x^k$
\begin{enumerate}
\item choose (randomly or otherwise) indices $i \in \{1, \ldots, n\}$ and $j \in \{1, \ldots, m\}$;
\item get $x^{k+1}$ (in some undetermined way) from $x^{k}$ and $\nabla_j f_i(x^k)$ by only changing a single component of $x^k$ (possibly using some combination of the previous terms $\{x^l\}_{l \in \{0, \ldots, k\}}$).
\end{enumerate}
At this moment, the differences between these algorithms is immaterial. What matters is the contrast between these algorithms and the overwhelmingly costlier algorithms that form sequences $\{x^k\}_{k \in \NN}$ by getting $x^{k+1}$ from $x^k$ and $\nabla \left[ \sum_{i=1}^n f_i\right](x^k)$; we simply cannot afford to compute these full gradients for every $k \in \NN$. 

When looking at this 2-step prototypical algorithm, there is a looming temptation to generalize and to replace the partial derivatives of the $f_i$ with something else. To find a good generalization, we only need to look at the optimality conditions of~\eqref{eq:finsumfunc}:
\begin{align}\label{eq:gradientroot}
\text{Find $x^\ast$ such that:} \qquad \frac{1}{n}\sum_{i=1}^n \nabla f_i(x^\ast) = 0.
\end{align}
At once we reduce our cognitive load by changing notation and forgeting that there ever was a function called $f_i$. We simply replace every occurrence of $\nabla f_i$ with a mapping called $S_i$:\vspace{-1.5pt}
%\begin{align}\label{eq:Sroot}
\begin{empheq}[box=\mybluebox]{align}\label{eq:Sroot}
\text{Find $x^\ast$ such that:} \qquad S(x^\ast) := \frac{1}{n}\sum_{i=1}^n S_i(x^\ast) = 0.
%\end{align} 
\end{empheq}
The $S_i$ take, as input, points in a space $\cH$, like $\RR^n$, and output points in $\cH$. We follow mathematical tradition and call the $S_i$ \emph{operators}. We do not, however, forget about our structural assumptions: for any given $x \in \cH$, we still expect $S_i(x)$ to be drastically simpler to compute than $n^{-1}\sum_{i=1}^n S_i(x)$, and we still expect the $j$th component, denoted by $(S_i(x))_j$, to be drastically simpler to compute than the full operator $S_i(x)$. We gain a lot of flexibility from this generalization. 

Each $S_i(x)$ can, in principle, be any mapping. But we would never be so optimistic. Instead, a coherence condition suggests itself: we assume that for each root $x^\ast$ of~\eqref{eq:Sroot}, we have%\vspace{-1.5pt}%
%\begin{align}\label{eq:coherence}
\begin{empheq}[box=\mybluebox]{align}\label{eq:coherence}\vspace{-5pt}
(\exists \beta_{ij} > 0) : (\forall x\in \cH)  \qquad \sum_{j=1}^m \sum_{i=1}^n \beta_{ij} \|(S_{i}(x))_{j} - (S_{i}(x^\ast))_j\|_j^2  \leq \dotp{S(x), x - x^\ast}.%\frac{1}{n}\sum_{i=1}^n\dotp{S_i(x), x - x^\ast}.
%\end{align}
\end{empheq}
This condition\footnote{The technical name might be quasi-cocoercivity; we could not find it discussed elsewhere.} is weaker than what can be guaranteed when each $S_i(\cdot)$ is the gradient of a convex function. In that case, the $x^\ast$ in~\eqref{eq:coherence} can even vary beyond the roots of $n^{-1}\sum_{i=1}^n S_i(\cdot)$ to any point in $\cH$; the $\beta_{ij}$ are akin to inverse Lipschitz constants of gradients. But~\eqref{eq:Sroot} is not limited to smooth optimization problems because~\eqref{eq:coherence} will be satisfied for $S_i$ that are, for example, compositions of proximal and gradient operators. Beyond the benefits of increased modeling power, though, we make this generalization because it is easy to design algorithms for~\eqref{eq:Sroot} that hide all of the complicated details that appear in algorithms which solve specific models, for example, it can be quite technical to design algorithms for~\eqref{eq:finsumfunc} when two of the terms, say $f_1$ and $f_2$, are nonsmooth. Instead, we treat $S_i$ just like we treat $\nabla f_i$. 

A need for increased modeling power and algorithms with smaller per-iteration complexity has led us to Problem~\eqref{eq:Sroot} and a class of algorithms that solve it. We want to parallelize these algorithms. But they are inherently sequential because each step updates just a single coordinate using just a single function or operator, and the other coordinates cannot be updated until the active one finishes its work. A big slowdown occurs if some partial derivatives or operator coordinates are much more complicated to evaluate than all the others because we will spend most of our time updating just a few coordinates and leaving the others fixed. A solution is to eliminate the stalling between coordinate updates and allow multiple processors to work at their own pace, updating whenever they complete their work.  We take this approach in this paper and allow \emph{asynchronous} updates in our algorithm.

In total, we have taken the basic template for incremental gradient and block-coordinate optimization algorithms and increased its modeling power by introducing the root finding Problem~\eqref{eq:Sroot} and an algorithm to solve it. This stochastic monotone aggregated root-finding (SMART)\footnote{We use the term \emph{monotone} because $S$ is a quasi-monotone operator.} algorithm is inspired by another algorithm called SAGA~\cite{defazio2014saga}. In this work we have taken SAGA and generalized it: the SMART algorithm applies to operators, allows block-coordinate updates, allows asynchronous updates, requires less memory than SAGA, and requires less computation per iteration than SAGA. The theoretical guarantees for SMART are also stronger: the sequence of points formed by the algorithm, and not just the function values, will converge with probability 1 when a solution exists (even in infinite dimensional separable Hilbert spaces, but in this case SMART converges weakly). Like SAGA, SMART converges linearly when~$n^{-1}\sum_{i=1}^n f_i$ is strongly convex, and beyond that, it converges linearly when $n^{-1} \sum_{i=1}^n S_i$ is essentially quasi-strongly monotone. It even converges linearly in the asynchronous, block-coordinate case. The rest of this paper describes SMART and proves that it converges. 

\section{Assumptions and Notation}\label{section:assump}

The SMART algorithm solves~\eqref{eq:Sroot} in a separable Hilbert space, like $\RR^n$, which we call $\cH$. We assume the Hilbert space $\cH = \cH_1 \oplus \cdots \oplus \cH_m$ is a direct sum of $m \in \NN$ other Hilbert spaces $\cH_1, \ldots, \cH_m$. Given a vector $x \in \cH$, we denote its $j$th component by $x_j \in \cH_j$. Given a sequence $\{x^k\}_{k\in \NN}$ and a vector $h \in \NN^m$, we define 
\begin{align*}
(\forall k \in \NN) \qquad  x^{k-h} =(x^{k-h_1}_1, \ldots, x^{k-h_m}_m)
\end{align*}
and use the convention that $x_j^k = x_j^0$ if $k \leq 0$. For $j \in \{1, \ldots, m\}$, we let $\dotp{ \cdot, \cdot}_j : \cH_j \times \cH_j  \rightarrow \RR$ denote the inner product on $\cH_j$, and we let $\|\cdot\|_j$ be the corresponding norm. For all $x, y \in \cH$, we let $\dotp{x, y}_\pr = \sum_{j=1}^m\dotp{x_j, y_j}_j$  and $\|x\|_{\pr}:= \sqrt{\smash[b]{\dotp{x, x}}}_{\pr}$ be the standard inner product and norm on $\cH$. We also fix an inner product $\dotp{\cdot, \cdot} : \cH \times \cH \rightarrow \RR$ and denote the corresponding norm by $\|\cdot\| $. We make one assumption about this norm:
\begin{align*}%\label{eq:norm_bound}
\left(\exists  \lmu_j, \umu_j > 0\right) :\left( \forall x\in \cH\right) \qquad \sum_{i=1}^m\lmu_j \|x_j\|_j^2 \leq \|x\|^2 \leq \sum_{i=1}^m \umu_j\|x_j\|_j^2 .
\end{align*}
We often choose inner products associated to self-adjoint linear maps, $P$, which are defined for all $x, y \in \cH$, by $\dotp{x, y} = \dotp{x, y}_P := \dotp{Px, y}_\pr = \dotp{x, Py}_\pr$. We work with an underlying probability space denoted by $(\Omega, \cF, P)$, and we assume that the space $\cH$ is equipped with the Borel $\sigma$-algebra. We always let $\sigma(X) \subseteq \cF$ denote the sub $\sigma$-algebra generated by a random variable $X$. We use the shorthand $\as$ to denote almost sure convergence of a sequence of random variables. We also assume that the operators $S_i$ are measurable. We say that a map $T : \cH \rightarrow \cH$ is \emph{nonexpansive} if it is 1-Lipschitz continuous, i.e., $\|Tx - Ty\|\leq \|x-y\|$ for all $x, y \in \cH$. A map $S : \cH \rightarrow \cH$ is called \emph{demiclosed at $0$} if whenever a sequence $\{x^k\}_{k \in \NN}$ converges weakly to a point $x \in \cH$ and $S(x^k)$ converges strongly to $0\in \cH$, then $S(x) = 0$. Given any nonempty, closed, convex set $C$, let $P_C(x) := \argmin_{x^\ast \in C} \|x - x^\ast\|$ denote its projection operator, and let $d_C(x) := \min_{x^\ast\in C} \|x - x^\ast\|$ denote its distance function. For any closed convex set, $C \subseteq \cH$, we let $N_C$ denote the normal cone of $C$~\cite[Definition 6.37]{bauschke2011convex}. We define
\vspace{-7pt}\begin{align*}
S:= \frac{1}{n} \sum_{i=1}^n S_i \qquad \text{and} \qquad \cS := \zer(S) = \{ x \in \cH \mid S(x) = 0\}.
\end{align*}\vspace{-10pt}

Beyond~\eqref{eq:coherence}, we use a single regularity property
\begin{align}
&(\exists \mu > 0): (\forall x\in \cH) &\dotp{S(x), x - P_\cS(x)} &\geq \mu\|x- P_\cS(x)\|^2.\numberthis\label{eq:essstrongquasimono}
\end{align} 
Operators that satisfy~\eqref{eq:essstrongquasimono} are called \emph{essentially strongly quasi-monotone}. In order for our most general results to hold, $S$ need not satisfy~\eqref{eq:essstrongquasimono}, but when it is satisfied, our algorithm converges linearly. Although it is hardly comprehensive, one example is noteworthy: property~\eqref{eq:essstrongquasimono} holds if $S = A^\ast \circ \nabla f \circ A$ for a strongly convex function $f$ and a matrix $A$~\cite[p. 287]{liu2015asynchronous}. See~\cite{bolte2015error,zhang2015restricted,lai2013augmented} for information on convex error bounds.

Most of the concepts that we use in this paper can be found in~\cite{bauschke2011convex}. See Table~\ref{tab:symbols} for a list of symbols.
\vspace{-10pt}\section{The SMART Algorithm}

We develop an iterative algorithm that solves~\eqref{eq:Sroot}. The algorithm forms a sequence of \emph{primal variables} $\{x^k\}_{k \in \NN} \subseteq \cH$ that converges to a root of~\eqref{eq:Sroot}. The algorithm also maintains a sequence of \emph{dual variables}, one per operator, which is denoted by $\{(y_1^k, \ldots, y_n^k)\}_{k \in \NN}\subseteq \cH^n$. 

We assume that at least one of $m$, the number of components, and $n$, the number of operators, is large, and so it costs a lot to obtain $x^{k+1}$ from $x^k$ by updating all $m$ of its components, using all $n$ of the operators. To lower the cost, we introduce an IID sequence of $2^{\{1, \ldots, m\}}$-valued (subsets of $\{1, \ldots, m\}$) random variables $\{\sfS_k\}_{k \in \NN}$ that determines which components of $x^{k}$ we update at the $k$th iteration. The component-choosing variable $\mathsf{S}_k$ is coupled with an IID sequence of $\{1, \ldots, n\}$-valued random variables $\{i_k\}_{k \in \NN}$ that determine which one of the $n$ operators $S_1, \ldots, S_n$ are evaluated at the $k$th iteration. If $i_k = i$ and $j \in \sfS_k$, then the $j$th component of $x^k$ is updated at iteration $k$ using an evaluation of $(S_i(\cdot))_j$; the other operators and components are left alone. The user can choose $\{i_k\}_{k \in \NN}$ and $\{\sfS_k\}_{k \in \NN}$ however they like as long as 
\begin{quote}
\centering$q_j := P(j \in \sfS_0) > 0$ for all $j$; and $p_{ij} := P(i_0 = i | j \in \sfS_0) > 0$ exactly when $(S_i(\cdot))_j \not\equiv 0$.
\end{quote}

Unlike the point sequence $\{x^k\}_{k \in \NN}$, the dual variables need not be updated at every iteration. Instead, we introduce an IID sequence of $\{0, 1\}$-valued random variables $\{\epsilon_k\}_{k \in \NN}$ that determines if and when we update the dual variables. If $\epsilon_k = 1$, then the dual variables are updated at iteration $k$; otherwise, the dual variables are left alone. The user can choose $\{\epsilon_k\}_{k \in \NN}$ however they like as long as 
\begin{quote}
\centering $\rho := P(\epsilon_0 = 1) > 0$. 
\end{quote}

If $\epsilon_k = 1$, and thus the dual variables must update at iteration $k$, we only require, at the absolute minimum, that $y_{i_k}^k$ be updated; the rest of the variables may stay fixed. However, we allow the update of $y_{i_k}^k$ to \emph{trigger} the update of any subset of the other dual variables. The \emph{trigger graph} $G = (V, E)$ with vertices $V = \{1, \ldots, n\}$ uses the \emph{edge set} $E \subseteq \{1, \ldots, n\}^2$ to encode, for each $i \in V$, the set of dual variables that must be updated when $i_k = i$: 
\begin{quote}
\centering $(i,i') \in E$ if, and only if, $i_k = i$ triggers the update of dual variable $y_{i'}^k$.
\end{quote}
When $(i,i') \in E$, we simply say that $i$ \emph{triggers} $i'$. We require that for all $i$, $(i,i) \in E$, but otherwise there are no constraints on $G$; it can be absolutely any graph, from a completely disconnected graph, to a complete graph on $n$ vertices. And one quantity, figuring only in our linear rate of convergence, is important: the probability that $i$ is triggered \emph{and} coordinate $j$ is sampled simultaneously
\begin{quote}
\centering$p_{ij}^T := P((i_0, i) \in E, j \in \sfS_0) = P((i_0, i) \in E \mid j \in \sfS_0)q_j = \sum_{(i', i) \in E} p_{i'j}q_j$,
\end{quote}
which is easily computable, but it need not be known to guarantee convergence. 

Often, the matrix of optimal operator values 
\begin{quote}
\centering$\mathbf{S}^\ast := ((S_i(x^\ast))_j)_{ij}, \qquad x^\ast \in \cS$
\end{quote} has some entries which are zero (by~\eqref{eq:coherence},  $\mathbf{S}^\ast$ is independent of $x^\ast \in \cS$). For these zero entries, we simply set the corresponding dual variable to zero: $y_{i,j}^k \equiv 0$ if $\mathbf{S}^\ast_{ij} = 0$. In the extreme case that $\mathbf{S}^\ast = 0$, all operators are zero at the solution,~\eqref{eq:Sroot} reduces to the common zero problem, and the dual variables $\{(y^k_1, \ldots, y^k_n)\}_{k \in \NN}\subseteq \cH^n$ are all zero. Setting these particular dual variables to zero is not necessary, but by doing so, we use less memory than we otherwise would.

An algorithm that solves~\eqref{eq:Sroot} might use the following 3-step process: given $x^k$ 
\begin{enumerate}
\item\label{item:algNoAsyncSample} sample $\sfS_k,i_k, $ and $\epsilon_k$;
\item\label{item:algNoAsyncx}  get $x^{k+1}$ from $x^k$ using $\{(S_{i_k}(x^k))_{j}\mid j \in \sfS_k\}$ and $(y_{1}^k, \ldots, y_n^k)$;
\item\label{item:algNoAsyncphi} if $\epsilon_k = 1$, get $(y_{1}^{k+1}, \ldots, y_{n}^{k+1})$ using $x^{k}$ and $(y_{1}^k, \ldots, y_{n}^k)$; otherwise set $(y_{1}^{k+1}, \ldots, y_{n}^{k+1}) = (y_{1}^k, \ldots, y_{n}^k)$.
\end{enumerate}
After $i_k$ and $\sfS_k$ are sampled, the inactive operators and the inactive components stall until the active ones finish Steps~\ref{item:algNoAsyncx} and~\ref{item:algNoAsyncphi}. If we have access to a parallel computing device, stalling is wasteful, so in our algorithm we let all operators work in parallel and update $x^k$ whenever they finish their work. Mathematically, we form sequences of \emph{delays} 
$\{d_k\}_{k \in \NN} \subseteq \{0, 1, \ldots, \tau_p\}^{m}$ and $\{e^{i}_{k}\}_{k \in \NN} \subseteq \{0, 1, \ldots, \tau_d\}^m$, and we replace several instances of $x^{k}$ and $y_i^k$ with iterates from the past  whose coordinates were formed in the last $\tau_p$ or $\tau_d$ iterations, respectively.
The final algorithm is below:
\begin{mdframed}
\noindent\begin{algo}[SMART]\label{alg:main}
Let $\{\lambda_k\}_{k \in \NN}$ be a sequence of stepsizes. Choose $x^0\in \cH$ and $y_1^0, \ldots, y_{n}^0 \in \cH$ arbitrarily except that $y_{i,j}^0 = 0$ if $\mathbf{S}^\ast_{ij} = 0$. Then for $k \in \NN$, perform the following three steps: 
\begin{enumerate}
\item \textbf{Sampling:} choose a set of coordinates $\sfS_k$,  an operator index $i_k$, and dual update decision $\epsilon_k$.
\item \textbf{Primal update:} set
\begin{align*}
\left(\forall j \in \sfS_k\right) \qquad &x_j^{k+1} = x_j^k - \frac{\lambda_k}{q_jm}\left(\frac{1}{np_{ij}}(S_{i_k}(x^{k-d_k}))_j - \frac{1}{np_{ij}}y_{i_k, j}^{k-e^{i_k}_{k}} + \frac{1}{n}\sum_{i=1}^{n} y_{i,j}^{k - e^{i}_{k}}\right); \\
\left(\forall j \not\in \sfS_k\right) \qquad & x_j^{k+1} = x_j^k.
\end{align*}
\item \textbf{Dual update:} If $i_k$ triggers $i$, set
\begin{align*}
 \left(\forall j \in \sfS_k \text{ with } \mathbf{S}^\ast_{ij} \neq 0\right)  \qquad &y_{i,j}^{k+1} = y_{i,j}^k + \epsilon_k\left((S_i(x^{k-d_k}))_j - y_{i,j}^k\right); \\
\left(\forall  j \notin  \sfS_k \right) \qquad & y_{i,j}^{k+1} = y_{i,j}^k.
\end{align*}
Otherwise,  set $y_{i,j}^{k+1} = y_{i,j}^k.$\qquad \qed
\end{enumerate}
\end{algo}
\end{mdframed}

The $x^{k-d_k}$ iterate in SMART can be \emph{totally synchronous}, in which case $d_{k,j} = 0$ for all $j$; 
 it can be \emph{consistent-read asynchronous}, in which case $d_{k,j} = d_{k,j'}$ for all $j$ and $j'$; 
 or it can be \emph{inconsistent-read asynchronous}, in which case $d_{k,j} \neq d_{k,j'}$ for some $j$ and $j'$. 
Totally synchronous iterates are not delayed at all, so $x^{k-d_k} = x^k$ for all $k$; consistent-read asynchronous iterates are delayed, but all of their coordinates are delayed by the same amount, so $x^{k-d_k} \in \{x^k, x^{k-1}, \ldots, x^{k-\tau_p}\}$ for all $k$; inconsistent-read asynchronous iterates are delayed, and their coordinates can be delayed by different amounts, so $x^{k-d_{k,j}}_j \in \{ x^{k}_j, x^{k-1}_j, \ldots, x^{k-\tau_p}_j\}$ for all $j$, but $x^{k-d_k}$ is not necessarily an element of $\{x^k, x^{k-1}, \ldots, x^{k-\tau_p}\}$ for all $k$.

Likewise, the $y_i^{k-e_k^i}$ iterate in SMART can be totally synchronous, consistent-read asynchronous, or inconsistent-read asynchronous, corresponding to the cases $e_{k,j}^i = 0$ for all $j$, $e_{k,j}^i = e_{k,j'}^i$ for all $j$ and $j'$, or $e_{k,j}^i \neq e_{k,j'}^i$ for some $j$ and $j'$, respectively. 

The delays $\{d_k\}_{k \in \NN}$ and $\{e^{i}_k\}_{k \in \NN}$ come with no particular order. They can induce maximal delays at all times, e.g., $d_{k,j} = \tau_p$ for all $j$ and $k$, in which case the oldest information possible is used in every iteration; they can be cyclic, e.g., $d_{k,j} = k \mod (\tau_p+1)$ for all $j$ and $k$, in which case the same information is used for $\tau_p$ consecutive iterations in a row, and all the intermediate information is thrown away; but in general, the delays can be arbitrary. These delays are artificially imposed, but we can also incur delays that are beyond our control. 

Uncontrolled delays can occur in the $x^{k}$ iterate when a processor, called $\text{Proc}_1$, attempts to read coordinates $x_1^k, \ldots, x_m^k$ while another processor, called $\text{Proc}_2$, attempts to replace $x^k$ with $x^{k+1}$. It can happen that $\text{Proc}_1$ successfully reads coordinate $x_1^k$ before $\text{Proc}_2$ replaces it with $x_1^{k+1}$, but that $\text{Proc}_2$ replaces $x_2^k, \ldots, x_m^{k}$ with $x_2^{k+1}, \ldots, x_m^{k+1}$ before $\text{Proc}_1$ attempts to read this group of coordinates. When $\text{Proc}_1$ finishes reading, it will have the iterate $(x_1^k, x_2^{k+1}, \ldots, x_m^{k+1})$, which is not necessarily equal to any previous iterate $x^{t}$ with $t \leq k$. This effect is exacerbated if multiple processors attempt to write and read simultaneously, but in SMART, $x^{k-d_k}$ is inconsistent-read asynchronous, so these uncontrolled delays cause no trouble. 

In general, including inconsistent-read asynchronous iterates leads to tedious convergence proofs with many-term recursive identities and complicated stepsizes. The recursive identities are necessary to control algorithm progress in a chaotic environment. The stepsizes, on the other hand, can be be optimized and have a clear dependence on the delays, sampling probabilities, and problem data. In both the inconsistent-read and consistent-read cases, the algorithm will converge with the same range of parameters. However, the rates of convergence depend on a measure of inconsistency, which we call $\delta$
\begin{align}\label{eq:deltadef}
\delta := \sup_{\substack{k \in \NN \\ j, j' \in \{1, \ldots, m\}}} |d_{k,j} - d_{k,j'}|.
\end{align}
When $\delta = 0$, the $x^{k-d_k}$ are consistent-read asynchronous, and the convergence rates improve. Otherwise $\delta \in \{0, \ldots, \tau_p\}$ and the convergence rates degrade with increasing $\delta.$ Of course, when $\delta$ is not known explicitly, it can be replaced by its upper bound $\tau_p$.  

~\\~\\
\indent If all of the sampling variables are statistically independent and the stepsizes are chosen small enough, SMART converges:

\begin{theorem}[Convergence of SMART]\label{thm:newalgoconvergence}
For all $k \geq 0$, let $\cI_k = \sigma((i_k, \sfS_k))$, let $\cE_k = \sigma(\varepsilon_k)$, let 
$$\cF_k :=  \sigma(\{x^l\}_{l =0}^k\cup \{y_1^l, \ldots, y_n^l\}_{l = 0}^k),$$ and suppose that $\{ \cI_k, \cE_k, \cF_k\}$ are independent. Suppose that~\eqref{eq:coherence} holds. Finally, suppose that there are constants $\underline{\lambda}, \overline{\lambda} > 0$ such that $\{\lambda_k\}_{k\in \NN} \subseteq [\underline{\lambda}, \overline{\lambda}]$, and  
\begin{align}\label{eq:weaklambdabound}
\overline{\lambda}^2 < \begin{cases}
\min_{i,j}\left\{\frac{2\underline{\lambda} n^2p_{ij} \beta_{ij}}{\frac{3\umu_j\tau_p}{m\sqrt{\underline{q}}} + \frac{2\umu_j}{\underline{q}m}}\right\} & \text{if $\mathbf{S}^\ast = 0$}; \\
\min_{i,j}\left\{\frac{\underline{\lambda}n^2 p_{ij}\beta_{ij}}{\frac{\umu_j\tau_p}{m\sqrt{\underline{q}}}\sqrt{2(\tau_d+2)} + \frac{\umu_j(\tau_d + 2)}{m\underline{q}}}\right\} & \text{otherwise}.
\end{cases}
\end{align}
Then
\begin{enumerate}
 \item \textbf{Convergence of operator values.}\label{thm:newalgoconvergence:part:operatorvalues}  For $i \in \{1, \ldots, n\}$, the sequence of $\cH$-valued random variables $\{S_i(x^{k-d_k})\}_{k \in \NN} \as$ converges strongly to $S_i(x^\ast)$.
\item \textbf{Weak convergence.} \label{thm:newalgoconvergence:part:weak} 
Suppose that $S$ is demiclosed at $0$. Then the sequence of $\cH$-valued random variables $\{x^k\}_{k \in \NN} \as$ weakly converges to an $\cS$-valued random variable.
\item \textbf{Linear convergence.} \label{thm:newalgoconvergence:part:linear} Let $\eta <  \min_{i, j} \{\rho p_{ij}^T\},$ let $\alpha \in [0, 1)$, let $\underline{q} = \min_j\{q_j\}$, let $\umu = \min_{j} \{\umu_j\}$, and let 
\begin{align}\label{eq:asynclambdalinear}
\lambda &\leq \min_{i,j}\left\{\frac{2\eta(1-\alpha)n^2p_{ij}\beta_{ij}}{\frac{2\umu_j\eta(\tau_d + 2)}{q_jm}\left(1 + \frac{\delta\eta}{\tau_d + 1} + \frac{5\sqrt{2(\tau_d + 2)}\alpha^2\delta}{m\umu \left( \sqrt{2(\tau_d + 2)} + \tau_p \sqrt{\underline{q}} \right) }\right) + \frac{\umu_j\eta\tau_p\sqrt{2(\tau_d + 2)}}{m\sqrt{\underline{q}}} \left(2+\frac{\eta}{1-\eta}\right) + 4\mu(\tau_d+1)\alpha(1-\alpha)n^2p_{ij}\beta_{ij}} \right\}.
\end{align}
Then if~\eqref{eq:essstrongquasimono} holds, there exists a constant $C(z^0, \phi^0) \in \vR_{\geq 0}$ depending on $x^0$ and $\phi^0$ such that for all $k \in \vN$, 
\begin{align*}
\EE\left[d_{\cS}^2(x^k)\right] &\leq  \left(1 - \frac{2\alpha\mu\lambda}{\tau_p+1} \right)^{k/(\tau_p + 1)}\left(d_\cS^2(x^0) + C(x^0, \phi^0)\right).
\end{align*}
\end{enumerate}
\end{theorem}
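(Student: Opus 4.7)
The plan is to run a single Lyapunov analysis that yields all three parts simultaneously. For a fixed $x^\ast \in \cS$, I would introduce a composite energy
\[
V^k = \|x^k - x^\ast\|^2 + \sum_{i,j} c_{ij}\|y_{i,j}^k - (S_i(x^\ast))_j\|_j^2 + \sum_{\ell=1}^{\tau_p} a_\ell\|x^{k-\ell+1}-x^{k-\ell}\|^2 + \sum_{i}\sum_{\ell=1}^{\tau_d} b_\ell^i \|y_i^{k-\ell+1}-y_i^{k-\ell}\|^2,
\]
with nonnegative coefficients $c_{ij}, a_\ell, b_\ell^i$ to be calibrated in terms of $\beta_{ij}$, $\tau_p$, $\tau_d$, $\delta$, and the sampling probabilities. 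The last two sums buffer recent primal and dual increments; this is the standard device for controlling bounded delays, and inconsistent reads will enter through $\delta$-dependent multipliers on these coefficients.

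First I would expand $\|x^{k+1}-x^\ast\|^2$ from the primal update and take conditional expectation given $\cF_k$, exploiting the assumed independence of $\cI_k$, $\cE_k$, and $\cF_k$. The scalings $1/(q_j m)$ and $1/(np_{ij})$ in Algorithm~\ref{alg:main} are chosen precisely so that, conditioned on $\cF_k$, the mean primal direction equals $m^{-1}S(x^{k-d_k})$ plus a bias that vanishes whenever $y_{i,j}^{k-e_k^i} = (S_i(x^\ast))_j$; the dual variables thus act as control variates. The linear cross term splits as $\dotp{S(x^{k-d_k}), x^{k-d_k}-x^\ast} + \dotp{S(x^{k-d_k}), x^k-x^{k-d_k}}$, and the first piece is exactly what is bounded by coherence~\eqref{eq:coherence}. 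In parallel, the dual update contributes
\[
\EE[\|y_{i,j}^{k+1}-(S_i(x^\ast))_j\|_j^2 \mid \cF_k] = (1-\rho p_{ij}^T)\|y_{i,j}^k-(S_i(x^\ast))_j\|_j^2 + \rho p_{ij}^T\|(S_i(x^{k-d_k}))_j-(S_i(x^\ast))_j\|_j^2,
\]
so weighting the dual terms by suitable $c_{ij}$ (proportional to $\beta_{ij}/p_{ij}^T$) lets the new $\|(S_i(x^{k-d_k}))_j - (S_i(x^\ast))_j\|_j^2$ contribution be absorbed into a single negative multiple of $\dotp{S(x^{k-d_k}), x^{k-d_k}-x^\ast}$ via~\eqref{eq:coherence}. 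The remaining second-order error, including the delayed cross term $\dotp{S(x^{k-d_k}), x^k-x^{k-d_k}}$ and the expansion of $\|x^{k+1}-x^k\|^2$, is absorbed into the buffer sums after Young's inequality. Calibrating the coefficients against~\eqref{eq:weaklambdabound} produces the supermartingale inequality $\EE[V^{k+1} \mid \cF_k] \leq V^k - \gamma \sum_i\|S_i(x^{k-d_k}) - S_i(x^\ast)\|^2$ for some $\gamma > 0$.

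Given this one-step bound, Robbins--Siegmund delivers a.s. convergence of $V^k$ together with summability $\sum_k \|S_i(x^{k-d_k})-S_i(x^\ast)\|^2 < \infty$, which is part~\ref{thm:newalgoconvergence:part:operatorvalues}. For part~\ref{thm:newalgoconvergence:part:weak}, I would run the argument for every $x^\ast$ in a countable dense subset of $\cS$, obtaining simultaneous a.s. convergence of $\|x^k-x^\ast\|$; since $S(x^{k-d_k})\to 0$ a.s.\ and the summability of primal increments forces $\|x^k-x^{k-d_k}\|\to 0$, demiclosedness of $S$ at $0$ places every weak cluster point in $\cS$, and a stochastic variant of Opial's lemma yields weak a.s.\ convergence to an $\cS$-valued random variable. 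For part~\ref{thm:newalgoconvergence:part:linear}, I replace the generic $x^\ast$ by $P_\cS(x^k)$ and invoke~\eqref{eq:essstrongquasimono} to strengthen the descent to $\mu d_\cS^2(x^{k-d_k})$; transferring this back to $d_\cS^2(x^k)$ through the delay buffers produces a one-step contraction $\EE[V^{k+1}]\leq (1-c\lambda)\EE[V^k]$, and telescoping across blocks of $\tau_p+1$ iterations (to amortize the worst-case primal delay) yields the stated exponent $k/(\tau_p+1)$.

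The main obstacle is the bookkeeping in the inconsistent-read regime: because $x^{k-d_k}$ is not a genuine past iterate, the clean bound $\|x^k-x^{k-d_k}\|^2 \leq \tau_p\sum_{\ell=0}^{\tau_p-1}\|x^{k-\ell}-x^{k-\ell-1}\|^2$ degrades into a coordinate-wise telescoping that loses a factor proportional to $\delta$ in the cross terms between coordinates, and analogously for $y_i^{k-e_k^i}$. Balancing these losses against $a_\ell$, $b_\ell^i$, $c_{ij}$ without shrinking the admissible stepsize is the delicate part, and it is precisely what forces the asymmetric $\tau_p$- and $(\tau_d+2)$-dependencies seen in~\eqref{eq:weaklambdabound} and~\eqref{eq:asynclambdalinear}.
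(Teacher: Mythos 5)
Your plan matches the paper's proof essentially step for step: the same Lyapunov function (squared distance to $x^\ast$ plus weighted dual residuals plus a buffer of recent primal increments), the same conditional-expectation computation exploiting the unbiasedness of the scaled update and the dual recursion with success probability $\rho p_{ij}^T$, the same splitting of the cross term at the delayed point, Robbins--Siegmund for summability, a dense-subset Opial/demiclosedness argument for weak convergence, and a block-of-$(\tau_p+1)$ contraction for the linear rate. The only caveats are cosmetic: the paper buffers past dual \emph{residuals} rather than dual increments, and in Part 3 the contraction is genuinely only over a window of $\tau_p+1$ iterations (established via a max-recursion lemma plus a projection-swapping step using the firm nonexpansiveness of $P_\cS$ to handle the inconsistent reads), rather than a true one-step contraction followed by telescoping.
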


The proof of Theorem~\ref{thm:newalgoconvergence} without delay (i.e., $d_k \equiv 0$ and $e^{ij}_{k}  \equiv 0$) is presented in Section~\ref{sec:nonasync}. The proof of the full theorem is in Appendix~\ref{sec:async}

\section{Connections with Other Algorithms}\label{sec:connections}

\begin{table}
\renewcommand{\arraystretch}{1.3}
\begin{center}
\begin{tabular}{c|c|c|c}\hline
\multirow{2}{*}{Algorithm} & \multicolumn{2}{c|}{Stepsize $\lambda$}  & \multirow{2}{*}{Rate for best $\lambda$} \\\cline{2-3}
  & that is largest possible & that gives best rate  &\\\hline\hline
 \multirow{2}{*}{SAGA~\eqref{eq:SAGABasic}} & \multirow{2}{*}{$\frac{1}{2L}$} & \multirow{2}{*}{$ \frac{1}{4L+ \mu N }$} & \multirow{2}{*}{$1 - \frac{\mu}{4L+ \mu N }$}\\
 & & & \\\hline
%SAGA clone~\eqref{eq:SAGAprobs} & \multirow{2}{*}{$\frac{1}{2L_{\text{avg}}}$} & \multirow{2}{*}{$ \frac{1}{4L_{\text{avg}}+ \mu n}$} & \multirow{2}{*}{$1 - \frac{\mu}{4L_{\text{avg}} + \mu n}$}\\
%optimal probabilities & & & \\\hline
  SVRG~\eqref{eq:SVRGavgupdate} & \multirow{2}{*}{$\frac{1}{2L}$} & \multirow{2}{*}{$\frac{1}{4L + \mu \tau}$} & \multirow{2}{*}{$1-\frac{\mu}{4L + \mu \tau}$}\\
 average update frequency $\tau$& & & \\\hline
 SVRG~\eqref{eq:SVRGschedupdate} & \multirow{2}{*}{$\frac{1}{(\tau + 2)L}$} & \multirow{2}{*}{$\frac{1}{2L(\tau + 2) + \mu (\tau + 1)}$} & \multirow{2}{*}{$1-\frac{\mu}{2L(\tau + 2) + \mu (\tau + 1)}$}\\
 scheduled update frequency $\tau$& & & \\\hline
 \multirow{2}{*}{Finito~\eqref{eq:finitoclonestd}} & \multirow{2}{*}{$\frac{1}{2}; \; \gamma = \frac{2}{L}$} & \multirow{2}{*}{$\frac{1}{4}; \; \gamma = \frac{1}{L}$} & \multirow{2}{*}{$1- \frac{1}{4N}\left(1-\sqrt{1 - \frac{\hat{\mu}}{L}}\right)$}\\
  & & & \\\hline
  \multirow{2}{*}{SDCA~\eqref{eq:SDCAclone}} & \multirow{2}{*}{$\frac{3}{4}$} & \multirow{2}{*}{$\frac{3}{8}$} & \multirow{2}{*}{$1-\frac{3\mu_0}{8(L + \mu_0N)}$}\\
  & & & \\\hline 
   \multirow{2}{*}{Alternating Projections~\eqref{eq:RandomizedProjectionClone}}& \multirow{2}{*}{$1$} & \multirow{2}{*}{$\frac{1}{2}$} & \multirow{2}{*}{$1-\frac{\min\{1, \varepsilon^2L^{-2}\}}{2N\hat{\mu}}$}\\
    & & & \\\hline 
 \multirow{2}{*}{Kaczmarz~\eqref{eq:Kaczmarzclone}} & \multirow{2}{*}{$1$} & \multirow{2}{*}{$\frac{1}{2}$} & \multirow{2}{*}{$1-\frac{1}{2N\|A^{-1}\|_2^{2}}$}\\
  & & & \\\hline 
\end{tabular}\\[5pt]
\caption{The stepsizes and convergence rates for the special cases of SMART introduced in Section~\ref{sec:connections}}\label{table:stepsizeconnections}
\end{center}
\end{table}

\subsection{SAGA, SVRG, and S2GD}\label{sec:SAGA}
In the simplest case of~\eqref{eq:Sroot} \vspace{-10pt}
\begin{align}\label{eq:simplesmooth}
\Min_{x \in \cH_0}\, \frac{1}{N} \sum_{i=1}^N f_i(x) 
\end{align}
where each $f_i$ is convex and differentiable, and $\nabla f_i$ is $L$-Lipschitz, we set $S_i :=\nabla f_i$. (We also set $\cH = \cH_0$, use the canonical norm $\|\cdot\| = \|\cdot\|_0$, and ignore coordinates and partial derivatives for the moment.) By Proposition~\ref{eq:SAGAOP}, we can set $\beta_{ij} =(LN)^{-1}$ for all $i$ and $j$. With this choice of operators, the condition~\eqref{eq:essstrongquasimono} is, of course, implied by the $\mu$-strong convexity of $f : = N^{-1} \sum_{i=1}^N f_i$. But whether or not $f$ is strongly convex, if $\lambda$ is set according to Theorem~\ref{thm:newalgoconvergence} or Table~\ref{table:stepsizeconnections}, SMART will converge.

The SAGA Algorithm~\cite{defazio2014saga} applied to~\eqref{eq:simplesmooth} selects a function uniformly at random, performs a primal update, and updates a single dual variable (i.e., the trigger graph is completely disconnected). When, for all $i$, we set $y_i^0 = \nabla f_i(\phi_i^0)$ with $\phi_i^0 \in \cH$, SAGA takes the form:\footnote{Use $n = N, m = 1, d_{k} \equiv 0$, $ e^{i}_{k} \equiv 0$, $q_1 \equiv 1$, $\tau_p = \tau_d = 0$, $p_{ij} \equiv N^{-1}$, $\rho = 1$,  $E = \{(i, i) \mid i \in V\}$, $p_{ij}^T = N^{-1}$.}
\begin{align*}
 x^{k+1} &= x^k - \lambda \left(\nabla f_{i_k}(x^k) - y_{i_k}^k + \frac{1}{N} \sum_{i=1}^N y_i^k\right);\\
y_{i}^{k+1} &= \begin{cases} 
\nabla f_{i}(x^k)& \text{if $i_k = i$;} \\
 y_i^k &\text{otherwise.}
\end{cases}
\numberthis\label{eq:SAGABasic}
\end{align*}
In the SAGA algorithm, each dual variable is just a gradient, $y_i^k := \nabla f_i(\phi_i^k)$, for a past iterate $\phi_i^k$. The SAGA algorithm stores the stale gradients $\{ \nabla f_i(\phi_i^k) \mid i = 1, \ldots, N\}$, which, if $x \in \RR^d$, is the size of a $d\times N$ matrix. However, in logistic and least squares regression problems, the functions $f_i$ have a simple form $f_i(x) = \psi_i(\dotp{a_i, x})$ where the $\psi_i : \RR \rightarrow \RR$ are differentiable functions and the $a_i \in \RR^d$ are datapoints; in this case, $\nabla f_i(x) = \psi_i'(\dotp{a_i, x}) a_i$, so the cost of storing $\{\nabla f_i(\phi_i^k) \mid i = 1, \ldots, N\}$ can be reduced to that of $(\psi_1'(\dotp{a_1, \phi_1^k}), \ldots, \psi_N'(\dotp{a_N, \phi_N^k}))^T \in \RR^d$---a $d$-dimensional vector. But not all problems have this parametric form, so the SAGA algorithm is somewhat limited in scope.

The Stochastic Variance Reduced Gradient (SVRG) algorithm~\cite{johnson2013accelerating}, and the similar S2GD algorithm~\cite{konevcny2013semi}, solve~\eqref{eq:simplesmooth}, but in contrast to SAGA, these algorithms store just a single vector, namely $\nabla f(\widetilde{x}^k)$. As a consequence, SVRG and S2GD must make repeated, though infrequent, evaluations of the full gradient $\nabla f$---in addition to one extra evaluation of $\nabla f_{i_k}$ per-iteration:
\begin{align*}
x^{k+1} &= x^k - \lambda \left( \nabla f_{i_k}(x^k) - \nabla f_{i_k}(\widetilde{x}^k) + \frac{1}{N} \sum_{i=1}^N \nabla f_i(\widetilde{x}^k)\right); \\
\widetilde{x}^{k+1} &= \begin{cases}
x^{k - t_k}  &\text{if $k \equiv 0 \mod \tau-1$}; \\
\widetilde{x}^{k+1} & \text{otherwise;}
\end{cases}
\end{align*}
where $\{i_k\}_{k \in \NN}$ is an IID sequence of uniformly distributed $\{1, \ldots, N\}$-valued random variables and $\{t_k\}_{k \in \NN}$ is an IID sequence of uniformly distributed $\{0, \ldots, \tau-1\}$-valued random variables. The full gradient $\nabla f(\widetilde{x}^k)$ and the point $\widetilde{x}^k$ are only updated once every $\tau \in \NN$ iterations. 

The SVRG algorithm\footnote{From here on, we will ignore the distinction between SVRG and S2GD.} solves the SAGA storage problem, but it requires $f$ to be strongly convex, so it is also somewhat limited in scope. SMART can mimic SVRG---even without strong convexity---by selecting a function uniformly at random, performing a primal update, and updating \emph{all} dual variables with probability $\tau^{-1}$ (i.e., the trigger graph is the complete graph). When, for all $i$, we set $y_i^0 = \nabla f_i(\phi^0)$ with $\phi^0 \in \cH$, our SVRG clone takes the form:\footnote{Use $n = N, m = 1,  \umu_1 = 1, d_{k} \equiv 0$,  $e^{i}_{k} \equiv 0$, $q_j \equiv 1$, $\tau_p = \tau_d = 0$, $p_{ij} \equiv N^{-1}$, $\rho = \tau^{-1}$, $E = V \times V$, and $p_{ij}^T = 1$.}
\begin{align*}\vspace{-10pt}
x^{k+1} &= x^k - \lambda \left( \nabla f_{i_k}(x^k) -y_{i_k}^k + \frac{1}{N} \sum_{i=1}^N y_i^k\right); \\
\left(\forall i \right) \qquad y_i^{k+1} &= y_i^k + \epsilon_k(\nabla f_i(x^k) - y_i^k). \numberthis\label{eq:SVRGavgupdate} 
\end{align*}
As in SAGA, each dual variable is just a gradient $y_i^k = \nabla f_i(\phi^k)$, for a past iterate $\phi^k$, but unlike SAGA, the past iterate is the same for all $i$.  On average, all dual variables $y_i^k$ and, hence, the full gradient $\nabla f(\phi^k)$, are only updated once every $\tau$ iterations.  

Another clone of SVRG, this time with dual variables that update once every $\tau$ iterations, comes from SMART as applied in~\eqref{eq:SVRGavgupdate}, but with a cyclic uniform  dual variable delay $e_k^i = e_k := k \mod (\tau+1)$:\footnote{Use $n = N, m = 1,  \umu_1 = 1, d_{k} \equiv 0$,  $e^{i}_{k} = k \mod \tau$, $q_j \equiv 1$, $\tau_p = 0,  \tau_d = \tau$, $p_{ij} \equiv N^{-1}$, $\rho = 1$, $E = V \times V$, and $p_{ij}^T = 1$.}
\begin{align*}\vspace{-10pt}
x^{k+1} &= x^k - \lambda \left( \nabla f_{i_k}(x^k) -  y_{i_k}^{k - e_k} + \frac{1}{N} \sum_{i=1}^N y_i^{k-e_{k}}\right); \\
\left(\forall i \right) \qquad y_i^{k+1} &= \nabla f_{i}(x^k). \numberthis\label{eq:SVRGschedupdate}
\end{align*}
The dual variable $y_i^k = \nabla f_i(x^k)$, and hence the full gradient $\nabla f(x^k)$, is only updated once every $\tau$ iterations. 

\subsection{Finito}\label{sec:finito}

The Finito algorithm~\cite{defazio2014finito} solves~\eqref{eq:simplesmooth}, but unlike SAGA and SVRG, Finito stores one point and one gradient per function, or a superposition of the two: 
\begin{align*}%\label{eq:FINITO}
x_{i}^{k+1} &= \begin{cases}
 \frac{1}{N}\sum_{l=1}^N(x_l^k- \frac{1}{2\hat{\mu}}\nabla f_l(x_l^k)) & \text{if $i = i_k$;} \\
 x_i^k & \text{otherwise,}
 \end{cases}
\end{align*}
where each $f_i$ is $\hat{\mu}$-strongly convex. For each function $f_i$, Finito stores $x_i^k- (2\hat{\mu})^{-1}\nabla f_i(x_i^k)$, which can be substantially costlier than storing a matrix of gradients. In addition, only when each function $f_i(x)$ is $\hat{\mu}$-strongly convex and the bound $N \geq 2L\hat{\mu}^{-1}$ holds, is Finito known to converge. 

SMART can mimic SAGA and SVRG with multiple operators, but with one operator $S = S_1$, SMART recovers the Finito algorithm. To get Finito, recast~\eqref{eq:simplesmooth} into an equivalent form with duplicated variables
\begin{align*}%\label{eq:productsmooth}
\Min_{(x_1, \ldots, x_N) \in \cH_0^N} \;\frac{1}{N} \sum_{i=1}^N f_i(x_i) \qquad\text{subject to:} \; x_1 = x_2 = \cdots = x_N,
\end{align*}
let $D := \{(x, \ldots, x) \in \cH_0^N \mid x \in \cH\}$ denote the \emph{diagonal} set, and define the operator (for a fixed $\gamma > 0$)
\begin{align*}%\label{eq:finitooperator}
\left(\forall x \in \cH_0^N\right) \qquad S(x) := S_1(x) = x - P_D  \left(x_1 - \gamma \nabla f_1(x_1), \ldots, x_N - \gamma \nabla f_N(x_N)\right).
\end{align*}
Then the Finito algorithm, selects a coordinate (and hence a function) uniformly at random and performs a primal update; the sole dual variable is set to zero at all iterations:\footnote{Use $n = 1, m = N, \umu_j \equiv 1, d_{k}  \equiv 0$, $e^{i}_{k} \equiv 0 $, $\mathbf{S}^\ast = 0$, $q_j \equiv N^{-1}$, $\tau_p = 0$, $\tau_d = 0$, $p_{ij} \equiv 1$, $\rho = 1$, $E = \{(1, 1)\}$, and $p_{ij}^T = N^{-1}$.}
\begin{align}\label{eq:finitoclonestd}\vspace{-10pt}
x^{k+1}_j = \begin{cases}
(1-\lambda) x_j^k  +  \frac{\lambda}{N}\sum_{l=1}^N \left( x_l^k- \gamma \nabla f_l(x_l^k)\right) & \forall  j \in \sfS_k; \\ 
x_j^k & \text{otherwise.}
\end{cases}
\end{align}
Unlike the standard Finito algorithm~\cite{defazio2014finito}, Algorithm~\eqref{eq:finitoclonestd} converges with or without strong convexity.

The constants $\beta_{1j}$ depend on the constant $\gamma$ and the inner products that we place on $\cH_j := \cH_0$ and $\cH := \cH_0^N$. The projection operator $P_D$ also depends on these inner products. As long as\footnote{See Proposition~\ref{prop:finito_op}.} $\gamma\leq 2L^{-1}$, the operator $S$ satisfies~\eqref{eq:coherence} with~$\beta_{1j} \equiv 4^{-1}\gamma L$, and if $\mu := 1 - \sqrt{1-2\gamma\hat{\mu} + \gamma^2\hat{\mu} L}$, it is $\mu$-essentially strongly quasi-monotone---provided that each $f_i$ is $\hat{\mu}$-strongly convex and we make the choices $\dotp{x_j, z_j}_j := \dotp{x_j, z_j}_0$ and $\|\cdot\| = \|\cdot\|_\pr$; the operator $S$ need not be strongly monotone unless each $f_i$ is strongly convex.  See Proposition~\ref{prop:finito_op}.

%\footnote{It is the composition of  $2P_D - I_{\cH}$ and $I_{\cH} - (2m/\max_i\{L_i\})\nabla f$, which are both nonexpansive; see~\cite{?} and~\cite{?}.}, so by Proposition~\ref{?}, $\beta_{1j} \equiv 1/2$. If each $f_i$ is $\hat{\mu}$-strongly convex, then $S$ satisfies~\eqref{thm:asyncnewalgoconvergence:eq:strongmono} with $\mu = \frac{2\hat{\mu}}{\max_i\{L_i\}}$.

%Algorithm~\ref{alg:main} recovers a damped version of Finito through the parameter choices $d_{k}  \equiv 0$, $e^{i}_{k} \equiv 0 $, $n_1 = 0, n = 1$, $q_j \equiv 1/m$, $\tau_p = 0$, $\tau_d = 0$, $p_{ij} \equiv 1$, $\rho = 1$, $a(i) = 1$ for all $i$, and $p= 1$:
%\begin{align}\label{eq:finitoclonestd}
%x^{k+1}_j = \begin{cases}
%(1-\lambda) x_j^k  +  \frac{\lambda}{m}\sum_{l=1}^m \left( x_l^k- \gamma \nabla f_l(x_l^k)\right) & \forall j \text{ with } t_{k,j} = 1\\ 
%x_j^k & \text{otherwise.}
%\end{cases}
%\end{align}
%There is no database to store because $n_1 = 0$; however, the space $\cH = \cH_0^m$ is high-dimensional, so even storing a single vector $x^k$ is still expensive. In practice, the gradients should also be stored---unless they are simple to recompute. Unlike~\eqref{eq:FINITO}, Algorithm~\eqref{eq:finitoclonestd} converges\footnote{It is well-known that $S$ is demiclosed.} with or without strong convexity.

The space $\cH = \cH_0^N$ is high-dimensional, so even storing a single vector $x^k$ is expensive. And in practice, the gradients should also be stored---unless they are simple to recompute. Thus, it is clear that Finito, like SAGA, but unlike SVRG, is  impracticable if $m$ is too large and memory is limited. Nevertheless, Finito performs well in practice, often better than other incremental gradient methods. 

%The Finito algorithm in~\cite{defazio2014finito} is slightly different from~\eqref{eq:finitoclonestd}: One the one hand, only one coordinate can be sampled per iteration; each function $f_i(x)$ must be $s$-strongly convex; and the bound $m \geq 2L/s$ must hold. On the other hand, the variant~\eqref{eq:finitoclonestd} converges\footnote{It is well-known that $S$ is demiclosed.} with or without strong convexity.

\subsection{SDCA}\label{sec:SDCA}

The Stochastic Dual Coordinate Ascent (SDCA) algorithm~\cite{shalev2014accelerated} solves a problem different from~\eqref{eq:simplesmooth}:\footnote{In the standard SDCA problem, $f_i(z) := \psi_i(A_i^T z)$ for a convex, differentiable function $\psi_i$ and a matrix $A_i$. Furthermore, the squared 2-norm is replaced with a general strongly convex function $g$.}
\begin{align}\label{eq:SDCA_prob}
\Min_{z\in \cH_0} \frac{1}{N}\sum_{j=1}^N f_j(z) + \frac{\mu_0}{2}\|z\|^2
\end{align}
SDCA does not solve this primal problem directly; instead, SDCA solves the dual problem.
\begin{align*}%\label{prop:SDCAdual}
\Min_{(x_1, \ldots, x_N)\in \cH_0^N} \frac{1}{N}\sum_{j=1}^N f_j^\ast(-x_j) + \frac{\mu_0}{2}\left\|\frac{1}{\mu_0 N} \sum_{j=1}^N x_j \right\|^2.
\end{align*}
If we define $\{\sfS_k\}_{k \in \NN}$ as in SMART and restrict all $\sfS_k$ to have at most one element, SDCA repeatedly does the following:
\begin{align}\label{eq:SDCA}
x_{j}^{k+1} &= \begin{cases}
x_j^k + \argmin_{x_j\in \cH_0} \left\{ f_j^\ast(-x_j^k - x_j) + \frac{\mu_0 N}{2} \left\|\frac{1}{\mu_0 N}\sum_{l=1}^N x_l^k + \frac{1}{\mu_0 N} x_j\right\|^2\right\} & \text{if $j \in \sfS_k$;}\\
x_j^k & \text{otherwise.}
\end{cases}
\end{align}
For each function $f_j$, SDCA stores the vector $x_j^k$. This can be cheap because, as the optimality conditions of~\eqref{eq:SDCA} show, there is always a point $z_j^k$ such that $x_j^k \in \partial f(x_j^k)$. The drawback to \eqref{eq:SDCA} is that only when each $f_i$ is differentiable, is SDCA known to converge. In addition, the form~\eqref{eq:SDCA} is opaque, but with a bit of polishing, it can be made transparent.

Evidently, the first line of~\eqref{eq:SDCA} is a \emph{forward-backward} step: 
$$ 
x_j^{k+1} = \prox_{\mu_0N f_j^\ast(-\cdot)}\left(x_j^k - \mu_0N \left[\nabla g(x^k)\right]_j\right)
$$ where $g(x) := 2^{-1}\mu_0N\left\|(\mu_0N)^{-1}\sum_{l=1}^N x_l\right\|^2$. Thus, SDCA is an instance of SMART with $f^\ast :=  \sum_{j=1}^N f_j^\ast(x_j)$,
\begin{align*}%\label{eq:SDCA_op}
\left(\forall x\in \cH_0^N\right) \qquad S(z) = S_1(x) := x - \prox_{ \mu_0N f^\ast(-\cdot)}(x - \mu_0N\nabla g(x)),
\end{align*}
that selects, at each iteration, a single coordinate (hence, a function) uniformly at random and performs a primal update; the sole dual variable is set to zero at all iterations:\footnote{Use $n = 1, m=N, \umu_j \equiv 1, d_{k} \equiv 0$,  $e^{i}_{k} \equiv 0$, $\mathbf{S}^\ast = 0$, $q_j \equiv N^{-1}$, $\tau_p = \tau_d = 0$, $p_{ij} \equiv 1$, $\rho = 1$, $E = \{(1, 1)\}$, and $p_{ij}^T = N^{-1}$.}
\begin{align}\label{eq:SDCAclone}
x^{k+1}_j = \begin{cases}
 x_j^k  - \lambda\left(x_j^k -  \prox_{\mu_0N f_j^\ast(-\cdot)}\left(x_j^k - \mu_0N \left[\nabla g(x^k)\right]_j\right)\right)& \forall j \text{ with } j \in \sfS_k\\ 
x_j^k & \text{otherwise.}
\end{cases}
\end{align}
Unlike~\eqref{eq:SDCA}, Algorithm~\eqref{eq:SDCAclone} converges whether or not any $f_j$ is differentiable.

The constants $\beta_{1j}$ depend on the inner products that we place on $\cH_j := \cH_0$ and $\cH := \cH_0^N$; as we did for Finito, we set $\dotp{x_j, z_j}_j := \dotp{x_j, z_j}_0$, and $\|\cdot\| = \|\cdot\|_\pr$.  With this choice, the operator $S$ satisfies~\eqref{eq:coherence} with $\beta_{1j} \equiv 3/4$, and it is $\mu_0N(\mu_0N + L)^{-1}$-strongly quasi-monotone. See Proposition~\ref{prop:SDCA_op}.

Although the space $\cH$ is high-dimensional, the condition $x_j^0 \in\text{range}(\partial f_j)$ ensures that each component $x_j^k$ will lie in the linear span of $\text{range}(\partial f_j)$, which is often one-dimensional.\footnote{The Moureau identity implies that for all $x_j$, we have $\prox_{N\mu_0 f_j^\ast(-\cdot)}\left(x_j\right) \in \text{range}(\partial f_j). $}

\subsection{Randomized Projection Algorithms}\label{sec:RPA}

Besides minimization problems, SMART also solves feasibility problems:
\begin{align}\label{eq:feasibilityproblem}
\text{Find } x \in \bigcap_{i=1}^{s_1} C_i \qquad \text{ subject to: } \qquad f_i(x) \leq 0 \text{ for } i = 1, \ldots, s_2, \qquad (N = s_1 + s_2)
\end{align}
where $C_1, \ldots, C_{s_1} \subseteq \cH$ are closed, convex sets, and $f_i : \cH \rightarrow (-\infty, \infty)$ are continuous, convex functions. To align~\eqref{eq:feasibilityproblem} with~\eqref{eq:Sroot}, each set and each function are assigned an operator. The sets $C_i$ are assigned the familiar projection operator $P_{C_i}$. The functions are assigned a \emph{subgradient projector}
$$
\left(\forall x \in \cH\right) \qquad G_{f_i}(x)  :=\begin{cases} x - \frac{f_i(x)}{\|g_i(x)\|^2} g_i(x)& \text{if $f_i(x) > 0$}\\
x& \text{otherwise.}
\end{cases}
$$
where $g_i : \cH \rightarrow \cH$ is a \emph{measurable subgradient selector}; i.e., for all $x \in \cH$, $g_i(x) \in \partial f_i(x)$. We assume that $0 \notin \partial f_i(\{x \mid f_i(x) > 0\})$ so that the projector is well-defined.

Then Problem~\eqref{eq:feasibilityproblem} is an instance of~\eqref{eq:Sroot} with $n = s_1 + s_2$ and 
\begin{align*}%\label{eq:RandomizedProjection_op}
\left(\forall x \in \cH\right) \qquad S_i(x) := \begin{cases}
x - P_{C_i}x & \text{if } i = 1, \ldots, s_1; \\
x - G_{f_{i-s_1}}(x) & \text{if } i = s_1+1, \ldots, s_1 + s_2. %\frac{f_{i-s_1}(x)}{\|g_{i-s_1}(x)\|^2} g_{i-s_1}(x)
\end{cases}
\end{align*}
With SMART, we can select a function or a set uniformly at random and perform a primal update; all operators are zero at points in $\zer(S)$, so all dual variables are zero:\footnote{Use $n = N, m = 1, \umu_1 \equiv 1, d_{k} \equiv 0$,  $e^{i}_{k} \equiv 0$, $\mathbf{S}^\ast = 0$, $q_j \equiv 1$, $\tau_p = \tau_d = 0$, $p_{ij} \equiv N^{-1}$, $\rho = 1$, $E = \{(i,i) \mid i \in V\}$, and $p_{ij}^T = N^{-1}.$}
\begin{align} \label{eq:RandomizedProjectionClone}\vspace{-10pt}
x^{k+1} := x^k - \lambda %S_{i_k}(x^k)
\begin{cases}
x^k - P_{C_{i_k}}x^k & \text{if } i_k \in \{1, \ldots, s_1\}; \\
x^k - G_{f_{i_k-s_1}}(x^k) & \text{if } i_k \in \{s_1+1, \ldots, s_1 + s_2\}. %\frac{f_{i_k-s_1}(x^k)}{\|g_{i_k-s_1}(x)\|^2} g_{i_k-s_1}(x^k)
\end{cases}
\end{align}
In general, the operator $S$ in~\eqref{eq:feasibilityproblem} has several nice properties: $\zer(S)$ is the set of solutions to~\eqref{eq:feasibilityproblem}; $S$ satisfies~\eqref{eq:coherence} with $\beta_{i1} \equiv N^{-1}$; $S$ is demiclosed; and if 
\begin{enumerate}
\item $\{C_i \mid i = 1, \ldots, s_1\} \cup \{\{x \mid f_i(x) \leq 0\} \mid i = 1, \ldots, s_2\}$ are $\hat{\mu}$-linearly regular,\footnote{A set family $\{D_1, \ldots, D_N\}$ is $\hat{\mu}$-linearly regular if $\forall x\in \cH$, $d_{D_1\cap\cdots \cap D_N}(x) \leq \hat{\mu}\max\{d_{D_1}(x), \ldots, d_{D_N}(x)\}$.}
\item there is an $\varepsilon > 0$ such that $f_i(x) \geq \varepsilon d_{\{ f_i(x) \leq 0\}}(x)$ for all $x \in \cH$, 
\item and there is an $L > 0$ such that $\|g_i(x)\| \leq L$ for all $x \in \cH$,
\end{enumerate}
then, with $\mu := (N\hat{\mu})^{-1}\max\{1, \varepsilon^2L^{-2}\}$, the operator $S$ is $\mu$-essentially strongly quasi-monotone. Thus, $x^k$ converges linearly, which is a new result for~\eqref{eq:RandomizedProjectionClone}.\footnote{See Proposition~\ref{prop:RPA}.}

The randomized Kaczmarz algorithm~\cite{strohmer2009randomized,liu2014asynchronous}, which solves overdetermined linear systems $Ax = b$, is a special case of~\eqref{eq:RandomizedProjectionClone}: if $a_1, \ldots, a_{N}$ are the rows of $A$, which we assume, without loss of generality, are normalized  and $C_i := \{ x \mid \dotp{a_i,x} = b_i\}$, then the projection is $P_{C_i}(x) = x + (b_i - \dotp{a_i, x})a_i$, and 
\begin{align}\label{eq:Kaczmarzclone}
x^{k+1} = x^k + \lambda(b_{i_k} - \dotp{a_{i_k}, x})a_{i_k}
\end{align}
The Kaczmarz operator is $(\|A^{-1}\|_2^{-2}N^{-1})$-essentially strongly quasi-monotone, and so $x^k$ linearly converges to a solution of $Ax = b$.\footnote{We define $\|A^{-1}\|_2 := \inf \{M \mid \left(\forall x \in \cH\right) \; M\|Ax \|_2 \geq \|x\|_2 \}$; see Corollary~\ref{cor:Kaczmarz}.}

\subsection{ARock}

When $n = 1$, and hence, $S = S_1$, SMART recovers the ARock Algorithm~\cite{peng2015arock}, which at iteration $k$, samples a coordinate $j$ and updates
\begin{align*}
x^{k+1}_j &= x^k_j - \frac{\lambda}{q_j m} (S(x^{k-d_k}))_j; \\
\left(\forall j' \neq j\right)\qquad x_j^{k+1} &= x_j^k.
\end{align*}
When we specialize SMART to this simple single operator case, which is the only case that ARock applies to, there is no difference between ARock and SMART; but Theorem~\ref{thm:newalgoconvergence} guarantees that ARock will converge under conditions weaker than those presented in~\cite{peng2015arock}.

In~\cite{peng2015arock}, the underlying Hilbert space $\cH = \cH_1 \times \cdots \times \cH_m$ is a product of several other Hilbert spaces, and for ARock to converge, the norm $\|\cdot\|$ on $\cH$ must be equal to the standard product norm: $\|x\|_\pr^2 = \sum_{j=1}^m \|x_j\|_j^2$. We removed this assumption, and this significantly extends the problems that ARock can solve;  in Sections~\ref{sec:TropicSMART},~\ref{sec:ProxSMART},~and~\ref{sec:ProxSMART+} we present new algorithms that use nonstandard norms. 

In~\cite{peng2015arock}, the operator $S$ must satisfy 
$$
\left(\forall x \in \cH\right), \left(\forall y\in \cH\right) \qquad  \dotp{S(x) - S(y), x- y} \geq \frac{1}{2} \|S(x) - S(y)\|^2,
$$
where the inner product on left and the norm on the right are, again, both the standard ones on $\cH$. This cocoercivity condition is plainly stronger than~\eqref{eq:coherence}, and for example, fails for the subgradient projector of Section~\ref{sec:RPA}.

ARock converges linearly only when 
\begin{align*}
&(\exists \mu > 0): (\forall x\in \cH), \left(\forall x^\ast \in \cS\right) &\dotp{S(x), x - x^\ast} &\geq \mu\|x- x^\ast\|^2,
\end{align*}
which is, of course, plainly stronger than~\eqref{eq:essstrongquasimono}, and for example, requires that $\cS$ is a singleton.

But the biggest limitation of ARock, a limitation that we remove in SMART, is that to solve simple problems, such as~\eqref{eq:simplesmooth}, an extra primal variable must be introduced for each smooth term, and these primal variables are not low dimensional, unlike the dual variables of~SMART, which tend to be gradients of functions of the form $f(\dotp{a_i, x})$; this difference is comparable to the difference between the low memory methods SAGA/SVRG and and high memory method Finito.
\section{What's New: Improving Existing Algorithms}

In Section~\ref{sec:connections} we introduced a few algorithms and described how to recover them with SMART, but we did not discuss new features which are obtainable from SMART. We do that now.

\subsection{Weaker Conditions for Convergence} 

Only when the objectives are differentiable and strongly convex are SVRG, Finito, and SDCA known to converge. But with SMART, strong convexity can be dropped in all cases.  Furthermore, in SDCA, the objectives, $f_j$, need only be convex, proper, and closed. 

SAGA and SVRG are known to converge linearly when $f$ is strongly convex. By Theorem~\ref{alg:main}, they converge linearly when $N^{-1}\sum_{i=1}^N\nabla f_i(x_i)$ is essentially strongly quasi-monotone, which occurs, for example, when there is a strongly convex function $g_i$ and a linear map $A_i$ such that $f_i(x) = g_i(A_ix)$.

Finito is also known to converge linearly when $f_j$ is strongly convex. Again, by Theorem~\ref{alg:main}, Finito converges linearly when its operator $S$ is essentially strongly quasi-monotone; the weakest conditions under which this occurs appears to be an open problem in the study of error bounds.

SDCA is known to converge linearly when each $f_j$ is strongly convex and differentiable with Lipschitz continuous gradient. SDCA will still converge linearly if each $f_j$ is just differentiable, but not necessarily strongly convex; see Lemma~\ref{prop:SDCA_op} for a proof of this simple fact. 

\subsection{Proximable Terms}\label{sec:proximable_SAGA}

SAGA and SVRG also solve problems in which a single nonsmooth term is added to~\eqref{eq:simplesmooth}:
\begin{align}\label{eq:simplesmooth_one_nonsmooth}
\Min_{x \in \cH_0}\, g(x) + \frac{1}{N} \sum_{i=1}^N f_i(x) ,
\end{align}
where $g : \cH_0 \rightarrow [-\infty, \infty)$ is closed, proper, and convex. In this composite case, the update rule is only slightly changed, for example, in SAGA~\eqref{eq:SAGABasic}, we replace the primal update with
$$
 x^{k+1} = \prox_{\lambda g}\left(x^k - \lambda \left(\nabla f_{i_k}(x^k) - y_{i_k}^k + \frac{1}{N} \sum_{i=1}^N y_i^k\right)\right).
 $$
This update rule is not a special case of SMART, but with almost no extra work, we can extend our proof of convergence in the synchronous case (Theorem~\ref{thm:syncnewalgoconvergence}) to show that the update rule works. However, when SMART is asynchronous, we hit a wall; progress seems unlikely. 

Instead of pursuing asynchronous versions of this update, we introduce a new update rule: Let
\begin{align*}
S_{i} = \frac{1}{LN}\nabla f_i\circ \prox_{L^{-1} g} \qquad i = 1, \ldots, N && \text{and} && S_{N+1} = (I-\prox_{L^{-1}g}),
\end{align*}
Then, for all $k \in \NN$, we get the proximal SAGA update:\footnote{Use $n = N, m = 1, d_{k} \equiv 0$, $ e^{i}_{k} \equiv 0$, $q_1 \equiv 1$, $\tau_p = \tau_d = 0$, $p_{ij} \equiv (2N)^{-1}$ for $i < N+1$ and $p_{(N+1)1} = 2^{-1}$, $\rho = 1$,  $E = \{(i, i) \mid i \in V\} \cup \{(i, N+1) \mid i \in V\} $, $p_{i1}^T = (2N)^{-1}$ for $i < N+1$, $p_{(N+1)1}^T = 1.$} select $\lambda < (N+1)8^{-1}$ and iterate
\begin{align*}
 x^{k+1} &= x^k - \lambda \begin{cases}
 \left( \frac{2}{L(N+1)}\nabla f_i( \prox_{L^{-1} g}(x)) - \frac{2N}{N+1}y_{i}^k + \frac{1}{N+1} \sum_{i=1}^{N+1} y_i^k\right) & \text{if $i_k = i$ and $i < N+1$};\\
 \left( \frac{2}{N+1}(I-\prox_{L^{-1} g})(x^k) - \frac{2}{N+1}y_{N+1}^k + \frac{1}{N+1} \sum_{i=1}^{N+1} y_i^k\right) & \text{if $i_k = N+1$}; 
 \end{cases}\\
y_{i}^{k+1} &= \begin{cases} 
\frac{1}{LN}\nabla f_{i}(\prox_{L^{-1} g}(x^k))& \text{if $i_k=i$ and $i < N+1$;} \\
\left(I - \prox_{L^{-1} g}\right)(x^k) & \text{if $i = N+1$}; \\
 y_i^k &\text{otherwise.}
\end{cases}
\numberthis\label{eq:ProximalSAGA}
\end{align*}
This special case of SMART has a trigger graph that is not completed disconnected, as it is in the standard SAGA algorithm; instead all vertices $i$ in the graph contain a directed edge starting at $i$ and ending at $N+1$. In a similar fashion, proximal SVRG algorithms arise from our choice of $S_i$, but as in~\eqref{eq:SVRGavgupdate} and~\eqref{eq:SVRGschedupdate}, the dual variable update probability should be $\rho = \tau^{-1}$ and the trigger graph should be the complete graph on $N+1$ vertices.

The zero set $\cS$ of $S = (N+1)^{-1}\sum_{i=1}^{N+1} S_i$ does not contain the set of minimizers of $f+g$, but it is related to the minimizers (if any exist) through the proximal operator of $g$:
\begin{quote}
\centering
For all $x^\ast \in \cS$, the point $\prox_{\gamma g}(x^\ast)$ minimizes $f+g$. 
\end{quote}
Unlike in SAGA and SVRG, the operators $S_i$ do not satisfy $\dotp{S_i(x) - S_i(x^\ast), x - x^\ast} \geq L^{-1} \|S_i(x) - S_i(x^\ast)\|^2$; but the sum $S$ satisfies~\eqref{eq:coherence} with 
\begin{align*}
\beta_{i1} = \frac{N}{2(N+1)} \qquad i = 1, \ldots, N && \text{and} && \beta_{(N+1)1} =  \frac{1}{2(N+1)};
\end{align*}
in Proposition~\ref{prop:SAGAopprop}, we give different possibilities for different $\gamma$. But with these parameters, we have
\begin{align*}
\lambda &\leq \frac{(N+1)(1+ \mu_gL^{-1})}{(16+2N)(1 + \mu_gL^{-1}) - 2N(\sqrt{1 - \mu_f L^{-1}})}\; \implies \text{linear rate: } \; \; \; 1 - \frac{1 + \mu_gL^{-1} - \sqrt{1 - \mu_f L^{-1}}}{(8+N)(1 + \mu_gL^{-1}) - N(\sqrt{1 - \mu_f L^{-1}})},
\end{align*}
where $N^{-1}\sum_{i=1}^N f_i$ is $\mu_f$-strongly convex and $g$ is $\mu_g$-strongly convex.\footnote{In this case, $S$ is $((N+1)(1 + \mu_gL^{-1}))^{-1}(1 + L^{-1} \mu_g - \sqrt{1 - L^{-1}\mu_f})$ essentially strongly quasi-monotone.}

\subsection{Coordinate Updates}

We recover Finito and SDCA with SMART by performing coordinate updates on an operator $S$. These operators are block-separable, but all algorithms in Section~\ref{sec:connections} will still converge if we perform finer, nonseparable updates.

For example, SAGA will converge all the same if we replace all full derivatives $\nabla f_i$ with partial derivatives $\nabla_j f_i$ (where $f_i$ is now viewed as a function on a space $\cH_1 \times \cdots \times \cH_m$, and the partial derivative is taken with respect to the coordinates in $\cH_j$, which could be an infinite dimensional space): given $x^k$, sample $j_k \in \{1, \ldots, m\}$ uniformly at random, and set
\begin{align*}
 x_{j}^{k+1} &= \begin{cases}
 x_{j}^k - \lambda \left(\nabla_{j} f_{i_k}(x^k) - y_{i_k, j}^k + \frac{1}{N} \sum_{i=1}^N y_{i,j}^k\right) & \text{if $j = j_k$;}\\
 x_j^k & \text{otherwise.}
 \end{cases}\\
y_{i, j}^{k+1} &= \begin{cases} 
\nabla_j f_{i}(x^k)& \text{if $i_k = i$ and $j_k = j$;} \\
 y_{i,j}^k &\text{otherwise.}
\end{cases}
\numberthis\label{eq:SAGACoordinate}
\end{align*}
This algorithm converges for the same range of $\lambda$ as~\eqref{eq:SAGABasic} (namely, for $\lambda < (2L)^{-1}$), but in the strongly convex case, the step size $\lambda$ that gives the best rate of convergence rate changes to $\lambda = m(4Lm+\mu N)^{-1}$, and correspondingly, the convergence rate changes to $1 - \mu(4Lm+\mu N)^{-1}$.

The story is similar for all algorithms in Section~\ref{sec:connections}.

\subsection{Importance Sampling and Better $\beta_{ij}$}

Until now, the constants $\beta_{ij}$ have been constant and equal to $L^{-1}$. But by making the finer distinction that $\nabla f_i$ is $L_i$-Lipschitz continuous, can choose larger $\lambda$ by sampling $i_k$ nonuniformly.

For example, in the SAGA algorithm~\eqref{eq:SAGABasic}, we have  
\begin{align}
p_{i1} = P(i_k = i) = \frac{L_i}{\sum_{i=1}^n L_i} \implies \lambda < \frac{1}{\frac{2}{n}\sum_{i=1}^n L_i}.\label{eq:importance}
\end{align}
Compared to $(2\max\{L_i\})^{-1}$, which is the stepsize obtained with uniform $i_k$ sampling, the above stepsize can be much larger. Similarly
\begin{align*}
\text{$i_k$ sampled according to \eqref{eq:importance} }\qquad  & \implies\qquad \text{linear rate: } \; \; \;  1 - \frac{\mu}{ \frac{4}{N}\sum_{i=1}^NL_i + \mu N};\\
\text{$i_k$ sampled uniformly } \qquad &\implies \qquad\text{linear rate: } \; \; \;  1 - \frac{\mu}{ 4\max_i\{L_i\} + \mu N}.
\end{align*}
Thus, importance sampling replaces maximums of Lipschitz constants by averages of Lipschitz constants. 

For smooth functions, the constants $\beta_{ij}$ appearing in~\eqref{eq:coherence} are related to inverse coordinatewise Lipschitz constants, which are the minimal values $L_{ij}$ such that
\begin{align*}
f_i(x + \hat{y_j}) \leq f_i(x) + \dotp{ \nabla f_i(x), \hat{y_j}} + \frac{L_{ij}}{2} \|\hat{y_j}\|_j^2; \qquad \hat{y_j} = (0, \ldots, 0, y_j, 0, \ldots, 0);
\end{align*}
for all $x \in \cH$ and $y_j \in \cH_j$. From the above inequality, comes the relationship\footnote{See Proposition~\ref{prop:coordinate-BH} for a proof in our general case, and~\cite[Lemma 4]{konevcny2014semi} for a proof in the case that $\cH_j = \RR$.} 
$$
\frac{1}{L_{ij}}\|\nabla_j f_i(x) - \nabla_j f_i(y)\|^2_j \leq \dotp{\nabla f_i(x) - \nabla f_i(y), x- y},
$$
which is not a favorable one because the right hand side depends on the full derivative $\nabla f_i$. 

However, the relationship improves if, say, at most $s \ll n$ partial derivatives $\nabla_j f(x_1, \ldots, x_n)$ depend on each coordinate; from that assumption, comes the relationship
$$
\sum_{i=1}^n \frac{1}{sL_{ij}}\|\nabla_j f_i(x) - \nabla_j f_i(y)\|^2_j \leq \dotp{\nabla f_i(x) - \nabla f_i(y), x- y}.
$$
But notice that if $L_i$ is the minimal Lipschitz constant $\nabla f_i$, then $L_i^{-1} \geq \min_j\{(sL_{ij})^{-1}\}$.

\subsection{Mini Batching} 

The \emph{pre-update} mini batching method adjusts problem~\eqref{eq:simplesmooth} by grouping functions together according to $\cB \subseteq 2^{\{1, \ldots, N\}}$:
$$
\Min_{x \in \cH} \frac{1}{N} \sum_{B \in \cB} \sum_{i \in B} \frac{1}{N(i)} f_i; \qquad \left(\forall i\right) \; N(i) := |\{ B \in \cB \mid i \in B\}|.
$$
Then it runs one of the algorithms from Section~\ref{sec:connections} with the $n = |\cB|$ functions $\sum_{i\in B} N(i)^{-1} f_i$. 

The parameter $n$ and hence, the number of dual variables in the pre-update mini batching method can be impractically large. To save memory, only our SVRG clone~\eqref{eq:SVRGavgupdate} should be used with the pre-update mini batching method; with this method, a mini batch $B_{i_k} \in \cB$ of gradients is computed at every iteration, and on average all of the dual variables are, and hence, the full gradient is, updated once per $\tau^{-1}$ iterations: label the elements of $\cB = \{B_1, \ldots, B_n\}$, and iterate\footnote{The pre-update mini batching SVRG clone is similar to the algorithm in~\cite{konevcny2014ms2gd}.}
\begin{align*}\vspace{-10pt}
x^{k+1} &= x^k - \lambda \left( \sum_{l\in B_{i_k}} (\nabla f_{l}(x^k) -\nabla f_{l}(\phi^k)) + \frac{1}{N} \sum_{i=1}^N \nabla f_i (\phi^k)\right); \\
\left(\forall i \right) \qquad \phi^{k+1} &= \phi^k + \epsilon_k(x^k - \phi^k). 
\end{align*}
In this SVRG clone, we eliminate all of the dual variables, and as a result, we save a lot of memory.  

Unlike the pre-update mini batching method, the \emph{post-update} mini batching method does not adjust problem~\eqref{eq:simplesmooth}; it only adjusts the trigger graph. Consequently, there is no grouping $\cB$ which pairs functions together. In place of a grouping, for each function, the trigger graph $G$ \emph{triggers} a gradient computation for some other set of functions: 
\begin{align*}
 x^{k+1} &= x^k - \lambda \left(\nabla f_{i_k}(x^k) - y_{i_k}^k + \frac{1}{N} \sum_{i=1}^N y_i^k\right);\\
y_{i}^{k+1} &= \begin{cases} 
\nabla f_{i}(x^k)& \text{if $i_k$ triggers $i$;} \\
 y_i^k &\text{otherwise.}
\end{cases}
\end{align*}
Here the mini batching is arbitrary, but we have already seen three specific examples of trigger graphs: the completely disconnected graph (SAGA), the completely connected graph (SVRG), and an internally directed star graph (Proximal SAGA).

Convergence rates improve with post-update mini batching because the parameter $\eta := \min_{i, j} \{\rho p_{ij}^T\},$ increases. For example, if every node $i \in \{1, \ldots, N\}$ in the trigger graph is triggered by $N_T$ other nodes, then $p_{ij}^T := N^{-1} N_T$, and the convergence rate of SAGA improves:
\begin{align*}
\lambda &\leq \frac{1}{4L + \frac{8 \mu N}{N_T}}\; \implies \text{linear rate: } \; \; \; 1 - \frac{\mu}{4L + \frac{8 \mu N}{N_T}}.
\end{align*}
Compared to $1 - \mu(4L+ \mu N)^{-1}$, which is the convergence rate for standard SAGA, the above convergence rate can be much better. See~\cite{neighorhoodwatch} for similar convergence rate improvements from mini batching.

\subsection{Asynchronous Updates}

Besides ARock, all algorithms in Section~\ref{sec:connections} are synchronous because the primal and dual updates are not delayed. On paper, delaying is just a matter of changing indices, as in the \emph{asynchronous SAGA} algorithm:
\begin{align*}
 x^{k+1} &= x^{k} - \lambda \left(\nabla f_{i_k}(x^{k-d_k}) - y_{i_k}^{k-e_{k}^i} + \frac{1}{N} \sum_{i=1}^N y_i^{k-e_k^i}\right);\\
y_{i}^{k+1} &= \begin{cases} 
\nabla f_{i}(x^{k-d_k})& \text{if $i_k = i$;} \\
 y_i^k &\text{otherwise.}
\end{cases}
%\numberthis\label{eq:SAGABasic}
\end{align*}
However, on a computer, implementing an asynchronous algorithm can be difficult. For a brief discussion on implementation issues, see~\cite{peng2015arock}.

\section{What's New: Algorithms}\label{sec:new_opt_alg}

On the surface, the shape of an operator plainly resembles the shape of a gradient, and this is already a powerful observation, leading to the SAGA and SVRG algorithms. But operators also model nonsmooth optimization problems and even monotone inclusions; choosing these operators just requires a bit of experience, which anyone can acquire, for example, by understanding the examples presented in this section. 

The examples presented here isolate common problems, reformulate these problems with operators $S_i$, and then solve these problems with SMART. Once we choose the operators $S_i$, we can apply SMART in exponentially many ways, for example, by choosing arbitrary delays, sampling probabilities, and trigger graphs; in this section, we avoid endless customization, and instead, we apply SMART in a simple, arbitrary manner. 

\subsection{LinSAGA and LinSVRG}\label{sec:LinSAGA}

LinSAGA and LinSVRG add a linear constraint to the proximal SAGA and proximal SVRG problems: 
\begin{align*}
\Min_{x \in \cH}&\; g(x) + \frac{1}{N}\sum_{i=1}^{N} f_i(x); \\
\text{subject to:} &\;  x \in V, \numberthis\label{eq:linSAGA}
\end{align*}
where the function $g : \cH \rightarrow (-\infty, \infty]$ is closed, proper, and convex; the functions $f_i : \cH \rightarrow (-\infty, \infty)$ are differentiable and the gradients $P_V \circ \nabla f_i \circ P_V $ are $\hat{L}_i$-Lipschitz continuous;\footnote{The operator $ P_V \circ \nabla f_i  \circ P_V$ is the gradient of the convex function $f_i \circ P_V$, and its Lipschitz constant, which we denote by $\hat{L}_i$, is generally smaller than the Lipschitz constant of $\nabla f_i$ (see the discussion surrounding~\cite[Lemma 1.3]{myFDRS}.)} and the set $V \subseteq \cH$ is a vector space.\footnote{Affine constraints, say, $Ax = b$ can replace the linear constraint $ x\in V$ provided we choose any $c \in \cH$ such that $A c = b$, change $f(x)$ and $g(x)$ to $f(x + c)$, and $g(x+c)$, respectively, and set $V = \ker(A)$.} We assume that $P_V$ and $\prox_{\gamma g}$ are both easy to evaluate.

We model this problem with $N+1$ operators (for some $\gamma > 0$):
\begin{align*}
\left(\forall i < N +1\right) \qquad& S_i := \frac{\gamma}{N} P_V \circ \nabla f_i  \circ P_V \circ \prox_{\gamma g};\\
&S_{N+1} := (I- 2P_V)\circ \prox_{\gamma g} + P_V. 
\end{align*} 
The roots of $S := (N+1)^{-1}\sum_{i=1}^{N+1} S_i$ are not solutions of~\eqref{eq:linSAGA}, but in general, 
\begin{align*}
x\in \zer(S) \implies \prox_{\gamma g} (x) \text{ solves \eqref{eq:linSAGA}}, 
\end{align*}
and $\zer(S) \neq \emptyset$ if, and only if, $\zer(\partial g + N^{-1}\sum_{i=1}^N \nabla f_i  + N_V) \neq \emptyset$.

The following iterative algorithm is a special case of SMART. 

\begin{mdframed}
\noindent\begin{algo}[LinSAGA/LinSVRG]\label{alg:LinSAGA}
Choose initial points $x^0, y_1^0, \ldots, y_{N+1}^0 \in \cH$. Choose stepsizes satisfying
\begin{align*}
\lambda  \leq \frac{(N+1)}{8} && \text{and} && \gamma = \frac{N}{\sum_{i=1}^N \hat{L}_i}.
\end{align*}
Then for $k \in \NN$, perform the following four steps:
\begin{enumerate}
\item \textbf{Sampling:} Choose dual update decision $\epsilon_k \in \{0,1\}$. Choose an index $i_k \in \{1, \ldots, N+1\}$ with distribution 
$$
P(i_k = i) = \begin{cases}
\frac{1}{2} &\text{if $i_k = N+1$}\\
\frac{\hat{L}_i}{2\sum_{i=1}^N\hat{L}_i}& \text{otherwise.}
 \end{cases}
 $$
\item \textbf{Primal update (gradient case):} if $i_k < N+1$, set
\begin{align*}
x^{k+1} &= x^k - \lambda\left(\frac{\gamma}{N(N+1)p_{i_k1}}P_V \nabla f_i (P_V\prox_{\gamma g}(x^k)) - \frac{1}{(N+1)p_{i_k1}}y_{i_k}^{k} + \frac{1}{N+1}\sum_{i=1}^{N+1} y_i^k\right).
\end{align*}
 \item \textbf{Primal update (proximal case):} if $i_k = N+1$, set
\begin{align*}
x^{k+1} &= x^k - \lambda\left(\frac{1}{(N+1)p_{i_k1}}\left((I - 2P_V)\prox_{\gamma g}(x^k) + P_Vx^k\right) - \frac{1}{(N+1)p_{i_k1}}y_{i_k}^{k} + \frac{1}{N+1}\sum_{i=1}^{N+1} y_i^k\right).
\end{align*}
\item \textbf{Dual update:} set
\begin{align*}
y_i^{k+1} &= \begin{cases}
y_i^k + \epsilon_k\left(\frac{\gamma}{N}P_V \nabla f_i (P_V\prox_{\gamma g}(x^k)) - y_i^k\right) & \text{if $i_k$ triggers $i$ and $i < N+1$.}\\
y_{N+1}^k + \epsilon_k\left((I - 2P_V)\prox_{\gamma g}(x^k) - y_{N+1}^k\right) & \text{if $i_k$ triggers $i$ and $i=N+1$.}
\end{cases}\qquad \qed
\end{align*}
\end{enumerate}
\end{algo}
\end{mdframed}

\paragraph{~~} As in Section~\ref{sec:SAGA}, the difference between LinSAGA and LinSVRG lies in the trigger graph, and how often the dual variables are updated: LinSAGA uses the directed star trigger graph in which every node $i < N+1$ connects to $N+1$; and LinSVRG  uses the completely connected trigger graph with dual variable update frequency $ P(\epsilon_k = 1) = \tau^{-1} $ for some $\tau > 0$.

\paragraph{The Properties of $S$.}
The operator $S$ satisfies the coherence condition~\eqref{eq:coherence} with
\begin{align*}
\left(\forall i \leq N \right) \qquad \beta_{i1} =  \frac{N}{2\gamma \hat{L}_i (N+1)} && \text{and} && \beta_{(N+1)1} =  \frac{1}{N+1}\left(1 - \frac{1}{2N} \sum_{i=1}^N\gamma \hat{L}_i\right),
\end{align*}
and with
\begin{align*}
\mu = \frac{1}{N+1}\left(1 - \left(\frac{1}{(1+(\gamma L_g)^{-1})} + \frac{\sqrt{1- 2\gamma\mu_f +  \gamma^2 L\mu_f}}{(1+\gamma \mu_g)}\right)\right)
\end{align*}
the operator $S$ is essentially strongly quasi-monotone (if $\gamma \leq 2L^{-1}$ and $\mu > 0$), where $L = N^{-1} \sum_{i=1}^N \hat{L}_i$, the function $N^{-1}\sum_{i=1}^N f_i$ is $\mu_f$-strongly convex, the function $g$ is differentiable and $\mu_g$-strongly convex, and the gradient $\nabla g$ is $L_g$-Lipschitz continuous.

\subsection{SuperSAGA and SuperSVRG}

SuperSAGA and SuperSVRG add a few nonsmooth terms to the smooth problem~\eqref{eq:simplesmooth}:
\begin{align*}
\Min_{x \in \cH_1} \sum_{j=1}^M g_j(z) + \frac{1}{N} \sum_{i=1}^{N} f_i(z), \numberthis\label{eq:super_SAGA_1}
\end{align*}
where the functions $g_j : \cH_1 \rightarrow (-\infty, \infty]$ are closed, proper, and convex; and the functions $f_i:\cH \rightarrow (-\infty, \infty)$ are differentiable and the gradients $\nabla f_i $ are $L_i$-Lipschitz continuous.

It is more convenient for us to work with the reformulated problem: 
\begin{align*}
\Min_{z\in \cH} \; & g(x) + \frac{1}{N} \sum_{i=1}^{N} f_i(x_1) \\
\text{subject to:} \; & x \in D \numberthis\label{eq:super_SAGA_2}
\end{align*}
where we define the spaces $\cH := \cH_1^M$ and $D := \{ x \in \cH \mid x_1 = \cdots = x_M\}$; and for all $x \in \cH$, we let $g(x) := \sum_{j=1}^M g(x_i)$. 

Problem~\eqref{eq:super_SAGA_2} is evidently a special case of Problem~\eqref{eq:linSAGA}, so we choose the operators that worked well there: 
\begin{align*}
\left(\forall i < N +1\right) \qquad& S_i := \frac{\gamma}{N} P_D \circ \nabla f_i  \circ P_D \circ \prox_{\gamma g};\\
&S_{N+1} := (I- 2P_D)\circ \prox_{\gamma g} + P_D. 
\end{align*} 
Provided that $M$ is relatively small, and for each $j$, the operator $\prox_{\gamma g_j}$ is easy to evaluate, the operators $S_i$ are easy to evaluate. For example, for all $x \in \cH$ and $i < N+1$, we have
\begin{align*}
 \qquad P_Dx = \frac{1}{M}\sum_{j=1}^M x_j; &&  \prox_{\gamma g}(x) = (\prox_{\gamma g_j}(x_j))_{j = 1}^M; && S_i(x) = \left(\frac{\gamma}{NM} \nabla f_i\left(\frac{1}{M} \sum_{l=1}^M\prox_{\gamma g_l}(x_l)\right)\right)_{j= 1}^M.
\end{align*}
And as before, the roots of $S := (N+1)^{-1}\sum_{i=1}^{N+1} S_i$ are not solutions of~\eqref{eq:super_SAGA_1}, but in general, 
\begin{align*}
x^\ast\in \zer(S) \implies \left(\forall j \right) \;  \prox_{\gamma g_j} (x_j^\ast) \text{ solves \eqref{eq:super_SAGA_1}}, 
\end{align*}
and $\zer(S) \neq \emptyset$ if, and only if, $\zer(\sum_{j=1}^M\partial g_j + N^{-1}\sum_{i=1}^N \nabla f_i) \neq \emptyset$.

The following iterative algorithm is a special case of SMART. 

\begin{mdframed}
\noindent\begin{algo}[SuperSAGA/SuperSVRG]\label{alg:superSAGA}
Choose initial points $x^0 \in \cH$ and $\overline{y}_1^0, \ldots, \overline{y}_{N+1}^0 \in \cH_1$. Choose stepsizes satisfying
\begin{align*}
\lambda  \leq \frac{(N+1)}{8} && \text{and} && \gamma = \frac{MN}{\sum_{i=1}^N L_i}.
\end{align*}
Then for $k \in \NN$, perform the following four steps:
\begin{enumerate}
\item \textbf{Sampling:}  Choose dual update decision $\epsilon_k \in \{0,1\}$. Choose an index $i_k \in \{1, \ldots, N+1\}$ with distribution 
$$
P(i_k = i) = \begin{cases}
\frac{1}{2} &\text{if $i = N+1$}\\
\frac{L_i}{2\sum_{i=1}^NL_i}& \text{otherwise.}
 \end{cases}
 $$
\item \textbf{Primal update (gradient case):} if $i_k < N+1$, set\begin{align*}
\left( \forall j\right) \hspace{10pt}& x_j^{k+1} = x_j^k - \lambda \left(\frac{\gamma}{MN(N+1)p_{i_k1}}\nabla f_i\left(\frac{1}{M}\sum_{j=1}^Mw_j^k\right)- \frac{1}{(N+1)p_{i_k1}}\overline{y}_{i_k}^k + \frac{1}{N+1} \sum_{i=1}^k \overline{y}_{i}^k \right); \\
&w_j^{k+1} = \prox_{\gamma g_j}(x^{k+1}_j).
\end{align*}
\item \textbf{Primal update (proximal case):} if  $i_k = N+1$, set
\begin{align*}
\left( \forall j \right) \hspace{10pt}& x_j^{k+1} = x_j^k - \lambda\left(\frac{1}{(N+1)p_{i_k1}}\left(w_j^k - \frac{1}{M}\sum_{j=1}^M (2w_j^k - x_j^k)\right) - \frac{1}{(N+1)p_{i_k1}}\overline{y}_{i_k}^k +\frac{1}{N+1} \sum_{i=1}^k \overline{y}_i^k \right); \\
&w_j^{k+1} = \prox_{\gamma g_j}(x^{k+1}_j).
\end{align*}
\item \textbf{Dual update:} set
\begin{align*}
\overline{y}_i^{k+1} &= \begin{cases}
\overline{y}_i^k + \epsilon_k\left(\frac{\gamma}{MN}\nabla f_i\left(\frac{1}{M}\sum_{j=1}^Mw_j^k\right) - \overline{y}_i^k\right) &\text{if $i_k$ triggers $i$ and $i < N+1$;}\\
\overline{y}_{i}^k + \epsilon_k\left(\frac{1}{M}\sum_{j=1}^M (x_j^k - w_j^k) - \overline{y}_{i}^k\right) &\text{if $i_k$ triggers $i$ and $i = N+1$.} 
\end{cases}\qquad \qed
\end{align*}
\end{enumerate}
\end{algo}
\end{mdframed}

\paragraph{~~} The difference between SuperSAGA and SuperSVRG, again, lies in the trigger graph and the dual variable update frequency; see the comments following Algorithm~\ref{alg:LinSAGA}.

\paragraph{The Properties of $S$.}
The operator $S$ is an instance of the LinSAGA/LinSVRG operator defined in Section~\ref{sec:LinSAGA}, so the two operators satisfy the coherence condition~\eqref{eq:coherence} with the same constants\footnote{Clearly, $\hat{L}_i = L_iM^{-1}$.}  
\begin{align*}
\left(\forall i \leq N \right) \qquad \beta_{i1} =  \frac{NM}{2\gamma L_i (N+1)} && \text{and} && \beta_{(N+1)1} =  \frac{1}{N+1}\left(1 - \frac{1}{2N} \sum_{i=1}^N \frac{\gamma L_i}{M}\right),
\end{align*}
and both operators are essentially strongly monotone under the same conditions. 

In fact, the SuperSAGA/SuperSVRG algorithms and the LinSAGA/LinSVRG algorithms differ in just one way: for SuperSAGA/SuperSVRG, the dual variables $\overline{y}_i^k$ are vectors in $\cH_0$ rather than vectors $y_i^k$ in $\cH = \cH_0^M$, and that saves some memory. We can use $\overline{y}_i^k$ in place of $y^k_i$ by viewing $S$ as an operator not on the space $\cH^M$, but as an operator on the orthogonal decomposition $\cH = D \oplus D^\perp$, which has just two components. Then, the identity
$$
S_{N+1} = P_{D^\perp} \prox_{\gamma g} + P_D (I - \prox_{\gamma g})
$$
allows us to reparamterize the components of $S_{i}$ into a $D$ component and $D^\perp$ component: for all $i \leq N$,
\begin{align*}
(S_i)_D &= \frac{\gamma}{N} P_D \circ \nabla f_i  \circ P_D \circ \prox_{\gamma g}; && & (S_i(x))_{D^\perp} &\equiv 0; \\
(S_{N+1})_D &= P_D (I - \prox_{\gamma g}); && & (S_{N+1})_{D^\perp} &= P_{D^\perp} \prox_{\gamma g}.
\end{align*}
Then, with this new decomposition, we have $(S_i(x^\ast))_{D^\perp} = 0$ for all $i$, which makes saving the full vector $y_i^k$ superfluous; we need only store the component in $D$, which is precisely the vector $\overline{y}_i^k$. 

The above approach saves memory when all $M$ components of $x$ are updated at every iteration \textit{and} the coordinates are consistent-read asynchronous or totally synchronous. However, if only some of the coordinates $x_j$ are updated at each iteration \textit{or} inconsistent-read updates are performed, Theorem~\ref{thm:newalgoconvergence} ceases to apply; in either of those cases, we must use the full dual variables $y_i^k$.

\subsection{TropicSMART: Randomized Smoothly Coupled Monotropic Programming}\label{sec:TropicSMART}

The TropicSMART problem is different from all problems we have seen so far: 
\begin{align*}
\Min_{x_j \in \cH_j} &\; \sum_{j=1}^M g_j(x_j) + f(x_1, \ldots, x_M); \\
\text{subject to:} &\; \sum_{j=1}^M A_j x_j = b, \numberthis\label{eq:RSCMP}
\end{align*}
where the sets $\cH_j$ ($j = 1, \ldots, M+1$) are Hilbert spaces; the sets $\cH' : = \cH_1 \times \cdots \times \cH_M$ and $\cH := \cH_1 \times \cdots \times \cH_{M+1}$ are product spaces; the functions $g_j : \cH_j \rightarrow (-\infty, \infty]$ are closed, proper, and convex (we also let $g(x) := \sum_{j=1}^M g(x_j)$); the function $f : \cH' \rightarrow (-\infty, \infty)$ is differentiable and $\nabla f$ is $L$-Lipschitz continuous; the linear maps $A_j : \cH_j \rightarrow \cH_{M+1}$ are continuous; and $b \in \cH_{M+1}$.

There is only one TropicSMART operator: for all $x \in \cH$, define
\begin{align*}
(S(x))_j := \begin{cases}
x_j - \prox_{\gamma_j g_j}\left(x_j - \gamma_j  A_j^\ast \left( x_{M+1} + 2\gamma_{M+1} \left(\sum_{l=1}^M A_l x_l - b\right)\right) - \gamma_j \nabla_j f(x)\right) & \text{if $j < M+1$};\\
 -\gamma_{M+1}\left( \sum_{l=1}^M A_l x_l - b\right) &\text{if $j = M+1$.}
\end{cases}
\end{align*}
The roots of $S := (N+1)^{-1}\sum_{i=1}^{N+1} S_i$ are not solutions of~\eqref{eq:RSCMP}, but in general, 
\begin{align*}
x^\ast\in \zer(S) \implies   (x_1^\ast, \ldots, x_M^\ast) \text{ solves \eqref{eq:super_SAGA_1}}, 
\end{align*}
and $\zer(S) \neq \emptyset$ if, and only if, $ \zer\left(\partial g + \nabla f  + N_{\{x \in \cH \mid \sum_{j=1}^M A_j x_j = b\}}\right) \neq \emptyset$.

The following iterative algorithm is a special case of SMART. 

\begin{mdframed}
\noindent\begin{algo}[TropicSMART]\label{alg:RSCMP}
Choose initial points $x^0 \in \cH$. Choose $\delta \in (0, 1)$, and choose stepsizes satisfying
\begin{align*}
\gamma_{M+1}\left( \sum_{j=1}^M \gamma_j\|A_j\|^2\right)\leq \delta;  && \max_j\{\gamma_j\} \leq \frac{2(1-\sqrt{\delta})}{L}; && \lambda  \leq \frac{L\max_j\{\gamma_j\}}{4(1+\sqrt{\delta})}.
\end{align*}
Then for $k \in \NN$, perform the following two steps:
\begin{enumerate}
\item \textbf{Sampling:} Choose a coordinate $j_k \in \{1, \ldots, M+1\}$ uniformly at random and set $\sfS_k = \{j_k\}$.
\item \textbf{Primal update:} set
\begin{align*}
 \qquad & \overline{x}_{M+1}^{k+1} = x_{M+1}^k + \gamma_{M+1}\left( \sum_{l=1}^M A_l x_l^k - b\right);\\
\left(\forall j \in \sfS_k\backslash \{M+1\}\right) \qquad & \overline{x}_j^{k+1} = \prox_{\gamma_j g_j}\left(x_j^k - \gamma_j  A_j^\ast (2\overline{x}_{M+1}^{k+1} - x_{M+1}^k) - \gamma_j \nabla_j f(x^k)\right); \\\\
\left( \forall j \in \mathsf{S}_k\right)\qquad& x_j^{k+1} = x_j^k - \lambda\left(x_j^k - \overline{x}_j^{k+1}\right); \\
\left( \forall j \notin \mathsf{S}_k\right) \qquad&
x_j^{k+1} = x_j^k. \qquad \qed
\end{align*}
\end{enumerate}
\end{algo}
\end{mdframed}

\paragraph{The Properties of $S$.}
The operator $S$ satisfies the coherence condition~\eqref{eq:coherence} with
\begin{align*}
\left(\forall j \right) \qquad \beta_{1j} =  \frac{L\max_j\{\gamma_j\}}{4\gamma_j}.
\end{align*}
But the TropicSMART operator $S$ does not satisfy the coherence condition in the standard metric on $\cH$; instead there is a strongly positive self-adjoint linear operator $P$ (e.g., with respect to $\dotp{\cdot, \cdot}_\pr$, a symmetric positive definite matrix) so that
\begin{align*}
\left( \forall x \in \cH\right), \left( \forall x^\ast \in \zer(S)\right)  \qquad \dotp{ S(x), x - x^\ast}_P \geq \sum_{j=1}^{M+1} \beta_{1j}\|(S(x))_j\|^2_j,
\end{align*}
and this linear operator $P$ satisfies $\sum_{j=1}^{M+1} \underline{M}_j\|x_j\|^2_j \leq \|x\|_P^2 \leq   \sum_{j=1}^{M+1} \overline{M}_j\|x_j\|^2_j$,
where for all $j $, we have 
\begin{align*}
\underline{M}_j := \frac{1-\sqrt{\delta}}{\gamma_j} && \text{and} && \overline{M}_j := \frac{1+\sqrt{\delta}}{\gamma_j}.
\end{align*}

The weakest conditions under which $S$ is essentially strongly quasi-monotone are not known.

\subsection{ProxSMART: Randomized Proximable Optimization}\label{sec:ProxSMART}

The ProxSMART algorithm solves the following proximable optimization problem:
\begin{align*}
\Min_{z\in \cH_1} \; g_1(z) + \sum_{j=2}^{M} g_j(A_jz), \numberthis\label{eq:RNPD}
\end{align*}
where the sets $\cH_j$ ($j = 1, \ldots, M+1$) are Hilbert spaces; the set $\cH:= \cH_1 \times \cdots \times \cH_M$ is a product space; the functions $g_j : \cH_j \rightarrow (-\infty, \infty]$ are closed, proper, convex, and nonsmooth; and the linear maps $A_j :\cH_1 \rightarrow \cH_j$ are continuous. 

There is only one ProxSMART operator: for all $x \in \cH$, define
\begin{align*}
\left(S(x) \right)_j
:= \left(S_1(x) \right)_j= 
\begin{cases}
x_1 - \prox_{\gamma_1 g_1}\left(x_1 - \gamma_1\sum_{j=2}^M A_j^\ast x_j \right)  &\text{if $j = 1$;} \\
x_j - \prox_{\gamma_j g_j^\ast}\left(x_j + \gamma_j A_j\left(2\overline{x}_1 - x_1\right)\right) &\text{otherwise;}
\end{cases}
\end{align*}
where $\overline{x}_1 = \prox_{\gamma_1 g_1}\left(x_1 - \gamma_1\sum_{j=2}^M A_j^\ast x_j \right)$. The roots of $S := (N+1)^{-1}\sum_{i=1}^{N+1} S_i$ are not solutions of~\eqref{eq:RNPD}, but in general, \vspace{-5pt}
\begin{align*}
x^\ast\in \zer(S) \implies   x_1 \text{ solves \eqref{eq:RNPD}}, 
\end{align*}
and $\zer(S) \neq \emptyset$ if, and only if, $\zer(\partial g_1(x) + \sum_{j=2}^M A_j^\ast\partial g_j\circ A_j) \neq \emptyset$.

The following iterative algorithm is a special case of SMART. 

\begin{mdframed}
\noindent\begin{algo}[ProxSMART]\label{alg:ProxSMART}
Choose initial point $x^0 \in \cH$. Choose $\delta \in (0, 1)$, and choose stepsizes satisfying
\begin{align*}
\gamma_{1}\left( \sum_{j=2}^M \gamma_j\|A_j\|^2\right)\leq \delta  && \text{and} && \lambda  \leq \frac{1-\sqrt{\delta}}{1+\sqrt{\delta}}.
\end{align*}
Then for $k \in \NN$, perform the following two steps:
\begin{enumerate}
\item \textbf{Sampling.} Choose a set of functions (their indices) $\mathsf{S_k}$.
\item \textbf{Primal update}: set 
\begin{align*}
\qquad&\overline{x}_1^{k+1} = \prox_{\gamma_1 g_1}\left(x_1^k - \gamma_1\sum_{j=2}^M A_j^\ast x_j^k\right); \\
\left( \forall j \in \mathsf{S}_k\backslash \{1\}\right)\qquad & \overline{x}_j^{k+1} = \prox_{\gamma_j g_j^\ast} \left( x_j^k + \gamma_jA_j\left(2\overline{x}_1^{k+1} - x_1^k\right)\right); \\\\
\left( \forall j \in \mathsf{S}_k\right)\qquad& x_j^{k+1} = x_j^k - \frac{\lambda}{q_j M }\left(x_j^k - \overline{x}_j^{k+1}\right); \\
\left( \forall j \notin \mathsf{S}_k\right) \qquad&
x_j^{k+1} = x_j^k. \qquad \qed% \numberthis\label{eq:BCCP} 
\end{align*}
\end{enumerate}
\end{algo}
\end{mdframed}

The variables $x_j$ for which $j > 1$ are different from the dual variables in SMART because here $n = 1$. However, the $x_j$ dual variables play a similar role to the $y$ dual variables, and like the dual variables in SMART, they are often low dimensional---if $A_i$ is a row vector, then $x_j$ is a scalar. Another similarity, arising in the case $\lambda = 1$, is that for $j > 1$, $x_j^k$ is a subgradient, much like the dual variables in SMART are gradients in SAGA and SVRG.\footnote{If $z^+ = \prox_{\gamma_j g^\ast}(z)$, then $z^+ \in \partial g(\gamma^{-1}(z - z^+))$} But even if $A_i$ is the identity map, and $\lambda$ is arbitrary, $x_j^k$ is always in the linear span of $\partial f$, which may be low dimensional.

\paragraph{The Properties of $S$.} The operator $S$ satisfies the coherence condition~\eqref{eq:coherence} with
\begin{align*}
\left(\forall j \right) \qquad \beta_{1j} =  \frac{(1-\sqrt{\delta})}{\gamma_j}.
\end{align*}
But  like the TropicSMART operator from Section~\ref{sec:TropicSMART}, the ProxSMART operator $S$ does not satisfy the coherence condition in the standard metric on $\cH$; instead there is a strongly positive self-adjoint linear operator $P$ so that\vspace{-5pt}
\begin{align*}
\left( \forall x \in \cH\right), \left( \forall x^\ast \in \zer(S)\right)  \qquad \dotp{ S(x), x - x^\ast}_P \geq \sum_{j=1}^M \beta_{1j}\|(S(x))_j\|^2_j,
\end{align*}
and this linear operator $P$ satisfies $\sum_{j=1}^{M} \underline{M}_j\|x_j\|^2_j \leq \|x\|_P^2 \leq   \sum_{j=1}^{M} \overline{M}_j\|x_j\|^2_j$,
where for all $j > 0$, we have 
\begin{align*}
\underline{M}_j := \frac{1-\sqrt{\delta}}{\gamma_j} && \text{and} && \overline{M}_j := \frac{1+\sqrt{\delta}}{\gamma_j}.
\end{align*}

essentially strongly quasi-monotone, provided that $g_1$ is $\mu_1$-strongly convex and each function $g_j$, $j> 1$, is differentiable, the gradient $\nabla g_j$ is $L_j$-Lipschitz continuous, and the constants $\gamma_j$ are chosen small enough; we omit the proof of this fact. The weakest conditions under which $S$ is essentially strongly quasi-monotone are not known.

\vspace{-10pt}
\subsection{ProxSMART+: Randomized Composite Optimization}\label{sec:ProxSMART+}

The ProxSMART+ algorithm solves the following composite optimization problem:
\begin{align*}
\Min_{z\in \cH_1} \; \sum_{j=2}^{M} g_j(A_jz) + \frac{1}{N}\sum_{i=1}^{N} f_i(z).\numberthis\label{eq:RCPD}
\end{align*}
where the sets $\cH_j$ ($j = 1, \ldots, M+1$) are Hilbert spaces; the set $\cH:= \cH_1 \times \cdots \times \cH_M$ is a product space; the functions $g_j : \cH_j \rightarrow (-\infty, \infty]$ are closed, proper, and convex; the functions $f_i : \cH_1 \rightarrow (-\infty, \infty)$ are differentiable and $\nabla f_i$ is $L_i$-Lipschitz continuous; the maps $A_j :\cH_1 \rightarrow \cH_j$ are continuous linear maps.

We model the ProxSMART+ problem with $N+1$ operators: for all $x \in \cH$, define
\begin{align*}
\left(\forall i < N+1\right)\qquad \left(S_i(x)\right)_j 
&:= \begin{cases}
\frac{\gamma_1}{N}\nabla f_i\left(x_1 - 2\gamma_1\sum_{j=2}^M A_j^\ast x_j\right) &\text{if $j = 1$;}\\
0 &\text{otherwise.}
\end{cases}\\
\left(S_{N+1}(x) \right)_j
&:= 
\begin{cases}
 \gamma_1\sum_{j=2}^M A_j^\ast x_j  &\text{if $j = 1$;} \\
x_j - \prox_{\gamma_j g_j^\ast}\left(x_j + \gamma_j A_j\left(x_1 - 2\gamma_1\sum_{j=2}^M A_j^\ast x_j\right)\right) &\text{otherwise.}
\end{cases}
\end{align*}
The roots of $S := (N+1)^{-1}\sum_{i=1}^{N+1} S_i$ are not solutions of~\eqref{eq:RCPD}, but in general, \vspace{-10pt}
\begin{align*}
x^\ast\in \zer(S) \implies  x_1^\ast - 2\gamma_1\sum_{j=2}^M A_j^\ast x_j^\ast  \text{ solves \eqref{eq:RCPD}}
\end{align*}
and $\zer(S) \neq \emptyset$ if, and only if, $\zer(\sum_{j=2}^M A_j^\ast\partial g_j\circ A_j + N^{-1}\sum_{i=1}^N \nabla f_i) \neq \emptyset$.

The following iterative algorithm is a special case of SMART. 

\begin{mdframed}
\noindent\begin{algo}[ProxSMART+]\label{alg:ProxSMART+}
Choose initial points $x^0 \in \cH$ and $y_{11}, \ldots, y_{(N+1)1} \in\cH_1$. Choose $\delta \in (0, 1)$, and choose stepsizes satisfying
\begin{align*}
\gamma_{1}\left( \sum_{j=2}^M \gamma_j\|A_j\|^2 + \frac{1}{2N} \sum_{i=1}^N L_i\right)\leq \delta  && \text{and} && \lambda  \leq \frac{(N+1)(1-\sqrt{\delta})M}{2 \left(1 + \sqrt{\delta} -  \frac{1}{2\sqrt{\delta}N} \sum_{i=1}^N L_i\right)\left(\frac{M-1}{\frac{\gamma_1}{N} \sum_{i=1}^{N} L_i} + 1\right)}.
\end{align*}
Then for $k \in \NN$, perform the following four steps:
\begin{enumerate}
\item \textbf{Sampling.} Choose dual update decision $\epsilon_k \in \{0,1\}$. Choose $j_k \in \{1, \ldots, m\}$ with distribution ($j > 1$)
\begin{align*}
q_1  = P(j_k = 1)  = \frac{1}{\frac{M-1}{\frac{\gamma_1}{N} \sum_{i=1}^{N} L_i} + 1}, &&  && q_j = P(j_k = j) = \frac{1 - q_{1}}{M-1}, 
\end{align*}
and set $\sfS_k = \{j_k\}$. Also, given $j_k$, choose $i_k \in \{1, \ldots, N+1\}$ with distribution ($i < N+1$ and $j > 1)$
\begin{align*}
 p_{i1} &= \frac{\gamma_1 L_i}{N\left( \frac{\gamma_1}{N} \sum_{i=1}^N L_i + 1\right)};    &&& p_{(N+1)1} & =1 - \sum_{i=1}^N p_{i1}; \\
p_{ij} &= 0; &&& p_{(N+1)j} &= 1. 
\end{align*}
\item \textbf{Primal update (gradient case):} if $i_k < N+1$ and $\hat{x}_1^k = x_1^k - 2\gamma_1\sum_{j=2}^M A_j^\ast x_j^k$, set
\begin{align*}
&\overline{x}_1^{k+1} = x_1^k - \frac{\lambda}{q_1M}\left(\frac{\gamma_1}{N(N+1) p_{i_k1}}\nabla f_{i_k}\left(\hat{x}_1^k\right) - \frac{1}{(N+1) p_{i_k1}} y_{i_k, 1}^k + \frac{1}{N+1} \sum_{i=1}^{N+1} y_{i, 1}^k  \right); \\
\left( \forall j > 1\right) \qquad&
x_j^{k+1} = x_j^k.
\end{align*}
\item \textbf{Primal update (proximal case):} if $i_k = N+1$ and $\hat{x}_1^k = x_1^k - 2\gamma_1\sum_{j=2}^M A_j^\ast x_j^k$, set 
\begin{align*}
\qquad&\overline{x}_1^{k+1} = x_1^k - \left(\frac{\gamma_1}{(N+1) p_{i_k1}}\sum_{j=2}^M A_j^\ast x_j^k - \frac{1}{(N+1) p_{i_k1}} y_{i_k, 1}^k + \frac{1}{N+1} \sum_{i=1}^{N+1} y_{i, 1}^k  \right); \\
\left( \forall j \in \mathsf{S}_k\backslash \{1\}\right)\qquad & \overline{x}_j^{k+1} = \prox_{\gamma_j g_j^\ast} \left( x_j^k + \gamma_jA_j\hat{x}_1^k\right); \\\\
\left( \forall j \in \mathsf{S}_k\right)\qquad & x_j^{k+1} = x_j^k - \frac{\lambda}{q_jM}\left(x_j^k - \overline{x}_j^{k+1}\right); \\
\left( \forall j \notin \mathsf{S}_k\right) \qquad&
x_j^{k+1} = x_j^k.
\end{align*}
\item \textbf{Dual update:} if $\hat{x}_1^k = x_1^k - 2\gamma_1\sum_{j=2}^M A_j^\ast x_j^k$, set
\begin{align*}
y_{i, 1}^{k+1} &= \begin{cases}
y_{i, 1}^k + \epsilon_k\left(\frac{\gamma_1}{N}\nabla f_i\left(\hat{x}_1^k\right) - y_{i, 1}^k\right) & \text{if $i_k$ triggers $i$ and $i <  N+1$;}\\
y_{i, 1}^k + \epsilon_k\left(\gamma_1\sum_{j=2}^M A_j^\ast x_j^k - y_{i, 1}^k\right) &\text{if $i_k$ triggers $i$ and $i= N+1$.}
\end{cases} \qquad \qed
\end{align*} 
\end{enumerate}
\end{algo}
\end{mdframed}

As in the ProxSMART+ algorithm, the dual variables $x_j$ with $j > 1$, are subgradients of $g_j$, and they often live in low-dimensional spaces; see the comments immediately following Algorithm~\ref{alg:ProxSMART+}. 

We save a bit of memory on the $y_i$ dual variables, too,  because for all $j > 1$ and for all $i$, $\mathbf{S}^\ast_{ij} = 0$; thus, we only maintain the first component of these dual variables. 

\paragraph{The Properties of $S$.}
The operator $S$ satisfies the coherence condition~\eqref{eq:coherence} with 
\begin{align*}
\left(\forall 1\leq  i < N+1\right) \qquad \beta_{i1}  = \frac{N(1-\sqrt{\delta})}{2(N+1)\gamma_1^2 L_i}  &&\text{and} && \left(\forall j\right) \qquad\beta_{(N+1)j} = \frac{1-\sqrt{\delta}}{\gamma_1}.
\end{align*}
But like the TropicSMART and ProxSMART operators from Sections~\ref{sec:TropicSMART} and~\ref{sec:ProxSMART}, the ProxSMART+ operator $S$ does not satisfy the coherence condition in the standard metric on $\cH$; instead there is a strongly positive self-adjoint linear operator $P$ so that\begin{align*}
\left( \forall x \in \cH\right), \left( \forall x^\ast \in \zer(S)\right)  \qquad \dotp{ S(x), x - x^\ast}_P \geq \sum_{i=1}^N\sum_{j=1}^M \beta_{ij}\|(S_i(x))_j - (S_i(x^\ast))_j\|^2_j,
\end{align*}
and this linear operator $P$ satisfies $\sum_{j=1}^{M} \underline{M}_j\|x_j\|^2_j \leq \|x\|_P^2 \leq   \sum_{j=1}^{M} \overline{M}_j\|x_j\|^2_j$
where for all $j > 0$, we have 
\begin{align*}
\underline{M}_j := \frac{1-\sqrt{\delta}}{\gamma_j} && \text{and} && \overline{M}_j := \frac{1+\sqrt{\delta}}{\gamma_j}.
\end{align*}

The operator $S$ is essentially strongly quasi-monotone, provided that $N^{-1}\sum_{i=1}^N f_i$ is $\mu_1$-strongly convex, each function $g_j$, $j> 1$, is differentiable, each gradient $\nabla g_j$ is $\overline{L}_j$-Lipschitz continuous, and the constants $\gamma_j$ are chosen small enough; we omit the proof of this fact. The weakest conditions under which $S$ is essentially strongly quasi-monotone are not known.

%\section{What's new: Monotone Inclusion Algorithms}

\subsection{Monotone Inclusions and Saddle-Point Problems}\label{sec:general_monotone_inclusion}

SMART solves the monotone inclusion problem (see~\cite{bauschke2011convex})
\begin{align}
\text{Find $x \in \cH$ such that $0 \in Ax + \frac{1}{N}\sum_{i=1}^N B_ix$,} \label{eq:general_monotone_inclusion}
\end{align}
where $A : \cH \rightarrow 2^\cH$ is a monotone operator and $B_i : \cH \rightarrow \cH$ are $L_i$-Lipschitz continuous, as long as  
\begin{enumerate}[label=(\alph*)]
\item \label{cond:mono_operator_cocoercive} each operator $B_i$ is $L_i^{-1}$-cocoercive; or
\item \label{cond:mono_operator_Lipschitz} the operator $B := N^{-1}\sum_{i=1}^n B_i$ is monotone and $A$ is $\mu_A$-strongly monotone.
\end{enumerate}
SMART will converge (weakly) under condition~\ref{cond:mono_operator_cocoercive} and will converge linearly under condition~\ref{cond:mono_operator_Lipschitz}. A straightforward modification of the results in Section~\ref{sec:proximable_SAGA} deals with case~\ref{cond:mono_operator_cocoercive}, so here we focus on case~\ref{cond:mono_operator_Lipschitz}.

To solve~\eqref{eq:general_monotone_inclusion}, we create $N+1$ SMART operators: ($J_{\gamma A} = (I+ \gamma A)^{-1}$)
\begin{align*}
\left(\forall i < N +1\right) \qquad& S_{i} = \frac{\gamma}{N} B_i\circ J_{\gamma A} && \text{and}&& S_{N+1} = (I-J_{\gamma A}), \numberthis\label{eq:SMART_mono_operators}
\end{align*}
The operator $S = (N+1)^{-1}\sum_{i=1}^{N+1} S_i$ satisfies all the conditions needed for us to invoke SMART (see Proposition~\eqref{prop:MONOopprop}): solutions of~\eqref{eq:general_monotone_inclusion} are obtainable from zeros of $S$ via the resolvent mapping $J_{\gamma A}(\zer(S)) = \zer(A + B)$;  the operator $S$ is $\mu$-essentially strongly quasi-monotone with 
\begin{align}\label{eq:mu_saddle}
 \mu := \frac{(1+\gamma \mu_A)^2 - (1 + \gamma^2 (\overline{L})^2)}{(N+1)(1 + \gamma \mu_A)^2} \qquad\qquad \text{where }\overline {L} = \frac{1}{N}\sum_{i=1}^N L_i;
\end{align}
the operators $S_i$ satisfy the coherence condition~\eqref{eq:coherence} with particular constants  
\begin{align*}
\beta_{i1} :=   \frac{\mu N^2}{L_i^2 \gamma^2(N+1)} \quad \left(\forall i \leq N\right) && \text{and} && \beta_{(N+1)1} := \frac{\mu}{(N+1)}; \numberthis\label{eq:mono_op_conditions}
\end{align*}
and the operator $S$ is demi-closed at $0$. 
Of course, we should choose $\gamma$ small enough that $\beta_{i1}$ and $\mu$ are positive.\footnote{If $\overline{L} \leq \mu$, then $\gamma$ is arbitrary; if $\overline{L} > \mu$, then choose $\gamma < 2\mu ((\overline{L})^2 - \mu_A^2)^{-1}$.}

The following iterative algorithm is a special case of SMART. 
\begin{mdframed}
\noindent\begin{algo}[SMART for Monotone Inclusions]\label{alg:monoSMART}
Fix $\lambda  \in \RR_{++}$ according to Table~\ref{table:stepsizeconnections}. Let $n = N+1$. Choose $x^0\in \cH$ and $y_1^0, \ldots, y_{n}^0 \in \cH$. Then for $k \in \NN$, perform the following three steps: 
\begin{enumerate}
\item \textbf{Sampling:} choose an operator index $i_k$ and a dual update decision variable $\epsilon_k$.
\item \textbf{Primal update:} if $i_k < n$, set
\begin{align*}
&x^{k+1} = x^k - \frac{\lambda}{n}\left(\frac{\gamma}{p_{i_k1}N}B_{i_k}( J_{\gamma A}(x^k)) - \frac{1}{p_{i_k1}}y_{i_k}^k + \sum_{i=1}^{n} y_i^k\right); 
\end{align*}
otherwise, set
\begin{align*}
& x^{k+1} = x^k - \frac{\lambda}{n} \left(\frac{1}{p_{n1}}(I-J_{\gamma A})(x^k) - \frac{1}{p_{n1}}y_{n}^k + \sum_{i=1}^{n} y_i^k\right).
\end{align*}
\item \textbf{Dual update:} set 
\begin{align*}
\qquad & y_{n}^{k+1} = y_{n}^k + \epsilon_k\left((I-J_{\gamma A})(x^k) - y_{N+1}^k\right); \\
\text{if $i_k$ triggers $i < n$} \qquad & y_{i}^{k+1}  = y_{i}^k + \epsilon_k \left(\frac{\gamma}{N}B_{i}( J_{\gamma A}(x^k)) - y_{i}^k\right).
\end{align*}
\end{enumerate}
\end{algo}
\end{mdframed}

For this problem, proper trigger graph selection and importance sampling (i.e., nonuniform $p_{ij}$) are essential for getting good performance with SMART. No matter which operator $S_i$ is selected at iteration $k$, we must compute the resolvent $J_{\gamma A}$, and we can then use that resolvent value to update the dual variable $y_{N+1}$ corresponding to $S_{N+1}$. So the trigger graph $G$ that we choose will always include a star subgraph, with directed edges emanating from each $i = 1, \ldots, N+1$ and ending at the node $N+1$. This subgraph requirement acknowledges the nonuniform structure of $S_1, \ldots, S_{N+1}$ by refusing to throw away useful information. Their nonuniform structure can also be addressed by sampling $i_k$ from a nonuniform distribution, for example, by sampling each operator with a probability proportional to the inverse of $\beta_{i1}$: $P(i_k = i) \propto \beta_{i1}^{-1}.$ Nonuniform sampling not only improves rates of convergence, but also maximizes the range of allowable step sizes $\lambda$ in~\eqref{eq:weaklambdabound} and~\eqref{eq:asynclambdalinear}.

With such constraints imposed on the parameters of SMART, we obtain the update rules in Algorithm~\ref{alg:monoSMART}.\footnote{Use $n = N+1, m = 1, d_{k} \equiv 0$, $ e^{i}_{k} \equiv 0$, $q_1 \equiv 1$, $\tau_p = \tau_d = 0$, $p_{i1} \propto \beta_{i1}$, $\rho  \in (0, 1)$,  $E \supseteq \{(i, N+1) \mid i \in V\}$, $p_{(N+1)1}^T =1$.} We showed, in Section~\ref{sec:SAGA}, that the difference between SAGA and SVRG lies with the dual update decision variable $\epsilon_k \in \{0, 1\}$ and the trigger graph $G$; the same tricks can be played here, resulting in variants of SAGA and SVRG for solving the monotone inclusion problem~\eqref{eq:general_monotone_inclusion}. Further asynchronous or coordinate-update variants of these methods readily follow, too: as soon as we define the operators~\eqref{eq:SMART_mono_operators}, the full power of SMART is available to us.

Convex-concave saddle-point problems
\begin{align*}
\min_{w \in \cG_1} \max_{z \in \cG_2} \left\{M(w, z) + K(w, z)\right\},
\end{align*}
where $\cG_1, \cG_2$ are Hilbert spaces and $M, K : \cG_1 \times \cG_2 \rightarrow (-\infty, \infty]$ are convex in $w \in \cG_1$, concave in $z \in \cG_2$, fall under~\eqref{eq:SMART_mono_operators} through through the assignments $\cH := \cG_1 \times \cG_2$ and,  for all $x = (w, z) \in \cH$, 
\begin{align*}
 Ax := \left(\partial_w M(w, z), \partial_z (- M) (w, z)\right) && \text{and} &&  Bx := \left(\partial_w K(w, z), \partial_z(-K)(w,z)\right),
\end{align*}
as long as $B$ splits into the sum of Lipschitz operators $B_i$. With this notation, $A$ is strongly monotone whenever $M$ is strongly convex in $w$ and strongly concave in $z$. (See~\cite{rockafellar1970monotone} for precise conditions which ensure the maximal monotonicity of $A$ and $B$.)

Among the many possible Saddle-Point problems, the following one, which falls under assumption~\ref{cond:mono_operator_Lipschitz}, deserves special attention:
\begin{align*}
M(w, z) = g_1(w) - g_2(z) && \text{and} &&  K(w, z) = \frac{\dotp{Lw, z} + \sum_{i=1}^{N-1} (f_i(w) - h_i(z))}{N},
\end{align*} 
where the functions $g_1 : \cG_1 \rightarrow (-\infty, \infty]$ and $g_2 : \cG_2 \rightarrow (-\infty, \infty]$ are proper, closed, convex, the linear map $L : \cG_1 \rightarrow \cG_2$ is bounded, and the functions $f_i : \cG_1 \rightarrow (-\infty, \infty)$ and $h_i : \cG_2 \rightarrow (-\infty, \infty)$ are differentiable, convex, and have Lipschitz continuous gradients. Finer splittings of $K(w, z)$ and, hence, $B$ are available by splitting the linear map $L$ into finer, additive pieces. In this case 
$$
\left( \forall (w, z) \in \cH\right)\qquad J_{\gamma A}(w, z) = (\prox_{\gamma g_1}(w), \prox_{\gamma g_2}(z)),
$$
where $\prox_{\gamma g_i}(v) = \argmin_{v' \in \cG_i} \{g_i(v') + (2\gamma)^{-1}\|v' - v\|^2\}$, whereas multiplications by $B_i$ correspond to multiplications by $L$ and $L^\ast$ or evaluations of the gradient mappings $\nabla f_i$ and $\nabla h_i$.

\newpage\section{Proof in the Synchronous Case}\label{sec:nonasync}

The proof that SMART converges is illuminating when the algorithm is totally synchronous, i.e., when $d_k \equiv 0$ and $e_{k}^i \equiv 0$. The proof of the following theorem should be read as a warmup before moving onto the proof of the full theorem, which is presented in Appendix~\ref{sec:async}.
\begin{theorem}[Convergence of SMART in the Synchronous Case]\label{thm:syncnewalgoconvergence}
For all $k \geq 0$, let $\cI_k = \sigma((i_k, \sfS_k))$, let $\cE_k = \sigma(\varepsilon_k)$, let 
$$\cF_k :=  \sigma(\{x^l\}_{l =0}^k\cup \{y_1^l, \ldots, y_n^l\}_{l = 0}^k),$$ and suppose that $\{ \cI_k, \cE_k, \cF_k\}$ are independent. Suppose that~\eqref{eq:coherence} holds. Finally assume that $d_k \equiv 0$, $e^{ij}_{k} \equiv 0$,  and $\lambda_k := \lambda$ satisfies
\begin{align*}
\lambda < \begin{cases}
 \min_{i,j}\left\{\frac{n^2p_{ij} \beta_{ij}q_j m}{\umu_j} \right\} & \text{if $\mathbf{S}^\ast = 0$}; \\
  \min_{i,j}\left\{\frac{n^2p_{ij} \beta_{ij}q_j m}{2\umu_j}\right\} & \text{otherwise}.
 \end{cases}
\end{align*}
Then
\begin{enumerate}
\item \textbf{Convergence of operator values.}\label{thm:syncnewalgoconvergence:part:operatorvalues}  For $i \in \{1, \ldots, n\}$, the sequence of $\cH$-valued random variables $\{S_i(x^k)\}_{k \in \NN} \as$ converges strongly to $S_i(x^\ast)$.
\item \textbf{Weak convergence.} \label{thm:syncnewalgoconvergence:part:weak} 
Suppose that $S$ is demiclosed at $0$. Then the sequence of $\cH$-valued random variables $\{x^k\}_{k \in \NN} \as$ weakly converges to an $\cS$-valued random variable.
\item \textbf{Linear convergence.} \label{thm:syncnewalgoconvergence:part:linear} Let $\eta :=  \min_{i, j} \{\rho p_{ij}^T\},$ let $\alpha \in [0, 1)$, and let
\begin{align}\label{eq:nonasynclambdalinear}
\lambda &\leq \begin{cases}
\min_{i,j}\left\{\frac{(1-\alpha)\beta_{ij} n^2 p_{ij}q_jm}{\umu_j}  \right\} & \text{if $\mathbf{S}^\ast = 0$};\\
\min_{i,j}\left\{\frac{\eta(1-\alpha)\beta_{ij} n^2 p_{ij}q_jm}{2\umu_j\eta + 2\mu\alpha(1-\alpha)\beta_{ij}n^2p_{ij}q_j}\right\} & \text{otherwise.}
\end{cases}
\end{align}
Then if~\eqref{eq:essstrongquasimono} holds, there exists a constant $C(z^0, \phi^0) \in \vR_{\geq 0}$ depending on $x^0$ and $\phi^0$ such that for all $k \in \vN$, 
\begin{align*}
\EE\left[d_{\cS}^2(x^k)\right] &\leq  \left(1 - \frac{2\alpha\mu\lambda}{m} \right)^{k}\left(d_\cS^2(x^0)+ C(x^0, \phi^0)\right).
\end{align*}
\end{enumerate}
\end{theorem}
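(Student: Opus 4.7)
The plan is to prove all three parts from a single supermartingale-type inequality for a Lyapunov function of the form
\[
\Phi^k := \|x^k - x^\ast\|^2 + \sum_{i=1}^n \sum_{j=1}^m c_{ij} \|y_{i,j}^k - (S_i(x^\ast))_j\|_j^2,
\]
for a fixed $x^\ast \in \cS$ (parts~\ref{thm:syncnewalgoconvergence:part:operatorvalues} and~\ref{thm:syncnewalgoconvergence:part:weak}), or with $\|x^k - x^\ast\|^2$ replaced by $d_\cS^2(x^k)$ for part~\ref{thm:syncnewalgoconvergence:part:linear}. The coefficients $c_{ij}\geq 0$ are chosen at the end so that the dual-update contraction exactly balances the primal-update variance; in particular $c_{ij}$ can be taken to be $0$ whenever $\mathbf{S}^\ast_{ij}=0$, which is why the stepsize bound splits into two cases.

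First I compute $\EE[\|x^{k+1}-x^\ast\|^2\mid\cF_k]$ using the independence of $(i_k,\sfS_k,\epsilon_k)$ from $\cF_k$. The identities $P(j\in\sfS_k)=q_j$ and $P(i_k=i\mid j\in\sfS_k)=p_{ij}$ make the SAGA-style update direction unbiased, so its conditional expectation equals $(1/m)S(x^k)$; expanding the square produces the cross term $-(2\lambda/m)\dotp{S(x^k), x^k-x^\ast}$ plus a variance bound with coefficients in $\umu_j$, $q_j$, $p_{ij}$ and quantities of the form $\|(S_{i_k}(x^k))_j-y_{i_k,j}^k\|_j^2$. In parallel, $\EE[\sum_{ij} c_{ij}\|y_{i,j}^{k+1}-(S_i(x^\ast))_j\|_j^2\mid\cF_k]$ contracts $y_{i,j}^k$ toward $(S_i(x^k))_j$ at rate $\rho p_{ij}^T$, because that is the probability that $i_k$ simultaneously triggers $i$ and $j$ is sampled. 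Taking each $c_{ij}$ proportional to $\umu_j/(q_jm\rho p_{ij}^T)$ cancels the $y$-variance, and the stepsize bound in the statement is exactly what is needed so that the residual $\|(S_i(x^k))_j-(S_i(x^\ast))_j\|_j^2$ coefficient is absorbed via the coherence inequality~\eqref{eq:coherence}. After this cancellation I expect an inequality of the shape
\[
\EE[\Phi^{k+1}\mid\cF_k] \leq \Phi^k - \frac{\lambda}{m}\dotp{S(x^k), x^k-x^\ast} - c\sum_{i,j}\|(S_i(x^k))_j-(S_i(x^\ast))_j\|_j^2,
\]
for some $c>0$ depending on the gap between $\lambda$ and its upper bound.

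Applying Robbins--Siegmund then gives almost-sure convergence of $\Phi^k$ together with $\sum_k\|(S_i(x^k))_j-(S_i(x^\ast))_j\|_j^2<\infty$ a.s., which is part~\ref{thm:syncnewalgoconvergence:part:operatorvalues}. For part~\ref{thm:syncnewalgoconvergence:part:weak} I would pass to a weak cluster point of the almost-surely bounded sequence $\{x^k\}$, invoke the demiclosedness of $S$ at $0$ to force that cluster point into $\cS$, and promote Fej\'er monotonicity with respect to every $x^\ast\in\cS$ into almost-sure weak convergence to an $\cS$-valued random variable via the standard Opial-type argument run over a countable dense subset of $\cS$. For part~\ref{thm:syncnewalgoconvergence:part:linear} I substitute $P_\cS(x^k)$ for $x^\ast$ in the cross term (using $d_\cS^2(x^{k+1})\leq\|x^{k+1}-P_\cS(x^k)\|^2$) and apply~\eqref{eq:essstrongquasimono} to bound the cross term below by $\mu d_\cS^2(x^k)$. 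Splitting the cross term as $(1-\alpha)+\alpha$, absorbing the $(1-\alpha)$-part through coherence just as above and extracting geometric contraction of $d_\cS^2(x^k)$ from the $\alpha$-part, I expect $\EE[\Phi^{k+1}\mid\cF_k]\leq(1-2\alpha\mu\lambda/m)\Phi^k$ under~\eqref{eq:nonasynclambdalinear}, from which the claimed rate follows by iterating expectations. The main obstacle will be the bookkeeping in the variance estimate: calibrating the auxiliary constants $c_{ij}$ and tracking the stepsize-dependent coefficients tightly enough to recover precisely the thresholds~\eqref{eq:weaklambdabound} and~\eqref{eq:nonasynclambdalinear}, while separately handling the $\mathbf{S}^\ast=0$ case in which the entire dual-variable Lyapunov term vanishes and the primal variance is simplified.
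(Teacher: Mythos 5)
Your proposal follows essentially the same route as the paper: the same Lyapunov function combining $\|x^k-x^\ast\|^2$ with weighted dual residuals $\|y_{i,j}^k-(S_i(x^\ast))_j\|_j^2$, the same unbiasedness/variance decomposition of the update direction, the same dual contraction at probability $\rho p_{ij}^T$, Robbins--Siegmund for parts~\ref{thm:syncnewalgoconvergence:part:operatorvalues}--\ref{thm:syncnewalgoconvergence:part:weak}, the demiclosedness/Opial argument for weak convergence, and the $P_\cS(x^k)$ substitution with the $\alpha$-split of the cross term for part~\ref{thm:syncnewalgoconvergence:part:linear}. The remaining work is exactly the bookkeeping you identify (the paper's free parameters $\gamma_{ij}$ play the role of your $c_{ij}$), so the plan is sound.
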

\begin{proof}
\indent\textbf{Notation.} Let $x^\ast \in \cS$. The following quantities are often repeated---in both the synchronous and asynchronous settings.
\begin{enumerate}
\item \textbf{The $t_{k}$ variable.} For all $k \geq 0$, let $t_k$ be the $\{0, (q_1m)^{-1}\} \times \cdots \times  \{0, (q_m m)^{-1}\}$-valued random variable, which for all $j \in \{1, \ldots, m\}$, satisfies 
\begin{align*}
t_{k,j} := \begin{cases}
\frac{1}{q_jm} & \text{if $j \in \sfS_k$;} \\
0 & \text{otherwise.}
\end{cases}
\end{align*}
\item \textbf{The primal and dual $Q_i$ operators}. For all $i$, define two new operators $Q_i^p, Q_i^d: \cH \rightarrow \cH$: for all $x, y \in \cH$, set
\begin{align*}
(Q_i^p(x))_j := \begin{cases} 
\frac{1}{np_{ij}}(S_i(x))_j & \text{if } p_{ij} \neq 0;\\
0 & \text{otherwise;} 
\end{cases} && \text{and} && \qquad(Q_i^d(y))_j := \begin{cases} 
\frac{1}{np_{ij}}y_{i,j} & \text{if } p_{ij} \neq 0;\\
0 & \text{otherwise.}
\end{cases}
\end{align*}
For all $i$, define $Q_i^\ast := Q_i^p(x^\ast).$  By~\eqref{eq:coherence}, $Q_i^\ast$ is independent of the choice of $x^\ast \in \zer(S)$.
\item \textbf{The primal and dual $r_{ij}^p$ functions.} For each $i$ and $j$, define two functions $r_{ij}^p, r_{ij}^d : \cH \rightarrow \RR_+$: for all $x, y \in \cH$, let
\begin{align*}
  r_{ij}^p(x) := \|(Q_i^p(x))_j - (Q_i^\ast)_j\|^2_j && \text{and} && r_{ij}^d(y) := \|(Q_i^d(y))_j -  (Q_i^\ast)_j\|^2_j.
\end{align*}
\item \textbf{The $\mkQ^k$ random vector.} If 
$$
\mkQ^k =  t_k \odot\left( Q_{i_k}^p(x^{k}) - Q_{i_k}^d(y_{i_k}^k) + \frac{1}{n}\sum_{i=1}^n y_i^k\right),
$$ then $x^{k+1} = x^k - \lambda  \mkQ^k$ and  $\EE\left[\mkQ^k\mid \cF_k\right] = m^{-1}S(x^{k})$.
\item \textbf{The $\gamma_{ij}$ constants.} For all $i $ and $ j$, choose any constants that satisfy\footnote{The constraint~\eqref{eq:gammabound1} has a solution by the assumptions of the theorem.} 
\begin{align}\label{eq:gammabound1}
\frac{2\umu_j\lambda^2}{q_jm^2} < \gamma_{ij} <  \frac{2\lambda\beta_{ij}n^2p_{ij}}{m} -   \frac{2\umu_j\lambda^2}{q_jm^2}.
\end{align}
\item \textbf{The  $R_{l, ij}$ constants.} For all $i $ and $ j$, we define two \emph{positive constants}\footnote{By the choice of~$\gamma_{ij}$, there is a constant $b > 0$ such that for all $i,j$ and $k$, we have $R_{1, ij} > b$ and $R_{2, ij}> b$.}
\begin{align*}
R_{1, ij} := \gamma_{ij} -   \frac{2\umu_j\lambda^2}{q_jm^2} && \text{and} && R_{2, ij} := \frac{2\lambda\beta_{ij}n^2p_{ij}}m -   \frac{2\umu_j\lambda^2}{q_jm^2} - \gamma_{ij}.
\end{align*}
\end{enumerate}
\begin{assumption}
We only address the case in which $\mathbf{S}^\ast$ is not necessarily the zero matrix. From this case, and with the help of the identity $r_{ij}^d(y_i^{k}) \equiv 0$, it is straightforward to retrieve the case in which $\mathbf{S}^\ast = 0$.
\end{assumption}

\textbf{Parts~\ref{thm:syncnewalgoconvergence:part:operatorvalues} and~\ref{thm:syncnewalgoconvergence:part:weak}}: Two essential elements feature in our proof. The indispensable \emph{supermartingale convergence theorem}~\cite[Theorem 1]{robbins1985convergence}, with which we show that a pivotal sequence of random variables converges, is our hammer for nailing down the effect of randomness in SMART:
\begin{theorem}[Supermartingale convergence theorem]
Let $(\Omega, \cF, P)$ be a probability space. Let $\mathfrak{F} := \{\cF_k\}_{k \in \NN}$ be an increasing sequence of sub $\sigma$-algebras of $\cF$ such that $\cF_k \subseteq \cF_{k+1}$. Let $\{X_k\}_{k \in \NN}$ and $\{Y_k\}_{k \in \NN}$ be sequences of $[0, \infty)$-valued random variables such that for all $k \in \NN$, $X_k$ and $Y_k$ are $\cF_k$ measurable, and 
\begin{align*}
(\forall k \in \NN) \qquad \EE\left[X_{k+1}\mid \cF_k\right] + Y_{k} \leq X_k.
\end{align*}
Then $\sum_{k =0}^\infty Y_k < \infty \as$ and $X_k \as$ converges to a $[0, \infty)$-valued random variable.
\end{theorem}
The other equally indispensable element of our proof is the next inequality, which, when taken together with the supermartingale convergence theorem, will ultimately show that SMART converges: with
\begin{align*}
\kappa_{k} &:= \sum_{j=1}^m \sum_{i=1}^n \frac{\gamma_{ij}p_{ij}}{\rho p_{ij}^T}r^d_{ij}(y_i^k); &&  \text{ and } &&Y_k := \lambda ^2\sum_{j=1}^m \frac{\umu_j}{m^2}\|(S(x^{k}))_j\|_j^2 + 2\lambda ^2\sum_{j=1}^m  \frac{\umu_j}{q_jm^2}\left\|\frac{1}{n} \sum_{i=1}^n y_{i,j}^k  \right\|^2_j \\
&&&&&\hspace{40pt}+ \sum_{j=1}^m\sum_{i=1}^n  p_{ij}R_{1, ij}r_{ij}^d(y_i^k) +\sum_{j=1}^m \sum_{i=1}^{n_1} p_{ij}R_{2, ij} r_{ij}^p(x^{k}),
\end{align*}
the supermartingale inequality holds
\begin{align}\label{eq:syncfejerinequality}
\left(\forall k \in \NN\right)\left(\forall x^\ast \in \cS\right) \qquad \EE\left[\|x^{k+1} - x^\ast\|^2 + \kappa_{k+1}\mid \cF_{k}\right] + Y_k \leq  \|x^{k} - x^\ast\|^2 + \kappa_k.
\end{align}
So, by the supermartingale convergence theorem, the sequence $Y_k$ is $\as$ summable and $X_k := \|x^{k} - x^\ast\|^2 + \kappa_k \as$ converges to a $[0, \infty)$-valued random variable, and these conclusions hold for any element $x^\ast \in \cS$. 

At this point, Part~\ref{thm:newalgoconvergence:part:operatorvalues} of the theorem follows because $\sum_{k=0}^\infty\sum_{j=1}^m \sum_{i=1}^n p_{ij} R_{2, ij} r_{ij}^p(x^k) \leq \sum_{k=0}^\infty Y_k < \infty \as$, and hence, $\|S_i(x^k) - S_i(x^\ast)\| = (np_{ij})^2r_{ij}^p(x^k) \as$ converges to $0$.

Moreover, the road to almost sure weak convergence of $x^k$ is only three steps long:
\begin{enumerate}
\item Because $\sum_{k=0}^\infty Y_k < \infty \as$,  we conclude that $\|S(x^k)\|$ and $r_{ij}^d(y_i^k) \as$ converge to $0$.
\item Because each $r_{ij}^d(y_i^k) \as$ converges to $0$, we conclude that $\kappa_{k} \as$ converges to $0$. 
\item Because $\kappa_k \as$ converges to $0$ and $\|x^k - x^\ast\|^2 + \kappa_k \as$ converges to a $[0, \infty)$-valued random variable, we conclude that $\|x^k - x^\ast\|^2 \as$ converges to a $[0, \infty)$-valued random variable.
\end{enumerate}
At last, $x^k$ weakly converges to an $\cS$-valued random vector:
\begin{proposition}[Weak convergence assuming demiclosedness]\label{eq:weakconvergence}
Suppose that $S$ is demiclosed at $0$. Let $\{z^k\}_{k \in \NN}$ be a sequence of random vectors such that, for all $z^\ast \in \zer(S)$, the sequence  $\{\|z^k - z^\ast\|^2\}_{k \in \NN} \as$ converges to a $[0, \infty)$-valued random variable. In addition, assume that $\|S(z^k)\| \as$ converges to $0$. Then $z^k \as$ converges to an $\cS$-valued random variable.
\end{proposition}
\begin{proof}
The set $\cS$ is closed\footnote{Let $y^k \in \cS$ and suppose that $y^k \rightarrow y \in \cH$. We claim that $ y \in \cS$. Indeed, from~\eqref{eq:coherence}, there exists a constant $\beta > 0$ such that $\beta\|S(y)\|^2 \leq \dotp{S(y), y-y^k} \rightarrow 0.$ Thus, $S(y) = 0$, and $y \in \cS$.}, the space $\cH$ is separable, and for all $z^\ast \in \cS$, the sequence $\{\|z^k - z^\ast\|\}_{k \in \NN} \as$ converges, so the exact argument in~\cite[Prop. 2.3(iii)]{doi:10.1137/140971233} shows that there exists $\hat{\Omega} \subseteq \Omega$ such that $P(\hat{\Omega}) = 1$ with the property that for all $z^\ast \in \zer(S)$ and for all $\omega \in \hat{\Omega}$, the sequence $\{\|z^k(\omega) - z^\ast\|^2\}_{k \in \NN}$ converges. Furthermore, by assumption, there exists $\widetilde{\Omega}\subseteq \hat{\Omega}$ such that $P(\widetilde{\Omega}) = 1$ with the property that $\|S(z^k(\omega))\| \as$ almost surely converges to $0$. 

Finishing the proof with demiclosedness, let $\omega \in \widetilde{\Omega}$, and let $z$ be a weak sequential cluster point of $\{z^k(\omega)\}_{k \in \NN}$, say $z^{k_j}(\omega) \rightharpoonup z$ (cluster points exist because $\{z^k\}_{k \in \NN}$ is bounded). Then $\|S(z^{k_j}(\omega))\| \rightarrow 0$ so by the demiclosedness of $S$, the limit is a zero: $S(z) = 0$. In summary, for all $z^\ast \in \cS$, the sequence $\{\|z^k(\omega) - z^\ast\|\}_{k \in \NN}$ converges and every weak sequential cluster point of $\{z^k\}_{k \in \NN}$ is an element of $\cS$. Therefore, by~\cite[Lemma 2.39]{bauschke2011convex}, the sequence $\{z^k(\omega)\}_{k \in \NN}$ weakly converges to an element of $\cS$. Because $\omega \in \widetilde{\Omega}$ is arbitrary and $P(\widetilde{\Omega}) = 1$, the sequence $\{z^k\}_{k \in \NN} \as$ weakly converges, and by the classic result~\cite[Corollary 1.13]{pettis1938integration}, the weak limit of $\{z^k\}_{k \in \NN}$ is measurable. \qed
\end{proof}

Our task is now clear: we must prove~\eqref{eq:syncfejerinequality}. But to do so, we must first bound several random variables. We present the three bounds we need now and defer their proofs until later.
\begin{lemma}[Variance bound]\label{eq:syncvariance}
For $j \in \{1, \ldots, m\}$, let $\eta_{j} > 0$ be a positive real number. Then for all $s \in \NN$, we have 
\begin{align*}
\sum_{j = 1}^m \eta_{j}\EE\left[\|\mkQ_j^s\|_j^2 \mid \cF_s\right] &\leq 2\sum_{j = 1}^m \sum_{i=1}^n \frac{p_{ij}}{q_jm^2}\eta_{j}r_{ij}^p(x^{s})  +2\sum_{j = 1}^m \sum_{i=1}^n \frac{p_{ij}}{q_jm^2} \eta_{j}r_{ij}^d(y_i^s)\\
&\hspace{20pt} - 2\sum_{j=1}^m \frac{1}{q_jm^2}\eta_j\left\|\frac{1}{n} \sum_{i=1}^n y_{i,j}^s \right\|^2_j- \sum_{j=1}^m \frac{\eta_j}{m^2}\|(S(x^{s}))_j\|_j^2.%\numberthis\label{eq:SYNCSAGAFPRBOUND}
\end{align*}
\end{lemma}

\begin{lemma}[Using~\eqref{eq:coherence}]\label{eq:kappasync2}
For all $s \in \NN$ and $\lambda > 0$, we have
\begin{align*}%\label{eq:usingcoherencesync}
2\lambda \dotp{S(x^{s}), x^s - x^\ast} &\geq  \sum_{j=1}^m\sum_{i=1}^n 2\lambda\beta_{ij}n^2p_{ij}^2r_{ij}^p(x^{s}).
\end{align*}
\end{lemma}

\begin{lemma}[Recursive $\kappa_{k}$ bound]\label{lem:kappasync1}
For all $s \in \NN$, we have
\begin{align*}%\label{eq:recursivekappaboundsync}
\EE\left[ \kappa_{s+1} \mid \cF_k\right] \leq \kappa_s -\sum_{j=1}^m\sum_{i=1}^{n}  p_{ij} \gamma_{ij}r_{ij}^d(y_i^s)  + \sum_{j=1}^m \sum_{i=1}^{n} p_{ij} \gamma_{ij}r_{ij}^p(x^{s}).
\end{align*} 
\end{lemma}

Lemmas in hand, we can now prove~\eqref{eq:syncfejerinequality}
\begin{lemma}[Proof of~\eqref{eq:syncfejerinequality}]
Equation~\eqref{eq:syncfejerinequality} holds for all $s \in \NN$.
\end{lemma}
\begin{proof}
Fix $s \in \NN$. Then
\begin{align*}
&\EE\left[\|x^{s+1} - x^\ast\|^2 + \kappa_{s+1} \mid \cF_s\right] \\
&=\|x^s - x^\ast\|^2 - 2\lambda\EE\left[\dotp{\mkQ^s, x^s - x^\ast}\mid \cF_s\right]+ \lambda^2 \EE\left[\|\mkQ^s\|^2 \mid \cF_s\right] + \EE\left[ \kappa_{s+1}   \mid \cF_s\right] \\
&=\|x^s - x^\ast\|^2 - \frac{2\lambda}{m}\dotp{S(x^{s}), x^s - x^\ast}+ \lambda^2 \EE\left[\|\mkQ^s\|^2 \mid \cF_s\right] + \EE\left[ \kappa_{s+1}   \mid \cF_s\right].
\end{align*}
where the second equality follows from the linearity of expectation. Now, apply Lemmas~\ref{eq:syncvariance},~\ref{eq:kappasync2}, and~\ref{lem:kappasync1}:
\begin{align*}
&\leq \|x^s - x^\ast\|^2 + \kappa_s + \lambda^2 \EE\left[\sum_{j=1}^m \umu_j \|\mkQ_j^s\|_j^2 \mid \cF_s\right] \\
&\hspace{20pt}  -\sum_{j=1}^m\sum_{i=1}^n  p_{ij}\gamma_{ij}r_{ij}^d(y_i^s)  - \sum_{j=1}^m \sum_{i=1}^n p_{ij}\left( \frac{2\lambda\beta_{ij}n^2p_{ij}}{m} - \gamma_{ij}\right)r_{ij}^d(x^{s})  \\
%%%Block 2
&\leq \|x^s - x^\ast\|^2 + \kappa_s - \lambda^2\sum_{j=1}^m \frac{\umu_j}{m^2}\|(S(x^{s}))_j\|_j^2 - 2\lambda^2\sum_{j=1}^m \frac{\umu_j}{q_jm^2}\left\|\frac{1}{n} \sum_{i=1}^n y_{i,j}^s  \right\|^2_j\\
&\hspace{20pt} - \sum_{j=1}^m\sum_{i=1}^n  p_{ij}\left(\gamma_{ij} - \frac{2\umu_j\lambda^2}{q_jm^2} \right)r_{ij}^d(y_i^s) -\sum_{j=1}^m \sum_{i=1}^n p_{ij}\left(\frac{2\lambda\beta_{ij}n^2p_{ij}}m -   \frac{2\umu_j\lambda^2}{q_jm^2} - \gamma_{ij}\right)r_{ij}^p(x^{s}). \\
&=\|x^s - x^\ast\|^2 + \kappa_s - Y_s. \qquad\qed 
\end{align*}
\end{proof}

We finish the proof of Parts~\ref{thm:syncnewalgoconvergence:part:operatorvalues} and~\ref{thm:syncnewalgoconvergence:part:weak} by proving Lemmas~\ref{eq:syncvariance},~\ref{eq:kappasync2}, and~\ref{lem:kappasync1}.
\begin{proof}[of Lemma~\ref{eq:syncvariance} (variance bound)]
In the next sequence of inequalities, we use the following variance identity three times: For any random vector $X : \Omega \rightarrow \cH_j$ and a sub $\sigma$-algebra $\cX$ on $\Omega$, we have  $\EE\left[\|X - \EE\left[X \mid\cX\right]\|_j^2 \mid \cX\right]  = \EE\left[ \|X\|_j^2\mid \cX\right] - \|\EE\left[X \mid\cX\right]\|_j^2 $. 
By the law of iterated expectation: 
\begin{align*}
&2 \EE\left[ \|t_{s,j}((Q_{i_s}^d(y_{i_s}^s))_j - (Q_{i_s}^\ast)_j - \frac{1}{n} \sum_{i=1}^n y_{i,j}^s ) \|^2_j\mid \cF_s\right]\\
&= 2 \EE\left[ \EE\left[\|t_{s,j}((Q_{i_s}^d(y_{i_s}^s))_j - (Q_{i_s}^\ast)_j - \frac{1}{n} \sum_{i=1}^n y_{i,j}^s ) \|^2_j\mid \sigma(t_{s,j}, \cF_s) \right]\mid \cF_s\right] \\
&= 2 \EE\left[  \EE\left[\|t_{s,j}((Q_{i_s}^d(y_{i_s}^s))_j - (Q_{i_s}^\ast)_j) \|^2_j\mid \sigma(t_{s,j}, \cF_s) \right] - \left\|t_{s,j}\frac{1}{n} \sum_{i=1}^n y_{i,j}^s  \right\|_j^2 \mid \cF_s\right] \\
&=2  \sum_{i=1}^n \frac{p_{ij}}{q_jm^2}\|((Q_{i_s}^d(y_{i}^s))_j - (Q_{i}^\ast)_j)   \|_j^2 -\frac{2}{q_jm^2} \left\|\frac{1}{n} \sum_{i=1}^n y_{i,j}^s  \right\|^2_j.
\end{align*}
Therefore, 
\begin{align*}
&\EE\left[\|\mkQ_j^s\|_j^2 \mid \cF_s\right] \\
&= \EE\left[\|\mkQ_j^s - \frac{1}{m}(S(x^{s}))_j\|_j^2 \mid \cF_s \right] + \frac{1}{m^2}\|(S(x^{s}))_j\|_j^2 \\
&= \EE\biggl[ \|t_{s,j} \left[(Q_{i_s}^d(y_{i_s}^s))_j - (Q_{i_s}^\ast)_j - \frac{1}{n} \sum_{i=1}^n y_{i,j}^s \right]  \\
&\hspace{20pt} -t_{s,j}\left[ (Q_{i_s}^p(x^{s}))_j  - (Q_{i_s}^\ast)_j\right] + \frac{1}{m}(S(x^{s}))_{j}   \|_j^2 \mid \cF_s \biggr]  + \frac{1}{m^2}\|(S(x^{s}))_j\|_j^2 \\
&\leq 2 \EE\left[ \|t_{s,j}((Q_{i_s}^d(y_{i_s}^s))_j - (Q_{i_s}^\ast)_j - \frac{1}{n} \sum_{i=1}^n y_{i,j}^s ) \|_j^2\mid \cF_s\right] \\
&\hspace{20pt} + 2\EE\left[ \| t_{s,j}((Q_{i_s}^p(x^{s}))_j  - (Q_{i_s}^\ast)_j)  - \frac{1}{m}(S(x^{s}))_j  \|_j^2 \mid \cF_s\right] + \frac{1}{m^2}\|(S(x^{s}))_j\|_j^2  \\
&\leq 2  \EE\left[ \|t_{s,j}((Q_{i_s}^d(y_{i_s}^s))_j - (Q_{i_s}^\ast)_j)   \|_j^2\mid \cF_s\right] + 2\EE\left[ \|t_{s,j}((Q_{i_s}^p(x^{s}))_j - (Q_{i_s}^\ast)_j ) \|_j^2\mid \cF_s\right] \\
&\hspace{20pt} - \frac{2}{q_jm^2}\left\|\frac{1}{n} \sum_{i=1}^n y_{i,j}^s  \right\|^2_j- \frac{1}{m^2}\|(S(x^{s}))_j\|_j^2. \\
&\leq 2  \sum_{i=1}^n \frac{p_{ij}}{q_jm^2}\|((Q_{i}^d(y_{i}^s))_j - (Q_{i}^\ast)_j)   \|_j^2 + 2\sum_{i=1}^n \frac{p_{ij}}{q_jm^2}\|((Q_{i}^p(x^{s}))_j - (Q_{i}^\ast)_j ) \|_j^2 \\
&\hspace{20pt} - \frac{2}{q_jm^2}\left\|\frac{1}{n} \sum_{i=1}^n y_{i,j}^s \right\|^2_j- \frac{1}{m^2}\|(S(x^{s}))_j\|_j^2.
\end{align*}
To get the claimed identity, multiply this inequality by $\eta_j$ for each $j $ and sum.
\qed
\end{proof}

\begin{proof}[of Lemma~\ref{eq:kappasync2} (using~\eqref{eq:coherence})]
From~\eqref{eq:coherence}, 
\begin{align*}
 \dotp{S(x^{s}), x^{s} - x^\ast} \geq  \sum_{j=1}^m\sum_{i=1}^n\beta_{ij}\|(S_{i}(x^{s}))_j - (S_{i}(x^\ast))_j\|_j^2  \geq  \sum_{j=1}^m\sum_{i=1}^n \beta_{ij}n^2p_{ij}^2\|(Q_{i}^p(x^{s}))_j - (Q_{i}^\ast)_j\|_j^2.%\numberthis\label{thm:syncnewalgoconvergence:eq:coco}\qed
\end{align*}
\end{proof}

\begin{proof}[of Lemma~\ref{lem:kappasync1} (recursive $\kappa_{k}$ bound)]
For any $i, j$ with $\mathbf{S}^\ast_{ij} \neq 0$ and $\gamma_{ij} > 0$, 
\begin{align*}
\EE\left[ r_{ij}^d(y_i^{s+1}) \mid \cF_k\right] &= \left(1- \rho p_{ij}^T\right) r_{ij}^d(y_i^{s}) + \rho p_{ij}^T r_{ij}^p(x^{s})
\end{align*} 
because $r_{ij}^d(y_i^{s+1})$ depends only on $y_{i,j}^{s+1}$, not on its other components, and the probability of update for $(y_i^k)_j$ is $P((i_k, i) \in E, t_{k,j} \neq 0, \epsilon_k = 1) = \rho p_{ij}^T$.  When $\mathbf{S}^\ast_{ij} = 0$, the left hand side of the equation is zero, so the above equation holds as an inequality. Thus, to get the claimed inequality, multiply both sides of the above equation by $p_{ij}\gamma_{ij}(\rho p_{ij}^T)^{-1}$ and sum over $i$ and $j$. \qed
\end{proof}

\textbf{Part~\ref{thm:syncnewalgoconvergence:part:linear}}: In this part, we no longer work with arbitrary zeros $x^\ast \in \cS$. Instead we work with the sequence of zeros $P_{\cS}(x^k)$, which, by definition, satisfy 
\begin{align*}%\label{eq:distanceidentity}
d_{\cS}^2(x^{k+1}) = \|x^{k+1} - P_{\cS}(x^{k+1})\|^2 \leq \|x^{k+1} - P_{\cS}(x^{k})\|^2.
\end{align*}
But notice that, because they are independent of the fixed zero $x^\ast$ from Parts~\ref{thm:syncnewalgoconvergence:part:operatorvalues} and~\ref{thm:syncnewalgoconvergence:part:weak}, we can freely use the inequalities in Lemmas~\ref{eq:syncvariance},~\ref{eq:kappasync2}, and~\ref{lem:kappasync1}. 

In this part, we further constrain the constants $\gamma_{ij}$. (We defer the proof of the lemma for a moment.)
\begin{lemma}[Choosing $\gamma_{ij}$]\label{eq:synclinearconstant}
Let $\xi := 2m^{-1}\mu\alpha\lambda$. Then for all $i$ and $j$, there exists $\gamma_{ij} >0$ such that 
\begin{align*}
\frac{2\umu_j\lambda^2}{q_jm^2\left(1-\frac{\xi}{\rho p_{ij}^T}\right)}  \leq \gamma_{ij} \leq \frac{2(1-\alpha)\lambda\beta_{ij}n^2 p_{ij}}{m}  - \frac{2\umu_j\lambda^2}{q_jm^2}.
\end{align*}
\end{lemma}

Then we finish off the proof by appealing to the lower bound
$$\dotp{S(x^{k}), x^{k} - P_\cS(x^k)} \geq \mu\|x^{k} -  P_{\cS}(x^k))\|^2
$$
in the following sequence of inequalities
\begin{align*} 
&\EE\left[d_\cS^2(x^{k+1}) + \kappa_{k+1} \mid \cF_k\right] \\
&\leq \EE\left[\|x^{k+1} - P_\cS(x^{k})\|^2 + \kappa_{k+1} \mid \cF_k\right] \\
&=\|x^k - P_{\cS}(x^k)\|^2 - 2\lambda\EE\left[\dotp{\mkQ^k, x^k - P_{\cS}(x^k)}\mid \cF_k\right]+ \lambda^2 \EE\left[\|\mkQ^k\|^2 \mid \cF_k\right] + \EE\left[ \kappa_{k+1}   \mid \cF_k\right] \\
&=\|x^k -  P_\cS(x^{k})\|^2 - \frac{2\lambda}{m}\dotp{S(x^{k}), x^k - P_\cS(x^{k})}+ \lambda^2 \EE\left[\|\mkQ^k\|^2 \mid \cF_k\right] + \EE\left[ \kappa_{k+1}   \mid \cF_k\right]  \\
&\leq \left(1-\frac{2\alpha\mu\lambda}{m}\right)\|x^k - P_\cS(x^{k})\|^2 - \frac{2\lambda(1-\alpha)}{m}\dotp{S(x^k), x^k -  P_\cS(x^{k})}+ \lambda^2 \EE\left[\sum_{j=1}^m \umu_j \|\mkQ_j^k\|_j^2 \mid \cF_k\right] + \EE\left[ \kappa_{k+1}   \mid \cF_k\right] \\
&\leq \left(1-\frac{2\alpha\mu\lambda}{m}\right)\|x^k -  P_\cS(x^{k})\|^2 + \kappa_k \qquad \text{(Lemmas~\ref{eq:syncvariance},~\ref{eq:kappasync2}, and~\ref{lem:kappasync1} are used below)} \\
&\hspace{20pt} - \sum_{j=1}^m\sum_{i=1}^n  p_{ij}\left(\gamma_{ij} - \frac{2\umu_j\lambda^2}{q_jm^2} \right)r_{ij}^d(y_i^k)  -\sum_{j=1}^m \sum_{i=1}^n p_{ij}\left(\frac{2(1-\alpha)\lambda\beta_{ij}n^2p_{ij}}{m} -   \frac{2\umu_j\lambda^2}{q_jm^2} - \gamma_{ij}\right)r_{ij}^p(x^k) \\
&\leq (1-\xi)\left(\|x^k -   P_\cS(x^{k})\|^2 + \kappa_k\right) - \sum_{j=1}^m\sum_{i=1}^n  p_{ij}\left(\gamma_{ij}\left(1 - \frac{\xi}{\rho p_{ij}^T}\right) -  \frac{2\umu_j\lambda^2}{q_jm^2} \right)r_{ij}^d(y_i^k)  \\
&\leq(1-\xi)\left(d_\cS^2(x^k) + \kappa_k\right)
\numberthis\label{eq:linearbound}
\end{align*}
(Apply the law of iterated expectations to get the linear convergence rate (with $C(z^0, \phi^0) := \kappa_0$).)

The only loose end, which we now tie up, is the proof of Lemma~\ref{eq:synclinearconstant}.
\begin{proof}[of Lemma~\ref{eq:synclinearconstant} (choosing $\gamma_{ij}$)]
We have assumed that
\begin{align*}
\lambda &\leq 
\min_{i,j}\left\{\frac{\eta(1-\alpha)\beta_{ij} n^2 p_{ij}q_jm}{2\umu_j\eta + 2\mu\alpha(1-\alpha)\beta_{ij}n^2p_{ij}q_j}\right\},
\end{align*}
and consequently, if
$$w_{ij} :=  \frac{\frac{(1-\alpha)\beta_{ij}n^2p_{ij}q_jm}{2\umu_j}}{\frac{m\eta}{2\alpha\mu} + \frac{(1-\alpha)\beta_{ij}n^2p_{ij}q_jm}{2\umu_j}} < 1,$$
then
\begin{align*}
\lambda \leq  \frac{w_{ij}m\eta}{2\alpha\mu} = \frac{(1-\alpha)\beta_{ij}n^2p_{ij}q_jm(1-w_{ij})}{2\umu_j} \leq \frac{(1-\alpha)\beta_{ij}n^2p_{ij}q_jm(1-w_{ij})}{(2-w_{ij})\umu_j} .
\end{align*}
In particular, $\xi \leq w_{ij}\eta < 1$. Now rearrange the above inequality to get
\begin{align*}
 \frac{\umu_j\lambda^2}{q_jm^2\left(1-w_{ij}\right)} + \frac{\umu_j\lambda^2}{q_jm^2} = \frac{\umu_j\lambda^2(2-w_{ij})}{q_jm^2\left(1-w_{ij}\right)}  \leq \frac{2\umu_j\lambda^2}{q_jm^2\left(1-w_{ij}\right)} \leq \frac{(1-\alpha)\lambda\beta_{ij}n^2p_{ij}}{m}.
\end{align*}
The last bound proves the claimed bound because $\left(1-\frac{\xi}{\rho p_{ij}^T}\right)^{-1} \leq \left(1 - \frac{w_{ij}\eta}{\rho p_{ij}^T}\right)^{-1}  \leq \left(1-w_{ij}\right)^{-1}$.
\qed\end{proof}
\qed\end{proof}

\section{Future Work}

The SMART algorithm calls for five avenues of future work: numerical experiments, especially in the asynchronous setting; the creation of new operators $S$; the characterization of the error bound condition which we call essential strong quasi-monotonicity; the convergence rate analysis of the SMART algorithm without assuming essential strong quasi-monotonicity; and nonconvex extensions. All of these avenues present challenges.

The SAGA, SVRG, Finito, SDCA, and randomized projection algorithms presented in Section~\ref{sec:connections} perform well in the synchronous setting. Asynchronous algorithms often obtain a linear speedup (in the number of computing cores) over their synchronous implementations, so we expect these algorithms to perform even better in the asynchronous setting. However, implementing asynchronous algorithms still requires a bit of programming expertise, so we expect that good experimental results will take some time to acquire.

Any operator that satisfies the coherence condition~\eqref{eq:coherence} can be plugged into the SMART algorithm; as such, the power of SMART increases with each new operator discovered. We presented many examples of operators in this paper, and we expect there to be more in the future. 

The essential strong quasi-monotonicity property~\eqref{eq:essstrongquasimono} appears to be the weakest possible condition under which a first-order algorithm will converge linearly. This deep, difficult to characterize property is related to the Hoffman bound~\cite{hoffman1952approximate}, the linear regularity assumption, and the Kurdyka-{\L}ojasiewicz property~\cite{bolte2015error}. We look forward to a calculus of operations that preserve this property.

Even if $S$ is not essentially strongly quasi-monotone, we suspect that SMART converges sublinearly; to show these rates, the techniques in~\cite{myPDpaper,davis2014convergenceFaster,myFDRS,davis2014convergence} should be adapted to the stochastic and asynchronous settings.

We showed that SMART converges when the coherence condition is satisfied, and this tethers our results to convex optimization problems. But SMART can and should be applied to nonconvex problems. In the nonconvex case, convergence guarantees will certainly be weaker than those presented here, but SMART will likely perform well on these problems.

\paragraph{\textbf{Acknowledgements:}} We thank Brent Edmunds and Professors Patrick Combettes and Wotao Yin for helpful comments.

\bibliographystyle{spmpsci}
\bibliography{bibliography}

\section*{Appendix}
\appendix
\section{Proof in the Asynchronous Case}\label{sec:async}

\begin{proof}[Convergence of SMART]
~\\
\indent\textbf{Notation.} Let $x^\ast \in \cS$. Some of our notation, in particular, the definitions of $t_k, Q_i^p, Q_i^d$, $r_{ij}^p$, and $r_{ij}^d$, remains the same as in the proof of the nonasynchronous case (Theorem~\ref{thm:syncnewalgoconvergence}). Below, we adapt the rest of the notation to the present asynchronous case:
\begin{enumerate}
\item \textbf{Integer indexed sequences.} We extend every sequence indexed by natural numbers, say $\{z^k\}_{k \in \NN}$, to all of $\ZZ$ by setting  $z^{k} := z^0$ for all $k \leq 0$.
\item \textbf{The $\mkQ^k$ random vector.} If 
$$
\mkQ^k =  t_{k} \odot\left( Q_{i_k}^p(x^{k-d_k}) - Q_{i_k}^d(y_i^{k-e_{k}^i}) + \frac{1}{n} \sum_{i=1}^n y_{i}^{k-e_k^i}\right),
$$ then $x^{k+1} = x^k - \lambda  \mkQ^k$ and  $\EE\left[\mkQ^k\mid \cF_k\right] = m^{-1}S(x^{k-d_k})$.
\item \textbf{The $C$ constant.} Define the positive constant
$$C := \frac{\sqrt{2(\tau_d + 2)}}{m\sqrt{\underline{q}}}.$$
\item \textbf{The $\gamma_{ij}$ constants.} For all $i $ and $ j$, choose any constants that satisfy\footnote{The constraint~\eqref{eq:gammabound1async} has a solution by the assumptions of the theorem.}
\begin{align*}
\frac{2\umu_j(\tau_d+1)}{q_jm^2}\left(1 + \frac{\tau_p}{mC}\right)\lambda_k^2&\leq \frac{2\umu_j(\tau_d+1)}{\underline{q}m^2}\left(1 + \frac{\tau_p}{mC}\right)\overline{\lambda}^2\\
&< \gamma_{ij} \\
&<  \frac{2\underline{\lambda} n^2p_{ij}\beta_{ij}}{m} -  \umu_j\left(\frac{2}{\underline{q}m^2} + \tau_p\left(\frac{2}{\underline{q}m^3C} + \frac{C}{m}\right)\right)\overline{\lambda}^2 \\
&\leq\frac{2\lambda_kn^2p_{ij}\beta_{ij}}{m} -  \umu_j\left(\frac{2}{q_jm^2} + \tau_p\left(\frac{2}{q_jm^3C} + \frac{C}{m}\right)\right)\lambda_k^2.\numberthis\label{eq:gammabound1async}
\end{align*}
\item \textbf{The  $R_{l, ij}^k$ constants.} For all $i $, $ j$, and $k$, we define two \emph{positive constants}\footnote{By the choice of~$\gamma_{ij}$, there is a constant $b > 0$ such that for all $i,j$ and $k$, we have $R^k_{1, ij} > b$ and $R^k_{2, ij}> b$.}
\begin{align*}
R_{1, ij}^k &:= \frac{\gamma_{ij}}{\tau_d+1} - \frac{2\umu_j}{q_jm^2}\left(1 + \frac{\tau_p}{mC}\right)\lambda_k^2 ; \text{ and} \\
R_{2, ij}^k &:=  \frac{2\lambda_k n^2p_{ij}\beta_{ij}}{m} - \umu_j\left(\frac{2}{q_jm^2} + \tau_p\left(\frac{2}{q_jm^3C} + \frac{C}{m}\right)\right)\lambda_k^2 - \gamma_{ij}.
\end{align*}
\end{enumerate}

\begin{assumption}
We only address the case in which $\mathbf{S}^\ast$ is not necessarily the zero matrix. From this case, and with the help of the identity $r_{ij}^d(y_i^{k-e_{k}^i}) \equiv 0$, it is straightforward to retrieve the case in which $\mathbf{S}^\ast = 0$.
\end{assumption}

\textbf{Parts~\ref{thm:newalgoconvergence:part:operatorvalues} and~\ref{thm:newalgoconvergence:part:weak}}: In the synchronous case, the supermartingale convergence theorem and a supermartingale inequality featured; the same is true in the present asynchronous case, but the supermartingale inequality now accounts for the use of delayed iterates and operator values: with
\begin{align*}
 \kappa_{k} &:= \underbrace{\sum_{j=1}^m \sum_{i=1}^n \frac{\gamma_{ij}p_{ij}}{\rho p_{ij}^T}r_{ij}^d(y_i^k) + \sum_{h = 0}^{\tau_d - 1} \sum_{j=1}^m \sum_{i=1}^n \frac{(h+1)p_{ij}\gamma_{ij}}{\tau_d + 1}r_{ij}^d(y_i^{k - \tau_d + h})}_{:=\kappa_{1,k}} +\underbrace{\sum_{h=k-\tau_p+1}^{k}\sum_{j=1}^m \frac{\umu_j(h-k+\tau_p)}{mC}\|x_j^{h} - x_j^{h-1}\|_j^2}_{:=\kappa_{2,k}}; \\
 Y_k &:= \lambda_k^2\sum_{j=1}^m\frac{\umu_j}{m^2}\left(1 + \frac{\tau_p}{mC}\right)\|(S(x^{k-d_k}))_j\|_j^2  + 2\lambda_k^2\sum_{j=1}^m  \frac{\umu_j}{q_jm^2}\left(1 + \frac{\tau_p}{mC}\right)\left\|\frac{1}{n} \sum_{i=1}^n y_{i,j}^{k-e^{i}_{k}}  \right\|^2_j \\
& \sum_{j=1}^m\sum_{i=1}^n  p_{ij}R_{1, ij}^kr_{ij}^d(y_i^{k-e^{i}_{k}}) +  \sum_{j=1}^m \sum_{i=1}^n p_{ij}R_{2, ij}^k r_{ij}^p(x^{k-d_k}) +  \sum_{j=1}^m \sum_{i=1}^n  \sum_{\substack{h=0\\ h\neq \tau_d - e^{i}_{k, j} }}^{\tau_d}\frac{p_{ij}\gamma_{ij}}{\tau_d + 1}r_{ij}^d(y_i^{k - \tau_d +h}), % \numberthis\label{eq:kappadefinition}
\end{align*}
the supermartingale inequality holds
\begin{align}\label{eq:asyncfejerinequality}
\left(\forall k \in \NN\right)\left(\forall x^\ast \in \cS\right) \qquad \EE\left[\|x^{k+1} - x^\ast\|^2 + \kappa_{k+1}\mid \cF_{k}\right] + Y_k \leq  \|x^{k} - x^\ast\|^2 + \kappa_k.
\end{align}
So, by the supermartingale convergence theorem, the sequence $Y_k$ is $\as$ summable and $X_k := \|x^{k} - x^\ast\|^2 + \kappa_k \as$ converges to a $[0, \infty)$-valued random variable, and these conclusions hold for any element $x^\ast \in \cS$. 

At this point, as in the synchronous case, Part~\ref{thm:newalgoconvergence:part:operatorvalues} of the theorem follows because $\sum_{k=0}^\infty\sum_{j=1}^m \sum_{i=1}^n p_{ij} R_{2, ij}^k r_{ij}^p(x^{k-d_k}) \leq \sum_{k=0}^\infty Y_k < \infty \as$, and hence, $\|S_i(x^{k-d_k}) - S_i(x^\ast)\| = (np_{ij})^2r_{ij}^p(x^{k-d_k}) \as$ converges to $0$.

The road to almost sure weak convergence of $x^{k}$, now 5 steps long, is slightly more complicated than in the synchronous case:
\begin{enumerate}
\item Because $\sum_{k=0}^\infty Y_k < \infty \as$,  we conclude that each term\footnote{Here we invoke the assumed equivalence of the two norms $\|x\|$ and $\|x\|_{\pr} = \sqrt{\sum_{j=1}^m \|x_j\|^2_j}$.}
\begin{align*}
\|S(x^{k-d_k})\|; && \left\|\left(\frac{1}{n} \sum_{i=1}^n y_{i,j}^{k-e^{i}_{k}}\right)_{j=1}^m  \right\|; && &r_{ij}^d(y_i^{k-e^{i}_{k}});\\
 r_{ij}^p(x^{k-d_k}); &&  && &  \sum_{\substack{h=0\\ h\neq \tau_d - e^{i}_{k, j} }}^{\tau_d}\frac{p_{ij}\gamma_{ij}}{\tau_d + 1}r_{ij}^d(y_i^{k - \tau_d +h}),
\end{align*}
$\as$ converges to $0$.
\item Because $r_{ij}^d(y_i^{k-e^{i}_{k}})$ and $ \sum_{\substack{h=0; h\neq \tau_d - e^{ij}_{k} }}^{\tau_d}\frac{p_{ij}\gamma_{ij}}{\tau_d + 1}r_{ij}^d(y_i^{k - \tau_d +h}) \as$ converge to $0$, we conclude that $\kappa_{1, k} \as$ converges to $0$.
\item Because $r_{ij}^d(y_i^{k-e^{i}_{k}})$, $\|n^{-1} \sum_{i=1}^n (y_i^{k-e^{i}_{k}})_j\|_j $, and $r_{ij}^p(x^{k-d_k}) \as$ converge to $0$ and because for all $h \in \{0, \ldots, \tau_p\}$, we have 
\begin{align*}
x_j^{k-h + 1} - x_j^{k-h} &= -\lambda_{k-h}t_{k-h,j}\left( (Q_{i_{k-h}}^p(x^{k - h -d_{k-h}}))_j - (Q_{i_{k-h}}^d(y_{i_k}^{k-h - e^{i_{k-h}}_{k-h}}))_{j} + \frac{1}{n} \sum_{i=1}^n y_{i,j}^{k-h- e^{i}_{k-h}}\right) \\
&= -\lambda_{k-h}t_{k-h,j}\biggl( ((Q_{i_{k-h}}^p(x^{k-h - d_{k-h}}))_j - (Q_{i_{k-h}}^\ast)_j) \\
&\hspace{70pt}+ ((Q_{i_{k-h}}^\ast)_j- (Q_{i_{k-h}}^d(y_{i_k}^{k-h - e^{i_{k-h}}_{k-h}}))_{j}) + \frac{1}{n} \sum_{i=1}^n y_{i,j}^{k-h- e^{i}_{k-h}}\biggr) \stackrel{k \rightarrow \infty}{\rightarrow} 0 \as,
\end{align*}
we conclude $\kappa_{2, k}$ and $\|x^{k-d_k} - x^k\|^2\as$ converge to $0$: 
\item Because $\kappa_k \as$ converges to $0$ and $\|x^k - x^\ast\|^2 + \kappa_k \as$ converges to a $[0, \infty)$-valued random variable, we conclude that $\|x^k - x^\ast\|^2 \as$ converges to a $[0, \infty)$-valued random variable.
\item Because $\|x^k - x^\ast\|^2 \as$ converges to a $[0, \infty)$-valued random variable, because $\|x^k - x^{k-d_k}\|\as$ converges to $0$, and because
$$
\|x^{k-d_k} - x^\ast\|^2 - \|x^{k} - x^\ast\|^2 = \|x^k - x^{k-d_k}\|^2 + 2\dotp{x^{k-d_k} - x^k, x^k - x^\ast}  \stackrel{k \rightarrow \infty}{\rightarrow} 0 \as,
$$
we conclude that $\|x^{k-d_k} - x^\ast\|^2 \as$ converges to a $[0, \infty)$-valued random variable.
\end{enumerate}
With the facts listed here, Proposition~\ref{eq:weakconvergence} immediately renders the sequence $x^{k-d_k} \as$ weakly convergent to an $\cS$-valued random variable. And at last, due to the limit $\|x^k - x^{k-d_k}\|\rightarrow 0\as$, the sequence $x^k \as$ weakly converges to an $\cS$-valued random variable.

Our task is now clear: we must prove~\eqref{eq:asyncfejerinequality}. But to do so, we must first bound several random variables. We present the two bounds we need now and defer their proofs until later. 

\begin{lemma}[Variance bound]\label{lem:AsyncFPRbound}
For $j \in \{1, \ldots, m\}$, let $\eta_{j} > 0$ be a positive real number. Then for all $s \in \NN$, we have 
\begin{align*}
\sum_{j = 1}^m \eta_{j}\EE\left[\|\mkQ_j^s\|_j^2 \mid \cF_s\right] &\leq 2\sum_{j = 1}^m \sum_{i=1}^n \frac{p_{ij}\eta_{j}}{q_jm^2}r_{ij}^p(x^{s-d_s}) +2\sum_{j = 1}^m \sum_{i=1}^n \frac{p_{ij} \eta_{j}}{q_jm^2}r_{ij}^d(y_i^{s-e^{i}_{s}})\\
&\hspace{20pt} - 2\sum_{j=1}^m \frac{\eta_j}{q_jm^2}\left\|\frac{1}{n} \sum_{i=1}^n y_{i,j}^{s-e^{i}_{s}} \right\|^2_j- \sum_{j=1}^m \frac{\eta_j}{m^2}\|S(x^{s-d_s})\|_j^2.\numberthis\label{eq:ASYNCSAGAFPRBOUND}
\end{align*}
\end{lemma}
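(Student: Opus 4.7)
The plan is to follow the proof of Lemma~\ref{eq:syncvariance} (the synchronous variance bound) line by line, simply substituting $x^{s-d_s}$ for $x^s$ and $y_i^{s-e^{i}_{s}}$ for $y_i^s$ wherever they appear. This works because, with the delays treated as deterministic (or more generally $\cF_s$-measurable), the iterates $x^{s-d_s}$ and $y_i^{s-e^{i}_{s}}$ are built from data up to time~$s$ and hence are $\cF_s$-measurable, whereas the fresh sampling information $t_s$ and $i_s$ is independent of $\cF_s$ by the assumption on $\{\cI_k,\cE_k,\cF_k\}$. Consequently every conditional-expectation identity used in the synchronous proof carries over unchanged to the corresponding delayed quantities.

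Concretely, I would decompose
\[
\mkQ_j^s = t_{s,j}\bigl((Q_{i_s}^p(x^{s-d_s}))_j - (Q_{i_s}^\ast)_j\bigr) \;-\; t_{s,j}\Bigl((Q_{i_s}^d(y_{i_s}^{s-e^{i_s}_{s}}))_j - (Q_{i_s}^\ast)_j - \tfrac{1}{n}\sum_{i=1}^n y_{i,j}^{s-e^{i}_{s}}\Bigr),
\]
observe that the first summand has $\cF_s$-conditional mean $m^{-1}(S(x^{s-d_s}))_j$ while the second has $\cF_s$-conditional mean $0$ (the latter using $\sum_i (S_i(x^\ast))_j = 0$), and apply the variance identity $\EE[\|X\|^2|\cX] = \|\EE[X|\cX]\|^2 + \EE[\|X-\EE[X|\cX]\|^2|\cX]$ to isolate the term $m^{-2}\|(S(x^{s-d_s}))_j\|_j^2$. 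After the usual $\|a+b\|_j^2 \leq 2\|a\|_j^2 + 2\|b\|_j^2$ split on the zero-mean remainder, a second application of the variance identity on the dual piece — this time conditioning on the finer $\sigma$-algebra $\sigma(t_{s,j},\cF_s)$, under which the conditional mean of $t_{s,j}((Q_{i_s}^d(y_{i_s}^{s-e^{i_s}_{s}}))_j - (Q_{i_s}^\ast)_j)$ equals $t_{s,j}\cdot n^{-1}\sum_i y_{i,j}^{s-e^{i}_{s}}$ — extracts the $-\,2q_j^{-1}m^{-2}\|n^{-1}\sum_i y_{i,j}^{s-e^{i}_{s}}\|_j^2$ contribution.

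Finally I would evaluate the remaining unconditional squares using $\EE[t_{s,j}^2\mathbf{1}_{i_s=i}|\cF_s] = p_{ij}/(q_j m^2)$, recognize $\|(Q_i^p(x^{s-d_s}))_j - (Q_i^\ast)_j\|_j^2$ and $\|(Q_i^d(y_i^{s-e^{i}_{s}}))_j - (Q_i^\ast)_j\|_j^2$ as $r_{ij}^p(x^{s-d_s})$ and $r_{ij}^d(y_i^{s-e^{i}_{s}})$ respectively, multiply by $\eta_j$, and sum over $j$ to reach~\eqref{eq:ASYNCSAGAFPRBOUND}. The only real hazard is bookkeeping — in particular, keeping the two kinds of dual delay indices $e^{i_s}_s$ and $e^{i}_s$ straight and ensuring that the $t_{s,j}$ factor is pulled out correctly when passing to the finer conditioning — but no genuinely new idea is needed beyond the synchronous calculation, since the independence of $\cI_s$ from $\cF_s$ (and hence from every delayed iterate entering $\mkQ^s$) is precisely what is required for the synchronous argument to transplant.
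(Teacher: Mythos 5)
Your proposal is correct and follows exactly the route the paper takes: the paper's own ``proof'' of Lemma~\ref{lem:AsyncFPRbound} simply states that it coincides with the proof of Lemma~\ref{eq:syncvariance} up to a change of indices, and your plan spells out precisely why this works — the delayed iterates $x^{s-d_s}$ and $y_i^{s-e^i_s}$ are $\cF_s$-measurable while $(i_s,\sfS_s)$ is independent of $\cF_s$, so every conditional-expectation and variance-identity computation in the synchronous proof carries over verbatim after substitution. The bookkeeping hazard you flag (distinguishing $e^{i_s}_s$ from $e^i_s$, and keeping $t_{s,j}$ in the right place when conditioning on $\sigma(t_{s,j},\cF_s)$) is genuine but is exactly the ``change of indices'' the paper alludes to.
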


\begin{lemma}[Recursive $\kappa_k$ bound]\label{lem:kappa}
For all $s \in \NN$ and $\alpha \in [0, 1]$, we have
\begin{align*}
&\EE\left[\kappa_{s+1} \mid \cF_s\right] \\
&\leq \kappa_s  + \frac{2\lambda_s}{m} \dotp{S(x^{s-d_s}), x^s - x^\ast} - \frac{2\alpha\lambda_s}{m}\dotp{S(x^{s-d_s}), x^{s-d_s} - x^\ast} + \sum_{j=1}^m \sum_{i=1}^n\frac{2\alpha\lambda_s n^2p_{ij}^2\beta_{ij}}{m}r_{ij}^p(x^{s-d_s}) \\
&+\lambda_s^2 \sum_{j=1}^m\frac{\tau_p\umu_j}{mC}\EE\left[\|\mkQ^s_j\|_j^2 \mid\cF_s\right]  - \sum_{j=1}^m\sum_{i=1}^n p_{ij}\left( \frac{2(1-\alpha)\lambda_s\beta_{ij}n^2p_{ij}}{m} - \frac{\tau_p \umu_jC_j \lambda_s^2}{m}- \gamma_{ij}\right)r_{ij}^p(x^{s-d_s})  \\
& - \sum_{j=1}^m\sum_{i=1}^n  \frac{p_{ij} \gamma_{ij}}{\tau_d + 1}r_{ij}^d(y_i^{s - e^{i}_{s}}) -  \sum_{j=1}^m \sum_{i=1}^n   \sum_{\substack{h = 0\\ h\neq \tau_d - e^{i}_{s, j} }}^{\tau_d}\frac{p_{ij}\gamma_{ij}}{\tau_d + 1}r_{ij}^d(y_i^{s - \tau_d +h}).
\end{align*}
\end{lemma}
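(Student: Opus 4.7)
The first move is to split $\kappa_s = \kappa_{1,s} + \kappa_{2,s}$ according to its definition and compute $\EE[\kappa_{1,s+1}\mid\cF_s] - \kappa_{1,s}$ and $\EE[\kappa_{2,s+1}\mid\cF_s] - \kappa_{2,s}$ separately. For $\kappa_{1,s+1}$, I would use the fact that, for each pair $(i,j)$, the update rule gives $y_{i,j}^{s+1} = (S_i(x^{s-d_s}))_j$ with probability $\rho p_{ij}^T$ (the joint event that $i_s$ triggers $i$, $j\in\sfS_s$, and $\epsilon_s = 1$) and $y_{i,j}^{s+1} = y_{i,j}^s$ otherwise, so
\begin{align*}
\EE[r_{ij}^d(y_i^{s+1})\mid\cF_s] = (1-\rho p_{ij}^T)\, r_{ij}^d(y_i^s) + \rho p_{ij}^T\, r_{ij}^p(x^{s-d_s}).
\end{align*}
Substituting this into the Part-A sum (which carries the prefactor $\gamma_{ij}p_{ij}/(\rho p_{ij}^T)$) contracts the $\rho p_{ij}^T$ factor to $1$, and a one-step index shift in Part B (whose weights $(h+1)/(\tau_d+1)$ are precisely designed for this telescope) produces the identity
\begin{align*}
\EE[\kappa_{1,s+1}\mid\cF_s] - \kappa_{1,s} = \sum_{i,j} p_{ij}\gamma_{ij}\, r_{ij}^p(x^{s-d_s}) - \sum_{i,j}\sum_{h=0}^{\tau_d} \frac{p_{ij}\gamma_{ij}}{\tau_d+1}\, r_{ij}^d(y_i^{s-\tau_d+h}).
\end{align*}
Splitting off the $h=\tau_d-e_s^i$ summand gives the two explicit $r_{ij}^d$ dissipation terms of the lemma, while the surplus $\sum_{i,j}p_{ij}\gamma_{ij}r_{ij}^p(x^{s-d_s})$ will be absorbed in the final step.

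\textbf{The $\kappa_2$ reservoir.} For $\kappa_{2,s+1}-\kappa_{2,s}$, the index shift is deterministic: the weight of $\|x_j^h-x_j^{h-1}\|_j^2$ drops by $\umu_j/(mC)$ for each $h\in\{s-\tau_p+1,\ldots,s\}$, the $h=s-\tau_p+1$ term disappears, and a new maximal-weight term $(\umu_j\tau_p/mC)\|x_j^{s+1}-x_j^s\|_j^2$ enters. Substituting $x_j^{s+1}-x_j^s=-\lambda_s\mkQ_j^s$ and taking conditional expectation gives
\begin{align*}
\EE[\kappa_{2,s+1}\mid\cF_s]-\kappa_{2,s} = \lambda_s^2\sum_j \frac{\tau_p\umu_j}{mC}\EE[\|\mkQ_j^s\|_j^2\mid\cF_s] - \sum_{h=s-\tau_p+1}^{s}\sum_j \frac{\umu_j}{mC}\|x_j^h-x_j^{h-1}\|_j^2,
\end{align*}
which supplies the $\lambda_s^2$-term of the lemma verbatim, plus a negative reservoir $R:=-\sum_h\sum_j(\umu_j/mC)\|x_j^h-x_j^{h-1}\|_j^2$ set aside for delay-gap absorption.

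\textbf{Injecting the dot-product terms and closing with coherence.} To introduce the three dot-product/coherence terms that do not appear naturally in the recursion, I would add and subtract $(2\lambda_s/m)\dotp{S(x^{s-d_s}),x^s-x^\ast}$ and decompose $x^s-x^\ast = (x^s-x^{s-d_s})+(x^{s-d_s}-x^\ast)$. The delay-gap piece $-(2\lambda_s/m)\dotp{S(x^{s-d_s}),x^s-x^{s-d_s}}$ is then controlled by a weighted Young's inequality with parameter $c=\lambda_s\tau_p C$: Cauchy--Schwarz in $\|\cdot\|$ followed by Young yields
\begin{align*}
-\frac{2\lambda_s}{m}\dotp{S(x^{s-d_s}),x^s-x^{s-d_s}} \leq \frac{\lambda_s^2\tau_p C}{m}\|S(x^{s-d_s})\|^2 + \frac{1}{mC\tau_p}\|x^s-x^{s-d_s}\|^2.
\end{align*}
The specific value of $c$ is chosen so that the standard delay bound $\|x^s-x^{s-d_s}\|^2 \leq \tau_p\sum_j\umu_j\sum_{h=s-\tau_p+1}^{s}\|x_j^h-x_j^{h-1}\|_j^2$ (coordinate expansion plus telescoping Cauchy--Schwarz) turns the second summand into exactly $-R$, cancelling the reservoir; for the first summand, the identity $(S(x^{s-d_s}))_j = \sum_i p_{ij}((Q_i^p(x^{s-d_s}))_j-(Q_i^\ast)_j)$ combined with $\sum_i p_{ij}\leq 1$ and Jensen gives $\|(S(x^{s-d_s}))_j\|_j^2 \leq \sum_i p_{ij}\,r_{ij}^p(x^{s-d_s})$, which produces the $\tau_p\umu_jC\lambda_s^2/m$ contribution inside the bracket multiplying $r_{ij}^p(x^{s-d_s})$. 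After cancelling the surplus $\sum p_{ij}\gamma_{ij}r_{ij}^p(x^{s-d_s})$ against its mirror in the lemma's bracket, the residual inequality reduces to
\begin{align*}
\frac{2(1-2\alpha)\lambda_s}{m}\sum_{i,j}\beta_{ij}n^2p_{ij}^2\, r_{ij}^p(x^{s-d_s}) \leq \frac{2(1-\alpha)\lambda_s}{m}\dotp{S(x^{s-d_s}),x^{s-d_s}-x^\ast},
\end{align*}
which the coherence condition~\eqref{eq:coherence} delivers directly when $\alpha\in[0,1/2]$ (since $1-2\alpha\leq 1-\alpha$ and the right side of~\eqref{eq:coherence} is nonnegative), and which is trivial when $\alpha\in(1/2,1]$ because the left side is then nonpositive. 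The main obstacle is the Young calibration: the parameter $c$ must be chosen so that one half of the split is pointwise bounded by $-R$ while the other half is expressible in terms of $r_{ij}^p(x^{s-d_s})$ with coefficients matching the lemma exactly; once $c=\lambda_s\tau_p C$ is identified, the remaining algebra is just bookkeeping.
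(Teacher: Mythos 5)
Your proposal is correct and takes essentially the same approach as the paper's proof: the same $\kappa_{1}/\kappa_{2}$ split, the same update-probability identity $\EE[r_{ij}^d(y_i^{s+1})\mid\cF_s]=(1-\rho p_{ij}^T)r_{ij}^d(y_i^s)+\rho p_{ij}^T r_{ij}^p(x^{s-d_s})$ with the telescoping $(h+1)/(\tau_d+1)$ weights, the same deterministic $\kappa_2$ telescope, and the same closing estimates (Cauchy--Schwarz/Young calibrated with $c=\lambda_s\tau_p C$ to cancel the reservoir, Jensen giving $\|(S(x^{s-d_s}))_j\|_j^2\leq\sum_i p_{ij}r_{ij}^p(x^{s-d_s})$, and the coherence condition~\eqref{eq:coherence} to settle the residual inequality). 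The only nit is that for pairs with $\mathbf{S}^\ast_{ij}=0$ your $\kappa_1$ expectation ``identity'' is really an inequality ($\leq$, since those dual variables stay at zero), which is the direction you need and is exactly how the paper handles it.
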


Lemmas in hand, we can now prove~\eqref{eq:asyncfejerinequality}. Compared to the synchronous case, we prove a refined bound, depending on the parameter $\alpha$, which will figure into our proof of linear convergence.
\begin{lemma}[Proof of~\eqref{eq:asyncfejerinequality}]\label{lem:ASYNCSAGADESCEND}
Equation~\eqref{eq:asyncfejerinequality} holds for all $s \in \NN$. Moreover, for every $s \in \NN$, $\alpha \in [0, 1]$ and $x^\ast \in \cS$, we have
\begin{align*}
&\EE\left[\|x^{s+1} - x^\ast\|^2 + \kappa_{s+1}\mid \cF_s\right] + Y_s \\
&\hspace{10pt}\leq \|x^s - x^\ast\|^2 + \kappa_s - \frac{2\alpha\lambda_s}{m}\dotp{S(x^{s-d_s}), x^{s-d_s} - x^\ast} + \sum_{j=1}^m \sum_{i=1}^n\frac{2\alpha\lambda_s n^2p_{ij}^2\beta_{ij}}{m}r_{ij}^p(x^{s-d_s}). \numberthis\label{eq:asyncfejerinequalityalpha}
\end{align*}
\end{lemma}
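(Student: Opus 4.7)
The plan is to follow the synchronous proof's structure, but with the asynchronous versions of the variance bound (Lemma~\ref{lem:AsyncFPRbound}) and the $\kappa_k$ recursion (Lemma~\ref{lem:kappa}) in place of Lemmas~\ref{eq:syncvariance} and~\ref{lem:kappasync1}. First I would expand
\[
\|x^{s+1}-x^\ast\|^2 = \|x^s - x^\ast\|^2 - 2\lambda_s\dotp{\mkQ^s, x^s - x^\ast} + \lambda_s^2\|\mkQ^s\|^2,
\]
take conditional expectation with respect to $\cF_s$ (so that $\EE[\mkQ^s\mid\cF_s]=m^{-1}S(x^{s-d_s})$), and use the norm-equivalence assumption $\|\mkQ^s\|^2\leq \sum_j\umu_j\|\mkQ_j^s\|_j^2$ to obtain an upper bound in terms of $\sum_j\umu_j\lambda_s^2\EE[\|\mkQ_j^s\|_j^2\mid\cF_s]$.

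Next I would add Lemma~\ref{lem:kappa}. The crucial observation is that the term $+\tfrac{2\lambda_s}{m}\dotp{S(x^{s-d_s}),x^s-x^\ast}$ appearing in the $\kappa$-recursion exactly cancels the term $-\tfrac{2\lambda_s}{m}\dotp{S(x^{s-d_s}),x^s-x^\ast}$ from the primal expansion. What survives from the inner-product pieces is precisely $-\tfrac{2\alpha\lambda_s}{m}\dotp{S(x^{s-d_s}),x^{s-d_s}-x^\ast}+\sum_{i,j}\tfrac{2\alpha\lambda_s n^2p_{ij}^2\beta_{ij}}{m}r_{ij}^p(x^{s-d_s})$, which is the extra $\alpha$-weighted expression on the right of~\eqref{eq:asyncfejerinequalityalpha}.

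Then I would combine the two contributions to $\lambda_s^2\EE[\|\mkQ_j^s\|_j^2\mid\cF_s]$: coefficient $\umu_j$ from the primal expansion and coefficient $\tau_p\umu_j/(mC)$ from Lemma~\ref{lem:kappa}, totalling $\lambda_s^2\umu_j(1+\tau_p/(mC))$. Applying Lemma~\ref{lem:AsyncFPRbound} with $\eta_j=\umu_j(1+\tau_p/(mC))$ (and then multiplying by $\lambda_s^2$) bounds this by a sum of $r_{ij}^p(x^{s-d_s})$, $r_{ij}^d(y_i^{s-e_{s}^i})$, $\|n^{-1}\sum_i y_{i,j}^{s-e_s^i}\|_j^2$ and $\|(S(x^{s-d_s}))_j\|_j^2$ terms. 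Using $\tfrac{2}{q_jm^2}\bigl(1+\tfrac{\tau_p}{mC}\bigr)=\tfrac{2}{q_jm^2}+\tfrac{2\tau_p}{q_jm^3C}$, I would then match coefficients: the $r_{ij}^p(x^{s-d_s})$ coefficient collapses to $-p_{ij}R_{2,ij}^s$ (after peeling off the retained $\alpha$-piece), the $r_{ij}^d(y_i^{s-e_s^i})$ coefficient collapses to $-p_{ij}R_{1,ij}^s$, and the $\|S\|^2$ and $\|n^{-1}\sum_iy^{s-e_s^i}_{i,j}\|_j^2$ coefficients agree with the first two components of $Y_s$; the stale-history terms $\sum_{h\neq\tau_d-e^i_{s,j}}\tfrac{p_{ij}\gamma_{ij}}{\tau_d+1}r_{ij}^d(y_i^{s-\tau_d+h})$ transfer directly from Lemma~\ref{lem:kappa} and form the remaining component of $Y_s$.

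Finally, the unrefined inequality~\eqref{eq:asyncfejerinequality} is the $\alpha=0$ instance of~\eqref{eq:asyncfejerinequalityalpha}; alternatively, the analogue of Lemma~\ref{eq:kappasync2} applied at $x^{s-d_s}$ shows $-\tfrac{2\alpha\lambda_s}{m}\dotp{S(x^{s-d_s}),x^{s-d_s}-x^\ast}+\sum_{i,j}\tfrac{2\alpha\lambda_s n^2p_{ij}^2\beta_{ij}}{m}r_{ij}^p(x^{s-d_s})\leq 0$ for every $\alpha\in[0,1]$, so the refined bound always implies the plain one. The main obstacle is not conceptual but bookkeeping: verifying that the coefficients produced by the variance bound and the $\kappa$-recursion assemble \emph{exactly} into the $R_{1,ij}^s$, $R_{2,ij}^s$, and $Y_s$ prescribed in the theorem. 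Once the inner-product cancellation is identified and the $\lambda_s^2\EE[\|\mkQ_j^s\|_j^2\mid\cF_s]$ coefficients are pooled before invoking the variance bound, the remainder is mechanical algebra.
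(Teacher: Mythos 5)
Your proposal is correct and follows essentially the same route as the paper's proof: expand $\|x^{s+1}-x^\ast\|^2$, exploit the cancellation of $\pm\frac{2\lambda_s}{m}\dotp{S(x^{s-d_s}),x^s-x^\ast}$ between the primal expansion and Lemma~\ref{lem:kappa}, pool the $\lambda_s^2\EE[\|\mkQ_j^s\|_j^2\mid\cF_s]$ coefficients into $\umu_j(1+\tau_p/(mC))$ before invoking Lemma~\ref{lem:AsyncFPRbound}, and match the resulting coefficients to $R_{1,ij}^s$, $R_{2,ij}^s$, and $Y_s$, with~\eqref{eq:asyncfejerinequality} recovered at $\alpha=0$. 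The only cosmetic difference is your added remark that the retained $\alpha$-terms are nonpositive by the coherence condition, which the paper does not need since it simply sets $\alpha=0$.
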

\begin{proof}
Fix $s \in \NN$. Then
\begin{align*}
&\EE\left[\|x^{s+1} - x^\ast\|^2 + \kappa_{s+1} \mid \cF_s\right] \\
&=\|x^s - x^\ast\|^2 - 2\lambda_s\EE\left[\dotp{\mkQ^s, x^s - x^\ast}\mid\cF_s\right]+ \lambda_s^2 \EE\left[\|\mkQ^s\|^2 \mid \cF_s\right] + \EE\left[ \kappa_{s+1}   \mid \cF_s\right]  \\
&=\|x^s - x^\ast\|^2 - \frac{2\lambda_s}{m}\dotp{S(x^{s-d_s}), x^s - x^\ast}+ \lambda_s^2 \EE\left[\|\mkQ^s\|^2 \mid \cF_s\right] + \EE\left[ \kappa_{s+1}   \mid \cF_s\right],  
\end{align*}
where the second equality follows from the linearity of expectation. In the above inequality, apply Lemma~\ref{lem:kappa} to $\EE\left[ \kappa_{s+1}   \mid \cF_s\right]$ and bound $\|\mkQ^s\|^2$ with the equivalence of $\|\cdot\|$ and $\|\cdot\|_\pr$: 
\begin{align*}
&\leq \|x^s - x^\ast\|^2 + \kappa_s - \frac{2\alpha\lambda_s}{m}\dotp{S(x^{s-d_s}), x^{s-d_s} - x^\ast}+ \lambda_s^2 \EE\left[\sum_{j=1}^m \umu_j\left(1 + \frac{\tau_p}{mC}\right) \|\mkQ_j^s\|_j^2 \mid \cF_s\right] \\
&\hspace{0pt}  + \sum_{j=1}^m \sum_{i=1}^n\frac{2\alpha\lambda_s n^2p_{ij}^2\beta_{ij}}{m}r_{ij}^p(x^{s-d_s}) -\sum_{j=1}^m\sum_{i=1}^n  \frac{p_{ij}\gamma_{ij}}{\tau_d +1}r_{ij}^d(y_i^{s-e^{i}_{s}}) \\
&\hspace{0pt} -  \sum_{j=1}^m\sum_{i=1}^n p_{ij}\left( \frac{2\lambda_s\beta_{ij}n^2p_{ij}}{m} - \frac{\tau_p\umu_j C\lambda_s^2}{m} - \gamma_{ij}\right)r_{ij}^p(x^{s-d_s})  \\
&\hspace{0pt}-  \sum_{j=1}^m \sum_{i=1}^n   \sum_{\substack{h = 0\\ h\neq \tau_d - e^{i}_{s, j} }}^{\tau_d}\frac{p_{ij}\gamma_{ij}}{\tau_d + 1}r_{ij}^d(y_i^{s - \tau_d +h}).
\end{align*}
Finally, apply Lemma~\ref{lem:AsyncFPRbound} to get
\begin{align*}
%%%Block 2
&\leq \|x^s - x^\ast\|^2 + \kappa_s - \frac{2\alpha\lambda_s}{m}\dotp{S(x^{s-d_s}), x^{s-d_s} - x^\ast} +  \sum_{j=1}^m \sum_{i=1}^n\frac{2\alpha\lambda_s n^2p_{ij}^2\beta_{ij}}{m}r_{ij}^p(x^{s-d_s}) \\
&\hspace{0pt} - \lambda_s^2\sum_{j=1}^m \frac{\umu_j}{m^2}\left(1 + \frac{\tau_p}{mC}\right)\|(S(x^{s-d_s}))_j\|_j^2  - 2\lambda_s^2\sum_{j=1}^m \frac{\umu_j}{q_jm^2}\left(1 + \frac{\tau_p}{mC}\right)\left\|\frac{1}{n} \sum_{i=1}^n y_{i,j}^{s-e_{s}^{i}}  \right\|^2_j\\
&\hspace{20pt} - \sum_{j=1}^m\sum_{i=1}^n  p_{ij}\left(\frac{\gamma_{ij}}{\tau_d+1} - \frac{2\umu_j}{q_jm^2}\left(1 + \frac{\tau_p}{mC}\right)\lambda_s^2 \right)r_{ij}^d(y_i^{s-e_{s}^{i}}) \\
&\hspace{0pt}  -\sum_{j=1}^m \sum_{i=1}^n p_{ij}\left(\frac{2\lambda_s\beta_{ij}n^2p_{ij}}{m} - \frac{\tau_p \umu_jC\lambda_s^2}{m} - \frac{2\umu_j}{q_jm^2}\left(1 + \frac{\tau_p}{mC}\right)\lambda_s^2 - \gamma_{ij}\right)r_{ij}^p(x^{s-d_s}) \\
&\hspace{0pt}-  \sum_{j=1}^m \sum_{i=1}^n   \sum_{\substack{h = 0\\ h\neq \tau_d - e_{s,j}^{i} }}^{\tau_d}\frac{p_{ij}\gamma_{ij}}{\tau_d + 1}r_{ij}^d(y_i^{s - \tau_d +h}),%\numberthis\label{thm:newalgoconvergence:eq:boundforlinear1}\\
\end{align*}
and combine terms
\begin{align*}
&\leq \|x^s - x^\ast\|^2 + \kappa_s - \frac{2\alpha\lambda_s}{m}\dotp{S(x^{s-d_s}), x^{s-d_s} - x^\ast}  + \sum_{j=1}^m \sum_{i=1}^n\frac{2\alpha\lambda_s n^2p_{ij}^2\beta_{ij}}{m}r_{ij}^p(x^{s-d_s})\\
&\hspace{0pt} - \lambda_s^2\sum_{j=1}^m \frac{\umu_j}{m^2}\left(1 + \frac{\tau_p}{mC}\right)\|(S(x^{s-d_s}))_j\|_j^2  - 2\lambda_s^2\sum_{j=1}^m \frac{\umu_j}{q_jm^2}\left(1 + \frac{\tau_p}{C}\right)\left\|\frac{1}{n} \sum_{i=1}^n y_{i,j}^{s-e_{s}^{i}}  \right\|^2_j\\
&\hspace{0pt} - \sum_{j=1}^m\sum_{i=1}^n  p_{ij}\left(\frac{\gamma_{ij}}{\tau_d+1} - \frac{2\umu_j}{q_jm^2}\left(1 + \frac{\tau_p}{mC}\right)\lambda_s^2 \right)r_{ij}^d(y_i^{s-e_{s}^{i}}) \\
&\hspace{0pt} -  \sum_{j=1}^m \sum_{i=1}^n p_{ij}\left( \frac{2\lambda_s n^2p_{ij}\beta_{ij}}{m} - \umu_j\left(\frac{2}{q_jm^2} + \tau_p\left(\frac{2}{q_j m^3 C} + \frac{C}{m}\right)\right)\lambda_s^2 - \gamma_{ij} \right)r_{ij}^p(x^{s-d_s})  \\
&\hspace{0pt} -  \sum_{j=1}^m \sum_{i=1}^n   \sum_{\substack{h=0\\ h\neq \tau_d - e_{s,j}^{i} }}^{\tau_d}\frac{p_{ij}\gamma_{ij}}{\tau_d + 1}r_{ij}^d(y_i^{s - \tau_d +h}) \\
&=\|x^s - x^\ast\|^2 + \kappa_s - Y_s - \frac{2\alpha\lambda_s}{m}\dotp{S(x^{s-d_s}), x^{s-d_s} - x^\ast} + \sum_{j=1}^m \sum_{i=1}^n\frac{2\alpha\lambda_s n^2p_{ij}^2\beta_{ij}}{m}r_{ij}^p(x^{s-d_s}).
\end{align*}
Inequality~\eqref{eq:asyncfejerinequality} is simply the above inequality with $\alpha = 0$.
\qed\end{proof}

We finish the proof of Parts~\ref{thm:newalgoconvergence:part:operatorvalues} and~\ref{thm:newalgoconvergence:part:weak} by proving Lemmas~\ref{lem:AsyncFPRbound} and~\ref{lem:kappa}.

\begin{proof}[of Lemma~\ref{lem:AsyncFPRbound} (variance bound)]
Up to a change of indices, the proof of Lemma~\ref{lem:AsyncFPRbound} follows the proof Lemma~\ref{eq:syncvariance} exactly; we omit the proof and avoid repeating ourselves.
\qed\end{proof}

\begin{proof}[Lemma~\ref{lem:kappa} (recursive $\kappa_k$ bound)]

The bound is the addition of two further bounds:
for all $s \in \NN$ and $\alpha \in [0,1]$, we have
\begin{enumerate}
\item \textbf{Recursive $\kappa_{1, k}$ bound.}
\begin{align*}
\EE\left[ \kappa_{1, s+1} \mid \cF_s\right] &\leq \kappa_{1,s} -\sum_{j=1}^m\sum_{i=1}^n  \frac{p_{ij} \gamma_{ij}}{\tau_d + 1}r_{ij}^d(y_i^{s - e^{i}_{s}})  + \sum_{j=1}^m \sum_{i=1}^n p_{ij} \gamma_{ij}r_{ij}^p(x^{s-d_s}) \\
&-  \sum_{j=1}^m \sum_{i=1}^n  \sum_{\substack{h = 0\\ h\neq \tau_d - e^{i}_{s, j} }}^{\tau_d}\frac{p_{ij}\gamma_{ij}}{\tau_d + 1}r_{ij}^d(y_i^{s - \tau_d +h}); \text{ and}
\end{align*} 
\item \textbf{Recursive $\kappa_{2, k}$ bound.}
\begin{align*}
\kappa_{2,s+1} &\leq \kappa_{2, s} + \frac{2\lambda_s}{m} \dotp{S(x^{s-d_s}), x^s - x^\ast} - \frac{2\alpha\lambda_s}{m}\dotp{S(x^{s-d_s}), x^{s-d_s} - x^\ast} \\
&+ \sum_{j=1}^m \sum_{i=1}^n\frac{2\alpha\lambda_s n^2p_{ij}^2\beta_{ij}}{m}r_{ij}^p(x^{s-d_s})\\
&-\sum_{j=1}^m\sum_{i=1}^n p_{ij}\left( \frac{2\beta_{ij}\lambda_sn^2p_{ij}}{m} - \frac{\tau_p\umu_jC}{m} \lambda_s^2 \right)r_{ij}^p(x^{s-d_s}) +\sum_{j=1}^m\frac{\tau_p\umu_j}{mC}\|\mkQ^s_j\|_j^2.
\end{align*}
\end{enumerate}

The proofs of the above bounds are orthogonal; the $\kappa_{1, k}$ bound is a consequence of how we sample variables in SMART, while the $\kappa_{2, k}$ bound is a consequence of~\eqref{eq:coherence}. Thus, we separately prove each bound.

\textbf{Proof of recursive $\kappa_{1, k}$ bound.} For any $i$ and $j$ with $\mathbf{S}_{ij}^\ast \neq 0$, we have
%\begin{align*}
$\EE\left[ r_{ij}^d(y_i^{s+1}) \mid \cF_s\right] = \left(1-\rho p_{ij}^T\right) r_{ij}^d(y_i^{s}) + \rho p_{ij}^T r_{ij}^p(x^{s-d_s})$
%\end{align*} 
because $r_{ij}^d(y_i^{s+1})$ depends only on $y_{i,j}^{s+1}$, not on its other components, and the probability of update for $(y_i^k)_j$ is $P((i_k, i) \in E, t_{k,j} \neq 0, \epsilon_k = 1) = \rho p_{ij}^T$. We can also break up the sum:
\begin{align*}
&\sum_{h = 0}^{\tau_d-1} \sum_{j=1}^m \sum_{i=1}^n \frac{(h+1)p_{ij}\gamma_{ij}}{\tau_d + 1} r_{ij}^d(y_i^{s + 1 - \tau_d + h}) \\
&= \sum_{h = 0}^{\tau_d - 1} \sum_{j=1}^m \sum_{i=1}^n \frac{(h+1)p_{ij}\gamma_{ij}}{\tau_d + 1}r_{ij}^d(y_i^{s - \tau_d +h}) + \sum_{j=1}^m \sum_{i=1}^n \frac{\tau_dp_{ij}\gamma_{ij}}{\tau_d + 1}r_{ij}^d(y_i^{s}) \\
&\hspace{20pt} - \sum_{j=1}^m \sum_{i=1}^n\frac{p_{ij} \gamma_{ij}}{\tau_d + 1}r_{ij}^d(y_i^{s -e^{i}_{s}}) -  \sum_{j=1}^m \sum_{i=1}^n  \sum_{\substack{h = 0\\ h\neq \tau_d - e^{i}_{s, j} }}^{\tau_d-1}\frac{p_{ij}\gamma_{ij}}{\tau_d + 1}r_{ij}^d(y_i^{s - \tau_d +h}) \\
&= \sum_{h = 0}^{\tau_d - 1} \sum_{j=1}^m \sum_{i=1}^n \frac{(h+1)p_{ij}\gamma_{ij}}{\tau_d + 1}r_{ij}^d(y_i^{s - \tau_d +h}) + \sum_{j=1}^m \sum_{i=1}^n p_{ij}\gamma_{ij}r_{ij}^d(y_i^{s}) \\
&\hspace{20pt} - \sum_{j=1}^m \sum_{i=1}^n\frac{p_{ij} \gamma_{ij}}{\tau_d + 1}r_{ij}^d(y_i^{s -e^{i}_{s}}) -  \sum_{j=1}^m \sum_{i=1}^n  \sum_{\substack{h = 0\\ h\neq \tau_d - e^{i}_{s, j} }}^{\tau_d}\frac{p_{ij}\gamma_{ij}}{\tau_d + 1}r_{ij}^d(y_i^{s - \tau_d +h})
\end{align*}
Therefore, 
\begin{align*}
\EE\left[ \kappa_{1,s+1} \mid \cF_s\right] &=\sum_{j=1}^m \sum_{i=1}^n\frac{\gamma_{ij}p_{ij}}{\rho p_{ij}^T}\EE\left[ r_{ij}^d(y_i^{s+1}) \mid \cF_s\right] + \sum_{h=0}^{\tau_d - 1} \sum_{j=1}^m \sum_{i=1}^n \frac{(h+1)p_{ij}\gamma_{ij}}{\tau_d + 1} r_{ij}^d(y_i^{s + 1 - \tau_d + h})  \\
%%%%
&\leq \sum_{j=1}^m \sum_{i=1}^n\left(\frac{\gamma_{ij}p_{ij}}{\rho p_{ij}^T} - p_{ij}\gamma_{ij}\right) r_{ij}^d(y_i^{s}) +  \sum_{j=1}^m \sum_{i=1}^np_{ij}\gamma_{ij} r_{ij}^p(x^{s-d_s})\\
&\hspace{20pt} +\sum_{h = 0}^{\tau_d - 1} \sum_{j=1}^m \sum_{i=1}^n \frac{(h+1)p_{ij}\gamma_{ij}}{\tau_d + 1}r_{ij}^d(y_i^{s - \tau_d +h}) + \sum_{j=1}^m \sum_{i=1}^n p_{ij}\gamma_{ij}r_{ij}^d(y_i^{s}) \\
&\hspace{20pt} - \sum_{j=1}^m \sum_{i=1}^n\frac{p_{ij} \gamma_{ij}}{\tau_d + 1}r_{ij}^d(y_i^{s -e^{i}_{s}}) -  \sum_{j=1}^m \sum_{i=1}^n  \sum_{\substack{h = 0\\ h\neq \tau_d - e^{i}_{s, j} }}^{\tau_d}\frac{p_{ij}\gamma_{ij}}{\tau_d + 1}r_{ij}^d(y_i^{s - \tau_d +h}),
\end{align*}
where the inequality uses the trivial bound: $ r_{ij}^p(x^{s-d_s})  \geq 0$ for all $i$ and $j$ with $\mathbf{S}_{ij}^\ast = 0$. The claimed inequality now follows by combinning terms.~\\

\textbf{Proof of the recursive $\kappa_{2, k}$ bound.} By Jensen's inequality, ($\sum_{i=1}^n p_{ij} = 1$)
\begin{align*}
&\sum_{j=1}^m \tau_p C\|(S(x^{s-d_s}))_j\|_j^2 = 
\sum_{j=1}^m \tau_p C\left\|\frac{1}{n}\sum_{i=1}^n\left[(S_i(x^{s-d_s}))_j - (S_i(x^\ast))_j\right]\right\|_j^2 \\
&=\sum_{j=1}^m \tau_p C\left\|\sum_{i=1}^np_{ij}\left[(Q_i^p(x^{s-d_s}))_j - (Q_{i}^\ast)_j\right]\right\|_j^2 \leq \sum_{j=1}^m \sum_{i=1}^np_{ij} \tau_p Cr_{ij}^p(x^{s-d_s}). %\numberthis \label{eq:Jensenbound}
\end{align*}
From~\eqref{eq:coherence}, 
%\begin{align*}
 $\dotp{S(x^{s-d_s}), x^{s-d_s} - x^\ast} \geq  \sum_{j=1}^m\sum_{i=1}^n\beta_{ij}\|(S_{i}(x^{s-d_s}))_j - (S_{i}(x^\ast))_j\|_j^2$ \\$\geq  \sum_{j=1}^m\sum_{i=1}^n \beta_{ij}n^2p_{ij}^2r_{ij}^p(x^{s-d_s}).$ %%\numberthis\label{thm:newalgoconvergence:eq:coco}
%\end{align*}
Because $x_j^h - x_j^{h+1} = \lambda_h\mkQ^h_j$ for all $h \in \NN$, 
\begin{align*}
&\sum_{j=1}^m\sum_{h = s-\tau_p+1}^{s}\frac{\umu_j}{C} \|x_j^{h} - x_j^{h-1}\|_j^2 = \sum_{j=1}^m \sum_{h=s-\tau_p+1}^s\frac{\umu_j(h - s + \tau_p)}{C} \|x_j^{h} - x_j^{h-1}\|_j^2 \\
&\hspace{0pt}- \sum_{j=1}^m \sum_{h=s-\tau_p+2}^{s+1}\frac{\umu_j(h - (s+1) + \tau_p)}{C} \|x_j^{h} - x_j^{h-1}\|_j^2 +  \lambda_s^2\sum_{j=1}^m\frac{\umu_j\tau_p}{C}\|\mkQ^s_j\|_j^2. \\
&= m\kappa_{2,s} - m\kappa_{2, s+1} + \lambda_s^2 \sum_{j=1}^m\frac{\umu_j\tau_p}{C}\|\mkQ^s_j\|_j^2. 
\end{align*}
To get the next bound, we apply the Cauchy-Schwarz inequality, Young's inequality, the equivalence of $\|\cdot\|$ and $\|\cdot\|_\pr$, and the convexity of $\|\cdot\|^2$---in that order: 
\begin{align*}
2\lambda_s &\dotp{S(x^{s-d_s}),\underbrace{\left(\sum_{h=s-d_{s,j}+1}^{s} (x_j^{h} - x_j^{h-1})\right)_{j=1}^m}_{:=T_1}} \geq -\|S(x^{s-d_s})\|\left\|T_1\right\| \\
&\geq-C\tau_p\lambda_s^2\|S(x^{s-d_s})\|^2 - \frac{1}{C\tau_p}\left\|T_1\right\|^2 \\
&\geq -C\tau_p\lambda_s^2\sum_{j=1}^n\umu_j\|(S(x^{s-d_s}))_j\|^2_{j}  - \frac{1}{C\tau_p}\sum_{j=1}^m\umu_j\left\|\sum_{h=s-d_{s,j}+1}^{s} (x_j^{h} - x_j^{h-1})\right\|_j^2\\
&\geq - \lambda_s^2\sum_{j=1}^m \tau_p C\umu_j\|(S(x^{s-d_s}))_j\|_j^2 -  \sum_{j=1}^m\sum_{h=s-\tau_p+1}^{s}\frac{\umu_j}{C} \|x_j^{h} - x_j^{h-1}\|_j^2.
\end{align*}

Therefore,
\begin{align*}
&2\lambda_s \dotp{S(x^{s-d_s}), x^s - x^\ast}  = 2\lambda_s \dotp{S(x^{s-d_s}), x^{s-d_s} - x^\ast} + 2\lambda_s \dotp{S(x^{s-d_s}),T_1}\\
&\geq 2\alpha\lambda_s \dotp{S(x^{s-d_s}), x^{s-d_s} - x^\ast} + 2(1-\alpha)\lambda_s\dotp{S(x^{s-d_s}), x^{s-d_s} - x^\ast}  \\
&\hspace{20pt} - \lambda_s^2\sum_{j=1}^m \tau_p C\umu_j\|(S(x^{s-d_s}))_j\|_j^2 -  \sum_{j=1}^m\sum_{h=s-\tau_p+1}^{s}\frac{\umu_j}{ C} \|x_j^{h} - x_j^{h-1}\|_j^2 \\
&\geq 2\alpha\lambda_s\dotp{S(x^{s-d_s}), x^{s-d_s} - x^\ast} - \sum_{j=1}^m \sum_{i=1}^n2\alpha\lambda_s n^2p_{ij}^2\beta_{ij}r_{ij}^p(x^{s-d_s}) \\
&\hspace{20pt} + \sum_{j=1}^m\sum_{i=1}^n p_{ij}\left( 2\lambda_s\beta_{ij}n^2p_{ij} - \tau_p C\lambda_s^2\right)r_{ij}^p(x^{s-d_s})  - \left(m\kappa_{2,s} - m\kappa_{2,s+1} + \lambda_s^2 \sum_{j=1}^m\frac{\tau_p}{C}\|\mkQ^s_j\|_j^2 \right).
\end{align*}
After a simple rearrangement of terms, this last bound completes the proof of the $\kappa_{2, k}$ bound, and, consequently, the proof of the lemma is complete. 
\qed\end{proof}

~\\

\textbf{Part~\ref{thm:newalgoconvergence:part:linear}}: 
In this part, we introduce two constants, called $T_{1,ij}$ and $T_{2, ij}$, which play the same role as $R_{1, ij}^k$ and $R_{2, ij}^k$ play in Parts~\ref{thm:newalgoconvergence:part:operatorvalues} and~\ref{thm:newalgoconvergence:part:weak} (in fact, $T_{1,ij} = R_{1, ij}^k$):
 \begin{align*}
 T_{1, ij} &:=  \frac{\gamma_{ij}}{\tau_d+1} - \frac{2\umu_j}{q_jm^2}\left(1 + \frac{\tau_p}{mC}\right)\lambda^2; \text{ and} \\
 T_{2, ij} &:=  \frac{2(1-\alpha)\lambda n^2p_{ij}\beta_{ij}}{m} - \umu_j\left(\frac{2}{q_jm^2} + \tau_p\left(\frac{2}{q_j m^3 C} + \frac{C}{m}\right)\right)\lambda^2 - \gamma_{ij} .
 \end{align*}
 Not only do we require that $\gamma_{ij}$ be chosen so that $T_{1, ij}$ and $T_{2, ij}$ are positive, but $\gamma_{ij}$ must be further constrained so that several other inequalities hold.  (We defer the proof of the lemma for a moment.)
\begin{lemma}[Choosing $\gamma_{ij}$]\label{eq:linearconstant}
Let $\lambda$ satisfy~\eqref{eq:asynclambdalinear}. Set 
\begin{align*}
\xi := \frac{2\alpha\mu\lambda}{m} && \text{ and } && \zeta :=  \frac{\mu\umu}{2.5\alpha}\left(1 + \frac{\tau_p\sqrt{\underline{q}}}{\sqrt{2(\tau_d +2)}}\right).
\end{align*} 
Then for all $i$ and $j$, there exists $\gamma_{ij} > 0$ such that 
 \begin{align*}
T_{1,ij} - \frac{\xi \gamma_{ij}}{\rho p_{ij}^T}  -  \frac{4\alpha\mu\delta \umu_j}{q_jm^3}\left(\frac{1}{\zeta } + \lambda\right) \lambda^2\geq 0; && && T_{2,ij} -  \frac{4\alpha\mu\delta \umu_j}{q_jm^3}\left(\frac{1}{\zeta } + \lambda\right) \lambda^2 \geq 0; \\
T_{1, ij}- \frac{2\umu_j\tau_p\lambda^2\xi}{Cq_jm^3(1-\xi)}  \geq 0;  && &&  T_{2, ij}- \frac{2\umu_j\tau_p\lambda^2\xi}{Cq_jm^3(1-\xi)}\geq 0. \numberthis \label{eq:nonnegativecoefficients}
 \end{align*}
 Moreover,  $1- \xi(\tau_d+1) \geq 0$. 
\end{lemma}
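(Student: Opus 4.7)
}
The strategy mirrors the synchronous argument in Lemma~\ref{eq:synclinearconstant}: all four inequalities in~\eqref{eq:nonnegativecoefficients} are affine in $\gamma_{ij}$, so each one reduces either to a lower bound or an upper bound on $\gamma_{ij}$, and the task is to verify that the intersection of the resulting intervals is nonempty.  I would first rewrite
$T_{1,ij} = \gamma_{ij}/(\tau_d+1) - A_j\lambda^2$ and $T_{2,ij} = B_{ij}\lambda - C_j\lambda^2 - \gamma_{ij}$
where $A_j$, $B_{ij}$, $C_j$ absorb the constants in the definitions of $T_{1,ij}$ and $T_{2,ij}$.  Then inequalities involving $T_{1,ij}$ are of the form $\gamma_{ij} \geq (\text{stuff})\lambda^2$ (lower bounds) while those involving $T_{2,ij}$ are of the form $\gamma_{ij} \leq B_{ij}\lambda - (\text{stuff})\lambda^2$ (upper bounds).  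The slight wrinkle is the extra $\xi\gamma_{ij}/(\rho p_{ij}^T)$ term in the first inequality; I would handle it by moving it to the left, getting $\gamma_{ij}[1/(\tau_d+1)-\xi/(\rho p_{ij}^T)] \geq (\text{stuff})\lambda^2$, and then clearing the bracket using the bound $\rho p_{ij}^T \geq \eta$.

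Second, I would dispense with the trailing condition $1-\xi(\tau_d+1) \geq 0$ first, since $\xi = 2\alpha\mu\lambda/m$ and the denominator in~\eqref{eq:asynclambdalinear} contains the term $4\mu(\tau_d+1)\alpha(1-\alpha)n^2p_{ij}\beta_{ij}$; inverting and multiplying shows directly that $\xi(\tau_d+1) \leq 1$.  The same observation ensures $1/(\tau_d+1)-\xi/(\rho p_{ij}^T) > 0$ once we plug in $\rho p_{ij}^T \geq \eta$, so the bracket above is positive and the manipulations that follow are valid.

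The main work is then the two-sided check: pick the tighter of the two lower bounds on $\gamma_{ij}$ (from inequalities (1) and (3)) and the tighter of the two upper bounds (from (2) and (4)), and show lower $\leq$ upper.  In each of the (up to) four pairs, subtracting lower from upper and collecting in powers of $\lambda$ yields an inequality of the shape $B_{ij}\lambda \geq (\text{sum of constants})\lambda^2$, which after dividing by $\lambda$ becomes a linear upper bound on $\lambda$.  I would then check that each of these bounds is implied by~\eqref{eq:asynclambdalinear} by matching the coefficients on the right-hand side term-by-term: the piece $2\umu_j\eta(\tau_d+2)/(q_jm)$ with its $(1 + \delta\eta/(\tau_d+1) + \cdots)$ factor accounts for the $A_j$ and for the $\zeta$-dependent correction $4\alpha\mu\delta\umu_j(\zeta^{-1}+\lambda)\lambda^2/(q_jm^3)$; the $\umu_j\eta\tau_p\sqrt{2(\tau_d+2)}/(m\sqrt{\underline{q}})\,(2+\eta/(1-\eta))$ piece absorbs the $C_j\lambda^2$ terms and the $2\umu_j\tau_p\lambda^2\xi/(Cq_jm^3(1-\xi))$ correction (using $C=\sqrt{2(\tau_d+2)}/(m\sqrt{\underline{q}})$ and bounding $1-\xi$ by a constant from the already-verified $1-\xi(\tau_d+1)\geq 0$); and the final $4\mu(\tau_d+1)\alpha(1-\alpha)n^2p_{ij}\beta_{ij}$ piece is exactly what is needed to absorb the $\xi\gamma_{ij}/(\rho p_{ij}^T)$ correction from inequality (1).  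Having shown every upper-minus-lower inequality holds, the interval is nonempty and we may set $\gamma_{ij}$ to, say, its midpoint.

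The main obstacle is not conceptual but bookkeeping: the denominator of~\eqref{eq:asynclambdalinear} is a sum of several structurally distinct terms, each designed to control a different coefficient that appears in $A_j$, $C_j$, or in one of the correction terms on the right of~\eqref{eq:nonnegativecoefficients}.  The only slightly delicate algebraic step is handling the coupling term $\xi\gamma_{ij}/(\rho p_{ij}^T)$ in inequality (1), where the same trick used in the synchronous proof---introducing a weight $w_{ij}\in(0,1)$ so that $\xi/(\rho p_{ij}^T)\leq w_{ij}/(\tau_d+1)$ and then bounding $(1-w_{ij})^{-1}$ by the corresponding factor $(1+\delta\eta/(\tau_d+1))$ implicit in~\eqref{eq:asynclambdalinear}---should once again close the gap cleanly.
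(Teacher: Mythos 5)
Your plan is correct and follows essentially the same route as the paper's proof: reduce the four affine-in-$\gamma_{ij}$ conditions to a nonempty-interval check, derive $\xi(\tau_d+1)\leq 1$ from the $4\mu(\tau_d+1)\alpha(1-\alpha)n^2p_{ij}\beta_{ij}$ term in the denominator of~\eqref{eq:asynclambdalinear}, absorb the coupling term $\xi\gamma_{ij}/(\rho p_{ij}^T)$ via the weight $w_{ij}$ and the bound $1-(\tau_d+1)\xi/(\rho p_{ij}^T)\geq 1-w_{ij}$, and then match the remaining corrections term-by-term against the denominator of~\eqref{eq:asynclambdalinear}. The only cosmetic difference is that the paper lumps all lower-bound corrections into a single quantity $R_1$ and all upper-bound corrections into $R_2$ (proving $(\tau_d+1)R_1\leq(1-w_{ij})R_2$ in one chain of estimates) rather than comparing the tightest pair of bounds, which changes nothing substantive.
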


The expectation $\EE\left[\|x^k - x^\ast\| + \kappa_k\right]$ contracts, not every iteration, but over each series of $\tau_p$ iterations, and for this reason we will apply~\eqref{eq:asyncfejerinequalityalpha} multiple times, saving nonnegative summands from later iterations to absorb positive summands from earlier iterations which would otherwise, if not absorbed, prevent the expectation from contracting at all. 

We split our descent into three stages: \emph{initial descent}, in which we descend one iteration, from iteration $k+1$ to iteration $k$; \emph{intermediate descent}, in which we descend at most $\tau_p$ iterations, from iteration $k$ to iteration $k - h$ for some $h\in \{0, \ldots, \tau_p\}$; and \emph{final descent}, in which we descend one final iteration, from iteration $k- h$ to iteration $k- h - 1$. In each stage of descent, we apply~\eqref{eq:asyncfejerinequalityalpha} between $1$ and $\tau_p$ times---with the law of iterated expectations being never mentioned, but always applied to turn all conditional expectation inequalities into unconditional expectation inequalities.

We no longer work with arbitrary zeros $x^\ast \in \cS$. Instead we work with the sequence of zeros $P_{\cS}(x^k)$, which, by definition, satisfy \begin{align*}
d_{\cS}^2(x^{k+1}) &= \|x^{k+1} - P_{\cS}(x^{k+1})\|^2 \leq \|x^{k+1} - P_{\cS}(x^{k-h})\|^2; \text{ and}\\
d_{\cS}^2(x^{k-h})  &= \|x^{k-h} - P_{\cS}(x^{k-h})\|^2 \leq \|x^{k-h} - P_{\cS}(x^{k-h-1})\|^2.
\end{align*}
for any $h \in \NN$. Below, our choice of $h$ is precisely the maximal delay of any coordinate of $x^{k-d_k}$: let the integers $j_1, \ldots, j_m \in \{1, \ldots, m\}$ order the delays from smallest to largest: $d_{k,j_1} \leq \ldots \leq d_{k, j_m}$. With this notation, $\delta \geq d_{k, j_l} -  d_{k,j_m}$ for $l = 1, \ldots, m$, and we set $h = d_{k,j_m}$.

\textbf{Initial descent (from iteration $k+1$ to iteration $k$).} Apply~\eqref{eq:asyncfejerinequalityalpha} to descend to the $k$th iteration:
\begin{align*}
&\EE\left[d_\cS^2(x^{k+1}) + \kappa_{k+1} \right] \\
&\leq \EE\left[\|x^{k+1} -P_{\cS}(x^{k-d_{k,j_m}} )\|^2 + \kappa_{k+1} \right] \qquad \text{(by definition of $P_{\cS}$)} \\
&\leq \EE\left[\|x^k - P_{\cS}(x^{k-d_{k,j_m}} )\|^2 + \kappa_s - \frac{2\alpha\lambda}{m}\dotp{S(x^{k-d_k}), x^{k-d_k} - P_{\cS}(x^{k-d_{k,j_m}} )}\right] \\
&\hspace{20pt} -\lambda^2\sum_{j=1}^m \frac{\umu_j}{m^2}\left(1 + \frac{\tau_p}{mC}\right)\EE\left[\|(S(x^{k-d_k}))_j\|_j^2\right] -\sum_{j=1}^m\sum_{i=1}^n  p_{ij}T_{1, ij}\EE\left[r_{ij}^d(y_i^{k-e^{i}_{k}})\right]   \\
&\hspace{20pt} -  \sum_{j=1}^m \sum_{i=1}^n p_{ij}T_{2, ij}\EE\left[r_{ij}^p(x^{k-d_k})\right] -  \sum_{j=1}^m \sum_{i=1}^n   \sum_{\substack{h=0\\ h\neq \tau_d - e^{i}_{k,j} }}^{\tau_d}\frac{p_{ij}\gamma_{ij}}{\tau_d + 1}\EE\left[r_{ij}^d(y_i^{k - \tau_d +h})\right]. \numberthis\label{eq:initialdescent}
\end{align*}

\textbf{Intermediate descent (from iteration $k$ to iteration $k-d_{k,j_m}$).} To bound our initial descent~\eqref{eq:initialdescent}, apply~\eqref{eq:asyncfejerinequalityalpha} a total of $d_{k,j_m}$ times:
\begin{align*}
&\eqref{eq:initialdescent} \leq \EE\left[\|x^{k-d_{k,j_m}} - P_{\cS}(x^{k-d_{k,j_m}})\|^2 + \kappa_{k-d_{k, j_m}} - \frac{2\alpha\lambda}{m}\dotp{S(x^{k-d_k}), x^{k-d_k} - P_{\cS}(x^{k-d_{k,j_m}})} \right] \\
&\hspace{0pt}-\lambda^2\sum_{j=1}^m \frac{\umu_j}{m^2}\left(1 + \frac{\tau_p}{mC}\right)\EE\left[\|(S(x^{k-d_k}))_j\|_j^2\right]  -\sum_{h=k-d_{k,j_m}}^k\sum_{j=1}^m\sum_{i=1}^n  p_{ij}T_{1, ij}\EE\left[r_{ij}^d(y_i^{h-e^{i}_{h}})\right]   \\
& \hspace{0pt}- \sum_{h=k-d_{k,j_m}}^k  \sum_{j=1}^m \sum_{i=1}^n p_{ij}T_{2, ij}\EE\left[r_{ij}^p(x^{h- d_{h}})\right] -  \sum_{j=1}^m \sum_{i=1}^n   \sum_{\substack{h=0\\ h\neq \tau_d - e^{j}_{k-d_{k,j_m}, j}}}^{\tau_d}\frac{p_{ij}\gamma_{ij}}{\tau_d + 1}\EE\left[r_{ij}^d(y_i^{k -d_{k,j_m}- \tau_d +h})\right].\numberthis\label{eq:intermediate_descent_1}
\end{align*}
As we descended to the $(k-d_{k,j_m})$th iteration, we dropped a lot of negative terms including $-2\alpha\lambda\dotp{S(x^{s-d_s}), x^{s-d_s} - P_{\cS}(x^{k-d_{k,j_m}} )}$ for all $s \in \{k-1, \ldots, k-d_{k,j_m}\}$. To show that the expectation contracts, we do not need these terms, so we omit them. 

We need to rearrange the terms in~\eqref{eq:intermediate_descent_1} in order to derive a contraction. First we squeeze a norm term out of the inner product term---this is one of only two times we use the essential strong quasi-monotonicity of $S$:
\begin{align*}
&\dotp{S(x^{k-d_k}), x^{k-d_k} - P_{\cS}(x^{k-d_{k,j_m}})} \\
&= \dotp{S(x^{k-d_k}), x^{k-d_k} -P_{\cS}(x^{k-d_k})} + \dotp{S(x^{k-d_k}), P_{\cS}(x^{k-d_k}) -P_{\cS}(x^{k-d_{k,j_m}} )} \\
&\geq \mu\|x^{k-d_k} - P_{\cS}(x^{k-d_k})\|^2 +  \dotp{S(x^{k-d_k}), P_{\cS}(x^{k-d_k}) -P_{\cS}(x^{k-d_{k,j_m}} )}.
\end{align*}
Then we simply split up the term
%\begin{align*}
$\|x^{k-d_{k,j_m}} - P_{\cS}(x^{k-d_{k,j_m}} )\|^2 = \left(1-2\alpha\mu\lambda m^{-1} \right)\|x^{k-d_{k,j_m}} - P_{\cS}(x^{k-d_{k,j_m}} )\|^2 + 2\alpha\mu\lambda m^{-1}\|x^{k-d_{k,j_m}} - P_{\cS}(x^{k-d_{k,j_m}} )\|^2$
%\end{align*}
and couple it with $\mu\|x^{k-d_k} - P_\cS(x^{k-d_k})\|^2$:
\begin{align*}
&\eqref{eq:intermediate_descent_1} \leq \EE\left[\left(1- \frac{2\mu\alpha\lambda}{m}\right)\|x^{k-d_{k,j_m}} - P_{\cS}(x^{k-d_{k,j_m}} )\|^2 + \kappa_{k-d_{k, j_m}} \right] \\
&\hspace{0pt}-\lambda^2\sum_{j=1}^m \frac{\umu_j}{m^2}\left(1 + \frac{\tau_p}{mC}\right)\EE\left[\|(S(x^{k-d_k}))_j\|_j^2\right]  -\sum_{h=k-d_{k,j_m}}^k\sum_{j=1}^m\sum_{i=1}^n  p_{ij}T_{1, ij}\EE\left[r_{ij}^d(y_i^{h-e^{i}_{h}})\right]  \\
& \hspace{0pt}- \sum_{h=k-d_{k,j_m}}^k  \sum_{j=1}^m \sum_{i=1}^n p_{ij}T_{2, ij}\EE\left[r_{ij}^p(x^{h - d_{h}})\right] \\
&-  \sum_{j=1}^m \sum_{i=1}^n   \sum_{\substack{h=0\\ h\neq \tau_d - e^{i}_{k-d_{k,j_m}, j}}}^{\tau_d}\frac{p_{ij}\gamma_{ij}}{\tau_d + 1}\EE\left[r_{ij}^d(y_i^{k -d_{k,j_m}- \tau_d +h})\right] \\
&\hspace{0pt}+\frac{2\alpha\mu\lambda}{m}\EE\left[\|x^{k-d_{k,j_m}} - P_{\cS}(x^{k-d_{k,j_m}} )\|^2 - \|x^{k-d_{k}} - P_\cS(x^{k-d_k})\|^2\right] \\
&\hspace{0pt}- \frac{2\alpha\lambda}{m}\EE\left[\dotp{S(x^{k-d_k}), P_\cS(x^{k-d_k}) - P_{\cS}(x^{k-d_{k,j_m}} )}\right]. \numberthis\label{eq:intermediate_descent_2}
\end{align*}
The final two lines of the above equations are preventing the expectation from contracting, so we absorb them into the other terms; if $\delta$ is $0$, these terms are $0$, as $x^{k- d_{k,j_m}} = x^{k-d_k}$, and there is no need for the next lemma.  (We defer the proof of the lemma for a moment.)
\begin{lemma}[Swapping zeros in the inconsistent case]\label{lem:altbound}
The following bound holds:
\begin{align*}
&\frac{2\alpha\mu\lambda}{m}\EE\left[\|x^{k-d_{k,j_m}} - P_{\cS}(x^{k-d_{k,j_m}} )\|^2 - \|x^{k-d_{k}} - P_\cS(x^{k-d_k})\|^2\right] \\
&\hspace{20pt}- \frac{2\alpha\lambda}{m}\EE\left[\dotp{S(x^{k-d_k}), P_\cS(x^{k-d_k}) - P_{\cS}(x^{k-d_{k,j_m}} )}\right] \\
&\leq 4\alpha\mu\lambda^3\sum_{h=k-d_{k, j_m}}^k\sum_{j = 1}^m \sum_{i=1}^n \frac{\delta p_{ij}\umu_j}{q_jm^3}\left(\frac{1}{\zeta\lambda} + 1\right)\EE\left[r_{ij}^p(x^{h-d_h})\right] \\
&\hspace{20pt} +4\alpha\mu\lambda^3\sum_{h=k-d_{k, j_m}}^k\sum_{j = 1}^m \sum_{i=1}^n \frac{\delta p_{ij}\umu_j}{q_jm^3}\left(\frac{1}{\zeta\lambda} + 1\right)\EE\left[r_{ij}^d(y_i^{h-e^{i}_{h}})\right]\\
&\hspace{20pt} +\lambda^2\sum_{j=1}^m\frac{\umu_j}{m^2}\left(1 + \frac{\tau_p}{mC}\right)\EE\left[\|(S(x^{k-d_k}))_j\|_j^2\right].
\end{align*}
\end{lemma}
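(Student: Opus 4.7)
The plan is to decompose the left-hand side into two effects, bound each using Young's inequality combined with essential strong quasi-monotonicity, and then convert the residual quadratic in $x^{k-d_k}-x^{k-d_{k,j_m}}$ into a telescoped sum of primal increments to which the variance bound (Lemma~\ref{lem:AsyncFPRbound}) can be applied.

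Step one handles the difference of projection-residual norms. Writing $u := x^{k-d_k}$, $v := x^{k-d_{k,j_m}}$, and using $\|v-P_\cS v\|^2 \le \|v-P_\cS u\|^2$ (since $P_\cS v$ is the closest zero to $v$), expanding $v - P_\cS u = (v-u) + (u - P_\cS u)$ and applying Young's inequality with parameter $\zeta$ yields
\begin{align*}
\|v-P_\cS v\|^2 - \|u-P_\cS u\|^2 \le (1+\zeta)\|v-u\|^2 + \tfrac{1}{\zeta}\|u-P_\cS u\|^2.
\end{align*}
The last term is then converted into an $\|S(u)\|^2$ term through the consequence $\mu^2\|u-P_\cS u\|^2 \le \|S(u)\|^2$ of~\eqref{eq:essstrongquasimono} (obtained by Cauchy-Schwarz on $\mu\|u-P_\cS u\|^2 \le \dotp{S(u), u-P_\cS u}$).

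Step two handles the inner product $-\dotp{S(u), P_\cS u - P_\cS v}$: Cauchy-Schwarz, non-expansiveness of the projector ($\|P_\cS u - P_\cS v\| \le \|u-v\|$), and Young's inequality split this term into a constant times $\|S(u)\|^2$ and a constant times $\|v-u\|^2$. Combining with step one, the full left-hand side is bounded by a coefficient times $\|v-u\|^2$ plus a coefficient times $\|S(u)\|^2$. The Young's parameters are tuned so that the $\|S(u)\|^2$ coefficient is at most $\lambda^2/m^2 \cdot (1 + \tau_p/(mC))$; after applying $\|S(u)\|^2 \le \sum_j \umu_j \|(S(u))_j\|_j^2$, this produces exactly the third term on the right-hand side of the claimed bound. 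The specific form of $\zeta$ in Lemma~\ref{eq:linearconstant} is dictated by this balancing.

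Step three turns $\|v-u\|^2$ into $r^p_{ij}$ and $r^d_{ij}$ terms. Since $j$th coordinates differ by at most $\delta$ primal increments,
\begin{align*}
\|(u)_j - (v)_j\|_j^2 = \Bigl\| \lambda \sum_{h = k-d_{k,j_m}}^{k-d_{k,j}-1} \mkQ_j^h \Bigr\|_j^2 \le \delta \lambda^2 \sum_{h=k-d_{k,j_m}}^{k-1} \|\mkQ_j^h\|_j^2
\end{align*}
by Jensen's inequality. Summing with weights $\umu_j$ and using the norm equivalence yields $\|v-u\|^2 \le \delta \lambda^2 \sum_h \sum_j \umu_j \|\mkQ_j^h\|_j^2$. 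Taking expectation and invoking Lemma~\ref{lem:AsyncFPRbound} with $\eta_j = \umu_j$ (after dropping the negative terms, which we do not need here) produces the sums $\sum_h \sum_j \sum_i \tfrac{p_{ij}\umu_j}{q_jm^2}\EE[r^p_{ij} + r^d_{ij}]$. Multiplying by the prefactor $\tfrac{2\alpha\mu\lambda(1+\zeta)}{m}$ (from step one) plus the analogous prefactor from step two gives exactly the claimed coefficient $4\alpha\mu\lambda^3 \tfrac{\delta p_{ij}\umu_j}{q_j m^3}(\tfrac{1}{\zeta\lambda} + 1)$.

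The main obstacle is the joint tuning of Young's parameters in steps one and two: the coefficient of $\|S(u)\|^2$ must be small enough to be absorbed into the $\lambda^2 \umu_j/m^2(1 + \tau_p/(mC))$ negative term already present in Lemma~\ref{lem:ASYNCSAGADESCEND}, while simultaneously the induced coefficient on $\|v-u\|^2$ must collapse to the neat form $\Theta(\alpha\mu\lambda/m)\cdot(1 + \tfrac{1}{\mu\zeta\lambda})$ after combination. This pins $\zeta$ to the precise value chosen in Lemma~\ref{eq:linearconstant}, where the factor $\umu = \min_j \umu_j$ enters through the use of $\|S\|^2 \le \sum_j \umu_j \|(S)_j\|_j^2$.
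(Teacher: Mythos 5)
Your skeleton matches the paper's (Young's inequality plus essential strong quasi-monotonicity to dispose of the projection residual, then telescoping and the variance bound for $\|x^{k-d_{k,j_m}}-x^{k-d_k}\|^2$), but two of your steps do not reproduce the stated constants, and one of them is fatal rather than cosmetic. In step one you apply Young so that the leftover term is $\tfrac{1}{\zeta}\|x^{k-d_k}-P_\cS(x^{k-d_k})\|^2$; after converting via $\mu^2\|x^{k-d_k}-P_\cS(x^{k-d_k})\|^2\le\|S(x^{k-d_k})\|^2$ and multiplying by the prefactor $\tfrac{2\alpha\mu\lambda}{m}$, this contributes $\tfrac{2\alpha\lambda}{m\zeta\mu}\|S(x^{k-d_k})\|^2$, which is $O(\lambda)$. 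The only slack available to absorb $\|S\|^2$ terms is the $O(\lambda^2)$ quantity $\lambda^2\sum_j\tfrac{\umu_j}{m^2}(1+\tfrac{\tau_p}{mC})\|(S(x^{k-d_k}))_j\|_j^2$ on the right-hand side of the lemma, so the absorption fails for small $\lambda$. The paper applies Young with the reciprocal parameter, getting $(1+\tfrac{1}{\zeta\lambda})\|\cdot\|^2+\zeta\lambda\,\|x^{k-d_k}-P_\cS(x^{k-d_k})\|^2$, whose residual scales as $\zeta\lambda^2/\mu$ and is absorbable; this choice is also what produces the factor $(\tfrac{1}{\zeta\lambda}+1)$ in the target coefficient. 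Your factor $(1+\zeta)$ is not $(\tfrac{1}{\zeta\lambda}+1)$, so your final accounting cannot ``give exactly the claimed coefficient.''

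The second, milder issue: you bound $\|P_\cS(x^{k-d_k})-P_\cS(x^{k-d_{k,j_m}})\|^2$ by $\|x^{k-d_k}-x^{k-d_{k,j_m}}\|^2$ via plain nonexpansiveness. The paper instead uses \emph{firm} nonexpansiveness of $P_\cS$, which, with $u:=x^{k-d_k}$ and $v:=x^{k-d_{k,j_m}}$, gives $\|(v-P_\cS v)-(u-P_\cS u)\|^2\le\|v-u\|^2-\|P_\cS v-P_\cS u\|^2$; the retained negative $-\|P_\cS v-P_\cS u\|^2$ exactly cancels the positive multiple of $\|P_\cS v-P_\cS u\|^2$ produced by Cauchy--Schwarz/Young on $\dotp{S(x^{k-d_k}),P_\cS(x^{k-d_k})-P_\cS(x^{k-d_{k,j_m}})}$. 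Folding that term into the $\|v-u\|^2$ coefficient instead, as you propose, roughly doubles the $\tfrac{1}{\zeta\lambda}$ part and overshoots the stated constant $4\alpha\mu\lambda^3\tfrac{\delta p_{ij}\umu_j}{q_jm^3}(\tfrac{1}{\zeta\lambda}+1)$, which Lemma~\ref{eq:linearconstant} consumes as written. Your step three (telescoping over at most $\delta$ coordinate increments, Jensen, then Lemma~\ref{lem:AsyncFPRbound} with $\eta_j=\umu_j$ and the negative terms dropped) is correct and identical to the paper's.
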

Lemma~\ref{lem:altbound} takes care of the offending terms in~\eqref{eq:intermediate_descent_2}:
\begin{align*}
\eqref{eq:intermediate_descent_2} &\leq \EE\left[\left(1- \frac{2\mu\alpha\lambda}{m} \right)\|x^{k-d_{k,j_m}} - P_{\cS}(x^{k-d_{k,j_m}})\|^2 + \kappa_{k-d_{k, j_m}} \right] \\
&\hspace{20pt} -\sum_{h=k-d_{k,j_m}}^k\sum_{j=1}^m\sum_{i=1}^n  p_{ij}\left(T_{1,ij} -  \frac{4\alpha\mu\delta \umu_j}{q_jm^3}\left(\frac{1}{\zeta } + \lambda\right) \lambda^2 \right)\EE\left[r_{ij}^d(y_i^{h-e^{i}_{h}})\right]\\
&\hspace{20pt}  -\sum_{h=k-d_{k,j_m}}^k \sum_{j=1}^m \sum_{i=1}^n p_{ij}\left(T_{2,ij} -  \frac{4\alpha\mu\delta \umu_j}{q_jm^3}\left(\frac{1}{\zeta } + \lambda\right) \lambda^2  \right)\EE\left[r_{ij}^p(x^{h - d_{h}})\right] \\
& \hspace{20pt}-  \sum_{j=1}^m \sum_{i=1}^n   \sum_{\substack{h = 0\\ h\neq \tau_d - e^{i}_{k-d_{k,j_m},j} }}^{\tau_d}\frac{p_{ij}\gamma_{ij}}{\tau_d + 1}\EE\left[r_{ij}^d(y_i^{k - d_{k,j_m}- \tau_d +h})\right] .\numberthis\label{eq:intermediate_descent_3}
\end{align*}
Our intermediate descent is completed simply by writing $\kappa_{k-d_{k,j_m}} = (1-\xi) \kappa_{k-d_{k,j_m}}  + \xi\kappa_{k-d_{k,j_m}}$ and absorbing the $\xi\kappa_{1, k-d_{k, j_m}}$ terms in the second and fourth lines of the above inequality (we add $\xi\gamma_{ij}(\rho p_{ij}^T)^{-1}$ to all terms on the second line even though it only appears in the $h = k - d_{k,j_m}$ term in $\xi\kappa_{1,k-d_{k, j_m}}$):
\begin{align*}
\eqref{eq:intermediate_descent_3}& \leq\EE\left[\left(1-\xi\right)\left(\|x^{k-d_{k,j_m}} - P_{\cS}(x^{k-d_{k,j_m}})\|^2 + \kappa_{k-d_{k,j_m}}\right) + \xi\kappa_{2, k-d_{k,j_m}} \right] \\
&\hspace{20pt} -\sum_{h=k-d_{k,j_m}}^k\sum_{j=1}^m\sum_{i=1}^n  p_{ij}\left(T_{1,ij} - \frac{\xi \gamma_{ij}}{\rho p_{ij}^T}  -  \frac{4\alpha\mu\delta \umu_j}{q_jm^3}\left(\frac{1}{\zeta } + \lambda\right) \lambda^2\right)\EE\left[r_{ij}^d(y_i^{h-e^{i}_{h}})\right] \\
&\hspace{20pt}  -\sum_{t=k-d_{k,j_m}}^k \sum_{j=1}^m \sum_{i=1}^n p_{ij}\left( T_{2, ij} -  \frac{4\alpha\mu\delta \umu_j}{q_jm^3}\left(\frac{1}{\zeta } + \lambda\right) \lambda^2 \right)\EE\left[r_{ij}^p(x^{h - d_{h}})\right] \\
&\hspace{20pt} -  \sum_{j=1}^m \sum_{i=1}^n   \sum_{\substack{h=0\\ h\neq \tau_d - e^{i}_{k-d_{k,j_m}, j} }}^{\tau_d}\frac{p_{ij}\gamma_{ij}}{\tau_d + 1}\left( 1- \xi(h+1)\right)\EE\left[r_{ij}^d(y_i^{k - d_{k,j_m}- \tau_d +h})\right] .\numberthis\label{eq:intermediate_descent_4}
\end{align*}

\textbf{Final descent (from iteration $k-d_{k,j_m}$ to iteration $k-d_{k,j_m}-1$).}  The expectation is nearly contracting by a factor of $(1-\xi)$. The term $\xi\kappa_{2, k-d_{k,j_m}}$ is all that stands in our way, but it can, by descending one more iteration, be absorbed.  (We defer the proof of the lemma for a moment.)
\begin{lemma}[Recursive $\kappa_{2, k}$ bound]\label{eq:kappa2bound}
For all $s\in \NN$, 
\begin{align*}
\EE\left[\kappa_{2,s}\right] &\leq \lambda^2 \sum_{j=1}^m \sum_{i=1}^n \frac{2\umu_j \tau_p p_{ij}}{Cq_jm^3}\EE\left[r_{ij}^p(x^{s-1 - d_{s-1}})\right]  \\
&\hspace{20pt}+ \lambda^2 \sum_{j=1}^m \sum_{i=1}^n \frac{2\umu_j \tau_p p_{ij}}{Cq_jm^3}\EE\left[r_{ij}^d(y_i^{s-1 - e^{i}_{s-1}})\right]+  \frac{\tau_p}{\tau_p+1} \EE\left[ \kappa_{2,s-1}\right].
\end{align*}
\end{lemma}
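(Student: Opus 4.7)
The plan is to split $\kappa_{2,s}$ into its newest summand (the $h=s$ term) and the remaining older summands, to absorb the older summands into a scaled copy of $\kappa_{2,s-1}$ via a term-by-term weight comparison, and then to convert the isolated new summand into a variance expression in $\mkQ^{s-1}$ so that Lemma~\ref{lem:AsyncFPRbound} applies. This is essentially bookkeeping; the only conceptual step is the weight comparison, and the rest is mechanical.

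First I would unfold the definition and isolate the newest increment,
\begin{align*}
\kappa_{2,s} = \sum_{j=1}^m \frac{\umu_j \tau_p}{mC}\|x_j^s - x_j^{s-1}\|_j^2 + \sum_{h=s-\tau_p+1}^{s-1}\sum_{j=1}^m \frac{\umu_j(h-s+\tau_p)}{mC}\|x_j^{h} - x_j^{h-1}\|_j^2.
\end{align*}
For each $h \in \{s-\tau_p+1,\ldots,s-1\}$ the same difference $\|x_j^h - x_j^{h-1}\|_j^2$ appears in $\kappa_{2,s-1}$ with weight $(h-s+1+\tau_p)/(mC)$. The algebraic inequality $(h-s+\tau_p)(\tau_p+1) \leq \tau_p(h-s+1+\tau_p)$, which reduces to $h \leq s$, shows that each old summand of $\kappa_{2,s}$ is dominated by $\tau_p/(\tau_p+1)$ times its counterpart in $\kappa_{2,s-1}$. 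Since the extra $h=s-\tau_p$ summand of $\kappa_{2,s-1}$ is nonnegative, this yields the deterministic bound
\begin{align*}
\kappa_{2,s} \;\leq\; \frac{\tau_p}{mC}\sum_{j=1}^m \umu_j \|x_j^s - x_j^{s-1}\|_j^2 \;+\; \frac{\tau_p}{\tau_p+1}\,\kappa_{2,s-1}.
\end{align*}

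Second, I would use the primal update $x_j^s - x_j^{s-1} = -\lambda_{s-1}\mkQ_j^{s-1}$ to rewrite the isolated term as $(\tau_p\lambda_{s-1}^2/(mC))\sum_j \umu_j \|\mkQ_j^{s-1}\|_j^2$. Conditioning on $\cF_{s-1}$ and invoking Lemma~\ref{lem:AsyncFPRbound} with weights $\eta_j = \umu_j$ at index $s-1$, then discarding the two manifestly nonpositive terms on the right-hand side of that lemma, gives
\begin{align*}
\sum_{j=1}^m \umu_j\, \EE\!\left[\|\mkQ_j^{s-1}\|_j^2 \,\big|\, \cF_{s-1}\right] \;\leq\; 2\sum_{j=1}^m\sum_{i=1}^n \frac{p_{ij}\umu_j}{q_jm^2}\Bigl(r_{ij}^p(x^{s-1-d_{s-1}}) + r_{ij}^d(y_i^{s-1-e^{i}_{s-1}})\Bigr).
\end{align*}
Taking the full expectation of the deterministic bound above, substituting this inequality, and using $\lambda_{s-1}\leq \lambda$ (or, in the constant-stepsize regime used for the linear-rate result, $\lambda_{s-1}=\lambda$), the prefactor $\tau_p\lambda^2/(mC)$ multiplied through reproduces exactly the claimed coefficients $2\umu_j\tau_p p_{ij}/(Cq_jm^3)$. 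The tower property then turns the conditional expectation into an unconditional one, completing the argument.

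I do not anticipate a genuine obstacle: the weight inequality $(h-s+\tau_p)/(h-s+1+\tau_p) \leq \tau_p/(\tau_p+1)$ is the one place where something must be verified, and once it is in hand the rest is a direct substitution followed by an already-proved variance bound.
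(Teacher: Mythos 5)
Your proposal is correct and follows essentially the same route as the paper: isolate the $h=s$ summand with weight $\umu_j\tau_p/(mC)$, dominate the older summands by $\tfrac{\tau_p}{\tau_p+1}\kappa_{2,s-1}$ via the weight comparison, rewrite the new summand using $x_j^s - x_j^{s-1} = -\lambda_{s-1}\mkQ_j^{s-1}$, and apply the variance bound of Lemma~\ref{lem:AsyncFPRbound} while dropping its nonpositive terms. The only difference is that you make the term-by-term inequality $(h-s+\tau_p)(\tau_p+1)\leq\tau_p(h-s+1+\tau_p)$ explicit, which the paper leaves implicit.
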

By Equation~\eqref{eq:nonnegativecoefficients}, the last three lines in~\eqref{eq:intermediate_descent_4} are nonnegative; drop these nonnegative terms, use the bound $\|x^{k-d_{k,j_m}} - P_{\cS}(x^{k-d_{k,j_m}})\|^2 \leq \|x^{k-d_{k,j_m}} - P_{\cS}(x^{k-d_{k,j_m}-1})\|^2$, descend one more step with the aid of~\eqref{eq:asyncfejerinequalityalpha}, and use Lemma~\ref{eq:kappa2bound} to bound $\EE\left[\xi\kappa_{2, k-d_{k,j_m}}\right]$---in that order: if $h = k-d_{k,j_m}-1$, then
\begin{align*}
\eqref{eq:intermediate_descent_4} &\leq \EE\left[(1-\xi)\left(\|x^{h} - P_{\cS}(x^{h})\|^2 + \kappa_{h}\right) + \frac{\tau_p}{\tau_p+1}\xi\kappa_{2,h} \right] \\
&\hspace{20pt} - \left(1-\xi\right) \sum_{j=1}^m\sum_{i=1}^n  p_{ij}T_{1, ij}\EE\left[r_{ij}^d(y_i^{h-e^{i}_{h}})\right] +\xi\sum_{j=1}^m\sum_{i=1}^n \frac{2\umu_j \tau_p p_{ij}\lambda^2}{Cq_jm^3}\EE\left[r_{ij}^d(y_i^{h - e^{i}_{h}})\right] \\%\EE\left[\left\|(Q_{i}(\phi_{a(i)}^{k-d_{k}-1}))_j - (Q_i(x^\ast))_j\right\|_j^2\right]\\
&\hspace{20pt} - \left(1-\xi\right)  \sum_{j=1}^m \sum_{i=1}^n p_{ij}T_{2, ij}\EE\left[r_{ij}^p(x^{h - d_{h}})\right]  +\xi \sum_{j=1}^m \sum_{i=1}^n \frac{2\umu_j \tau_p p_{ij}\lambda^2}{Cq_jm^3}\EE\left[r_{ij}^p(x^{h -d_{h}})\right] \\%\EE\left[\left\|(Q_{i}(x^{k-d_k -1 -d_{k-d_k-1}}))_j - (Q_i(x^\ast))_j\right\|_j^2\right]\\
&\leq\EE\left[\left(1-\frac{\xi}{\tau_p+1}\right)\left(\|x^{h} - P_{\cS}(x^{h})\|^2 + \kappa_{h}\right) \right] \\
&\hspace{20pt} - \left(1-\xi\right) \sum_{j=1}^m\sum_{i=1}^n  p_{ij}\left(T_{1, ij} -  \frac{2\umu_j\tau_p\lambda^2\xi}{Cq_jm^3(1-\xi)} \right)\EE\left[r_{ij}^d(y_i^{h-e^{i}_{h}})\right]\\
&\hspace{20pt} - \left(1-\xi\right)  \sum_{j=1}^m \sum_{i=1}^n p_{ij}\left( T_{2, ij}- \frac{2\umu_j\tau_p\lambda^2\xi}{Cq_jm^3(1-\xi)} \right)\EE\left[r_{ij}^p(x^{h - d_{h}})\right].
\end{align*}
Equation~\eqref{eq:nonnegativecoefficients} implies that the last two lines of the equation are negative, so 
\begin{align*}
\EE\left[d_\cS(x^{k+1})^2 + \kappa_{k+1} \right] &\leq  \left(1-\frac{\xi}{\tau_p+1}\right)\EE\left[d_\cS^2(x^{k-d_{k,j_m} - 1})  + \kappa_{k+1} \right] \\
&\leq \left(1-\frac{\xi}{\tau_p+1}\right)\max_{h \in  \{k-\tau_p - 1, \ldots, k-1\}}\left\{\EE\left[d_\cS^2(x^{h})+ \kappa_{h}\right]\right\}. 
\end{align*}
holds for all $k$. Set $C(z^0, \phi^0) := \kappa_0$ and unfold these recursive bounds with the next lemma to get the rate. 
\begin{lemma}[Linear convergence rate of sequences that contract within a fixed number of steps]{\cite[Lemma 6]{peng2015arock}}]
Let $\{a_k\}_{k \in \NN}$ be a nonnegative sequence of real numbers. Suppose that there exists $\rho \in [0, 1)$ and $\kappa  \in \NN$ such that for all $k \in \NN$, we have
$a_{k+1} \leq \rho\max_{k - \kappa \leq h \leq k} a_k.$
Then for all $s \in \NN$, we have
$a_{k} \leq \rho^{k/(\kappa+1)}a_0.$
\end{lemma}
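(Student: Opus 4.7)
The plan is to prove the lemma by tracking a window-max quantity and showing that it decays by a factor of $\rho$ over every block of $\kappa+1$ steps, then interpolating between blocks.

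First I would introduce the window-max $M_k := \max_{\max(0,k-\kappa) \leq h \leq k} a_h$, so that the hypothesis reads $a_{k+1} \leq \rho M_k$ and $M_0 = a_0$. The first easy observation is that $M_k$ is nonincreasing: since $M_k$ differs from $M_{k-1}$ only by including $a_k$ (and possibly dropping an older term), and $a_k \leq \rho M_{k-1} \leq M_{k-1}$, we get $M_k \leq M_{k-1}$ for all $k \geq 1$.

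The core step is the block estimate $M_{(\kappa+1)m} \leq \rho^m a_0$, proved by induction on $m$. The base case $m = 0$ is immediate. For the inductive step, assume $M_{(\kappa+1)m} \leq \rho^m a_0$ and consider any index $k$ with $(\kappa+1)m + 1 \leq k \leq (\kappa+1)(m+1)$. The hypothesis gives $a_k \leq \rho M_{k-1}$, and since $M$ is nonincreasing and $k-1 \geq (\kappa+1)m$, we have $M_{k-1} \leq M_{(\kappa+1)m} \leq \rho^m a_0$, so $a_k \leq \rho^{m+1} a_0$. Then
\begin{equation*}
M_{(\kappa+1)(m+1)} = \max_{(\kappa+1)m + 1 \leq h \leq (\kappa+1)(m+1)} a_h \leq \rho^{m+1} a_0,
\end{equation*}
which closes the induction (the lower limit of the max simplifies because $(\kappa+1)(m+1) - \kappa = (\kappa+1)m + 1$).

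To finish, I would handle arbitrary $k$ by writing $k = (\kappa+1)m + r$ with $0 \leq r \leq \kappa$. If $r = 0$, then $a_k \leq M_k \leq \rho^m a_0 = \rho^{k/(\kappa+1)}a_0$. If $r \geq 1$, then $a_k \leq \rho M_{k-1} \leq \rho \cdot \rho^m a_0 = \rho^{m+1} a_0$, and since $r \leq \kappa < \kappa + 1$ we have $k/(\kappa+1) = m + r/(\kappa+1) < m+1$, so $\rho^{m+1} \leq \rho^{k/(\kappa+1)}$ (recall $\rho \in [0,1)$). In either case $a_k \leq \rho^{k/(\kappa+1)} a_0$. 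There is no real obstacle here; the only subtlety to check carefully is the monotonicity of $M_k$, because without it one cannot chain the block estimates across the boundary between consecutive blocks of length $\kappa+1$.
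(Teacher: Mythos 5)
Your proof is correct. The paper itself does not prove this lemma; it simply cites it as Lemma~6 of the ARock paper \cite{peng2015arock}, so there is no in-paper argument to compare against. Your self-contained route is sound: the monotonicity of the window-max $M_k$ (which, as you note, is the one point that genuinely needs checking, and follows from $a_k \leq \rho M_{k-1} \leq M_{k-1}$ together with the window shifting forward), the block estimate $M_{(\kappa+1)m} \leq \rho^m a_0$ by induction using $(\kappa+1)(m+1)-\kappa = (\kappa+1)m+1$, and the interpolation step using $\rho \in [0,1)$ so that $\rho^{m+1} \leq \rho^{k/(\kappa+1)}$ when $k < (\kappa+1)(m+1)$, all hold. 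You also correctly read the paper's statement modulo its typos (the max should run over $a_h$, and the conclusion's index is $k$, not $s$).
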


We finish the proof of Part~\ref{thm:newalgoconvergence:part:linear} by proving Lemmas~\ref{eq:linearconstant}, \ref{lem:altbound}, and \ref{eq:kappa2bound}.

\begin{proof}[of Lemma~\ref{eq:linearconstant} (choosing $\gamma_{ij}$)]
We assume that $\lambda$ satisfies~\eqref{eq:asynclambdalinear}
\begin{align*}
\lambda &\leq \min_{i,j}\left\{\frac{2\eta(1-\alpha)n^2p_{ij}\beta_{ij}}{\frac{2\umu_j\eta(\tau_d + 2)}{q_jm}\left(1 + \frac{\delta\eta}{\tau_d + 1} + \frac{5\sqrt{2(\tau_d + 2)}\alpha^2\delta}{m\umu \left( \sqrt{2(\tau_d + 2)} + \tau_p \sqrt{\underline{q}} \right) }\right) + \frac{\umu_j\eta\tau_p\sqrt{2(\tau_d + 2)}}{m\sqrt{\underline{q}}} \left(2+\frac{\eta}{1-\eta}\right) + 4\mu(\tau_d+1)\alpha(1-\alpha)n^2p_{ij}\beta_{ij}} \right\},
\end{align*}
and consequently,\footnote{Use
$2\mu\alpha \delta(\zeta m)^{-1} = 5\alpha^2\delta\left(m\umu\left(1 + \frac{\tau_p \sqrt{\underline{q}}}{\sqrt{2(\tau_d +2)}}\right)\right)^{-1} = \frac{5\sqrt{2(\tau_d + 2)}\alpha^2\delta}{m\umu \left( \sqrt{2(\tau_d + 2)} + \tau_p \sqrt{\underline{q}} \right) }$.}
\begin{align*}
&\text{if} \quad w_{ij} := \frac{\frac{2(1-\alpha)n^2p_{ij}\beta_{ij}}{\frac{2\umu_j(\tau_d + 2)}{q_jm}\left(1 + \frac{\delta\eta}{\tau_d + 1} +  \frac{2\mu\alpha \delta}{m\zeta }\right) + \umu_j\tau_p C\left(2+\frac{\eta}{1-\eta}\right)}}{\frac{m\eta}{2\alpha\mu(\tau_d+1)}+\frac{2(1-\alpha)n^2p_{ij}\beta_{ij}}{\frac{2\umu_j(\tau_d + 2)}{q_jm}\left(1 + \frac{\delta\eta}{\tau_d + 1} +  \frac{2\mu\alpha \delta}{m\zeta }\right) + \umu_j\tau_p C\left(2+\frac{\eta}{1-\eta}\right)}},\\
%\end{align*}
&\text{then} \quad 
%\begin{align*}
\lambda \leq \frac{w_{ij}m\eta}{2\alpha\mu (\tau_d+1)} = \frac{2(1-\alpha)n^2p_{ij}\beta_{ij}(1-w_{ij})}{\frac{2\umu_j(\tau_d + 2)}{q_jm}\left(1 + \frac{\delta\eta}{\tau_d + 1} +  \frac{2\mu\alpha \delta}{m\zeta }\right) + \umu_j\tau_p C\left(2+\frac{\eta}{1-\eta}\right)},\numberthis\label{eq:asynclambdaebound}
\end{align*}
which implies the bound: $\xi(\tau_d+1) = 2m^{-1}\alpha\mu\lambda(\tau_d + 1) \leq w_{ij}\eta \leq 1$. %(We use the fact here that $w_{ij} \leq 1$.)

The four inequalities that remain hold when there exists $\gamma_{ij} > 0$ such that:
\begin{align*}
\frac{(\tau_d + 1)}{\left(1-\frac{(\tau_d+1)\xi }{\rho p_{ij}^T}\right)}\underbrace{\left(-T_{1, ij} + \frac{\gamma_{ij}}{\tau_d+1}+\frac{2\umu_j\tau_p\xi\lambda^2}{Cq_jm^3(1-\xi)} +\frac{4\alpha\mu\delta \umu_j\lambda^2}{q_jm^3}\left(\frac{1}{\zeta} + \lambda  \right)\right)}_{:=R_1} &\leq \gamma_{ij}; \text{ and}\\
\underbrace{T_{2, ij}  - \frac{2\umu_j\tau_p\lambda^2\xi}{Cq_jm^3(1-\xi)} -\frac{4\alpha\mu\delta \umu_j}{q_jm^3}\left(\frac{1}{\zeta} + \lambda\right)  + \gamma_{ij}}_{:=R_2}&\geq \gamma_{ij}. \numberthis\label{eq:themastequation}
\end{align*}
Existence of such $\gamma_{ij}$ is implied by the following inequality:
%\begin{align*}
$(\tau_d + 1)\left(1-\frac{(\tau_d+1)\xi }{\rho p_{ij}^T}\right)^{-1}R_1\leq R_2$
%\end{align*} 
Furthermore, the bound $\left(1-\frac{(\tau_d+1)\xi }{\rho p_{ij}^T}\right) \geq \left(1 - \frac{w_{ij}\eta }{\rho p_{ij}^T}\right)  \geq \left(1-w_{ij}\right)$ shows that~\eqref{eq:themastequation} is implied by the following inequality: $(\tau_d + 1)R_1 \leq (1-w_{ij})R_2.$

Thus, we finish the proof by proving the above bound: Because $w_{ij} \leq 1$, $\xi \leq w_{ij}\eta < 1$, and $\xi/(1-\xi) \leq w_{ij}\eta/(1-w_{ij}\eta) \leq \eta/(1-\eta)$, we have
{\footnotesize
\begin{align*}
&\lambda^2\biggl[ (\tau_d + 1)\left(\frac{2\umu_j}{q_jm^2}\left(1 + \frac{\tau_p}{mC}\right) + \frac{2\umu_j\tau_p\xi}{Cq_jm^3(1-\xi)} +\frac{4\alpha\mu\delta \umu_j}{q_jm^3}\left(\frac{1}{\zeta} + \lambda\right)\right) \\
&\hspace{20pt}+ (1-w_{ij})\left(\umu_j\left(\frac{2}{q_jm^2} + \tau_p\left(\frac{2}{q_jm^3C} + \frac{C}{m}\right)\right) + \frac{2\umu_j\tau_p\lambda^2\xi}{Cq_jm^3(1-\xi)} +\frac{4\alpha\mu\delta \umu_j}{q_jm^3}\left(\frac{1}{\zeta} + \lambda\right)\right)\biggr] \\ 
&\leq\lambda^2\biggl[ (\tau_d + 1)\left(\frac{2\umu_j}{\underline{q}m^2}\left(1 + \frac{\tau_p}{mC}\right) + \frac{2\umu_j\tau_p\xi}{C\underline{q}m^3(1-\xi)} +\frac{4\alpha\mu\delta \umu_j}{\underline{q}m^3}\left(\frac{1}{\zeta} + \lambda\right)\right) \\
&\hspace{20pt}+ (1-w_{ij})\left(\umu_j\left(\frac{2}{\underline{q}m^2} + \tau_p\left(\frac{2}{\underline{q}m^3C} + \frac{C}{m}\right)\right) + \frac{2\umu_j\tau_p\lambda^2\xi}{C\underline{q}m^3(1-\xi)} +\frac{4\alpha\mu\delta \umu_j}{\underline{q}m^3}\left(\frac{1}{\zeta} + \lambda\right)\right)\biggr] \\ 
&\leq\umu_j\lambda^2\biggl[ (\tau_d + 2)\left(\frac{2}{\underline{q}m^2}\left(1 + \frac{\tau_p}{mC}\right) + \frac{2\tau_p\xi}{C\underline{q}m^3(1-\xi)} +\frac{4\alpha\mu\delta }{\underline{q}m^3}\left(\frac{1}{\zeta} + \lambda\right)\right) + \frac{\tau_pC}{m} \biggr] \\
&\leq \umu_j\lambda^2\left[(\tau_d + 2)\left(\frac{2}{\underline{q}m^2} +\frac{4\alpha\mu\delta }{\underline{q}m^3}\left(\frac{1}{\zeta} + \lambda\right)\right) + \frac{\tau_pC}{m} \left(2+\frac{\eta}{1-\eta}\right)\right] \quad \left(\text{b/c } \frac{2(\tau_d + 2)}{m^2\underline{q}C} = C\right) \\
&\leq \umu_j\lambda^2\left[\frac{2(\tau_d + 2)}{\underline{q}m^2}\left(1 + \frac{\delta\eta}{\tau_d + 1} + \frac{2\mu\alpha \delta }{m\zeta }\right) + \frac{\tau_pC}{m} \left(2+\frac{\eta}{1-\eta}\right)\right]  \quad\left( \text{b/c } \frac{2\alpha\mu\lambda}{m} \leq \frac{\eta}{\tau_d + 1}\right)\\
&\stackrel{\eqref{eq:asynclambdaebound}}{\leq} \frac{2(1-\alpha) n^2 \lambda p_{ij} \beta_{ij}(1-w_{ij})}{m}. \qquad \qed
\end{align*}
}
%
%(We bound $(q_j)^{-1} \leq \overline{q}^{-1}$ in the first inequality, and we bound $1-w_{ij} \leq 1$ in the second.) Divide both sides of this inequality by $1-w_{ij}$, rearrange, and use the bound $\left(1-\frac{(\tau_d+1)\xi }{\rho p_{ij}^T}\right)^{-1} \leq \left(1 - \frac{w_{ij}\eta }{\rho p_{ij}^T}\right)^{-1}  \leq \left(1-w_{ij}\right)^{-1}$ to get~\eqref{eq:crazy_gamma_bound}.
 \end{proof}

\begin{proof}[of Lemma~\ref{lem:altbound} (swapping zeros in the inconsistent case)]
We split the proof according to the bounds we apply:
\begin{enumerate}
\item \textbf{Firm nonexpansiveness.} Because $\cS$ is convex, the map $P_\cS$ is firmly nonexpansive in $\|\cdot\|$. So for all $x, y \in \cH$, $\|P_\cS(x) - P_\cS(y)\|^2  + \|(x- P_\cS(x)) - (y- P_\cS(y))\|^2 \leq \|x- y\|^2$. Therefore,   
\begin{align*}
&\underbrace{\|\left(x^{k-d_{k,j_m}} - P_{\cS}(x^{k-d_{k,j_m}} )\right) - \left(x^{k-d_{k}} - P_\cS(x^{k-d_k})\right)\|^2}_{:= R_1}  \\
&\leq \underbrace{\|x^{k-d_{k,j_m}} - x^{k-d_k}\|^2}_{:= R_2} - \underbrace{\| P_{\cS}(x^{k-d_{k,j_m}}) - P_\cS(x^{k-d_k})\|^2}_{:= R_3}.\numberthis\label{eq:firm-nonexpansive-bound}
\end{align*}
\item \textbf{Young's inequality.} Apply the bound $\|a + b\|^2 \leq (1+\varepsilon)\|a\|^2 + (1+\varepsilon^{-1})\|b\|^2$ to get
\begin{align*}
\underbrace{\|x^{k-d_{k,j_m}} - P_{\cS}(x^{k-d_{k,j_m}} )\|^2}_{:=R_4} - \underbrace{\|x^{k-d_{k}} - P_\cS(x^{k-d_k})\|^2}_{:=R_5} &\leq\left(\frac{1}{\zeta \lambda} + 1\right)R_1 + \zeta\lambda R_5\\%\|\left(x^{k-d_{k,j_m}} - P_{\cS}(x^{k-d_{k,j_m}} )\right) - \left(x^{k-d_{k}} - P_\cS(x^{k-d_k})\right)\|^2 \\
%&\hspace{20pt} \\ 
&\hspace{-10pt}\stackrel{\eqref{eq:firm-nonexpansive-bound}}{\leq} \left(\frac{1}{\zeta \lambda} + 1\right)\left(R_2 - R_3\right) + \zeta\lambda R_5.
\end{align*}
\item \textbf{Equivalence of norms, convexity of $\|\cdot\|^2$, and the variance bound.}
\begin{align*}
&\hspace{-5pt}\EE\left[R_2\right] \leq \sum_{j=1}^m\EE\left[\umu_j\|x^{k-d_{k,j_m}}_j - x^{k-d_{k,j}}_j\|_j^2\right] \leq \sum_{j=1}^m\EE\left[\umu_j\delta\sum_{h=k-d_{k, j_m}+1}^{k-d_{k,j}}\| x_{j}^{h} - x_{j}^{h-1}\|_{j}^2 \right] \\
&\hspace{-10pt}\leq \sum_{j=1}^m\EE\left[\umu_j\delta\sum_{h=k-d_{k, j_m}+1}^{k+1}\| x_{j}^{h} - x_{j}^{h-1}\|_{j}^2 \right] \stackrel{\eqref{eq:ASYNCSAGAFPRBOUND}}{\leq} 2\lambda^2\sum_{h=k-d_{k, j_m}}^k\sum_{j = 1}^m \sum_{i=1}^n \frac{\delta p_{ij}\umu_j}{q_jm^2}\EE\left[r_{ij}^p(x^{h-d_h})\right] \\
&\hspace{170pt}+2\lambda^2\sum_{h=k-d_{k, j_m}}^k\sum_{j = 1}^m \sum_{i=1}^n \frac{\delta p_{ij}\umu_j}{q_jm^2} \left[r_{ij}^d(y_i^{h-e^{i}_{h}})\right]. %\numberthis\label{eq:eq_norm_convex_sq_varianc_bound}
\end{align*}
\item \textbf{Essential strong quasi-monotonicity.} The next two bounds hold
\begin{align*}
\dotp{S(x^{k-d_k}), x^{k-d_k} -P_{\cS}(x^{k-d_k})} \geq \mu R_5 && \text{and} && \frac{1}{\mu^2}\|S(x^{k-d_k})\|^2\geq R_5.
\end{align*} 
(After an application of the Cauchy-Schwarz inequality, the second bound follows from the first.) Thus,
%\begin{align*}
$2\alpha\mu \zeta \lambda^2m^{-1}R_5 \leq 2\alpha\zeta\lambda^2(m\mu)^{-1}\|S(x^{k-d_k})\|^2.$%\label{eq:async:boundinaltsum}
%\end{align*}
\item \textbf{Cauchy-Schwarz and Young's Inequality.}
\begin{align*}
&\frac{2\alpha\lambda}{m}|\dotp{S(x^{k-d_k}), P_\cS(x^{k-d_k}) - P_{\cS}(x^{k-d_{k,j_m}})}| \leq \frac{\alpha \zeta\lambda^2}{2m\mu}\|S(x^{k-d_k})\|^2 + \frac{2\alpha\mu}{m\zeta}R_3. 
\end{align*}
\item \textbf{The $\zeta$ bound.} 
\begin{align*}
&\frac{2\alpha\mu \zeta \lambda^2}{m}\EE\left[R_5 \right] - \frac{2\alpha\mu\lambda}{m}\left(\frac{1}{\zeta \lambda} + 1\right)\EE\left[R_3\right] 
-\frac{2\alpha\lambda}{m}\EE\left[\dotp{S(x^{k-d_k}),  P_\cS(x^{k-d_k}) - P_{\cS}(x^{k-d_{k,j_m}})}\right] \\
%s
&\leq \frac{2.5\alpha\zeta\lambda^2}{m\mu}\EE\left[\|S(x^{k-d_k})\|^2\right] -  \left(\frac{2\alpha\mu}{m\zeta } + \frac{2\alpha\mu\lambda}{m} - \frac{2\alpha\mu}{m\zeta}\right) \EE\left[R_3\right] \\
&\leq \lambda^2\sum_{j=1}^m\frac{\umu_j}{m^2}\left(1 + \frac{\tau_p}{mC}\right)\EE\left[\|(S(x^{k-d_k}))_j\|_j^2\right]. %\numberthis\label{eq:zeta_bound}
\end{align*}
(The third line follows from the second because (i) $2.5\alpha\zeta(m\mu)^{-1} \leq \umu_j\left(1+\tau_p (mC)^{-1}\right)$ and (ii) the second term on line three is negative.) 
\end{enumerate}
The following bounds complete the proof.
\begin{align*}
&\frac{2\alpha\mu\lambda}{m}\EE\left[R_4 - R_5\right] - \frac{2\alpha\lambda}{m}\EE\left[\dotp{S(x^{k-d_k}), P_\cS(x^{k-d_k}) - P_{\cS}(x^{k-d_{k,j_m}} )}\right] \\
&\leq \frac{2\alpha\mu\lambda}{m}\left(\frac{1}{\zeta \lambda} + 1\right)\EE\left[R_2\right]  - \frac{2\alpha\lambda}{m}\EE\left[\dotp{S(x^{k-d_k}),  P_\cS(x^{k-d_k}) - P_{\cS}(x^{k-d_{k,j_m}})}\right] \\
&+\frac{2\alpha\mu \zeta \lambda^2}{m}\EE\left[R_5 \right] - \frac{2\alpha\mu\lambda}{m}\left(\frac{1}{\zeta \lambda} + 1\right)\EE\left[R_3\right] \\
&\leq 4\alpha\mu\lambda^3\sum_{h=k-d_{k, j_m}}^k\sum_{j = 1}^m \sum_{i=1}^n \frac{\delta p_{ij}\umu_j}{q_jm^3}\left(\frac{1}{\zeta\lambda} + 1\right)\EE\left[r_{ij}^p(x^{h-d_h})\right] \\
&\hspace{20pt}+4\alpha\mu\lambda^3\sum_{h=k-d_{k, j_m}}^k\sum_{j = 1}^m \sum_{i=1}^n \frac{\delta p_{ij}\umu_j}{q_jm^3}\left(\frac{1}{\zeta\lambda} + 1\right)\EE\left[r_{ij}^d(y_i^{h-e^{i}_{h}})\right] \\
&\hspace{20pt} +\lambda^2\sum_{j=1}^m\frac{\umu_j}{m^2}\left(1 + \frac{\tau_p}{mC}\right)\EE\left[\|(S(x^{k-d_k}))_j\|_j^2\right]. \qquad  \qed
\end{align*}
\end{proof}

\begin{proof}[of Lemma~\ref{eq:kappa2bound} (recursive $\kappa_{2, k}$ bound)]
For any $s \in \NN$, we have
\begin{align*}
\EE\left[\kappa_{2,s}\right] &= \sum_{h=s-\tau_p+1}^{s}\sum_{j=1}^m \frac{\umu_j(h - s + \tau_p)}{mC} \EE\left[\|x_j^{h} - x_j^{h-1}\|_j^2\right] \\
&= \sum_{j=1}^m \frac{\umu_j\tau_p}{mC}\EE\left[\|x_j^s - x_j^{s-1}\|^2_j\right] +  \sum_{h=s-\tau_p+1}^{s-1}\sum_{j=1}^m \frac{\umu_j(h - s + \tau_p)}{mC} \EE\left[\|x_j^{h} - x_j^{h-1}\|_j^2\right] \\
&\leq \lambda^2\sum_{j=1}^m \frac{\umu_j\tau_p}{mC}\EE\left[\|\mkQ^{s-1}_j\|^2_j\right] +  \frac{\tau_p}{\tau_p+1} \EE\left[ \kappa_{2, s-1}\right] \\
&\stackrel{\eqref{eq:ASYNCSAGAFPRBOUND}}{\leq}\lambda^2 \sum_{j=1}^m \sum_{i=1}^n \frac{2\umu_j \tau_p p_{ij}}{Cq_jm^3}\EE\left[r_{ij}^p(x^{s-1-d_{s-1}})\right] \\
&+ \lambda^2 \sum_{j=1}^m \sum_{i=1}^n \frac{2\umu_j \tau_p p_{ij}}{Cq_jm^3}\EE\left[r_{ij}^d(y_i^{s-1 - e^{i}_{s-1}})\right] +  \frac{\tau_p}{\tau_p+1} \EE\left[ \kappa_{2,s-1}\right]. \qquad \qed
\end{align*}
\end{proof}
\end{proof}
\newpage
\section{Auxiliary Results}

\subsection{Properties of Operators}

\begin{definition}[Cocoercive Operators]
An operator $S : \cH \rightarrow \cH$ is called \emph{$\beta$-cocoercive} if
$$
\left(\forall x, y \in \cH\right) \qquad \dotp{Sx - Sy,x-y} \geq \beta \|Sx - Sy\|^2.
$$
\end{definition}
\begin{definition}[Averaged Operators]
Let $\alpha \in [0, 1]$. An operator $T : \cH \rightarrow \cH$ is called \emph{$\alpha$-averaged} if there is a nonexpansive map $N : \cH \rightarrow \cH$ such that 
$$
 \qquad T = (1-\alpha)I_{\cH} + \alpha N.
$$
The map $T$ is is called \emph{firmly nonexpansive} if it is $(1/2)$-averaged.
\end{definition}

\begin{proposition}[Cocoercivness from Averagedness]\label{eq:identminusaveraged}
Let $\alpha \in [0, 1]$, and let $T : \cH \rightarrow \cH$ be an $\alpha$-averaged operator. Then for all $\beta > 0$, $\beta(I-T)$ is $(1/(2\alpha\beta))$-cocoercive.
\end{proposition}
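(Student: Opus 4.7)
The plan is to unfold the definition of $\alpha$-averaged and reduce the claimed cocoercivity of $\beta(I-T)$ to the standard fact that $I-N$ is firmly nonexpansive (equivalently, $(1/2)$-cocoercive) whenever $N$ is nonexpansive. First, write $T = (1-\alpha)I + \alpha N$ with $N : \cH \to \cH$ nonexpansive, so that
\[
\beta(I-T) \;=\; \alpha\beta\,(I-N).
\]
Substituting this into the target inequality, for any $x, y \in \cH$ the desired bound
\[
\dotp{\beta(I-T)x - \beta(I-T)y,\, x-y} \;\geq\; \frac{1}{2\alpha\beta}\,\|\beta(I-T)x - \beta(I-T)y\|^2
\]
becomes, after canceling a common factor of $\alpha\beta > 0$,
\[
\dotp{(I-N)x - (I-N)y,\, x-y} \;\geq\; \frac{1}{2}\,\|(I-N)x - (I-N)y\|^2.
\]
Thus the whole proposition collapses to proving this single inequality, which is just the statement that $I-N$ is $(1/2)$-cocoercive.

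Second, I would verify the $(1/2)$-cocoercivity of $I-N$ by a direct calculation with the polarization identity. Set $u := x-y$ and $v := Nx - Ny$. Expanding, the inequality is equivalent to
\[
\dotp{u-v,\,u} - \tfrac{1}{2}\|u-v\|^2 \;=\; \tfrac{1}{2}\|u\|^2 - \tfrac{1}{2}\|v\|^2 \;\geq\; 0,
\]
which is exactly $\|Nx - Ny\| \leq \|x - y\|$, i.e., the nonexpansiveness of $N$. Nothing nontrivial happens here; the only potentially delicate point is the edge case $\alpha = 0$ (where $T = I$, so $\beta(I-T) \equiv 0$ and cocoercivity is vacuous), which I would handle with a brief aside, and the $\alpha\beta$ cancellation, which requires $\alpha, \beta > 0$.

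There is essentially no serious obstacle: the proof is a two-line algebraic reduction plus a classical polarization identity. The only thing one has to be careful with is bookkeeping of the constants so that the factor of $\alpha$ in $I - T = \alpha(I - N)$ correctly produces the advertised cocoercivity constant $1/(2\alpha\beta)$, rather than, say, $1/(2\beta)$ or $\alpha/(2\beta)$. Overall the argument is short enough that I would simply state the chain of equivalent inequalities and then invoke nonexpansiveness of $N$ to conclude.
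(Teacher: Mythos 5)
Your proposal is correct and follows essentially the same route as the paper: both reduce the claim to the identity $\beta(I-T) = \alpha\beta(I-N)$ and then to the $(1/2)$-cocoercivity (firm nonexpansiveness) of $I-N$ for nonexpansive $N$. The only difference is that you verify that last fact directly via the polarization identity $\dotp{u-v,u} - \tfrac{1}{2}\|u-v\|^2 = \tfrac{1}{2}\|u\|^2 - \tfrac{1}{2}\|v\|^2$, whereas the paper cites it from the literature; your constant bookkeeping is also correct.
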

\begin{proof}
There is a nonexpansive map $N : \cH \rightarrow \cH$ such that $T = (1-\alpha)I + \alpha N$, and by definition, $T' := (1/2) I + (1/2) N$ is firmly nonexpansive. Thus, $I-T' = (1/2)(I-N)$ is firmly nonexpansive, and hence, $1$-cocoercive~\cite[Remark 4.24(iii)]{bauschke2011convex}. Therefore, for all $x, y \in \cH$, the next bound holds and proves that $\beta(I-T)$ is $(1/\alpha\beta)$-cocoercive:
\begin{align*}
\dotp{\alpha\beta(I-N)x - \alpha\beta(I-N)y, x-y} = 2\alpha\beta\dotp{\frac{1}{2}(I-N)x - \frac{1}{2}(I-N)y, x-y} & \geq 2\alpha\beta\left\|\frac{1}{2}(I-N)x - \frac{1}{2}(I-N)y\right\|^2 \\
&= \frac{1}{2\alpha\beta}\left\|\alpha\beta(I-N)x - \alpha\beta(I-N)y\right\|^2.\qquad \qed
\end{align*}
\end{proof}

\begin{proposition}[The Composition of Averaged Operators is Averaged \cite{combettes2014compositions}]\label{prop:averagedconstants}
Let $\alpha_1, \alpha_2 \in [0, 1]$, and let $T_1, T_2 : \cH \rightarrow \cH$ be $\alpha_1$- and $\alpha_2$-averaged maps, respectively. Then $T_1 \circ T_2$ is $\alpha_{1,2}$ averaged where
\begin{align*}
\alpha_{1,2} := \frac{\alpha_1 + \alpha_2 - 2\alpha_1\alpha_2}{1-\alpha_1\alpha_2}.
\end{align*}
\end{proposition}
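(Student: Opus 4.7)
The plan is to prove the result by working with the following equivalent characterization of $\alpha$-averagedness (for $\alpha \in (0,1]$): the map $T$ is $\alpha$-averaged if and only if
\begin{equation*}
\|Tx - Ty\|^2 + \frac{1-\alpha}{\alpha}\|(I-T)x - (I-T)y\|^2 \leq \|x-y\|^2 \qquad \text{for all } x,y \in \cH.
\end{equation*}
This equivalence is a short calculation: writing $T = (1-\alpha)I + \alpha N$ with $N$ nonexpansive and using the identity $\|(1-\alpha)u + \alpha v\|^2 = (1-\alpha)\|u\|^2 + \alpha\|v\|^2 - \alpha(1-\alpha)\|u-v\|^2$ with $u = x-y$, $v = Nx-Ny$, together with $(I-T)x - (I-T)y = \alpha[(I-N)x - (I-N)y]$, gives the forward direction; the reverse follows by defining $N := \alpha^{-1}(T - (1-\alpha)I)$ and running the identity backwards.

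Given this characterization, I would apply it to $T_2$ at the pair $(x,y)$ and to $T_1$ at the pair $(T_2 x, T_2 y)$, and then chain the two inequalities to obtain
\begin{equation*}
\|T_1 T_2 x - T_1 T_2 y\|^2 \leq \|x-y\|^2 - \frac{1-\alpha_2}{\alpha_2}\|a\|^2 - \frac{1-\alpha_1}{\alpha_1}\|b\|^2,
\end{equation*}
where $a := (I-T_2)x - (I-T_2)y$ and $b := (I-T_1)T_2 x - (I-T_1)T_2 y$. An elementary cancellation yields $(I-T_1 T_2)x - (I-T_1 T_2)y = a + b$, so the target inequality reduces to a scalar question about $a$ and $b$.

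To close the argument I would verify the purely scalar bound
\begin{equation*}
\frac{1-\alpha_2}{\alpha_2}\|a\|^2 + \frac{1-\alpha_1}{\alpha_1}\|b\|^2 \geq \frac{1-\alpha_{1,2}}{\alpha_{1,2}}\|a+b\|^2,
\end{equation*}
which I would deduce from Young's inequality $\|a+b\|^2 \leq (1+t)\|a\|^2 + (1+t^{-1})\|b\|^2$ with the optimal choice $t = s_2/s_1$, where $s_i := (1-\alpha_i)/\alpha_i$; this optimal $t$ makes both coefficients match exactly and gives the sharp constant $r := s_1 s_2/(s_1+s_2)$ on the right-hand side. The main (and really the only) obstacle is the algebraic identity linking $r$ back to the advertised formula for $\alpha_{1,2}$: computing
\begin{equation*}
\frac{1}{r} = \frac{1}{s_1} + \frac{1}{s_2} = \frac{\alpha_1}{1-\alpha_1} + \frac{\alpha_2}{1-\alpha_2} = \frac{\alpha_1 + \alpha_2 - 2\alpha_1\alpha_2}{(1-\alpha_1)(1-\alpha_2)}
\end{equation*}
shows $r = (1-\alpha_1)(1-\alpha_2)/(\alpha_1+\alpha_2-2\alpha_1\alpha_2)$, and a final simplification with $1-\alpha_{1,2} = (1-\alpha_1)(1-\alpha_2)/(1-\alpha_1\alpha_2)$ confirms $r = (1-\alpha_{1,2})/\alpha_{1,2}$ for the $\alpha_{1,2}$ in the statement. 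Feeding this back through the averagedness characterization establishes that $T_1 \circ T_2$ is $\alpha_{1,2}$-averaged.
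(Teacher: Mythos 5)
Your proof is correct: the quadratic characterization of $\alpha$-averagedness, chained through $T_2$ and then $T_1$, combined with Young's inequality at the optimal parameter $t = s_2/s_1$ and the algebra identifying $r = s_1 s_2/(s_1+s_2)$ with $(1-\alpha_{1,2})/\alpha_{1,2}$, is exactly the standard argument; the paper itself offers no proof, deferring to \cite{combettes2014compositions}, and your route is essentially the one used there to obtain this sharp constant. The only caveat is at the endpoints: the characterization requires $\alpha_i \in (0,1)$, so treat $\alpha_i = 0$ (i.e.\ $T_i = I$) and $\alpha_i = 1$ separately (when $\alpha_i=1$ one has $s_i=0$ and $r=0$, giving mere nonexpansiveness, consistent with $\alpha_{1,2}=1$), and note the stated formula is undefined when $\alpha_1 = \alpha_2 = 1$ --- a defect of the proposition's $[0,1]$ hypothesis rather than of your argument.
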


\begin{proposition}[Strongly monotone operators from Lipschitz operators]\label{prop:strongmonofromLipschitz}
Let $L \in (0, 1)$ and suppose that $T : \cH \rightarrow \cH$ be $L$-Lipschitz continuous. Then $I - T$ is $(1-L)$-strongly monotone.
\end{proposition}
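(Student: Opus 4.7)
The plan is to expand the inner product $\langle (I-T)x - (I-T)y,\, x-y\rangle$ linearly, keeping the $\|x-y\|^2$ term clean and estimating the cross term $\langle Tx - Ty,\, x-y\rangle$ from above.

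First, I would write
\begin{align*}
\langle (I-T)x - (I-T)y,\, x-y\rangle = \|x-y\|^2 - \langle Tx - Ty,\, x-y\rangle.
\end{align*}
Then the natural tool for the cross term is Cauchy--Schwarz followed by the $L$-Lipschitz bound on $T$, which gives
\begin{align*}
\langle Tx - Ty,\, x-y\rangle \leq \|Tx - Ty\|\cdot\|x-y\| \leq L\|x-y\|^2.
\end{align*}
Substituting this upper bound back yields $\langle (I-T)x - (I-T)y,\, x-y\rangle \geq (1-L)\|x-y\|^2$, which is exactly the definition of $(1-L)$-strong monotonicity. The assumption $L < 1$ ensures the constant $(1-L)$ is positive, so the resulting inequality is nontrivial.

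There is no real obstacle here; the proof is a two-line bookkeeping exercise. The only thing to be mindful of is that we are working in a generic Hilbert space $\cH$ with its inner product $\langle\cdot,\cdot\rangle$ and induced norm $\|\cdot\|$, so Cauchy--Schwarz applies directly without any separability or finite-dimensionality concerns. No properties of $T$ beyond $L$-Lipschitz continuity (in particular, no linearity, monotonicity, or averagedness) are needed.
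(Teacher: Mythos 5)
Your proof is correct and is the standard argument: expand the inner product, apply Cauchy--Schwarz and the Lipschitz bound to get $\dotp{(I-T)x-(I-T)y,\,x-y}\geq(1-L)\|x-y\|^2$. The paper states this proposition without proof, but this is precisely the computation it relies on (compare the bound derived in the proof of Proposition~\ref{prop:MONOopprop}), so there is nothing to add.
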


\begin{proposition}[Coordinate Lipschitz Constants and Coordinate Cocoercivity.]\label{prop:coordinate-BH}
Let $f : \cH \rightarrow (-\infty, \infty]$ be a Fr{\'e}chet differentiable convex function. If $j \in \{1, \ldots, m\}$ and 
\begin{align}\label{eq:coordinatewise}
\left(\forall x \in \cH\right), \, \left(\forall y_j \in \cH_j\right) \qquad f(x + \hat{y_j}) \leq f(x) + \dotp{ \nabla f(x), \hat{y_j}} + \frac{L_{j}}{2} \|\hat{y_j}\|^2; \qquad \hat{y_j} = (0, \ldots, 0, y_j, 0, \ldots, 0),
\end{align}
then 
\begin{align*}
\left(\forall x \in \cH\right), \, \left(\forall y \in \cH\right) \qquad \frac{1}{L_j}\|\nabla f(x) - \nabla  f(y) \|_j^2 \leq \dotp{\nabla f(x) - \nabla f(y), x - y}.
\end{align*}
\end{proposition}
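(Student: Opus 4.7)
The plan is to adapt the classical proof of the Baillon--Haddad theorem to a one-coordinate direction. Fix $y \in \cH$ and introduce the auxiliary function
\begin{align*}
h(u) := f(u) - \dotp{\nabla f(y), u},
\end{align*}
which is convex, Fr\'echet differentiable with $\nabla h(u) = \nabla f(u) - \nabla f(y)$, and therefore globally minimized at $u = y$ because $\nabla h(y) = 0$. The coordinate descent bound~\eqref{eq:coordinatewise} is preserved under subtraction of the linear form $\dotp{\nabla f(y), \cdot}$, so $h$ obeys the same inequality with the same constant $L_j$.

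Next I would apply~\eqref{eq:coordinatewise} to $h$ at the point $x$ with an arbitrary direction $\hat{w_j}$ (where $w_j \in \cH_j$) to obtain
\begin{align*}
h(x + \hat{w_j}) \;\leq\; h(x) + \dotp{\nabla f(x) - \nabla f(y), \hat{w_j}} + \frac{L_j}{2}\|\hat{w_j}\|^2,
\end{align*}
and then use the fact that $y$ minimizes $h$ together with a minimization of the right-hand side over all $w_j \in \cH_j$. The right-hand side is a quadratic on the one-coordinate subspace $V_j := \{\hat{w_j} : w_j \in \cH_j\}$; optimizing it yields the lower bound
\begin{align*}
h(y) \;\leq\; h(x) - \frac{1}{2L_j}\,\|(\nabla f(x) - \nabla f(y))_j\|_j^2,
\end{align*}
which, after substituting the definition of $h$ and rearranging, reads
\begin{align*}
\frac{1}{2L_j}\|(\nabla f(x) - \nabla f(y))_j\|_j^2 \;\leq\; f(x) - f(y) - \dotp{\nabla f(y), x - y}.
\end{align*}

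The proof then concludes by symmetry: swapping the roles of $x$ and $y$ produces the twin inequality with $\dotp{\nabla f(x), y-x}$ on the right, and adding the two bounds makes the function values cancel, leaving exactly the desired coordinate cocoercivity
\begin{align*}
\frac{1}{L_j}\|(\nabla f(x) - \nabla f(y))_j\|_j^2 \;\leq\; \dotp{\nabla f(x) - \nabla f(y), x - y}.
\end{align*}
The main obstacle is the minimization step, which needs the minimum of $\dotp{v, \cdot} + (L_j/2)\|\cdot\|^2$ restricted to $V_j$ to equal $-(2L_j)^{-1}\|v_j\|_j^2$; in the product inner product this is immediate since $V_j$ is isometric to $\cH_j$, but for a general $\cH$-inner product one must either invoke the product structure from Section~\ref{section:assump} or interpret the left-hand side via the orthogonal projection of $\nabla f(x) - \nabla f(y)$ onto $V_j$, verifying that this projection's squared norm agrees with $\|(\nabla f(x) - \nabla f(y))_j\|_j^2$ in the relevant metric.
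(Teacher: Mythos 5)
Your proposal is correct and follows essentially the same route as the paper: anchor an auxiliary function at $y$ (your $h(u) = f(u) - \dotp{\nabla f(y), u}$ differs from the paper's $g$ only by a constant), note it is minimized at $y$ and inherits~\eqref{eq:coordinatewise}, minimize the quadratic upper bound over the $j$th coordinate direction to get the one-sided bound $\frac{1}{2L_j}\|\nabla_j f(x) - \nabla_j f(y)\|_j^2 \leq f(x) - f(y) - \dotp{\nabla f(y), x-y}$, then symmetrize and add. Your closing remark about justifying the coordinatewise minimization via the product structure (or the projection onto $V_j$) is a point the paper glosses over, but it is a refinement of the same argument rather than a different approach.
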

\begin{proof}
Fix $y \in \cH$, and let define a function $g$, which continues to satisfy~\eqref{eq:coordinatewise}, 
$$
g(x) = f(x) - f(y) - \dotp{\nabla f(y), x - y}.
$$
Then $g \geq 0$, and $g(y) = 0$, so $g$ is minimized at $y$. In addition, 
\begin{align*}
\left(\forall x \in \cH\right), \, \left(\forall y_j \in \cH_j\right) \qquad g(y) \leq g(x + \hat{y_j}) \leq g(x) + \dotp{ \nabla f(x), \hat{y_j}} + \frac{L_{j}}{2} \|\hat{y_j}\|^2; \qquad \hat{y_j} = (0, \ldots, 0, y_j, 0, \ldots, 0).
\end{align*}
Thus, we minimize the right hand side of this inequality, over all $\hat{y_j} \in \cH$, and get the next bound:
\begin{align*}
f(x) - f(y) - \dotp{\nabla f(y), x-y} = g(x) - g(y) \geq \frac{1}{2L_j}\|\nabla_j f(x) - \nabla_j f(y) \|^2_j.
\end{align*}
We likewise get the opposite inequality in which the points $x$ and $y$ are exchanged; add both inequalities, and get the result.
\end{proof}

\begin{proposition}[Contractive Forward-Gradient Operator]\label{prop:stronglymonodiffable}
Suppose that the function $f : \cH \rightarrow (-\infty, \infty)$ is $\mu_f$-strongly convex, differentiable, and that the gradient $\nabla f$ is $L$-Lipschitz continuous. Then $I - \gamma \nabla f$ is 
$$
\sqrt{1- 2\gamma\mu_f +  \gamma^2 L\mu_f}
$$ 
Lipschitz continuous whenever $\gamma \leq 2L^{-1}.$
\end{proposition}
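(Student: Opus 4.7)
The plan is to expand the squared norm $\|(I - \gamma \nabla f)x - (I - \gamma \nabla f)y\|^2$ directly and bound each of the three resulting terms using the two available hypotheses: $\mu_f$-strong convexity and $L$-Lipschitzness of $\nabla f$ (equivalently, via Baillon--Haddad, $(1/L)$-cocoercivity of $\nabla f$).

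First I would write
\begin{align*}
\|(I - \gamma \nabla f)x - (I - \gamma \nabla f)y\|^2 = \|x-y\|^2 - 2\gamma \dotp{\nabla f(x) - \nabla f(y), x-y} + \gamma^2 \|\nabla f(x) - \nabla f(y)\|^2.
\end{align*}
The goal is to show the right-hand side is bounded above by $(1 - 2\gamma\mu_f + \gamma^2 L \mu_f)\|x-y\|^2$. By Baillon--Haddad, $\nabla f$ is $(1/L)$-cocoercive, so $\gamma^2\|\nabla f(x) - \nabla f(y)\|^2 \leq \gamma^2 L \dotp{\nabla f(x) - \nabla f(y), x-y}$. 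Substituting this into the expansion gives
\begin{align*}
\|(I - \gamma \nabla f)x - (I - \gamma \nabla f)y\|^2 \leq \|x-y\|^2 - \gamma(2 - \gamma L)\dotp{\nabla f(x) - \nabla f(y), x-y}.
\end{align*}

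The assumption $\gamma \leq 2/L$ ensures that the coefficient $\gamma(2 - \gamma L)$ is nonnegative, which is the only place that hypothesis enters. Strong convexity then gives $\dotp{\nabla f(x) - \nabla f(y), x-y} \geq \mu_f \|x-y\|^2$, so
\begin{align*}
\|(I - \gamma \nabla f)x - (I - \gamma \nabla f)y\|^2 \leq \bigl(1 - \gamma(2 - \gamma L)\mu_f\bigr)\|x-y\|^2 = (1 - 2\gamma\mu_f + \gamma^2 L\mu_f)\|x-y\|^2,
\end{align*}
and taking square roots delivers the claim. The bracketed expression is nonnegative because $1 - 2\gamma\mu_f + \gamma^2 L \mu_f \geq 1 - 2\gamma\mu_f + \gamma^2 \mu_f^2 = (1-\gamma\mu_f)^2 \geq 0$ (using $\mu_f \leq L$).

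There is no real obstacle here; the only subtlety is recognizing that the stated Lipschitz constant arises from mixing the two bounds in the asymmetric combination $\gamma(2-\gamma L) \cdot \mu_f$ rather than using the symmetric Nesterov-style inequality $\dotp{\nabla f(x) - \nabla f(y), x-y} \geq \tfrac{\mu_f L}{\mu_f + L}\|x-y\|^2 + \tfrac{1}{\mu_f + L}\|\nabla f(x) - \nabla f(y)\|^2$, which would produce a different (often tighter but less convenient) constant. The chosen route gives exactly the constant needed to feed into the Finito analysis in Section~\ref{sec:finito}.
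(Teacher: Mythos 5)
Your proof is correct and follows essentially the same route as the paper: expand the square, use the $(1/L)$-cocoercivity of $\nabla f$ to absorb the $\gamma^2\|\nabla f(x)-\nabla f(y)\|^2$ term into $\gamma^2 L\dotp{\nabla f(x)-\nabla f(y),x-y}$, and then apply strong monotonicity to the remaining coefficient $-\gamma(2-\gamma L) = -2\gamma(1-\gamma L/2)$, which is nonpositive precisely because $\gamma \leq 2L^{-1}$. Your added check that the expression under the square root is nonnegative is a small bonus the paper omits.
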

\begin{proof}
In the next sequence of inequalities, we use the bound $\dotp{\nabla f(x) - \nabla f(y), x-y} \geq L^{-1}\|\nabla f(x) - \nabla f(y)\|^2$ once:
\begin{align*}
\left( \forall x, y \in \cH\right) \qquad \|(I - \gamma \nabla f)x - (I - \gamma \nabla f)y\|^2 &\leq \|x - y\|^2 - 2\gamma \dotp{ \nabla f(x) - \nabla f(y), x - y} + \gamma^2 \|\nabla f(x) - \nabla f(y)\|^2\\
&\leq  \|x - y\|^2 + \gamma^2 \|\nabla f(x) - \nabla f(y)\|^2\\
&\hspace{20pt} - 2\gamma\left( 1- \frac{\gamma L}{2}\right) \dotp{ \nabla f(x) - \nabla f(y), x - y} - \gamma^2 L \dotp{ \nabla f(x) - \nabla f(y), x - y} \\
&\leq  \left(1- 2\gamma\mu_f\left( 1- \frac{\gamma L}{2}\right)\right) \|x - y\|^2. \qquad \qed
\end{align*}
\end{proof}
\newpage
\section{Examples of $S$}

\subsection{Old Operators}

\begin{proposition}[SAGA/SVRG/S2GD Operator Properties]\label{eq:SAGAOP}
Assume the setting of Section~\ref{sec:SAGA}, and in particular, that 
$$
S_i = \nabla f_i %\qquad \text{ and } S: = \frac{1}{N}\sum_{i=1}^NS_i
$$ 
Then
\begin{enumerate}
\item \textbf{Coherence:} \label{eq:SAGAOP:part:coherence} $S$ satisfies the coherence condition
\begin{align*}
\left( \forall x \in \cH\right), \left( \forall x^\ast \in \zer(S)\right)  \qquad \dotp{S(x) , x- x^\ast} \geq  \sum_{i=1}^N \beta_{i1}\|S_i(x) - S_i(x^\ast)\|^2.
\end{align*}
with $\beta_{i1} \equiv nL^{-1}$. 
\item \textbf{Essential strong quasi-monotonicity:}\label{eq:SAGAOP:part:ess_strong_mono} $S$ is $\mu$-essentially strongly monotone whenever $N^{-1} \sum_{i=1}^N f_i$ is $\mu$-strongly convex.
\item \textbf{Roots:} \label{eq:SAGAOP:part:roots} $\zer(S)$ is precisely the set of minimizers of the dual problem to~\eqref{eq:SDCA_prob}. 
\item \textbf{Demiclosedness:}\label{eq:SAGAOP:part:demiclosed} $S$ is demiclosed at $0$.
\end{enumerate}
\end{proposition}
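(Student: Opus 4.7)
The plan is to observe that $S = \nabla f$ where $f := N^{-1}\sum_{i=1}^N f_i$ is convex with $L$-Lipschitz gradient, and then to extract each of the four claims from standard facts about the gradients of such functions.

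For Part~\ref{eq:SAGAOP:part:coherence}, I would invoke the Baillon--Haddad theorem: since each $f_i$ is convex with $L$-Lipschitz gradient, $\nabla f_i$ is $L^{-1}$-cocoercive, giving $\|\nabla f_i(x) - \nabla f_i(x^\ast)\|^2 \leq L\,\dotp{\nabla f_i(x) - \nabla f_i(x^\ast), x - x^\ast}$ for each $i$ and each $x^\ast \in \cS$. Summing over $i$, dividing by $NL$, and using $S(x^\ast) = 0$ to collapse the right-hand side to $\dotp{S(x), x - x^\ast}$ yields the coherence inequality with $\beta_{i1} = (NL)^{-1}$, matching the constant advertised in the introductory paragraph on SAGA. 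For Part~\ref{eq:SAGAOP:part:ess_strong_mono}, I would specialize the $\mu$-strong monotonicity of $\nabla f = S$ to the pair $(x, P_{\cS}(x))$: since $S(P_{\cS}(x)) = 0$, the chain $\dotp{S(x), x - P_{\cS}(x)} = \dotp{S(x) - S(P_{\cS}(x)), x - P_{\cS}(x)} \geq \mu\,\|x - P_{\cS}(x)\|^2$ is exactly~\eqref{eq:essstrongquasimono}.

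Part~\ref{eq:SAGAOP:part:roots} is then a one-line consequence of the first-order optimality condition for convex minimization: $\zer(\nabla f)$ coincides with the critical points of $f$, and by convexity these are precisely the minimizers of~\eqref{eq:simplesmooth}. For Part~\ref{eq:SAGAOP:part:demiclosed}, the shortest route is to note that $S$ is monotone, $L$-Lipschitz, and defined on all of $\cH$, hence maximally monotone; maximally monotone operators have sequentially closed graphs in the weak--strong topology of $\cH \times \cH$, so $x^k \rightharpoonup x$ together with $S(x^k) \to 0$ strongly immediately forces $S(x) = 0$. A self-contained version of the same argument passes the monotonicity inequality $\dotp{S(x^k) - S(y), x^k - y} \geq 0$ to the limit: boundedness of $\{x^k\}$ combined with $\|S(x^k)\| \to 0$ kills the $\dotp{S(x^k), x^k - y}$ term, weak convergence turns $\dotp{S(y), x^k - y}$ into $\dotp{S(y), x - y}$, and substituting $y = x + tv$, letting $t \downarrow 0$, and using the continuity of $S$ forces $\dotp{S(x), v} \geq 0$ for every $v \in \cH$, hence $S(x) = 0$.

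The main obstacle, such as it is, is keeping the signs and normalizations straight in Part~\ref{eq:SAGAOP:part:coherence}; the other three parts are essentially textbook applications of Baillon--Haddad, strong convexity, first-order optimality, and maximal monotone operator theory, and they should each take only a few lines.
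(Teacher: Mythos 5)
Your proof is correct. Parts 1--3 coincide with the paper's treatment: your coherence bound is exactly the paper's Baillon--Haddad computation (yielding $\beta_{i1} = (NL)^{-1}$, which is what the paper actually proves and what Section~\ref{sec:SAGA} asserts; the ``$nL^{-1}$'' in the proposition statement, like the stray reference to the SDCA dual problem in Part~\ref{eq:SAGAOP:part:roots}, is a typo), and the paper simply declares Parts~\ref{eq:SAGAOP:part:ess_strong_mono} and~\ref{eq:SAGAOP:part:roots} ``simple'' and omits the arguments you supply, which are the intended ones. Part~\ref{eq:SAGAOP:part:demiclosed} is where you genuinely diverge: the paper observes that $I - \gamma S$ is nonexpansive for suitable $\gamma$ and invokes the demiclosedness principle for nonexpansive maps, whereas you go through maximal monotonicity of $S$ --- or, in self-contained form, the Minty-type argument of passing $\dotp{S(x^k) - S(y), x^k - y} \geq 0$ to the limit and then perturbing $y = x + tv$ with $t \downarrow 0$. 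Both routes are sound; the paper's is a one-liner given the cited nonexpansiveness result, while yours is more elementary in the sense that it needs only monotonicity, continuity of $S$ (for the final limit in $t$, supplied here by Lipschitz continuity), and boundedness of weakly convergent sequences, and it generalizes to any continuous monotone operator whether or not a cocoercivity constant is available.
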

\begin{proof}
Part~\ref{eq:SAGAOP:part:coherence} (coherence):  The Baillon-Haddad Theorem~\cite{baillon1977quelques} guarantees that for all $i \in \{1, \ldots, N\}$ and $x \in \cH$, 
\begin{align*}
\frac{1}{L}\|\nabla f_i(x) - \nabla f_i(x^\ast)\|^2 \leq \dotp{ \nabla f_i(x) - \nabla f_i(x^\ast), x - x^\ast},
\end{align*}
which implies that
\begin{align*}
\frac{1}{N}\sum_{i=1}^N \frac{1}{L}\|S_i(x) - S_i(x^\ast)\|^2= \frac{1}{N}\sum_{i=1}^N \frac{1}{L}\|\nabla f_i(x) - \nabla f_i(x^\ast)\|^2 \leq \frac{1}{N}\sum_{i=1}^N\dotp{ \nabla f_i(x) - \nabla f_i(x^\ast), x_i - x^\ast} = \dotp{S(x) , x - x^\ast}. 
\end{align*}

Parts~\ref{eq:SAGAOP:part:ess_strong_mono} (essential strong quasi-monotonicity) and~\ref{eq:SAGAOP:part:roots} (roots) are simple, so we omit the proofs.

Part~\ref{eq:SAGAOP:part:demiclosed} (demiclosedness): The operator $I - \gamma S$ is nonexpansive by~\cite[Proposition 4.33]{bauschke2011convex} (for some $\gamma > 0$); thus $S$ is demiclosed at $0$.  \qed
\end{proof}

~\\~\\
\begin{proposition}[Finito Operator Properties]\label{prop:finito_op}
Assume the setting of Section~\ref{sec:finito}, and in particular, that 
\begin{align}%\label{eq:finitooperator}
\left(\forall x \in \cH_0^N\right) \qquad S(x)  = x - P_D  \left(x_1 - \gamma \nabla f_1(x_1), \ldots,x_N -  \gamma \nabla f_N(x_N)\right).
\end{align}
In addition, let $\gamma\leq 2L^{-1}$. Then
\begin{enumerate}
\item \textbf{Coherence:}\label{prop:finito_op:part:coherence} $S$ satisfies the coherence condition
\begin{align*}
\left( \forall x \in \cH\right), \left( \forall x^\ast \in \zer(S)\right) \qquad \dotp{S(x) , x- x^\ast} \geq  \sum_{j=1}^N \beta_{1j}\|(S(x))_j\|_j^2.
\end{align*}
with $\beta_{1j} \equiv 4^{-1}\gamma L $.
\item \textbf{Essential strong quasi-monotonicity:}\label{prop:finito_op:part:ess_strong_mono} $S$ is 
$$
\mu := 1 - \sqrt{1-2\gamma\hat{\mu} + \gamma^2\hat{\mu} L }
$$
essentially strong quasi-monotone whenever each $f_i$ is $\hat{\mu}$-strongly convex.  
\item \textbf{Roots:}\label{prop:finito_op:part:roots} $\zer(S) = \{(x_0^\ast, \ldots, x_0^\ast) \in \cH_0^N\mid \text{ minimizes }~\eqref{eq:simplesmooth}\}$
\item \textbf{Demiclosedness:}\label{prop:finito_op:part:demiclosed} $S$ is demiclosed at $0$.
\end{enumerate}
\end{proposition}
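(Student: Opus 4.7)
My overall approach is to view $S = I - T$, where $T := P_D \circ V$ and $V : \cH_0^N \to \cH_0^N$ is the coordinatewise forward step $V(x) := (x_1 - \gamma\nabla f_1(x_1), \ldots, x_N - \gamma \nabla f_N(x_N))$, and then read off all four parts from classical properties of averaged, Lipschitz, and nonexpansive operators on the product Hilbert space $\cH_0^N$ equipped with the standard product inner product specified just before the proposition.

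For Part~\ref{prop:finito_op:part:coherence}, the Baillon-Haddad theorem together with $L$-Lipschitz continuity of $\nabla f_i$ makes each $I - \gamma \nabla f_i$ into a $(\gamma L/2)$-averaged map as soon as $\gamma \leq 2L^{-1}$ (this is exactly the calculation behind Proposition~\ref{eq:SAGAOP}). Since averagedness with a common parameter is preserved by the coordinatewise product on $\cH_0^N$, the operator $V$ is itself $(\gamma L/2)$-averaged, and $P_D$ is $(1/2)$-averaged because it is a projection onto a closed convex set. Proposition~\ref{prop:averagedconstants} then shows $T$ is $\alpha$-averaged with $\alpha = 2/(4-\gamma L)$, and Proposition~\ref{eq:identminusaveraged} gives that $S = I - T$ is $(4-\gamma L)/4$-cocoercive. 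Under $\gamma \leq 2L^{-1}$ we have $\gamma L/4 \leq (4-\gamma L)/4$, so $S$ is also $(\gamma L/4)$-cocoercive; combining this with $S(x^\ast) = 0$ and $\|\cdot\|^2 = \sum_j \|\cdot\|_j^2$ yields the coherence inequality with $\beta_{1j} \equiv \gamma L/4$.

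Part~\ref{prop:finito_op:part:ess_strong_mono} runs along the same skeleton but replaces averagedness with a Lipschitz bound: Proposition~\ref{prop:stronglymonodiffable} produces the contraction constant $\kappa := \sqrt{1 - 2\gamma\hat{\mu} + \gamma^2 L \hat{\mu}}$ for each $I - \gamma \nabla f_i$, this transfers to $V$ on the product space, and composition with the $1$-Lipschitz $P_D$ leaves $T$ still $\kappa$-Lipschitz. Proposition~\ref{prop:strongmonofromLipschitz} then makes $S = I - T$ into a $(1-\kappa)$-strongly monotone operator, which in particular is essentially strongly quasi-monotone with the claimed $\mu = 1 - \kappa$. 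For Part~\ref{prop:finito_op:part:roots}, $S(x) = 0$ forces $x$ into the range of $P_D$, which is $D$, so $x = (x_0, \ldots, x_0)$; substituting this back into $x = P_D V(x)$ collapses to $\sum_{i=1}^N \nabla f_i(x_0) = 0$, i.e., the first-order optimality condition for minimizing $N^{-1}\sum_i f_i$. Part~\ref{prop:finito_op:part:demiclosed} is immediate because $T$ is nonexpansive (composition of nonexpansive pieces), so $S = I - T$ is demiclosed at $0$ by the standard demiclosedness principle for the displacement of a nonexpansive map, exactly as invoked at the end of Proposition~\ref{eq:SAGAOP}.

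The only delicate point is the bookkeeping in Part~\ref{prop:finito_op:part:coherence}: I must keep track of the averagedness constant through the composition $P_D \circ V$ without losing the factor $(4-\gamma L)^{-1}$, and I must justify that the componentwise averagedness of $V$ is taken in the product norm $\|\cdot\|_\pr$ the proposition explicitly selects. Everything else is a one-line application of the named auxiliary results, so I do not anticipate any substantive technical obstacle beyond fixing these inner-product and averagedness-arithmetic conventions at the outset.
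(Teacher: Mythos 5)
Your proposal is correct and follows essentially the same route as the paper's proof: averagedness of $I-\gamma\nabla\mathbf{f}$ via Baillon--Haddad, composition with the $(1/2)$-averaged $P_D$ through Proposition~\ref{prop:averagedconstants} and Proposition~\ref{eq:identminusaveraged} for coherence, the Lipschitz-contraction argument of Propositions~\ref{prop:stronglymonodiffable} and~\ref{prop:strongmonofromLipschitz} for essential strong quasi-monotonicity, and nonexpansiveness of $I-S$ for demiclosedness. The only differences are cosmetic (you work coordinatewise where the paper works with the separable sum $\mathbf{f}$, and you supply the roots computation that the paper omits as ``simple''), and your observation that the cocoercivity constant $(4-\gamma L)/4$ dominates $\gamma L/4$ under $\gamma\le 2L^{-1}$ is exactly the implicit step in the paper's Part~\ref{prop:finito_op:part:coherence}.
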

\begin{proof}
Part~\ref{prop:finito_op:part:coherence} (coherence):  Let $\mathbf{f} : \cH \rightarrow \cH$ be the function $\mathbf{f}(x_1, \ldots, x_N) = \sum_{j=1}^N f_j(x_j)$. The operator $\nabla\mathbf{f} : x\mapsto (\nabla f_1(x_1), \ldots, \nabla f_N(x_N))$ is $L^{-1}$-cocoercive~\cite{baillon1977quelques} and $L$-Lipschitz, and thus, $I_{\cH} - \gamma \nabla \mathbf{f}$ is $(2^{-1}\gamma L)$-averaged. In addition, $P_D$ is $2^{-1}$-averaged~\cite[Proposition 4.8]{bauschke2011convex}, so by Proposition~\ref{prop:averagedconstants}, the composition $P_D \circ (I_{\cH} - \gamma \nabla \mathbf{f})$ is $2/ (4 - \gamma L)$-averaged. Therefore, $S = \left(I_{\cH} - P_D \circ (I_{\cH} - \gamma \nabla \mathbf{f})\right)$ is $(4^{-1}\gamma L)$-cocoercive, and so $\beta_{1j} =4^{-1} \gamma L$.

Part~\ref{prop:finito_op:part:ess_strong_mono} (essential strong quasi-monotonicity): by~\ref{prop:stronglymonodiffable}, if $\gamma \leq 2L^{-1}$, then the map $I_{\cH} - \gamma \nabla \mathbf{f}$ is $(1- \mu)$-Lipschitz, so the composition $P_D(I_{\cH} - \gamma\nabla \mathbf{f})$ is $(1- \mu)$-Lipschitz. Therefore, by Proposition~\ref{prop:strongmonofromLipschitz}, the operator $S$ is $\mu$-strongly quasi-monotone.

Part~\ref{prop:finito_op:part:roots} (roots) is simple so we omit the proof.

Part~\ref{prop:finito_op:part:demiclosed} (demiclosedness): By~\cite[Proposition 4.33 and Proposition 4.8]{bauschke2011convex}, the operator $I-S$ is nonexpansive because it is the composition of two nonexpansive maps. Thus, $S$ is demiclosed at $0$. 
\qed\end{proof}

~\\~\\

\begin{proposition}[SDCA Operator Properties]\label{prop:SDCA_op}
Assume the setting of section~\ref{sec:SDCA}, and in particular, that 
\begin{align*}%\label{eq:SDCA_op}
S = I - \prox_{ \mu_0N f^\ast(-\cdot)}\circ (I - \mu_0N\nabla g).
\end{align*}
\begin{enumerate}
\item \textbf{Coherence:}\label{prop:SDCA_op:part:coherence} $S$ satisfies the coherence condition
\begin{align*}
\left( \forall x \in \cH\right), \left( \forall x^\ast \in \zer(S)\right) \qquad \dotp{S(x) , x- x^\ast} \geq  \sum_{j=1}^N \beta_{1j}\|(S(x))_j\|_j^2.
\end{align*}
with $\beta_{1j} \equiv 3/4$
\item \textbf{Essential strong quasi-monotonicity:} \label{prop:SDCA_op:part:ess_strong_mono}$S$ is 
$$
\mu = \frac{\mu_0N}{(\mu_0N + L)}
$$
essentially strongly quasi-monotone---whether or not any $f_j$ is strongly convex.
\item \textbf{Roots:} \label{prop:SDCA_op:part:roots}$\zer(S)$ is precisely the set of dual solutions to~\eqref{eq:SDCA}.
\item \textbf{Demiclosedness:} \label{prop:SDCA_op:part:demiclosed} $S$ is demiclosed at $0$.
\end{enumerate}
\end{proposition}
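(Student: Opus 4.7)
The plan is to analyze $S = I - T$ with $T = \prox_{\mu_0 N f^{\ast}(-\cdot)} \circ (I - \mu_0 N \nabla g)$ by exploiting (i) the firm nonexpansiveness of the proximal step $P := \prox_{\mu_0 N f^{\ast}(-\cdot)}$, (ii) the linear structure of $\nabla g$, namely $\mu_0 N \nabla g = N P_D$ where $P_D$ is the orthogonal projector onto the diagonal $D = \{(u,\ldots,u) : u \in \cH_0\}$, and (iii) the special quadratic shape of $g$, which makes $\nabla g$ Lipschitz with constant $L = 1/\mu_0$ and gives a Baillon--Haddad-type cocoercive estimate on the forward step $R := I - \mu_0 N \nabla g$. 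Throughout, calculations will be done in the standard product inner product $\dotp{\cdot,\cdot}_{\pr}$.

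For the coherence bound (Part~\ref{prop:SDCA_op:part:coherence}), let $x^{\ast} \in \zer(S)$ and set $\tilde x = x - x^{\ast}$, $v = S(x)$, $u = T(x) - x^{\ast} = \tilde x - v$. Apply firm nonexpansiveness of $P$ to $z = R(x)$ and $w = R(x^{\ast})$, using $R(x) - R(x^{\ast}) = \tilde x - N P_D \tilde x$, to obtain the core identity
\begin{align*}
\|v\|^2 + N\|P_D \tilde x\|^2 - N\dotp{v, P_D \tilde x} \leq \dotp{v, \tilde x}.
\end{align*}
Then I would combine this with the Moreau-type optimality characterization $x^{\ast}_j \in \partial f^{\ast}\bigl(-(x^{\ast}_j - s^{\ast})\bigr)$ (equivalently, a monotone-operator inequality at $x^{\ast}$) to constrain the coupling between $v$ and $P_D \tilde x$ enough to absorb the cross term $-N\dotp{v, P_D \tilde x}$ by a weighted Young inequality, yielding $\dotp{v,\tilde x} \geq \tfrac{3}{4}\|v\|^2 = \tfrac{3}{4}\sum_j \|(S(x))_j\|_j^2$.

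For essential strong quasi-monotonicity (Part~\ref{prop:SDCA_op:part:ess_strong_mono}), I will show directly that $T$ is a strict contraction in the product norm with constant $1 - \mu$, where $\mu = \mu_0 N /(\mu_0 N + L)$. The map $P$ is $1$-Lipschitz; the forward step $R$ is, by Proposition~\ref{prop:stronglymonodiffable} applied to $\gamma = \mu_0 N$ and $L = 1/\mu_0$ together with $\nabla g$ being $L$-Lipschitz, at most $\sqrt{1 - 2\mu_0 N \cdot 0 + (\mu_0 N)^2 \cdot 0} = 1$-Lipschitz, but the product $P \circ R$ inherits contraction from the coupling of $\prox$ with $\nabla g$ via the resolvent identity applied to the strongly monotone sum $\partial f^{\ast}(-\cdot) + \mu_0 N \nabla g$. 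Concretely, this gives $\|T(x) - T(x^{\ast})\| \leq (1-\mu)\|x - x^{\ast}\|$, whence $\dotp{S(x), x - x^{\ast}} \geq \mu \|x - x^{\ast}\|^2$ by Proposition~\ref{prop:strongmonofromLipschitz}. No differentiability of any $f_j$ is needed since the strong convexity comes entirely from $g$.

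Parts~\ref{prop:SDCA_op:part:roots} and~\ref{prop:SDCA_op:part:demiclosed} are short. For roots, the fixed-point equation $x = P(R(x))$ is, by the subgradient characterization of $\prox$, equivalent to $0 \in \partial\bigl(\sum_j f_j^{\ast}(-x_j)\bigr) + \mu_0 N \nabla g(x)$, which is precisely the first-order optimality condition for the dual problem displayed in Section~\ref{sec:SDCA}. For demiclosedness, I will observe that $T$ is a composition of the firmly nonexpansive operator $P$ with the continuous affine map $R$, and that $I - T$ is the resolvent-displacement mapping of a maximally monotone operator on $\cH$; hence by standard results (e.g., \cite[Corollary 4.28]{bauschke2011convex}) $S$ is demiclosed at $0$. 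The main obstacle I anticipate is the coherence constant $3/4$: the naive composition calculus of averaged operators only delivers $\beta_{1j} = 1/2$, and squeezing the extra $1/4$ requires carefully using the monotonicity of $\partial f^{\ast}(-\cdot)$ at $x^{\ast}$ to bound the indefinite cross term above.
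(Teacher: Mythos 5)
Your Parts~\ref{prop:SDCA_op:part:roots} and~\ref{prop:SDCA_op:part:demiclosed} match the paper, but Parts~\ref{prop:SDCA_op:part:coherence} and~\ref{prop:SDCA_op:part:ess_strong_mono} each contain a genuine gap. For the coherence constant, the tool you dismiss is exactly the one the paper uses: the forward step $I-\mu_0N\nabla g$ and the prox are each $\tfrac12$-averaged, the refined composition formula of Proposition~\ref{prop:averagedconstants} makes $T$ \emph{$\tfrac23$-averaged} (not merely nonexpansive), and Proposition~\ref{eq:identminusaveraged} then gives $\tfrac34$-cocoercivity of $S=I-T$ immediately. Your claim that the averaged-operator calculus "only delivers $1/2$" is what sends you down the alternative route, and that route is not closed: your core identity from firm nonexpansiveness is correct, but the cross term $-N\dotp{v,P_D\tilde x}$ carries a factor of $N$ while your absorption budget is $N\|P_D\tilde x\|^2$ plus $\tfrac14\|v\|^2$; a Cauchy--Schwarz/Young estimate of the form $N\dotp{P_Dv,P_D\tilde x}\le N\epsilon\|P_D\tilde x\|^2+\tfrac{N}{4\epsilon}\|P_Dv\|^2$ leaves a residual of order $N\|P_Dv\|^2$ that cannot be dominated by $\tfrac14\|v\|^2$ for $N\ge 2$, and you never exhibit the "Moreau-type optimality" input that would rescue it. As written, Part~\ref{prop:SDCA_op:part:coherence} is unproved.

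Part~\ref{prop:SDCA_op:part:ess_strong_mono} misidentifies the source of the contraction. The function $g$ depends only on $\sum_l x_l$, so it is constant on the codimension-one-block subspace $\{x:\sum_l x_l=0\}$ and is \emph{not} strongly convex; the operator $\partial f^\ast(-\cdot)+\mu_0N\nabla g$ is therefore not strongly monotone "from the $g$ side," and your resolvent-identity argument has nothing to contract with. The paper's mechanism is the opposite factor: $\nabla f$ is $L$-Lipschitz, hence $f^\ast$ is $L^{-1}$-strongly convex~\cite[Theorem 18.15]{bauschke2011convex}, hence $\prox_{\mu_0Nf^\ast(-\cdot)}$ is $L(\mu_0N+L)^{-1}$-Lipschitz; composing with the nonexpansive forward step makes $T$ a $L(\mu_0N+L)^{-1}$-contraction and Proposition~\ref{prop:strongmonofromLipschitz} yields $\mu=\mu_0N(\mu_0N+L)^{-1}$. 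Consequently your closing assertion that "no differentiability of any $f_j$ is needed" is exactly backwards: the Lipschitz smoothness of the $f_j$ is the entire source of the linear rate here, and it is what the proposition's phrase "whether or not any $f_j$ is strongly convex" is contrasting with.
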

\begin{proof}
Part~\ref{prop:SDCA_op:part:coherence} (coherence): The gradient $\nabla g$ is $(\mu_0N)^{-1}$-Lipschitz, so $I_{\cH}- \mu_0N \nabla g$ is $(1/2)$-averaged~\cite[Proposition 4.33]{bauschke2011convex}. Thus, by Proposition~\ref{prop:averagedconstants}, the composition of the operators $\prox_{\mu_0 N f^\ast(-\cdot)} \circ(I_{\cH}- \mu_0N \nabla g)$, both of which are $2^{-1}$-averaged, is $(2/3)$-averaged. Thus, by Lemma~\ref{eq:identminusaveraged}, $S = I_{\cH} - \prox_{\mu_0 N f^\ast(-\cdot)} \circ(I_{\cH}- \mu_0N \nabla g)$ is $(3/4)$-cocoercive.

Part~\ref{prop:SDCA_op:part:ess_strong_mono} (essential strong quasi-monotonicity): All the strong monotonicity comes from the $\prox_{\mu_0 Nf^\ast(-\cdot)}$ operator: The gradient of $f(x_1, \ldots, x_N) = \sum_{i=1}^N f_i(x_i)$ is $L$-Lipschitz continuous on $\cH$, so the conjugate function $f^\ast$ is $L^{-1}$-strongly convex~\cite[Theorem 18.15]{bauschke2011convex}, and this, in turn, implies that $\prox_{\mu_0N f^\ast(-\cdot)}$ is $L(\mu_0N + L)^{-1}$-Lipschitz continuous. Thus, $\prox_{\mu_0Nf^\ast(-\cdot)}\circ(I_{\cH}- \mu_0N \nabla g)$ is $L(\mu_0N + L)^{-1}$-Lipschitz continuous~\cite[Proposition 23.11]{bauschke2011convex}, so by Proposition~\ref{prop:strongmonofromLipschitz}, $S$ is $\mu_0N(\mu_0N + L)^{-1}$-essentially strongly quasi-monotone (indeed, strongly monotone). 

Part~\ref{prop:SDCA_op:part:roots} (roots) follows from~\cite[Proposition 25.1(iv)]{bauschke2011convex}.

Part~\ref{prop:SDCA_op:part:demiclosed} (demiclosedness): In part~\ref{prop:SDCA_op:part:coherence}, we showed that $ I - S$ is averaged, and hence nonexpansive. Thus, $S$ is demiclosed at $0$. 
\qed
\end{proof}

~\\~\\
\begin{proposition}[Randomized Projection Operator Properties]\label{prop:RPA}
Assume the setting of Section~\ref{sec:RPA}, and in particular, that
\begin{align*}
 S_i = \begin{cases}
I - P_{C_i} & \text{if } i = 1, \ldots, s_1; \\
I - G_{f_{i-s_1}} & \text{if } i = s_1+1, \ldots, s_1 + s_2. %\frac{f_{i-s_1}(x)}{\|g_{i-s_1}(x)\|^2} g_{i-s_1}(x)
\end{cases}
\end{align*}
Then
\begin{enumerate}
\item \textbf{Coherence:}\label{prop:RPA:part:coherence} $S$ satisfies the coherence condition
\begin{align*}
\left( \forall x \in \cH\right), \left( \forall x^\ast \in \zer(S)\right)  \qquad\dotp{S(x) , x- x^\ast} \geq  \sum_{i=1}^N \beta_{i1}\|S_i(x) - S_i(x^\ast)\|^2.
\end{align*}
with $\beta_{i1} \equiv 1$.
\item \textbf{Essential strong quasi-monotonicity:} \label{prop:RPA:part:ess_strong_mono} $S$ is 
$$
\mu = \frac{\max\{1, \varepsilon^2L^{-2}\}}{N\hat{\mu}^2}
$$
essentially strongly quasi-monotone whenever
\begin{enumerate}
\item $\{C_i \mid i = 1, \ldots, s_1\} \cup \{\{x \mid f_i(x) \leq 0\} \mid i = 1, \ldots, s_2\}$ are $\hat{\mu}$-linearly regular,\footnote{A set family $\{D_1, \ldots, D_N\}$ is $\hat{\mu}$-linearly regular if $\forall x\in \cH$, $d_{D_1\cap\cdots \cap D_N}(x) \leq \hat{\mu}\max\{d_{D_1}(x), \ldots, d_{D_N}(x)\}$.}
\item there is an $\varepsilon > 0$ such that $f_i(x) \geq \varepsilon d_{\{ f_i(x) \leq 0\}}(x)$ for all $x \in \cH$, 
\item and there is an $L > 0$ such that $\|g_i(x)\| \leq L$ for all $x \in \cH$.
\end{enumerate}
\item \textbf{Roots:} \label{prop:RPA:part:roots} $\zer(S)$ is precisely the set of solutions to the feasibility problem~\eqref{eq:feasibilityproblem}. 
\item \textbf{Demiclosedness:} \label{prop:RPA:part:demiclosed} $S$ is demiclosed at $0$. 
\end{enumerate}
\end{proposition}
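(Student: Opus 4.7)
The plan is to establish the four assertions in order, since coherence in part (1) feeds the proofs of parts (2) and (3), while demiclosedness follows from standard continuity arguments. For part (1), the key observation is that for each $i$ and any $x^\ast \in \cS$, one has $\dotp{S_i(x), x - x^\ast} \geq \|S_i(x)\|^2$: when $i \leq s_1$, this is precisely the firm nonexpansiveness characterization of $P_{C_i}$ applied at $x$ and $x^\ast \in C_i$; when $i > s_1$ the case $f_{i-s_1}(x) \leq 0$ is trivial, and when $f_{i-s_1}(x) > 0$ one writes $S_i(x) = (f_{i-s_1}(x)/\|g_{i-s_1}(x)\|^2)\, g_{i-s_1}(x)$, applies the subgradient inequality $f_{i-s_1}(x^\ast) \geq f_{i-s_1}(x) + \dotp{g_{i-s_1}(x), x^\ast - x}$ with $f_{i-s_1}(x^\ast) \leq 0$ to get $\dotp{g_{i-s_1}(x), x - x^\ast} \geq f_{i-s_1}(x)$, and then scales by $f_{i-s_1}(x)/\|g_{i-s_1}(x)\|^2$. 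Averaging over $i$ proves coherence with $\beta_{i1} = N^{-1}$.

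Part (3) drops out of part (1): $\bigcap_i D_i \subseteq \zer(S)$ is immediate, and conversely if $S(x) = 0$ and $\cS$ is nonempty, coherence forces every $\|S_i(x)\| = 0$, which encodes $x \in D_i$ for each $i$. For part (4), each $I - P_{C_i}$ is demiclosed at $0$ by \cite[Cor.~4.28]{bauschke2011convex}. For the subgradient term, if $x^k \rightharpoonup x$ with $(I-G_{f_i})(x^k) \to 0$, then on the subsequence where $f_i(x^k) > 0$ one has $f_i(x^k) = \|g_i(x^k)\|\cdot\|(I-G_{f_i})(x^k)\| \leq L \|(I-G_{f_i})(x^k)\| \to 0$; weak lower semicontinuity of $f_i$ gives $f_i(x) \leq 0$, and the standing assumption $0 \notin \partial f_i(\{f_i > 0\})$ (together with coherence, which ensures each $\|S_i(x^k)\|\to 0$ individually from $\|S(x^k)\|\to 0$) rules out the pathological boundary case and yields $S_i(x) = 0$. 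Summing gives demiclosedness of $S$.

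The main obstacle is part (2), and the plan is to chain three inequalities evaluated at $x^\ast = P_\cS(x)$. First, coherence from part (1) yields $\dotp{S(x), x - P_\cS(x)} \geq N^{-1} \sum_{i=1}^N \|S_i(x)\|^2$. Second, each norm is bounded below by a distance to the corresponding feasibility set $D_i$: we have $\|S_i(x)\| = d_{C_i}(x)$ exactly for $i \leq s_1$, while for $i > s_1$ the hypotheses $f_i(x) \geq \varepsilon\, d_{D_{i+s_1}}(x)$ and $\|g_i(x)\| \leq L$ yield $\|S_i(x)\| = f_i(x)/\|g_i(x)\| \geq (\varepsilon/L)\, d_{D_{i+s_1}}(x)$ when $f_i(x) > 0$ and both sides vanish otherwise. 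Therefore
$$\sum_{i=1}^N \|S_i(x)\|^2 \;\geq\; \min\{1, \varepsilon^2 L^{-2}\}\sum_{i=1}^N d_{D_i}(x)^2 \;\geq\; \min\{1, \varepsilon^2 L^{-2}\}\, \max_i d_{D_i}(x)^2.$$
Third, linear regularity gives $\max_i d_{D_i}(x)^2 \geq \hat{\mu}^{-2}\, d_\cS(x)^2$. Composing the three bounds yields $\dotp{S(x), x - P_\cS(x)} \geq \mu\, d_\cS(x)^2$ with $\mu = \min\{1, \varepsilon^2 L^{-2}\}/(N\hat{\mu}^2)$, the claimed essential strong quasi-monotonicity (interpreting the $\max$ in the proposition statement as a typographical substitute for $\min$, consistent with Table~\ref{table:stepsizeconnections}). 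The delicate piece is verifying that the per-coordinate lower bound $\|S_i(x)\| \geq (\varepsilon/L)\, d_{D_{i+s_1}}(x)$ is uniform across the boundary $f_i(x) = 0$; this is handled by noting both sides are continuous in $f_i(x)$ at zero, so the inequality holds globally by a limiting argument from the open set $\{f_i > 0\}$.
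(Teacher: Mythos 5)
Your parts (1)--(3) follow essentially the same route as the paper: the firm-nonexpansiveness/subgradient-inequality argument yields exactly the quasi-cocoercivity $\dotp{S_i(x), x - x^\ast} \geq \|S_i(x)\|^2$ for $x^\ast$ in the feasibility set, and your part (2) chain (coherence, then $\|S_i(x)\| \geq \min\{1, \varepsilon L^{-1}\}\, d_{D_i}(x)$, then linear regularity) reproduces the paper's computation term by term, landing on the same constant $\min\{1,\varepsilon^2L^{-2}\}/(N\hat{\mu}^2)$ --- you are right that the $\max$ in the statement is a typo for $\min$, as the paper's own proof confirms. Your part (3) is marginally more elementary than the paper's (which invokes \cite[Proposition 4.34]{bauschke2011convex} on averages of quasi-nonexpansive operators with a common fixed point), though both arguments require nonemptiness of the feasibility set. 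One simplification: no ``limiting argument across the boundary $f_i(x)=0$'' is needed in part (2), since on $\{f_i \leq 0\}$ both $S_i(x)$ and $d_{\{f_i \leq 0\}}(x)$ vanish identically, so the per-operator bound holds pointwise with no continuity appeal.

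The one genuine gap is in part (4). You bound $f_i(x^k) \leq L\,\|(I - G_{f_i})(x^k)\|$ using the uniform subgradient bound $\|g_i(x)\| \leq L$, but that bound is hypothesis (c) of part (2) only; demiclosedness is asserted without hypotheses (a)--(c), so you may not invoke it there. The paper's fix: the weakly convergent sequence $\{x^k\}$ is bounded, and the subdifferential of a continuous convex function is locally bounded, so $\{g_i(x^k)\}_{k}$ is bounded by a constant depending on the sequence; this suffices to conclude $f_i(x^k) = \|g_i(x^k)\|\,\|(I - G_{f_i})(x^k)\| \to 0$, after which weak lower semicontinuity finishes exactly as you describe. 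Your preliminary step --- deducing $\|S_i(x^k)\| \to 0$ for each $i$ from $\|S(x^k)\| \to 0$ via coherence and boundedness of $x^k - x^\ast$ --- matches the paper's first step and is correct. With the local-boundedness substitution, your part (4) goes through.
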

\begin{proof}
Parts~\ref{prop:RPA:part:coherence} (coherence) and~\ref{prop:RPA:part:roots} (roots): Each projection operator $P_{C_i}$ is $2^{-1}$-averaged by~\cite[Proposition~4.8]{bauschke2011convex}, and so $I - P_{C_i}$ is $1$-cocoercive by Proposition~\ref{eq:identminusaveraged}. Similarly, by~\cite[Fact 2.1(v)]{subgradient_projectors}, each subgradient projector $2^{-1}$ quasi-averaged, i.e., 
\begin{align*}
\left(\forall x \in \cH\right), \left(\forall x^\ast \in \{z \mid f(z) \leq 0\}\right) \qquad \|G_{f_i}(x) - x^\ast\|^2 \leq \|x - x^\ast\|^2 - \|x - G_{f_i}(x)\|^2.
\end{align*}
Thus, by rearranging this inequality, we have
\begin{align*}
\left(\forall x \in \cH\right), \left(\forall x^\ast \in \{z \mid f(z) \leq 0\}\right) \qquad \dotp{ x - G_{f_i}(x), x - x^\ast} \geq \|x - G_{f_i}(x)\|^2.
\end{align*}
Altogether, with $C$ equal to the set of points in the intersection~\eqref{eq:feasibilityproblem}, we have
\begin{align*}
\left(\forall x \in \cH\right), \left(\forall x^\ast \in C\right) \qquad\dotp{S(x), x - x^\ast} \geq \frac{1}{N}\sum_{i=1}^N \|S_i(x)\|^2.
\end{align*}

To complete Parts~\ref{prop:RPA:part:coherence} and~\ref{prop:RPA:part:roots}, we need only show that $C = \zer(S)$. But the operator $(I - S) = \frac{1}{N}\sum_{i=1}^N(I - S_{i})$ is the average of quasi-nonexpansive operators, and all of the fixed-point sets of these operators overlap (because $\Fix(G_{f_i}) = \{x \mid f_i(x) \leq 0\}$~\cite[Fact 2.1(ii)]{subgradient_projectors} and there is a point in the intersection~\eqref{eq:feasibilityproblem}). Thus, by~\cite[Proposition 4.34]{bauschke2011convex}, the set of fixed points of $I-S$ is equal to the set of all points in the intersection; in other words, $C = \zer(S)$.

Part~\ref{prop:RPA:part:ess_strong_mono} (essential strong quasi-monotonicity): Let $x^\ast \in \zer(S)$. Then for all $x \in \cH$, with $f(x) > 0$, we have
\begin{align*}
\dotp{x - G_{f_i}(x), x - x^\ast} = \frac{f_i(x)}{\|g_i(x)\|^2}\dotp{g_i(x), x - x^\ast} \geq \frac{f_i(x)}{\|g_i(x)\|^2}(f_i(x) - f_i(x^\ast)) \geq \frac{f_i(x)^2}{\|g_i(x)\|^2} \geq \frac{\epsilon^2}{L^2}d_{\{f_i(z) \leq 0\}}^2(x).
\end{align*}
Similarly, because $I - P_{C_i}$ is $1$-cocoercive, for all $x \in \cH$, we have
\begin{align*}
\dotp{x - P_{C_i}(x), x - x^\ast} \geq \|x - P_{C_i}(x)\|^2 = d_{C_i}^2(x).
\end{align*}
By adding these bounds all together, we have
\begin{align*}
\left(\forall x \in \cH\right) \qquad \dotp{S(x), x- x^\ast} &\geq \frac{1}{N}\min\left\{1, \frac{\epsilon^2}{L^2}\right\}\left( \sum_{i=1}^{s_1} d_{C_i}^2(x) + \sum_{i=s_1 + 1}^{s_1+s_2} d_{\{f_{i-s_1}(z) < 0\}}^2(x)\right) \\
&\geq \frac{1}{N}\min\left\{1, \frac{\epsilon^2}{L^2}\right\}\max\left( \left\{d_{C_i}^2(x) \mid i = 1, \ldots, s_1\right\} \cup \left\{d^2_{\{ f_i(z) \leq 0\}}(x)\mid i = 1, \ldots, s_2\right\} \right) \\
&\geq \frac{d^2_{\zer(S)}(x)}{N\hat{\mu}^2}\min\left\{1, \frac{\epsilon^2}{L^2}\right\},
\end{align*}
which proves that $S$ is $\mu$ essentially strongly quasi-monotone.

Part~\ref{prop:RPA:part:demiclosed} (demiclosedness): Let $\{x^k \}_{k \in \NN} \subseteq \cH$ be a sequence of points, let $x \in \cH$, and suppose that two limits hold: $x^k \rightharpoonup x$ and $S(x^k) \rightarrow 0$. To show that $S$ is demiclosed at $0$, we need to show that $S(x) = 0$. We prove this in two parts: first we show that for all $i$, $S_i(x^k) \rightarrow 0$; and second we show that these limits imply that $x \in \zer(S)$.

``$S_i(x^k) \rightarrow 0$:" For all $i$, let $T_i = I - S_i$. These operators $T_i$ are quasinonexpansive (as noted in Part~\ref{prop:RPA:part:coherence}). Thus, for any $y \in \zer(S)$, we have
\begin{align*}
2\dotp{ T_ix^k - x^k, x^k - y}  &= \|T_ix^k  - y \|^2 - \|T_i x^k - x^k\|^2 - \|x^k - y\|^2 \leq  - \|T_ix^k - x^k\|^2.
\end{align*}
Thus, 
\begin{align*}
\frac{1}{N}\sum_{i=1}^N - \|T_ix^k - x^k\|^2 \geq \frac{1}{N} \sum_{i=1}^N 2\dotp{ T_ix^k - x^k, x^k - y} = \dotp{ S(x^k), x^k - y} \rightarrow 0,
\end{align*}
and so $S_i(x^k) = T_ix^k - x^k \rightarrow 0$,

``$x \in \zer(S)$:" All the projection operators $P_{C_i}$ are nonexpansive, and moreover, $P_{C_i}x^k - x^k \rightarrow 0$. Therefore, because all nonexpansive operators are demiclosed at $0$, we have $P_{C_i} x = x$ and $x \in C_i$. 

The subgradient projectors are not nonexpansive, but we can still show that $G_{f_i}x = x$, and hence, $f_i(x)\leq 0$. Indeed, $\|G_{f_i}(x^k) - x^k\| = f_i(x^k)\|g_i(x^k)\|^{-1} \rightarrow 0$. And because subdifferential operators of continuous convex functions are locally bounded, because the sequence  $\{x^k\}_{k \in \NN}$ is bounded, and because the subgradients $\{g_i(x^k) \}_{k \in \NN}$ are bounded, it follows that $f_i(x^k) \rightarrow 0$. Then because convex functions are weakly lower semicontinuous, we have $f_i(x) \leq \liminf_k f_i(x^k) \leq 0$. Altogether, $x \in \zer(S)$, and consequently, $S$ is demiclosed at $0$.\qed
\end{proof}

\begin{corollary}[Kaczmarz Operator Properties]\label{cor:Kaczmarz}
Assume the setting of Section~\ref{sec:RPA}, and in particular, that
\begin{align*}
\left(\forall i \right) \qquad S_i = (I - P_{C_i}) = (\dotp{a_i, \cdot} -b_i)a_i.
\end{align*} 
Then
\begin{enumerate}
\item \textbf{Coherence:}\label{cor:Kaczmarz:part:coherence} $S$ satisfies the coherence condition
\begin{align*}
\left( \forall x \in \cH\right), \left( \forall x^\ast \in \zer(S)\right)  \qquad \dotp{S(x) , x- x^\ast} \geq  \sum_{i=1}^N \beta_{i1}\|S_i(x) - S_i(x^\ast)\|^2.
\end{align*}
with $\beta_{i1} \equiv 1$.
\item \textbf{Essential strong quasi-monotonicity:} \label{cor:Kaczmarz:part:ess_strong_mono} $S$  is 
$$
\mu = \frac{1}{N\|A^{-1}\|_2^{2}}.
$$
essentially strong quasi-monotone, where
$$
\|A^{-1}\|_2 := \inf \{M \mid \left(\forall x \in \cH\right) \; M\|Ax \|_2 \geq \|x\|_2 \}.
$$
\item \textbf{Roots:} \label{cor:Kaczmarz:part:roots} $\zer(S)$ is precisely the set of solutions to the linear equation $Ax = b$.
\item \textbf{Demiclosedness:} \label{cor:Kaczmarz:part:demiclosed} $S$ is demiclosed at $0$. 
\end{enumerate}
\end{corollary}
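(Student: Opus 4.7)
The plan is to derive every part by specializing Proposition~\ref{prop:RPA} to the setting $s_1 = N$, $s_2 = 0$, $C_i = \{x \mid \dotp{a_i, x} = b_i\}$, and $S_i = I - P_{C_i}$. Since there are no function constraints, the $f_i$-dependent hypotheses of Proposition~\ref{prop:RPA}\ref{prop:RPA:part:ess_strong_mono} are vacuous. Parts~\ref{cor:Kaczmarz:part:coherence} (coherence with $\beta_{i1}\equiv 1$), \ref{cor:Kaczmarz:part:roots} (roots equal the affine variety $\{x : Ax = b\}$), and~\ref{cor:Kaczmarz:part:demiclosed} (demiclosedness) are then immediate corollaries of the corresponding parts of Proposition~\ref{prop:RPA}, so I would dispatch them in a single sentence each.

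The only real work is Part~\ref{cor:Kaczmarz:part:ess_strong_mono}. Rather than invoking Proposition~\ref{prop:RPA}\ref{prop:RPA:part:ess_strong_mono} through its generic $\max$-based linear regularity bound, I would exploit the fact that for affine half-spaces the sum $\sum_i d_{C_i}^2$ has a clean closed form, which yields a sharper constant than passing through a linear regularity modulus. Specifically, since $\|a_i\| = 1$, we have $d_{C_i}^2(x) = |\dotp{a_i,x} - b_i|^2$, and therefore
\begin{align*}
\sum_{i=1}^N d_{C_i}^2(x) = \|Ax - b\|_2^2.
\end{align*}
Combining this with the cocoercivity from Part~\ref{cor:Kaczmarz:part:coherence} (namely $\dotp{S(x), x-x^\ast} \geq N^{-1}\sum_i \|S_i(x)\|^2 = N^{-1}\sum_i d_{C_i}^2(x)$) gives $\dotp{S(x), x - x^\ast} \geq N^{-1}\|Ax-b\|_2^2$ for any $x^\ast \in \zer(S)$.

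The final step is to bound $\|Ax - b\|_2^2$ below by $d_\cS^2(x)$. Taking $x^\ast = P_\cS(x)$, one has $Ax^\ast = b$, so $Ax - b = A(x-x^\ast)$; the defining property $\|A^{-1}\|_2 \|Az\|_2 \geq \|z\|_2$ applied to $z = x - x^\ast$ yields $\|Ax - b\|_2 \geq \|x - x^\ast\|_2/\|A^{-1}\|_2 = d_\cS(x)/\|A^{-1}\|_2$. Chaining gives $\dotp{S(x), x - P_\cS(x)} \geq (N\|A^{-1}\|_2^2)^{-1} d_\cS^2(x)$, which is precisely the claimed $\mu$. The main obstacle, though minor, is simply being careful that the chosen definition of $\|A^{-1}\|_2$ applies to \emph{every} $z \in \cH$ (so no row-space restriction is needed) and that the constant obtained this way strictly beats what one would get by routing through a $\sqrt{N}$-linear-regularity estimate; otherwise the proof is a short chain of inequalities.
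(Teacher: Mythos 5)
Your proposal is correct and follows essentially the same route as the paper: Parts~\ref{cor:Kaczmarz:part:coherence}, \ref{cor:Kaczmarz:part:roots}, and~\ref{cor:Kaczmarz:part:demiclosed} are cited from Proposition~\ref{prop:RPA}, and Part~\ref{cor:Kaczmarz:part:ess_strong_mono} reduces $\dotp{S(x), x - x^\ast}$ to $N^{-1}\|A(x-x^\ast)\|_2^2$ and then applies the definition of $\|A^{-1}\|_2$. The only cosmetic difference is that the paper expands the inner product directly to get $\dotp{S(x),x-x^\ast} = N^{-1}\sum_i \dotp{a_i,x-x^\ast}^2$ as an exact identity, whereas you pass through the cocoercivity inequality and the identity $\sum_i d_{C_i}^2(x) = \|Ax-b\|_2^2$; for hyperplane projections these coincide, so the constant is the same.
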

\begin{proof}
Parts~\ref{cor:Kaczmarz:part:coherence} (coherence),~\ref{cor:Kaczmarz:part:roots} (roots), and~\ref{cor:Kaczmarz:part:demiclosed} (demiclosedness) follow from Proposition~\ref{prop:RPA}. 

Part~\ref{cor:Kaczmarz:part:ess_strong_mono}: Observe that
\begin{align*}
\dotp{S(x), x - x^\ast} = \frac{1}{N} \sum_{i=1}^N \dotp{(\dotp{a_i, x} - b_i)a_i, x - x^\ast} =   \frac{1}{N} \sum_{i=1}^N \dotp{(\dotp{a_i, x} - \dotp{a_i, x^\ast})a_i, x - x^\ast}  &=  \frac{1}{N} \sum_{i=1}^N \dotp{a_i, x-x^\ast}^2 \\
&= \frac{1}{N} \|A(x - x^\ast)\|_2^2 \\
&\geq \frac{1}{N\|A^{-1}\|_2^2} \|x - x^\ast\|^2. \qquad \qed
\end{align*}
\end{proof}

\subsection{New Operators}

\begin{proposition}[Proximal SAGA/SVRG Operator Properties]\label{prop:SAGAopprop}
Assume the setting of Section~\ref{sec:proximable_SAGA}, and in particular, that
\begin{align*}
\left(\forall i < N +1\right) \qquad& S_{i} = \frac{\gamma}{N}\nabla f_i\circ \prox_{\gamma g}; \\
& S_{N+1} = (I-\prox_{\gamma g}),
\end{align*}
Then 
\begin{enumerate}
\item \textbf{Coherence:}\label{prop:SAGAopprop:part:coherence} $S$ satisfies the coherence condition
\begin{align*}
\left( \forall x \in \cH\right), \left( \forall x^\ast \in \zer(S)\right) \qquad \dotp{S(x) , x- x^\ast} \geq  \sum_{i=1}^N \beta_{i1}\|S_i(x) - S_i(x^\ast)\|^2.
\end{align*}
with
\begin{align*}
\beta_{i1} :=  \frac{N}{2\gamma L (N+1)} \qquad i = 1, \ldots, N && \text{and} && \beta_{(N+1)1} := \frac{1}{(N+1)}\left(1 - \frac{\gamma L}{2}\right)
\end{align*}
\item \textbf{Essential strong quasi-monotonicity:}\label{prop:SAGAopprop:part:ess_strong_mono} $S$ is 
$$
\mu =\frac{1 + \gamma \mu_g - \sqrt{1- 2\gamma\mu_f +  \gamma^2 L\mu_f}}{(N+1)(1+\gamma \mu_g)}
$$
essentially strongly quasi-monotone, where $N^{-1}\sum_{i=1}^N f_i$ is $\mu_f$-strongly convex and $g$ is $\mu_g$-strongly convex.
\item \textbf{Roots:}\label{prop:SAGAopprop:part:roots} with any choice of $\gamma$, we have
\begin{align*}
x\in \zer(S) \implies \prox_{\gamma g} (x) \text{ solves \eqref{eq:simplesmooth_one_nonsmooth}}, 
\end{align*}
and $\zer(S) \neq \emptyset$ if, and only if, $g + N^{-1}\sum_{j=1}^N f_j$ has a minimizer.
\item \textbf{Demiclosedness:} \label{prop:SAGAopprop:part:demiclosed} $S$ is demiclosed at $0$.
\end{enumerate}
\end{proposition}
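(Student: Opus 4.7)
The plan starts from the simplifying identity that with $\bar f := N^{-1}\sum_{i=1}^N f_i$ and $u := \prox_{\gamma g}(x)$,
\begin{align*}
(N+1)S(x) = \gamma \nabla \bar f(u) + (x - u) = (I - T)(x), \qquad T := (I - \gamma \nabla \bar f)\circ \prox_{\gamma g}.
\end{align*}
This representation immediately delivers three of the four conclusions. For Part~\ref{prop:SAGAopprop:part:roots}, $x\in \zer(S)$ is equivalent to $x - u = -\gamma\nabla\bar f(u)$, and combining this with the definitional inclusion $x - u \in \gamma\partial g(u)$ gives $0 \in \nabla \bar f(u) + \partial g(u)$, so $u = \prox_{\gamma g}(x)$ minimizes $\bar f + g$ (the converse is symmetric). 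For Part~\ref{prop:SAGAopprop:part:demiclosed}, Baillon--Haddad together with Proposition~\ref{eq:identminusaveraged} make $I - \gamma\nabla\bar f$ into a $(\gamma L/2)$-averaged map and $\prox_{\gamma g}$ is $(1/2)$-averaged, so Proposition~\ref{prop:averagedconstants} renders $T$ averaged, hence nonexpansive, under $\gamma \leq 2L^{-1}$; thus $(N+1)S = I - T$ is demiclosed at $0$. For Part~\ref{prop:SAGAopprop:part:ess_strong_mono}, Proposition~\ref{prop:stronglymonodiffable} bounds the Lipschitz constant of $I - \gamma\nabla\bar f$ by $\sqrt{1 - 2\gamma\mu_f + \gamma^2 L\mu_f}$, and $\mu_g$-strong convexity of $g$ makes $\prox_{\gamma g}$ a $(1+\gamma\mu_g)^{-1}$-contraction, so $T$ is $\kappa$-Lipschitz with $\kappa := \sqrt{1-2\gamma\mu_f + \gamma^2L\mu_f}/(1+\gamma\mu_g)$; Proposition~\ref{prop:strongmonofromLipschitz} then upgrades $(N+1)S = I - T$ to $(1-\kappa)$-strong monotonicity, which upon division by $N+1$ matches the advertised $\mu$.

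The main work is Part~\ref{prop:SAGAopprop:part:coherence}, because the aggregate identity only controls the sum while coherence requires a separate quadratic in each $\|S_i(x) - S_i(x^\ast)\|^2$. With $u^\ast := \prox_{\gamma g}(x^\ast)$, I would expand
\begin{align*}
(N+1)\dotp{S(x),\, x - x^\ast} = \gamma\dotp{\nabla \bar f(u) - \nabla \bar f(u^\ast),\, x - x^\ast} + \dotp{(I-\prox_{\gamma g})(x) - (I-\prox_{\gamma g})(x^\ast),\, x - x^\ast}
\end{align*}
and treat the two pieces separately. The second piece yields $\|S_{N+1}(x) - S_{N+1}(x^\ast)\|^2$ exactly, by firm nonexpansiveness of $I - \prox_{\gamma g}$. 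For the first, I write $x - x^\ast = (u - u^\ast) + (S_{N+1}(x) - S_{N+1}(x^\ast))$. Baillon--Haddad applied to each $f_i$ separately, combined with $\|\nabla f_i(u) - \nabla f_i(u^\ast)\|^2 = (N/\gamma)^2\|S_i(x) - S_i(x^\ast)\|^2$, turns the resulting diagonal term $\gamma\dotp{\nabla\bar f(u) - \nabla\bar f(u^\ast), u - u^\ast}$ into $\sum_{i=1}^{N} \frac{N}{\gamma L}\|S_i(x) - S_i(x^\ast)\|^2$.

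The delicate step is disposing of the cross term $\gamma\dotp{\nabla\bar f(u) - \nabla\bar f(u^\ast), S_{N+1}(x) - S_{N+1}(x^\ast)}$, which can be negative, and it is here that the constants are forced to be tight. I would apply Young's inequality with parameter $a = L$; Jensen's inequality bounds $\|\nabla\bar f(u) - \nabla\bar f(u^\ast)\|^2 \leq N^{-1}\sum_{i=1}^N \|\nabla f_i(u) - \nabla f_i(u^\ast)\|^2$, so the first half of Young is exactly half the Baillon--Haddad bound and leaves $\sum_i \frac{N}{2\gamma L}\|S_i(x) - S_i(x^\ast)\|^2$ untouched, while the second half subtracts $\tfrac{\gamma L}{2}\|S_{N+1}(x) - S_{N+1}(x^\ast)\|^2$ from the firm-nonexpansiveness bound, producing the remainder $(1 - \gamma L/2)\|S_{N+1}(x) - S_{N+1}(x^\ast)\|^2$. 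Dividing through by $N+1$ recovers the announced constants $\beta_{i1} = N/(2\gamma L(N+1))$ for $i \leq N$ and $\beta_{(N+1)1} = (1-\gamma L/2)/(N+1)$. The choice $a = L$ is what makes Baillon--Haddad and Jensen fit together without waste, and identifying it is the crux of the proof.
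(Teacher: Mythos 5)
Your proposal is correct and follows essentially the same route as the paper's proof: the same splitting $x - x^\ast = (\prox_{\gamma g}(x) - \prox_{\gamma g}(x^\ast)) + (S_{N+1}(x) - S_{N+1}(x^\ast))$, cocoercivity of the $\nabla f_i$ for the diagonal term, firm nonexpansiveness of $I - \prox_{\gamma g}$, Young's inequality with the same parameter for the cross term (your aggregate-plus-Jensen version is equivalent to the paper's term-by-term application), and the identical Lipschitz-composition arguments for strong quasi-monotonicity, roots, and demiclosedness.
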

\begin{proof}

Part~\ref{prop:SAGAopprop:part:coherence} (coherence): The operator $S_{N+1} = (I- \prox_{\gamma g})$ is $1$-cocoercive 
 and $\nabla f_i$ is $N(\gamma L)^{-1}$-cocoercive. Thus, for all $x \in \cS$, we have
\begin{align*}
\dotp{S(x), x- x^\ast} &= \frac{1}{N+1}\sum_{i=1}^{N+1}\dotp{S_{i}(x) - S_{i}(x^\ast), x - x^\ast} \\
&= \frac{1}{N+1}\sum_{i=1}^{N}\dotp{S_{i}(x) - S_{i}(x^\ast), \prox_{\gamma g}(x) - \prox_{\gamma g}(x^\ast)}  + \frac{1}{N+1}\dotp{S_{N+1}(x) - S_{N+1}(x^\ast), x - x^\ast}\\
&\hspace{20pt} + \frac{1}{(N+1)}\sum_{i=1}^{N}\dotp{S_{i}(x) - S_{i}(x^\ast), S_{N+1}(x) - S_{N+1}(x^\ast)}   \\
&\geq \frac{N}{\gamma L(N+1)} \sum_{i=1}^N\|S_i(x) - S_i(x^\ast)\|^2 + \frac{1}{(N+1)}\|S_{N+1}(x) - S_{N+1}(x^\ast)\|^2 \\
&\hspace{20pt} - \frac{N}{2\gamma L (N+1)} \sum_{i=1}^N\|S_i(x) - S_i(x^\ast)\|^2 -  \frac{\gamma L }{2(N+1)}\|S_{N+1}(x) - S_{N+1}(x^\ast)\|^2 \\
&\geq \frac{N}{2\gamma L (N+1)}\sum_{i=1}^N\|S_i(x) - S_i(x^\ast)\|^2 + \frac{1}{(N+1)}\left(1 - \frac{\gamma L}{2}\right)\|S_{N+1}(x) - S_{N+1}(x^\ast)\|^2.
\end{align*}

Part~\ref{prop:SAGAopprop:part:ess_strong_mono} (essential strong quasi-monotonicity): Let 
$$
T = \left(I - \frac{\gamma}{N} \sum_{i=1}^N \nabla f_i\right) \circ \prox_{\gamma g}.
$$
The operator $\prox_{\gamma g}$ is $(1+\gamma \mu_g)^{-1}$-Lipschitz continuous~\cite[Proposition 23.11]{bauschke2011convex}. In addition, by Proposition~\ref{prop:stronglymonodiffable}, $\left(I - \gamma N^{-1} \sum_{i=1}^N \nabla f_i\right)$ is $\sqrt{1- 2\gamma\mu_f +  \gamma^2 L\mu_f}$-Lipschitz continuous, whenever $\gamma \leq 2L^{-1}$. Thus, $T$, the composition of the two Lipschitz operators, is $ (1+\gamma \mu_g)^{-1}\sqrt{1- 2\gamma\mu_f +  \gamma^2 L\mu_f}$ Lipschitz continuous. Therefore, $S = (N+1)^{-1} (I - T) $ is
$$
\mu =  \frac{1}{N+1} \left( 1- \frac{\sqrt{1- 2\gamma\mu_f +  \gamma^2 L\mu_f}}{1+\gamma \mu_g}\right) =  \frac{1 + \gamma \mu_g - \sqrt{1- 2\gamma\mu_f +  \gamma^2 L\mu}}{(N+1)(1+\gamma \mu_g)}
$$
essentially strongly quasi-monotone.

Part~\ref{prop:SAGAopprop:part:roots} (roots): 
\begin{align*}
x \in \zer(S) \Leftrightarrow 0 = (I- \prox_{\gamma g}) + \frac{\gamma}{N} \sum_{i=1}^N \nabla f_i(\prox_{\gamma g}(x)) &\in \gamma \partial g(\prox_{\gamma g}(x)) + \frac{\gamma }{N} \sum_{i=1}^N \nabla f_i(\prox_{\gamma g}(x)) \\
&\Leftrightarrow \prox_{\gamma g}(x) \text{ minimizes }  g + \frac{1}{N}\sum_{j=1}^N f_j.\qquad \qed
\end{align*}

Part~\ref{prop:SAGAopprop:part:demiclosed} (demiclosedness): The operator $T = (N+1)(I- S)$ in (displayed in Part~\ref{prop:SAGAopprop:part:ess_strong_mono}) is the composition of two nonexpansive maps, and thus, it is nonexpansive. Therefore, $S$ is demiclosed at $0$.
\end{proof}

~\\~\\
\begin{proposition}[LinSAGA/LinSVRG/SuperSAGA/SuperSVRG Operator Properties]\label{prop:LinSAGA}
Assume the setting of Section~\ref{sec:LinSAGA}, and in particular, that
\begin{align*}
\left(\forall i < N +1\right) \qquad& S_i = \frac{\gamma}{N} P_V \circ \nabla f_i  \circ P_V \circ \prox_{\gamma g};\\
&S_{N+1} = (I- 2P_V)\circ \prox_{\gamma g} + P_V. 
\end{align*} 
 Then 
 \begin{enumerate}
\item \textbf{Coherence:} \label{prop:LinSAGA:part:coherence} $S$ satisfies the coherence condition
\begin{align*}
\left( \forall x \in \cH\right), \left( \forall x^\ast \in \zer(S)\right)  \qquad\dotp{S(x) , x- x^\ast} \geq  \sum_{i=1}^N \beta_{i1}\|S_i(x) - S_i(x^\ast)\|^2.
\end{align*}
with 
\begin{align*}
\beta_{i1} :=  \frac{N}{2\gamma \hat{L}_i (N+1)} \qquad i = 1, \ldots, N && \text{and} && \beta_{(N+1)1} :=  \frac{1}{N+1}\left(1 - \frac{1}{2N} \sum_{i=1}^N\gamma \hat{L}_i\right).
\end{align*}
\item \textbf{Essential strong quasi-monotonicity:} \label{prop:LinSAGA:ess_strong_mono} $S$ is 
$$
\mu = \frac{1}{N+1}\left(1 - \left(\frac{1}{(1+(\gamma L_g)^{-1})} + \frac{\sqrt{1- 2\gamma\mu_f +  \gamma^2 L\mu_f}}{(1+\gamma \mu_g)}\right)\right)
$$
essentially strongly quasi-monotone (when $\gamma \leq 2L^{-1}$ and $\mu > 0$), where $L = N^{-1} \sum_{i=1}^N \hat{L}_i$, the function $N^{-1}\sum_{i=1}^N f_i$ is $\mu_f$-strongly convex, the function $g$ is differentiable and $\mu_g$-strongly convex, and the gradient $\nabla g$ is $L_g$-Lipschitz continuous.\label{prop:LinSAGA:part:ess_strong_mono} 
\item \textbf{Roots:} \label{prop:LinSAGA:part:roots} with any choice of $\gamma$, we have
\begin{align*}
x^\ast\in \zer(S) \implies  \prox_{\gamma g}(x^\ast) \text{ solves \eqref{eq:super_SAGA_1}}, 
\end{align*}
and $\zer(S) \neq \emptyset$ if, and only if, $ \zer\left(\partial g + N^{-1}\sum_{i=1}^N \nabla f_i  + N_{V}\right) \neq \emptyset$.
\item \textbf{Demiclosedness:} \label{prop:LinSAGA:part:demiclosed} $S$ is demiclosed at $0$. \end{enumerate}
\end{proposition}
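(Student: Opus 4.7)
The argument closely parallels the proof of Proposition~\ref{prop:SAGAopprop}, with the subspace $V$ interlacing the proximal operator. Setting $p := \prox_{\gamma g}$, the organizing observation is that $S_{N+1} = I - T_{\mathrm{DRS}}$, where $T_{\mathrm{DRS}} = (I - P_V) + P_V \circ (2p - I)$ is the Douglas--Rachford operator on $(g, \iota_V)$ and is firmly nonexpansive; hence $S_{N+1}$ is $1$-cocoercive. Similarly, $P_V \nabla f_i P_V = \nabla(f_i \circ P_V)$ by self-adjointness and idempotence of $P_V$, so by Baillon--Haddad each $S_i = (\gamma/N) P_V \nabla f_i P_V \circ p$ ($i \leq N$) inherits the $\hat L_i^{-1}$-cocoercivity of its inner gradient piece. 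These two structural facts drive everything.

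For Part~\ref{prop:LinSAGA:part:coherence} (coherence), I will split
\begin{align*}
\dotp{S_i(x) - S_i(x^*), x - x^*} = \dotp{S_i(x) - S_i(x^*), p(x) - p(x^*)} + \dotp{S_i(x) - S_i(x^*), (I-p)(x) - (I-p)(x^*)}.
\end{align*}
Cocoercivity of $(\gamma/N) P_V \nabla f_i P_V$ evaluated at $p(x), p(x^*)$ bounds the first piece below by $(N/(\gamma \hat L_i))\|S_i(x) - S_i(x^*)\|^2$. For the second piece, because $S_i(x) - S_i(x^*) \in V$, I may replace $(I-p)(x)$ by its $V$-component $P_V(I-p)(x)$; a one-line computation verifies the key identity $P_V(I-p) = S_{N+1} - (I-P_V)\circ p$, so the second piece collapses to $\dotp{S_i(x)-S_i(x^*), S_{N+1}(x) - S_{N+1}(x^*)}$. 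Applying Young's inequality with balance $\epsilon_i := \gamma \hat L_i/N$, and combining with the $1$-cocoercivity of $S_{N+1}$, produces the stated $\beta_{i1}$ and $\beta_{(N+1)1}$ exactly.

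For Parts~\ref{prop:LinSAGA:part:ess_strong_mono} and~\ref{prop:LinSAGA:part:demiclosed}, a direct computation decomposes $T := I - (N+1)S$ orthogonally as
\begin{align*}
T_V(x) = \bigl(I - \gamma \nabla(F\circ P_V)\bigr)(P_V p(x)), \qquad T_{V^\perp}(x) = (I - P_V)(I - p)(x),
\end{align*}
where $F := N^{-1}\sum_{i=1}^N f_i$. For Part~\ref{prop:LinSAGA:part:ess_strong_mono}, Proposition~\ref{prop:stronglymonodiffable} combined with the $(1+\gamma\mu_g)^{-1}$-Lipschitzness of $p$ gives $\mathrm{Lip}(T_V) \leq (1+\gamma\mu_g)^{-1}\sqrt{1 - 2\gamma \mu_f + \gamma^2 L \mu_f}$, while Moreau's identity $I - p = \gamma \prox_{g^*/\gamma}(\cdot/\gamma)$ combined with the $L_g^{-1}$-strong convexity of $g^*$ yields $\mathrm{Lip}(T_{V^\perp}) \leq (1+(\gamma L_g)^{-1})^{-1}$; orthogonality of the ranges and the bound $\sqrt{a^2+b^2}\leq a+b$ give $\mathrm{Lip}(T) \leq A+B$, and Proposition~\ref{prop:stronglymonodiffable}' implies (via Proposition~\ref{prop:strongmonofromLipschitz}) the stated $\mu$. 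Part~\ref{prop:LinSAGA:part:demiclosed} uses the same decomposition without strong convexity: firm nonexpansiveness of $p$ gives $\|p(x)-p(y)\|^2 + \|(I-p)(x) - (I-p)(y)\|^2 \leq \|x-y\|^2$, which combined with nonexpansiveness of $P_V$ and of $I - \gamma \nabla(F \circ P_V)$ (valid for $\gamma \leq 2/L$) yields $\|T(x)-T(y)\|^2 \leq \|x-y\|^2$, so $S$ is demiclosed at $0$ by the classical demiclosed principle.

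Part~\ref{prop:LinSAGA:part:roots} then splits $(N+1)S(x^*) = 0$ along $V \oplus V^\perp$: the $V^\perp$-component forces $(I - P_V)p(x^*) = 0$, so $u := p(x^*) \in V$, and the $V$-component becomes $-u + P_V x^* + \gamma P_V \nabla F(u) = 0$; combined with $(x^* - u)/\gamma \in \partial g(u)$ and $N_V(u) = V^\perp$, this gives $0 \in \partial g(u) + \nabla F(u) + N_V(u)$, so $u$ solves~\eqref{eq:linSAGA}; the converse simply reverses this construction. The main obstacle is the algebraic identity $P_V(I - p) = S_{N+1} - (I - P_V)\circ p$ used in Part~\ref{prop:LinSAGA:part:coherence}: without it the cross term does not reduce to one involving $S_{N+1}$ alone, and the coherence constants cannot be matched. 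A secondary subtlety is the Moreau-decomposition trick that sharpens $\mathrm{Lip}(I - p)$ from the naive composition bound $\gamma L_g/(1+\gamma\mu_g)$ to the tighter $\gamma L_g/(1+\gamma L_g)$ demanded by the stated $\mu$.
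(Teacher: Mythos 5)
Your proof follows the paper's argument essentially step for step: the same split of $\dotp{S_i(x)-S_i(x^\ast),\, x-x^\ast}$ into a $P_V\prox_{\gamma g}$ piece and a cross term with $S_{N+1}$ (via the identity $P_V(I-\prox_{\gamma g}) = S_{N+1} - (I-P_V)\circ\prox_{\gamma g}$), the same Young balance $\gamma\hat{L}_i/N$ yielding the stated $\beta_{i1}$ and $\beta_{(N+1)1}$, the same orthogonal $V\oplus V^\perp$ decomposition of $T = I-(N+1)S$ for Parts 2--4, and the same conjugate-strong-convexity bound on the Lipschitz constant of $I-\prox_{\gamma g}$. One small correction: the firmly nonexpansive operator whose complement is $S_{N+1}$ is $(I-\prox_{\gamma g}) + P_V\circ(2\prox_{\gamma g}-I)$, not $(I-P_V) + P_V\circ(2\prox_{\gamma g}-I)$ as you wrote (the latter gives $2P_V(I-\prox_{\gamma g})$, not $S_{N+1}$); with that fix your $1$-cocoercivity claim for $S_{N+1}$ is exactly the paper's.
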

\begin{proof}
Part~\ref{prop:LinSAGA:part:coherence} (coherence): The operator $S_{N+1} = (I- 2P_V)\circ \prox_{\gamma g} + P_V$ is $1$-cocoercive because 
$$
S_{N+1} = I - \left[P_V (2\prox_{\gamma g} - I) + I - \prox_{\gamma g}\right]
$$
and by~\cite[Proposition 4.21]{bauschke2011convex}, $P_V (2\prox_{\gamma g} - I) + I - \prox_{\gamma g}$ is $2^{-1}$ averaged. Not only is the operator $1$-cocoercive, it also nicely partitions into $V$ and $V^\perp$ components: 
$$
S_{N+1} = P_{V^\perp} \prox_{\gamma g} + P_V (I - \prox_{\gamma g})
$$
Therefore, $\dotp{S_i(x), S_{N+1}(y)} = \dotp{ S_i(x), P_V (y - \prox_{\gamma g}(y))}$ for all $i < N+1$ and any $x, y \in \cH$. In addition, $ \frac{\gamma}{N} P_V \circ \nabla f_i  \circ P_V$ is $N(\gamma L_i)^{-1}$-cocoercive.  Thus, for all $x \in \cS$, we have
\begin{align*}
\dotp{S(x), x- x^\ast} &= \frac{1}{N+1}\sum_{i=1}^{N+1}\dotp{S_{i}(x) - S_{i}(x^\ast), x - x^\ast} \\
&= \frac{1}{N+1}\sum_{i=1}^{N}\dotp{S_{i}(x) - S_{i}(x^\ast), P_V\prox_{\gamma g}(x) - P_V\prox_{\gamma g}(x^\ast)}  + \frac{1}{N+1}\dotp{S_{N+1}(x) - S_{N+1}(x^\ast), x - x^\ast}\\
&\hspace{20pt} + \frac{1}{N+1}\sum_{i=1}^{N}\dotp{S_{i}(x) - S_{i}(x^\ast), S_{N+1}(x) - S_{N+1}(x^\ast)}   \\
&\geq \frac{N}{N+1} \sum_{i=1}^N\frac{1}{\gamma \hat{L}_i}\|S_i(x) - S_i(x^\ast)\|^2 + \frac{1}{N+1}\|S_{N+1}(x) - S_{N+1}(x^\ast)\|^2 \\
&\hspace{20pt} - \frac{N}{N+1}\sum_{i=1}^N\frac{1}{2\gamma \hat{L}_i } \|S_i(x) - S_i(x^\ast)\|^2 -  \frac{\frac{1}{N}\sum_{i=1}^N\gamma \hat{L}_i}{2(N+1)}\|S_{N+1}(x) - S_{N+1}(x^\ast)\|^2 \\
&\geq \frac{N}{2 (N+1)}\sum_{i=1}^N\frac{1}{\gamma \hat{L}_i}\|S_i(x) - S_i(x^\ast)\|^2 + \frac{1}{N+1}\left(1 - \frac{1}{2N} \sum_{i=1}^N\gamma \hat{L}_i\right)\|S_{N+1}(x) - S_{N+1}(x^\ast)\|^2.
\end{align*} 

Part~\ref{prop:LinSAGA:ess_strong_mono} (essential strong quasi-monotonicity): Let 
$$
T: = P_{V^\perp}\circ (I - \prox_{\gamma g}) + P_V\circ\left(I - \frac{\gamma}{N} \sum_{i=1}^N \nabla f_i\right)\circ P_V\circ \prox_{\gamma g}
$$
The operator $T$ is Lipschitz continuous because it is built from Lipschitz continuous pieces. For example, by Proposition~\ref{prop:stronglymonodiffable}, the operator $P_V\circ \left(I - \gamma N^{-1} \sum_{i=1}^N \nabla f_i\right)\circ P_V$ is $\sqrt{1- 2\gamma\mu_f +  \gamma^2 L\mu_f}$-Lipschitz continuous; by~\ref{prop:stronglymonodiffable} the operator $\prox_{\gamma g}$ is $(1+ \mu_g \gamma)^{-1}$-Lipschitz continuous; by~\cite[Remark 23.19]{bauschke2011convex}, we have $I - \prox_{\gamma g} = \prox_{(\gamma g)^\ast}$, and by~\cite[Theorem 18.15]{bauschke2011convex}, the function $(\gamma g)^\ast$ is $(\gamma L_g)^{-1}$-strongly convex; thus, by~\cite[Proposition 23.11]{bauschke2011convex}, $I - \prox_{\gamma g}$ is $(1 + (\gamma L_g)^{-1})^{-1}$-Lipschitz continuous. When taken together, these properties yield
\begin{align*}
\left(\forall x, y \in \cH\right) \|T(x) - T(y)\| &\leq \|P_{V^\perp}(I - \prox_{\gamma g})(x) - P_{V^\perp}(I - \prox_{\gamma g})(y) \|\\
&\hspace{10pt} + \left\|P_V\circ\left(I - \frac{\gamma}{N} \sum_{i=1}^N \nabla f_i\right)\circ P_V\circ \prox_{\gamma g}(x) - P_V\circ\left(I - \frac{\gamma}{N} \sum_{i=1}^N \nabla f_i\right)\circ P_V\circ \prox_{\gamma g}(y)\right\| \\
&\leq \left(\frac{1}{(1+(\gamma L_g)^{-1})} + \frac{\sqrt{1- 2\gamma\mu_f +  \gamma^2 L\mu_f}}{(1+\gamma \mu_g)}\right) \|x - y \|.
\end{align*}
Thus, by Proposition~\ref{prop:strongmonofromLipschitz}, $S = (N+1)^{-1}(I-T)$ is 
$$
\frac{1}{N+1}\left(1 - \left(\frac{1}{(1+(\gamma L_g)^{-1})} + \frac{\sqrt{1- 2\gamma\mu_f +  \gamma^2 L\mu_f}}{(1+\gamma \mu_g)}\right)\right)
$$
strongly monotone.

Part~\ref{prop:LinSAGA:part:roots} (roots): 
\begin{align*}
x \in \zer(S) \Leftrightarrow 
0 &= P_{V^\perp}\prox_{\gamma g}(x); \text{ and}   \\
0 & = P_V(x - \prox_{\gamma g}(x)) +  \frac{\gamma}{N} \sum_{i=1}^N P_V\nabla f_i( P_V\prox_{\gamma g}(x)) \\
&\in P_V\partial g(\prox_{\gamma g}(x)) +  \frac{\gamma}{N} \sum_{i=1}^N P_V\nabla f_i( P_V\prox_{\gamma g}(x)) \\
\Leftrightarrow 0 &  \in \partial g(\prox_{\gamma}(x)) + \frac{\gamma}{N} \sum_{i=1}^N \nabla f_i( P_V\prox_{\gamma g}(x)) + N_V(\prox_{\gamma g}(x)), 
\end{align*}
where the last line follows because $\prox_{\gamma g}(x) \in V$, and so $N_V(\prox_{\gamma g}(x)) = V^\perp$ absorbs the nonzero $V^\perp$ component of $(x - \prox_{\gamma g}(x)) +  \gamma N^{-1} \sum_{i=1}^N \nabla f_i(\prox_{\gamma g}(x))$.

Part~\ref{prop:LinSAGA:part:demiclosed} (demiclosedness): The operator $T = (N+1)(I- S)$ in (displayed in Part~\ref{prop:LinSAGA:part:ess_strong_mono}) is nonexpansive:
\begin{align*}
\left(\forall x, y \in \cH\right) \|T(x) - T(y)\|^2 &= \|P_{V^\perp}(I - \prox_{\gamma g})(x) - P_{V^\perp}(I - \prox_{\gamma g})(y) \|^2\\
&\hspace{10pt} + \left\|P_V\circ\left(I - \frac{\gamma}{N} \sum_{i=1}^N \nabla f_i\right)\circ P_V\circ \prox_{\gamma g}(x) - P_V\circ\left(I - \frac{\gamma}{N}  \sum_{i=1}^N \nabla f_i\right)\circ P_V\circ \prox_{\gamma g}(y)\right\|^2 \\
&\leq \|(I - \prox_{\gamma g})(x) - (I - \prox_{\gamma g})(y)\|^2 + \| \prox_{\gamma g}(x) -  \prox_{\gamma g}(y)\|^2 \\
&\leq \|x - y\|^2,
\end{align*}
where the first ``$=$" follows because $V \perp V^\perp$ and the last line follows because $\prox_{\gamma g}$ is $2^{-1}$-averaged. Therefore $T$ is nonexpansive and $S$ is demiclosed at $0$. 
\qed
\end{proof}

~\\~\\
\begin{proposition}[TropicSMART Operator Properties]\label{prop:RSCMP}
Assume the setting of Section~\ref{sec:TropicSMART}, and in particular, that for all $x \in \cH$, we have
\begin{align*}
(S(x))_j = \begin{cases}
x_j - \prox_{\gamma_j g_j}\left(x_j - \gamma_j  A_j^\ast \left( x_{M+1} + 2\gamma_{M+1} \left(\sum_{l=1}^M A_l x_l - b\right)\right) - \gamma_j \nabla_j f(x)\right) & \text{if $j < M+1$};\\
 -\gamma_{M+1}\left( \sum_{l=1}^M A_l x_l - b\right) &\text{if $j = M+1$.}
\end{cases}
\end{align*}
Then
\begin{enumerate}
\item  \textbf{Coherence:} \label{prop:RSCMP:part:coherence}there is a strongly positive self-adjoint linear operator $P : \cH \rightarrow \cH$ such that if $\delta \in (0, 1)$;
\begin{align*}
\gamma_{M+1}\left( \sum_{j=1}^M \gamma_j\|A_j\|^2\right)\leq \delta;  &&\text{and} &&  \max_j\{\gamma_j\} \leq \frac{2(1-\sqrt{\delta})}{L},
\end{align*}
then the linear map $P$ satisfies $\sum_{j=1}^{M+1} \underline{M}_j\|x_j\|^2_j \leq \|x\|_P^2 \leq   \sum_{j=1}^{M+1} \overline{M}_j\|x_j\|^2_j$,
where for all $j$, we have 
\begin{align*}
\underline{M}_j := \frac{1-\sqrt{\delta}}{\gamma_j} && \text{and} && \overline{M}_j := \frac{1+\sqrt{\delta}}{\gamma_j}.
\end{align*}
With these parameters, $S$ also satisfies the coherence condition~\eqref{eq:coherence}
 \begin{align*}
\left( \forall x \in \cH\right), \left( \forall x^\ast \in \zer(S)\right)  \qquad \dotp{ S(x), x - x^\ast}_P \geq \sum_{j=1}^{M+1} \beta_{1j}\|(S(x))_j\|^2_j \qquad \text{with } \qquad \beta_{1j} :=  \frac{L\max_j\{\gamma_j\}}{4\gamma_j}.
\end{align*}
\item \textbf{Roots:}\label{prop:RSCMP:part:roots} with any choice of $\gamma_j$, we have
\begin{align*}
x^\ast\in \zer(S) \implies   (x_1^\ast, \ldots, x_M^\ast) \text{ solves \eqref{eq:super_SAGA_1}}, 
\end{align*}
and $\zer(S) \neq \emptyset$ if, and only if, $ \zer\left(\partial g + \nabla f  + N_{\{x \in \cH \mid \sum_{j=1}^M A_j x_j = b\}}\right) \neq \emptyset$.
\item \textbf{Demiclosedness:} \label{prop:RSCMP:part:demiclosed} $S$ is demiclosed at $0$. 
\end{enumerate}

\end{proposition}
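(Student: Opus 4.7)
My approach is to recognize that the TropicSMART operator is the fixed-point residual of a Vu--Condat/Chambolle--Pock-style primal-dual splitting applied to the monotropic problem~\eqref{eq:RSCMP}, where $x_{M+1}$ plays the role of the Lagrange multiplier for the affine constraint. All three parts then follow by adapting the standard preconditioned-metric analysis of such splittings to the ``quasi-cocoercivity'' framework of~\eqref{eq:coherence}.

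The plan for Part~\ref{prop:RSCMP:part:coherence} (coherence) is to build the preconditioner $P$ as the block operator
\begin{align*}
P := \begin{pmatrix} \gamma_1^{-1} I_{\cH_1} & & & -A_1^\ast \\ & \ddots & & \vdots \\ & & \gamma_M^{-1} I_{\cH_M} & -A_M^\ast \\ -A_1 & \cdots & -A_M & \gamma_{M+1}^{-1} I_{\cH_{M+1}} \end{pmatrix}.
\end{align*}
First I would verify the norm equivalence $\sum_j \underline{M}_j\|x_j\|_j^2 \leq \|x\|_P^2 \leq \sum_j \overline{M}_j\|x_j\|_j^2$ by applying weighted Young's inequality to each cross term $-2\langle A_j x_j, x_{M+1}\rangle$, using the budget constraint $\gamma_{M+1}\sum_j\gamma_j\|A_j\|^2\leq\delta$ to absorb the cross-term contributions into the diagonal up to a multiplicative factor $\sqrt{\delta}$; this yields exactly the claimed $\underline{M}_j = (1-\sqrt{\delta})/\gamma_j$ and $\overline{M}_j = (1+\sqrt{\delta})/\gamma_j$. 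Next I would write $I - S = T$ where $T$ is the single ``sweep'' of the primal-dual iteration, and compute $\dotp{S(x), x-x^\ast}_P = \dotp{P S(x), x-x^\ast}_\pr$ directly. The $j < M+1$ blocks of $PS(x)$ contribute the term $\gamma_j^{-1}(x_j - \overline x_j) - A_j^\ast(-\gamma_{M+1}(\sum_l A_l x_l - b))$, which after expanding $\overline x_j := \prox_{\gamma_j g_j}(\cdots)$ produces a subgradient of $g_j$ plus $A_j^\ast x_{M+1} + \nabla_j f(x)$ minus the corresponding quantities at $x^\ast$; the last block contributes the primal feasibility residual. Using the firm nonexpansiveness of each $\prox_{\gamma_j g_j}$ together with the cocoercivity of $\nabla f$ (where the factor $L\max_j\gamma_j/(4\gamma_j)$ in $\beta_{1j}$ will emerge from the quadratic-upper-bound form of $L$-smoothness, exactly as in the descent-lemma step of~\cite{myFDRS}), the off-diagonal couplings in $P$ exactly cancel the cross terms $\langle \cdot, A_j^\ast(2\overline x_{M+1} - x_{M+1})\rangle$ introduced by the extrapolation; what remains is a sum of squared increments $\|x_j - \overline x_j\|_j^2$ which is $\|(S(x))_j\|_j^2$.

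For Part~\ref{prop:RSCMP:part:roots} (roots), I would simply solve $S(x^\ast)=0$ block by block: the $(M+1)$-block forces $\sum_l A_l x_l^\ast = b$, and substituting this back into the $j<M+1$ blocks collapses the extrapolated term, leaving $x_j^\ast = \prox_{\gamma_j g_j}(x_j^\ast - \gamma_j A_j^\ast x_{M+1}^\ast - \gamma_j\nabla_j f(x^\ast))$; the proximal fixed-point characterization then gives $0 \in \partial g_j(x_j^\ast) + A_j^\ast x_{M+1}^\ast + \nabla_j f(x^\ast)$, which together with primal feasibility is precisely the KKT system for~\eqref{eq:RSCMP} with multiplier $x_{M+1}^\ast$, thereby also yielding the equivalence $\zer(S)\neq\emptyset \Leftrightarrow \zer(\partial g + \nabla f + N_{\{\sum A_j x_j = b\}}) \neq \emptyset$ under standard constraint-qualification-free KKT for affine constraints. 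Part~\ref{prop:RSCMP:part:demiclosed} (demiclosedness) will follow from showing that $I - S$ is nonexpansive in the $P$-norm: once this is established, the standard demiclosed-principle argument (\cite[Corollary 4.18]{bauschke2011convex}) applied in the Hilbert space $(\cH,\dotp{\cdot,\cdot}_P)$ — whose topology coincides with the original one by Part~\ref{prop:RSCMP:part:coherence} — immediately gives demiclosedness of $S = P^{-1}(P - P(I-S))$ at $0$.

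The main obstacle I anticipate is the bookkeeping in Part~\ref{prop:RSCMP:part:coherence}: the extrapolation term $2\gamma_{M+1}(\sum_l A_l x_l - b)$ inside each prox creates cross terms that must be absorbed exactly by the off-diagonal $-A_j^\ast$ entries of $P$, and this cancellation is where the particular choice of $P$ pays off; getting the constants right so that the residual is a clean sum of $\beta_{1j}\|(S(x))_j\|_j^2$ — and simultaneously having the stepsize condition $\max_j\gamma_j \leq 2(1-\sqrt\delta)/L$ swallow the error from $\nabla f$ via its descent lemma — is the delicate quantitative step. All other estimates (norm equivalence for $P$, the KKT derivation, and reducing demiclosedness to nonexpansiveness of $I-S$) are routine once the metric is chosen.
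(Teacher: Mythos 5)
Your overall strategy is the paper's: the paper also builds the preconditioner $P$ with diagonal blocks $\gamma_j^{-1}I_{\cH_j}$ and off-diagonal blocks coupling each $\cH_j$ to $\cH_{M+1}$, proves the two-sided norm bounds by weighted Young's inequality exactly as you describe, identifies $I-S$ with a forward--backward sweep $T = J_{P^{-1}(A+B)}\circ(I-P^{-1}C)$ for a subdifferential part $A$, a skew part $B$, and the cocoercive part $C = (\nabla f, 0)$, obtains the roots from the resulting KKT inclusion, and gets demiclosedness from nonexpansiveness of $T$ in $\|\cdot\|_P$. Two remarks. First, a concrete error as written: your $P$ has off-diagonal blocks $-A_j^\ast$, but for this operator --- where the multiplier block is the one that is extrapolated, $2\overline{x}_{M+1}-x_{M+1}$ --- the correct sign is $+A_j^\ast$ (the paper's choice); with $-A_j^\ast$ the inclusion $P(x-x^+)\in (A+B)x^+ + Cx$ produces an update with \emph{no} dual extrapolation, which is not $S$. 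You would catch this when verifying the cancellation you describe, but the stated $P$ is not the right one. Second, for the quantitative step you propose a direct expansion of $\dotp{S(x),x-x^\ast}_P$ using firm nonexpansiveness of each $\prox_{\gamma_j g_j}$ and the descent lemma for $\nabla f$; the paper instead runs the averagedness calculus: $P^{-1}C$ is $(1-\sqrt{\delta})(L\max_j\gamma_j)^{-1}$-cocoercive in $\|\cdot\|_P$ (by the rescaling lemma from~\cite{myPDpaper} together with the lower norm bound), hence $I-P^{-1}C$ is $\alpha_1$-averaged, the resolvent is $\tfrac12$-averaged, Proposition~\ref{prop:averagedconstants} gives the averagedness constant of the composition, and Proposition~\ref{eq:identminusaveraged} converts this into cocoercivity of $S$ in $\|\cdot\|_P$ with modulus $L\max_j\{\gamma_j\}/(4(1-\sqrt{\delta}))$; the componentwise $\beta_{1j}$ then falls out by one application of the lower norm bound. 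Your direct route can be made to work, but it is exactly the bookkeeping you flag as delicate, and it is where the constants are most likely to go wrong; the averagedness route delivers $\beta_{1j}=L\max_j\{\gamma_j\}/(4\gamma_j)$ and the stepsize condition $\max_j\{\gamma_j\}\le 2(1-\sqrt{\delta})/L$ (needed so that $\alpha_1\le 1$) with no term-tracking.
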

\begin{proof}
Part~\ref{prop:RSCMP:part:coherence} (coherence): Here is the linear map $P$ in block matrix form
\begin{align*}
P &= \begin{bmatrix}
\frac{1}{\gamma_1} I_{\cH_1}  & 0 & \cdots & 0 &   A_{1}^\ast \\
0 & \frac{1}{\gamma_2} I_{\cH_2}  & \cdots & 0 &   A_2^\ast \\
\vdots& & \ddots & & \vdots  \\
0 & 0  & \cdots & \frac{1}{\gamma_M} I_{\cH_M} &   A_M^\ast \\
A_1 & A_2  & \cdots & A_M & \frac{1}{\gamma_{M+1}} I_{\cH_{M+1}} \\
\end{bmatrix}.
\end{align*}
The lower bound is below
\begin{align*}
\dotp{x, x}_P  &= \sum_{j=1}^{M+1} \frac{1}{\gamma_j}\|x_j\|^2_j + \sum_{j=1}^M 2\dotp{A_jx_j, x_{M+1}}_{M+1} \\
&\geq\sum_{j=1}^{M+1} \frac{1}{\gamma_j}\|x_j\|_j^2 - \sum_{j=1}^M 2\|A_j\|\|x_j\|_{j}\| x_{M+1}\|_{M+1} \\
&\geq \sum_{j=1}^{M+1} \frac{1}{\gamma_j}\|x_j\|_j^2 - \sum_{j=1}^M \left(\frac{\sqrt{\delta}\|x_j\|_j^2}{\gamma_j} + \frac{\|A_j\|^2\|x_{M+1}\|_{M+1}^2}{\sqrt{\delta}}\right) \\
&\geq \sum_{j=1}^{M} \frac{1-\sqrt{\delta}}{\gamma_j}\|x_j\|^2_j + \frac{1}{\gamma_{M+1}} \left(1- \gamma_{M+1}\sum_{j=1}^M \frac{\|A_j\|^2}{\sqrt{\delta}} \right)  \|x_{M+1}\|_{M+1}^2\\
&\geq \sum_{j=1}^{M+1} \frac{1-\sqrt{\delta}}{\gamma_j}\|x_j\|^2_j.
\end{align*}
The upper bound follows the exact same argument but all ``$-$" signs are changed to ``$+$" signs.

To get the $\beta_{ij}$, we find an $\alpha$-averaged operator $T$ in the norm $\|\cdot\|_P$ such that $S = (I - T)$; then we apply Proposition~\ref{eq:identminusaveraged} together with the lower on $\|\cdot\|_P^2$ that we just derived. 

Let $A: \cH \rightarrow 2^\cH$ be the monotone operator\footnote{This matrix notation is an intuitive visual form for the product of the subdifferentials.}
$$
\left(\forall x\in \cH\right) \qquad A(x_1, \ldots, x_{M+1}) := \begin{bmatrix}
\partial g_1(x_1) \\
\vdots \\
\partial g_M(x_M) \\
0
\end{bmatrix};
$$
let $B : \cH \rightarrow \cH$ be the skew symmetric linear map:
\begin{align*}
B := \begin{bmatrix}
0 &  \cdots & 0 & A_1 \\   
\vdots &  & \vdots & \vdots\\
0 & \cdots & 0 & A_{M+1} \\   
-A_1^\ast  & \cdots & -A_{M}^\ast & 0   
\end{bmatrix};
\end{align*}
and let $C : \cH \rightarrow \cH$ be the cocoercive operator
\begin{align*}
\left(\forall x\in \cH\right) \qquad  C(x_1, \ldots, x_{M+1}) : = \begin{bmatrix}
\nabla f(x_1, \ldots, x_M) \\
0
\end{bmatrix}.
\end{align*}
Then define $T$ to be the forward-backward operator 
$$
T:= J_{P^{-1}(A+B)} \circ ( I - P^{-1}C).
$$
Given $x$, we compute $Tx$:
\begin{align*}
x^+ = Tx & \Leftrightarrow P(x - x^+) \in (A + B)x^+ + Cx\\
&\Leftrightarrow 
\begin{bmatrix} 
\frac{1}{\gamma_1}(x_1 - x_1^+)  + A_1^\ast(x_{M+1} - x_{M+1}^+) \\
\vdots  \\
\frac{1}{\gamma_M}(x_M - x_M^+)  + A_M^\ast(x_{M+1} - x_{M+1}^+) \\
\frac{1}{\gamma_{M+1}}(x_{M+1} - x_{M+1}^+) + \sum_{j=1}^M A_j (x_j - x_j^+) 
\end{bmatrix}
\in 
\begin{bmatrix}
\partial g_1(x_1^+) + A_1^\ast x_{M+1}^+ + \nabla_1 f(x_1, \ldots, x_{M})\\
\vdots \\
\partial g_M(x_M^+) + A_M^\ast x_{M+1}^+ + \nabla_M f(x_1, \ldots, x_{M}) \\
-\sum_{j=1}^M A_jx_j^+
\end{bmatrix} \\
& \Leftrightarrow \left(\forall j < M+1\right) \qquad  x_j^{+} = \prox_{\gamma_j g_j}\left(x_j - \gamma_j  A_j^\ast \left( x_{M+1} + 2\gamma_{M+1} \left(\sum_{l=1}^M A_l x_l - b\right)\right) - \gamma_j \nabla_j f(x_1, \ldots, x_m)\right); \\
& \hphantom{\Leftrightarrow \left(\forall j < M+1\right)} \qquad  x_{M+1}^{+} = x_{M+1} + \gamma_{M+1}\left( \sum_{l=1}^M A_l x_l - b\right). \numberthis\label{eq:RSCMP_inclusion}
\end{align*} 
Thus, $S = I-T$. 

The operator $T$ is a forward-backward operator, and such operators are always $\alpha$-averaged. But computing $\alpha$ requires the averagedness coefficient of $I - P^{-1} C$; this constant can be computed from existing work:
\begin{lemma}[{\cite[Proposition 1.5]{myPDpaper}}]
Let $(\cG, \dotp{, })$ be a Hilbert space, and let $U : \cG \rightarrow \cG$ be a strongly positive linear map such that for all $x \in \cG$, we have $\|x\|_U^2 = \dotp{Ux , x} \geq \xi\|x\|^2$. Then if $C : \cH \rightarrow C$ is $\beta$-cocoercive in the norm $\|\cdot\|$, it is $\beta\xi$ cocoercive in the norm $\|\cdot\|_U$.
\end{lemma}

Thus, from the already computed lower bound $\|\cdot\|_P^2 \geq (1-\sqrt{\delta})(\max_j\{\gamma_j\})^{-1}\|\cdot \|_{\pr}^2$, and from the knowledge that $C$ is $L^{-1}$-cocoercive in $\|\cdot\|_\pr$, the composition $P^{-1}C$ is $(1-\sqrt{\delta})(L\max_j\{\gamma_j\})^{-1}$-cocoercive in $\|\cdot\|_P$. Then the standard result~\cite[Proposition 4.33]{bauschke2011convex} shows that $I - P^{-1}C$ is $\alpha_1 := 2^{-1}(1-\sqrt{\delta})^{-1}L\max_j\{\gamma_j\}$ averaged as long as 
$$
\max_j\{\gamma_j\} \leq \frac{2(1-\sqrt{\delta})}{L},
$$
which we assume to be true. Therefore, because $J_{ P^{-1}(A+B)}$ is $\alpha_2 := 2^{-1}$ averaged in $\|\cdot\|_P^2$, Proposition~\ref{prop:averagedconstants} shows that $T$ is 
\begin{align*}
\alpha := \frac{\alpha_1 + \alpha_2 - 2\alpha_1 \alpha_2}{ 1 - \alpha_1 \alpha_2} = \frac{\frac{1}{2}}{ 1 - \frac{L\max_j\{\gamma_j\}}{4(1-\sqrt{\delta})}} = \frac{2}{4 - \frac{L\max_j\{\gamma_j\}}{1-\sqrt{\delta}} }
\end{align*}
averaged in the norm $\|\cdot\|_P$. Thus, from Proposition~\ref{eq:identminusaveraged}, the operator $S = I - T$ is 
\begin{align*}
\beta := 1 - \frac{1}{2\alpha} = \frac{L\max_j\{\gamma_j\}}{4(1-\sqrt{\delta})}
\end{align*}
cocoercive in the norm $\|\cdot\|_P$; with this property, we obtain $\beta_{ij}$
\begin{align*}
\left(\forall x \in \cH\right) \qquad \dotp{S(x), x - x^\ast}_P &\geq \beta\|S(x)\|^2_P \geq  \sum_{j=1}^{M+1} \frac{\beta(1-\sqrt{\delta})}{\gamma_j}\|(S(x))_j\|_j^2 =  \sum_{j=1}^{M+1}  \frac{L\max_j\{\gamma_j\}}{4\gamma_j}\|(S(x))_j\|_j^2. 
\end{align*}

Part~\ref{prop:RSCMP:part:roots} (roots): From the first line of~\eqref{eq:RSCMP_inclusion}, the roots of $S$ are precisely the zeros of $A+B + C$. Then it is straightforward to check that $x^\ast \in \zer(A + B + C)$, implies that $(x_1^\ast, \ldots, x_m^\ast) \in \zer(\partial g + \nabla f  + N_{\{x \in \cH \mid \sum_{j=1}^M A_j x_j = b\}})$, and if $(x_1^\ast, \ldots, x_m^\ast) \in \zer(\partial g + \nabla f  + N_{\{x \in \cH \mid \sum_{j=1}^M A_j x_j = b\}})$, then there exists $x_{M+1}^\ast \in \cH_{M+1}$ such that $x^\ast = (x_1^\ast, \ldots, x_{M+1}^\ast) \in \zer(A + B+ C)$. Both inclusions imply that $(x_1^\ast, \ldots, x_M^\ast)$ minimizes~\eqref{eq:RSCMP}.

Part~\ref{prop:RSCMP:part:demiclosed} (demiclosedness): The operator $T = I- S$ in (displayed in Part~\ref{prop:RSCMP:part:coherence}) is the composition of two nonexpansive maps (in the norm $\|\cdot\|_P$), and thus, it is nonexpansive. Therefore, $S$ is demiclosed at $0$.
\qed\end{proof}

~\\~\\
\begin{proposition}[ProxSMART Operator Properties]\label{prop:ProxSMART}
Assume the setting of Section~\ref{sec:ProxSMART}, and in particular, that for all $x \in \cH$, we have
\begin{align*}
\left(S(x) \right)_j= 
\begin{cases}
x_1 - \prox_{\gamma_1 g_1}\left(x_1 - \gamma_1\sum_{j=2}^M A_j^\ast x_j \right)  &\text{if $j = 1$;} \\
x_j - \prox_{\gamma_j g_j^\ast}\left(x_j + \gamma_j A_j\left(2\overline{x}_1 - x_1\right)\right) &\text{otherwise;}
\end{cases}
\end{align*}
where $\overline{x}_1 = \prox_{\gamma_1 g_1}\left(x_1 - \gamma_1\sum_{j=2}^M A_j^\ast x_j \right)$.  Then 
\begin{enumerate}
\item \textbf{Coherence:} \label{prop:RNPD:part:coherence} there is a strongly positive self-adjoint linear operator $P : \cH \rightarrow \cH$ such that if $\delta \in (0, 1)$;
\begin{align*}
\gamma_{1}\left( \sum_{j=2}^M \gamma_j\|A_j\|^2\right)\leq \delta.
\end{align*}
Then the linear map $P$ satisfies $\sum_{j=1}^{M} \underline{M}_j\|x_j\|^2_j \leq \|x\|_P^2 \leq   \sum_{j=1}^{M} \overline{M}_j\|x_j\|^2_j$,
where for all $j $, we have 
\begin{align*}
\underline{M}_j := \frac{1-\sqrt{\delta}}{\gamma_j} && \text{and} && \overline{M}_j := \frac{1+\sqrt{\delta}}{\gamma_j}.
\end{align*}
With these parameters, $S$ also satisfies the coherence condition~\eqref{eq:coherence}:
 \begin{align*}
\left( \forall x \in \cH\right), \left( \forall x^\ast \in \zer(S)\right)  \qquad \dotp{ S(x), x - x^\ast}_P \geq \sum_{j=1}^{M} \beta_{1j}\|(S(x))_j\|^2_j \qquad \text{with } \qquad \beta_{1j} :=  \frac{1-\sqrt{\delta}}{\gamma_j}.
\end{align*}
\item \textbf{Roots:} \label{prop:RNPD:part:roots} with any choice of $\gamma_j$, we have
\begin{align*}
x^\ast\in \zer(S) \implies   x_1 \text{ solves \eqref{eq:RNPD}}, 
\end{align*}
and $\zer(S) \neq 0$ if, and only if, $\zer(\partial g_1(x) + \sum_{j=2}^M A_j^\ast\partial g_j\circ A_j) \neq \emptyset$.
\item \textbf{Demiclosedness:} \label{prop:RNPD:part:demiclosed} $S$ is demiclosed at $0$. 
\end{enumerate}
\end{proposition}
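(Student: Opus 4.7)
The plan is to mirror the proof of Proposition~\ref{prop:RSCMP} (TropicSMART), but with the smooth component dropped, so that $S = I - T$ for a pure resolvent $T$; this replaces the forward-backward cocoercivity bookkeeping by the simpler, automatic firm-nonexpansiveness of a resolvent. First, I introduce the maximally monotone operator
$$A(x_1,\ldots,x_M) := \partial g_1(x_1) \times \partial g_2^\ast(x_2) \times \cdots \times \partial g_M^\ast(x_M),$$
the skew-symmetric linear map $B$ defined by $(Bx)_1 := \sum_{j=2}^M A_j^\ast x_j$ and $(Bx)_j := -A_j x_1$ for $j>1$, and the self-adjoint block operator $P$ with diagonal blocks $\gamma_j^{-1} I_{\cH_j}$, $(1,j)$-block $-A_j^\ast$, and $(j,1)$-block $-A_j$ for $j>1$ (zero elsewhere). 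Exactly as in Proposition~\ref{prop:RSCMP}, the identity $\|x\|_P^2 = \sum_j \gamma_j^{-1}\|x_j\|_j^2 - 2\sum_{j=2}^M \dotp{A_j x_1, x_j}_j$ combined with Young's inequality (with splitting parameter $\sqrt{\delta}$) and the stepsize condition $\gamma_1 \sum_{j=2}^M \gamma_j \|A_j\|^2 \leq \delta$ yields the announced two-sided bound $\sum_j \underline{M}_j \|x_j\|_j^2 \leq \|x\|_P^2 \leq \sum_j \overline{M}_j \|x_j\|_j^2$; in particular, $P$ is strongly positive and $\dotp{\cdot,\cdot}_P$ is a genuine inner product on $\cH$.

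Next, I will show that $S = I - T$ where $T := J_{P^{-1}(A+B)}$. Given $x$, the inclusion $P(x - x^+) \in (A+B) x^+$ unfolds coordinate-wise into
$$\tfrac{1}{\gamma_1}(x_1 - x_1^+) - \sum_{j=2}^M A_j^\ast x_j \in \partial g_1(x_1^+), \qquad \tfrac{1}{\gamma_j}(x_j - x_j^+) + A_j(2x_1^+ - x_1) \in \partial g_j^\ast(x_j^+),$$
after cancelling an $A_j^\ast x_j^+$ in the first line and adding $A_j x_1^+$ on both sides of the $j$-th line (the same bookkeeping trick used in the TropicSMART derivation). Solving for $x_1^+$ and $x_j^+$ via the corresponding proximal operators recovers precisely the formula for $Tx = x - S(x)$. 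Since $T$ is the resolvent of the maximally monotone operator $P^{-1}(A+B)$ in the Hilbert space $(\cH,\dotp{\cdot,\cdot}_P)$, it is firmly nonexpansive in $\|\cdot\|_P$, so by Proposition~\ref{eq:identminusaveraged} the operator $S = I - T$ is $1$-cocoercive in $\|\cdot\|_P$. Combining this with the lower bound $\|S(x)\|_P^2 \geq \sum_j (1-\sqrt{\delta})\gamma_j^{-1} \|(S(x))_j\|_j^2$ yields the coherence inequality with the stated $\beta_{1j} = (1-\sqrt{\delta})/\gamma_j$.

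For the roots, $x^\ast \in \zer(S)$ iff $x^\ast \in \Fix T$ iff $0 \in (A+B) x^\ast$, i.e.,
$$0 \in \partial g_1(x_1^\ast) + \sum_{j=2}^M A_j^\ast x_j^\ast, \qquad A_j x_1^\ast \in \partial g_j^\ast(x_j^\ast) \ (j>1).$$
Via Fenchel--Young, the second family is $x_j^\ast \in \partial g_j(A_j x_1^\ast)$; eliminating $x_2^\ast,\ldots,x_M^\ast$ from the first line gives $0 \in \partial g_1(x_1^\ast) + \sum_{j=2}^M A_j^\ast \partial g_j(A_j x_1^\ast)$, which is the optimality condition for~\eqref{eq:RNPD}, and the converse direction is immediate. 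For demiclosedness, the nonexpansiveness of $T$ in $\|\cdot\|_P$ together with the norm equivalence (the topology induced by $\|\cdot\|_P$ coincides with the standard weak topology on $\cH$) yields demiclosedness of $I-T$ at $0$ by the classical result~\cite[Corollary~4.28]{bauschke2011convex}.

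The main obstacle is the matrix/resolvent bookkeeping of the second paragraph: one must verify that the extrapolation term $(2x_1^+ - x_1)$ in the definition of $S$ really does emerge from a single resolvent step of $P^{-1}(A+B)$, and in particular that the off-diagonal signs in $P$ are chosen consistently with the skew part $B$. Once this identification is in place, cocoercivity follows for free from firm nonexpansiveness of the resolvent---no averagedness calculus or cocoercivity-of-the-forward-step argument (as was needed for TropicSMART) is necessary.
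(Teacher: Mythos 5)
Your proposal is correct and follows essentially the same route as the paper's own proof: the same primal--dual metric $P$, the same splitting $A+B$ into a product of subdifferentials plus a skew map, the same identification $S = I - J_{P^{-1}(A+B)}$, and the same passage from firm nonexpansiveness of the resolvent to $1$-cocoercivity of $S$ in $\|\cdot\|_P$, followed by the identical arguments for the roots and demiclosedness. No gaps; your bookkeeping for the extrapolation term $2x_1^+ - x_1$ matches the paper's computation (and in fact cleans up a couple of typographical slips in the paper's displayed inclusion).
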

\begin{proof}
Part~\ref{prop:RNPD:part:coherence} (coherence): Here is the linear map $P$ in block matrix form
\begin{align}\label{eq:P_RCPD_RNPD}
P &= \begin{bmatrix}
\frac{1}{\gamma_1} I_{\cH_1}  & -A_2^\ast & \cdots & \cdots  &  -A_{M}^\ast \\
-A_2 & \frac{1}{\gamma_2} I_{\cH_2} &0 & \cdots & 0  \\
\vdots& & \ddots & & \vdots  \\
-A_M & 0  & \cdots & 0 & \frac{1}{\gamma_M} I_{\cH_M}\end{bmatrix}. 
\end{align}
The lower bound is below
\begin{align*}
\dotp{x, x}_P  &= \sum_{j=1}^{M} \frac{1}{\gamma_j}\|x_j\|^2_j - \sum_{j=1}^M 2\dotp{A_jx_j, x_{1}}_{1} \\
&\geq\sum_{j=1}^{M} \frac{1}{\gamma_j}\|x_j\|^2_j - \sum_{j=1}^M 2\|A_j\|\|x_j\|_{j}\| x_{1}\|_{1} \\
&\geq \sum_{j=1}^{M} \frac{1}{\gamma_j}\|x_j\|^2_j - \sum_{j=1}^M \left(\frac{\sqrt{\delta}\|x_j\|_j^2}{\gamma_j} + \frac{\|A_j\|^2\|x_{1}\|_1^2}{\sqrt{\delta}}\right) \\
&\geq \sum_{j=2}^{M} \frac{1-\sqrt{\delta}}{\gamma_j}\|x_j\|^2_j + \frac{1}{\gamma_{1}} \left(1- \gamma_{1}\sum_{j=1}^M \frac{\|A_j\|^2}{\sqrt{\delta}} \right)  \|x_{1}\|_{1}^2\\
&\geq \sum_{j=1}^{M} \frac{1-\sqrt{\delta}}{\gamma_j}\|x_j\|^2_j.
\end{align*}
The upper bound follows the exact same argument but all ``$-$" signs (except for the one on the first line) are changed to ``$+$" signs.

To get the $\beta_{ij}$, we find an $\alpha$-averaged operator $T$ in the norm $\|\cdot\|_P$ such that $S = (I - T)$; then we apply Proposition~\ref{eq:identminusaveraged} together with the lower on $\|\cdot\|_P^2$ that we just derived. 

Let $A: \cH \rightarrow 2^\cH$ be the monotone operator
$$
A(x_1, \ldots, x_{M}) := \begin{bmatrix}
\partial g_1(x_1) \\
\partial g_2^\ast(x_2) \\
\vdots \\
\partial g_M^\ast(x_M) 
\end{bmatrix};
$$
and let $B : \cH \rightarrow \cH$ be the skew symmetric linear map:
\begin{align*}
B := \begin{bmatrix}
0 &  \cdots & 0 & A_1 \\   
\\
0 & \cdots & 0 & A_{M} \\   
-A_1^\ast  & \cdots & -A_{M}^\ast & 0   
\end{bmatrix}.
\end{align*}
Then define $T$ to be the resolvent operator 
$$
T:= J_{P^{-1}(A+B)} 
$$
Given $x$, we compute $Tx$:
\begin{align*}
x^+ = Tx & \Leftrightarrow P(x - x^+) \in (A + B)x^+ \\
&\Leftrightarrow 
\begin{bmatrix} 
\frac{1}{\gamma_1}(x_1 - x_1^+)  - \sum_{j=2}^mA_j^\ast(x_{j} - x_{j}^+) \\
\frac{1}{\gamma_2}(x_2- x_2^+)  - A_2(x_{1} - x_{1}^+) \\
\vdots\\
\frac{1}{\gamma_M}(x_M- x_M^+)  - A_2(x_{1} - x_{1}^+) \\
\end{bmatrix}
\in 
\begin{bmatrix}
  \sum_{j=2}^mA_j^\ast x_{j}^+ \\
\partial g_2^\ast(x_2^+) - A_2^\ast x_{1}^+  \\
\vdots\\
\partial g_M^\ast(x_M^+) - A_2^\ast x_{1}^+  
\end{bmatrix} \\
&\Leftrightarrow \hphantom{\left(\forall j > 1\right)} \qquad  x_{1}^{+} = \prox_{\gamma_1 g_1}\left(x_1 - \gamma_1\sum_{j=2}^M A_j^\ast x_j \right); \\
& \hphantom{\Leftrightarrow} \left(\forall j > 1\right) \qquad  x_j^{+}  = \prox_{\gamma_j g_j^\ast} \left( x_j + \gamma_jA_j\left(2x_1^{+} - x_1\right)\right). \numberthis\label{eq:RNPD_inclusion}\\
\end{align*} 
Thus, $S = I-T$. 

The operator $T$ is a resolvent, and such operators are always $2^{-1}$-averaged.  Thus, from Proposition~\ref{eq:identminusaveraged}, the operator $S = I - T$ is $1$-cocoercive in the norm $\|\cdot\|_P$; with this property, we obtain $\beta_{ij}$
\begin{align*}
\left(\forall x \in \cH\right) \qquad \dotp{S(x), x - x^\ast}_P &\geq \|S(x)\|^2_P \geq  \sum_{j=1}^{M} \frac{1-\sqrt{\delta}}{\gamma_j}\|(S(x))_j\|_j^2. \qquad \qed
\end{align*}

Part~\ref{prop:RNPD:part:roots} (roots): From the first line of~\eqref{eq:RNPD_inclusion}, the roots of $S$ are precisely the zeros of $A+B$. Then it is straightforward to check that $x^\ast \in \zer(A + B)$, implies that $x_1^\ast \in \zer(\partial g_1(x) + \sum_{j=2}^M A_j^\ast\partial g_j\circ A_j)$, and if $x_1^\ast \in\zer(\partial g_1(x) + \sum_{j=2}^M A_j^\ast\partial g_j\circ A_j)$, then there exists $(x_2^\ast, \ldots, x_{M}^\ast) \in \cH_{2} \times \cdots \times \cH_M$ such that $x^\ast = (x_1^\ast, \ldots, x_{M}^\ast) \in \zer(A + B)$. Both inclusions imply that $x_1^\ast$ minimizes~\eqref{eq:RNPD}.

Part~\ref{prop:RSCMP:part:demiclosed} (demiclosedness): The operator $T = I- S$ in (displayed in Part~\ref{prop:RSCMP:part:coherence}) is a nonexpansive map (in the norm $\|\cdot\|_P$). Therefore, $S$ is demiclosed at $0$.
\qed\end{proof}

~\\~\\
\begin{proposition}[ProxSMART+ Operator Properties]\label{prop:ProxSMART+}
Assume the setting of Section~\ref{sec:ProxSMART+}, and in particular, that for all $x \in \cH$, we have
\begin{align*}
\left(\forall i < N+1\right)\qquad \left(S_i(x)\right)_j 
&= \begin{cases}
\frac{\gamma_1}{N}\nabla f_i\left(x_1 - 2\gamma_1\sum_{j=2}^M A_j^\ast x_j\right) &\text{if $j = 1$;}\\
0 &\text{otherwise.}
\end{cases}\\
\left(S_{N+1}(x) \right)_j
&= 
\begin{cases}
 \gamma_1\sum_{j=2}^M A_j^\ast x_j  &\text{if $j = 1$;} \\
x_j - \prox_{\gamma_j g_j^\ast}\left(x_j + \gamma_j A_j\left(x_1 -2\gamma_1\sum_{j=2}^M A_j^\ast x_j \right)\right) &\text{otherwise,}
\end{cases}
\end{align*}
Then 
\begin{enumerate}
\item \textbf{Coherence:} \label{prop:RCPD:part:coherence} there is a strongly positive self-adjoint linear operator $P : \cH \rightarrow \cH$ such that if $\delta \in (0, 1)$;
\begin{align*}
\gamma_{1}\left( \sum_{j=2}^M \gamma_j\|A_j\|^2 + \frac{1}{2N} \sum_{i=1}^N L_i\right)\leq \delta.
\end{align*}
Then the linear map $P$ satisfies $\sum_{j=1}^{M} \underline{M}_j\|x_j\|^2_j \leq \|x\|_P^2 \leq   \sum_{j=1}^{M} \overline{M}_j\|x_j\|^2_j$,
where for all $j > 0$, we have 
\begin{align*}
\underline{M}_j := \left(1- \sqrt{\delta} + \frac{1}{2\sqrt{\delta}N} \sum_{i=1}^N L_i\right) && \text{and} && \overline{M}_j := \left(1+ \sqrt{\delta} - \frac{1}{2\sqrt{\delta}N} \sum_{i=1}^N L_i\right) .
\end{align*}
With these parameters, $S$ also satisfies the coherence condition~\eqref{eq:coherence}:
 \begin{align*}
\left( \forall x \in \cH\right), \left( \forall x^\ast \in \zer(S)\right)  \qquad \dotp{ S(x), x - x^\ast}_P \geq \sum_{i=1}^{N+1}\sum_{j=1}^{M} \beta_{ij}\|(S_i(x))_j - (S_i(x^\ast))_j\|^2_j 
\end{align*}
with 
\begin{align*}
\left(\forall 1\leq  i < N+1\right) \qquad \beta_{i1}  := \frac{N(1-\sqrt{\delta})}{2(N+1)\gamma_1^2 L_i};  &&\left(\forall i < N+1\right),\left(\forall j > 1\right) \qquad \beta_{ij} \equiv 0;&& \left(\forall j\right) \qquad\beta_{(N+1)j} := \frac{1-\sqrt{\delta}}{\gamma_j}.
\end{align*}
\item \textbf{Roots:}\label{prop:RCPD:part:roots} with any choice of $\gamma_j$, we have
\begin{align*}
x^\ast\in \zer(S) \implies  x_1^\ast - 2\gamma_1\sum_{j=2}^M A_j^\ast x_j^\ast  \text{ solves \eqref{eq:RCPD}},
\end{align*}
and $\zer(S) \neq \emptyset$ if, and only if, $\zer(\sum_{j=2}^M A_j^\ast\partial g_j\circ A_j + N^{-1}\sum_{i=1}^N \nabla f_i) \neq \emptyset$.
\item \textbf{Demiclosedness:} \label{prop:RCPD:part:demiclosed} $S$ is demiclosed at $0$. 
\end{enumerate}
\end{proposition}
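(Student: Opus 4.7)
The plan is to mirror the proofs of Propositions~\ref{prop:ProxSMART} and~\ref{prop:RSCMP}, realizing $(N+1)S = I-T$ for an operator $T$ that is averaged in a norm $\|\cdot\|_P$ induced by a positive self-adjoint linear map $P$. Writing $\overline{x}_1 := x_1 - \gamma_1\sum_{j=2}^M A_j^\ast x_j$ and $\hat x_1 := 2\overline{x}_1 - x_1$, direct unpacking of $\sum_i S_i(x)$ gives
\[
T(x) = \widetilde J(x) - \gamma_1\bigl(\nabla f(2\widetilde J(x)_1 - x_1)\bigr)\,e_1,\qquad f := \tfrac{1}{N}\sum_{i=1}^N f_i,
\]
where $\widetilde J := J_{P^{-1}(A+B)}$ is precisely the ProxSMART resolvent: $A(x) = (0,\partial g_2^\ast(x_2),\ldots,\partial g_M^\ast(x_M))$, $B$ is the skew-symmetric primal--dual coupling from Proposition~\ref{prop:ProxSMART}, and $P$ is the block matrix~\eqref{eq:P_RCPD_RNPD}. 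In words, $T$ performs the ProxSMART primal--dual step and then corrects the first block with a forward gradient step evaluated at the reflected point $\hat x_1$.

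For Part~\ref{prop:RCPD:part:coherence} I would first reuse the Young-inequality bounds of the ProxSMART proof, allocating the budget $\delta$ between the coupling term $\gamma_1\sum\gamma_j\|A_j\|^2$ and the gradient term $\gamma_1(2N)^{-1}\sum L_i$; this yields the stated $\underline{M}_j,\overline{M}_j$. Next, firm nonexpansiveness of $\widetilde J$ in $\|\cdot\|_P$ (already established in Proposition~\ref{prop:ProxSMART}) combines with $L$-cocoercivity of $\nabla f$ and a single Young-inequality absorption of the cross term $\langle \widetilde J(x) - \widetilde J(y),\nabla f(\hat x_1) - \nabla f(\hat y_1)\rangle_P$ to show that $T$ is averaged in $\|\cdot\|_P$. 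Proposition~\ref{eq:identminusaveraged} then converts averagedness into cocoercivity
\[
\langle S(x), x-x^\ast\rangle_P \;\geq\; \tfrac{c}{N+1}\|(I-T)(x) - (I-T)(x^\ast)\|_P^2,
\]
with an explicit $c$. Splitting the right-hand side block-by-block and further bounding each $\|(S_i(x))_1 - (S_i(x^\ast))_1\|_1^2$ via $L_i$-cocoercivity of $\nabla f_i$ (applied to the shared argument $\hat x_1$, in the spirit of Proposition~\ref{eq:SAGAOP}) delivers the announced $\beta_{i1} = N(1-\sqrt\delta)/(2(N+1)\gamma_1^2 L_i)$ for $i\leq N$ and $\beta_{(N+1)j} = (1-\sqrt\delta)/\gamma_j$.

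For Part~\ref{prop:RCPD:part:roots}, I would unpack $\sum_i S_i(x^\ast) = 0$ componentwise. Block $j>1$ of $S_{N+1}(x^\ast)$ vanishes (it is the only contributor), giving $x_j^\ast = \prox_{\gamma_j g_j^\ast}(x_j^\ast + \gamma_j A_j \hat x_1^\ast)$, i.e.\ $x_j^\ast \in \partial g_j(A_j\hat x_1^\ast)$; summing the first blocks gives $\nabla f(\hat x_1^\ast) + \sum_{j\geq 2} A_j^\ast x_j^\ast = 0$. Combining, $0 \in \nabla f(\hat x_1^\ast) + \sum_{j\geq 2} A_j^\ast\partial g_j(A_j\hat x_1^\ast)$, i.e.\ $\hat x_1^\ast = x_1^\ast - 2\gamma_1\sum A_j^\ast x_j^\ast$ solves~\eqref{eq:RCPD}. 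Conversely, any primal minimizer $z^\ast$ with dual certificates $v_j\in\partial g_j(A_j z^\ast)$ yields $x^\ast := (z^\ast + 2\gamma_1\sum A_j^\ast v_j,v_2,\ldots,v_M)\in\zer(S)$. For Part~\ref{prop:RCPD:part:demiclosed}, $T=I-(N+1)S$ is nonexpansive in $\|\cdot\|_P$ (by Part~1), so standard demiclosedness of nonexpansive operators~\cite[Corollary~4.18]{bauschke2011convex} transfers to $S$ via the equivalence of $\|\cdot\|_P$ with $\|\cdot\|_\pr$.

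The main obstacle is Part~1's averagedness computation: because the forward step is evaluated at the reflected point $2\widetilde J(x)_1 - x_1$ rather than $x_1$, neither the Vũ--Condat forward-backward analysis nor the standard Davis--Yin scheme applies off the shelf. The cross term $2\langle \widetilde J(x)-\widetilde J(y),\gamma_1(\nabla f(\hat x_1)-\nabla f(\hat y_1))e_1\rangle_P$ must be dissected using the explicit formula for $(P(\widetilde Jx - \widetilde Jy))_1$ that comes from the resolvent identity $P(x - \widetilde Jx)\in (A+B)(\widetilde J x)$, and then balanced against both $L$-cocoercivity of $\nabla f$ and the budget absorbed into the $\|\cdot\|_P$ bounds, which is exactly where the tight constraint $\gamma_1\bigl(\sum\gamma_j\|A_j\|^2 + (2N)^{-1}\sum L_i\bigr)\leq\delta$ enters.
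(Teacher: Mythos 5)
Your Part~\ref{prop:RCPD:part:roots} and Part~\ref{prop:RCPD:part:demiclosed} arguments match the paper's. The gap is in Part~\ref{prop:RCPD:part:coherence}: the pipeline ``averagedness of $T$ $\Rightarrow$ cocoercivity of $S$ in $\|\cdot\|_P$ $\Rightarrow$ split block-by-block to read off $\beta_{ij}$'' does not deliver the per-operator constants. Cocoercivity of $S$ gives $\dotp{S(x)-S(x^\ast),x-x^\ast}_P \geq c\|S(x)-S(x^\ast)\|_P^2$, and in block $j=1$ you then face $\left\|\sum_{i=1}^{N+1}\bigl((S_i(x))_1-(S_i(x^\ast))_1\bigr)\right\|_1^2$; this does not lower-bound $\sum_i \|(S_i(x))_1-(S_i(x^\ast))_1\|_1^2$ (Jensen/Cauchy--Schwarz goes the other way). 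So you cannot ``split the right-hand side block-by-block'' to recover $\beta_{i1}$ for each $i\leq N$. Your fallback of invoking $L_i$-cocoercivity of $\nabla f_i$ at $\hat x_1$ is the right ingredient, but it has to be applied to the inner product $\dotp{S_i(x)-S_i(x^\ast),x-x^\ast}_P$ directly, not to the norm $\|S(x)-S(x^\ast)\|_P^2$ after the fact.

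The paper sidesteps averagedness of $T$ entirely. It writes $\dotp{S(x),x-x^\ast}_P = (N+1)^{-1}\sum_{i=1}^{N+1}\dotp{S_i(x)-S_i(x^\ast),x-x^\ast}_P$ and bounds each summand on its own. For $i=N+1$ it recycles the ProxSMART fact that $S_{N+1}=I-J_{P^{-1}(A+B)}$ is $1$-cocoercive in $\|\cdot\|_P$ and expands $\|\cdot\|_P^2$ coordinatewise. For $i\leq N$ it expands $\dotp{S_i(x)-S_i(x^\ast),x-x^\ast}_P$ explicitly, rewrites $x_1-x_1^\ast$ in terms of $\hat x_1-\hat x_1^\ast$ plus the dual-coupling residual, applies $L_i$-cocoercivity of $\nabla f_i$ on the shared argument $\hat x_1$, and uses Young's inequality on the cross term, which produces the positive $\|(S_i(x))_1-(S_i(x^\ast))_1\|^2$ piece with coefficient $N(1-\sqrt\delta)/(\gamma_1^2 L_i)$ alongside a \emph{negative} $\tfrac{L_i}{4N\sqrt\delta}\|(S_{N+1}(x))_1-(S_{N+1}(x^\ast))_1\|^2$ leakage. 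Averaging over $i$ and adding the $i=N+1$ bound, that leakage is exactly absorbed by the surplus $\tfrac{1}{2\sqrt\delta N}\sum_i L_i$ that the paper builds into the $j=1$ coefficient of the lower bound on $\|\cdot\|_P^2$ (which is why $\underline{M}_1$ carries that extra term). You correctly flagged the reflected-argument obstacle to proving averagedness of $T$; the lesson is that you should not try to overcome it, and should instead bound the summands of $\dotp{S(x),x-x^\ast}_P$ one at a time.
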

\begin{proof}
Part~\ref{prop:RCPD:part:coherence} (coherence): The linear map $P$ is given in~\eqref{eq:P_RCPD_RNPD}. The lower bound is below
\begin{align*}
\dotp{x, x}_P  &= \sum_{j=1}^{M} \frac{1}{\gamma_j}\|x_j\|^2_j - \sum_{j=1}^M 2\dotp{A_jx_j, x_{1}}_{1} \\
&\geq\sum_{j=1}^{M} \frac{1}{\gamma_j}\|x_j\|^2_j - \sum_{j=1}^M 2\|A_j\|\|x_j\|_{j}\| x_{1}\|_{1} \\
&\geq \sum_{j=1}^{M} \frac{1}{\gamma_j}\|x_j\|^2_j - \sum_{j=1}^M \left(\frac{\sqrt{\delta}\|x_j\|_j^2}{\gamma_j} + \frac{\|A_j\|^2\|x_{1}\|_1^2}{\sqrt{\delta}}\right) \\
&\geq \sum_{j=2}^{M} \frac{1-\sqrt{\delta}}{\gamma_j}\|x_j\|^2_j + \frac{1}{\gamma_{1}} \left(1- \gamma_{1}\sum_{j=1}^M \frac{\|A_j\|^2}{\sqrt{\delta}} \right)  \|x_{1}\|_{1}^2\\
&\geq \sum_{j=2}^{M} \frac{1-\sqrt{\delta}}{\gamma_j}\|x_j\|^2_j + \frac{1}{\gamma_{1}} \left(1- \sqrt{\delta} + \frac{1}{2\sqrt{\delta}N} \sum_{i=1}^N L_i\right)  \|x_{1}\|_{1}^2.
\end{align*}
The upper bound follows the exact same argument but all ``$-$" signs are changed to ``$+$" signs; the exception to this rule is the last line, in which the $-\frac{1}{2\sqrt{\delta}N} \sum_{i=1}^N L_i$ must be changed to $\frac{1}{2\sqrt{\delta}N} \sum_{i=1}^N L_i $.

Let $A: \cH \rightarrow 2^\cH$ be the monotone operator
$$
\left(\forall x \in \cH\right) \qquad A(x_1, \ldots, x_{M}) := \begin{bmatrix}
0 \\
\partial g_2^\ast(x_2) \\
\vdots \\
\partial g_M^\ast(x_M) 
\end{bmatrix};
$$
let $B : \cH \rightarrow \cH$ be the skew symmetric linear map:
\begin{align*}
B := \begin{bmatrix}
0 &  \cdots & 0 & A_1 \\   
\\
0 & \cdots & 0 & A_{M} \\   
-A_1^\ast  & \cdots & -A_{M}^\ast & 0   
\end{bmatrix}.
\end{align*}
Then from the proof of Theorem~\ref{prop:ProxSMART} (with $g_1 \equiv 0$), we find that $S_{N+1} = I - J_{P^{-1}(A+B)}$ is $1$-cocoercive in $\|\cdot\|_P$.  Thus, 
\begin{align*}
\dotp{S_{N+1}(x) - S_{N+1}(x^\ast), x - x^\ast}_P & \geq \|S_{N+1}(x) - S_{N+1}(x^\ast)\|^2_P \\
&\geq \sum_{j=2}^M \frac{1-\sqrt{\delta}}{\gamma_j} \|(S_{N+1}(x))_j -(S_{N+1}(x^\ast))_j\|^2_j \\
&\hspace{20pt}+ \left(1- \sqrt{\delta} + \frac{1}{2\sqrt{\delta}N} \sum_{i=1}^N L_i\right) \|(S_{N+1}(x))_1 -(S_{N+1}(x^\ast))_1\|^2_1\numberthis\label{eq:RCPD_coh_1}
\end{align*}

Now we take care of the $S_i$ operators for $i < N+1$: set $\hat{x}_1 = x_1 - 2\gamma_1\sum_{j=2}^M A_j^\ast x_j$ and $\hat{x}_1^\ast = x_1^\ast -2\gamma_1\sum_{j=2}^M A_j^\ast x_j^\ast$.
\begin{align*}
\dotp{ S_i(x) - S_i(x^\ast) , x - x^\ast}_P &= \dotp{\frac{1}{N} (\nabla f_i(\hat{x}_1) - \nabla f_i(\hat{x}_1^\ast )), x_1 - x_1^\ast}_1\\
&\hspace{20pt} - \gamma_1\dotp{\frac{1}{N}((\nabla f_i(\hat{x}_1) - \nabla f_i(\hat{x}_1^\ast )), \sum_{j=2}^M A_j^\ast(x_j - x_j^\ast)}_1 \\
&\geq \dotp{\frac{1}{N} (\nabla f_i(\hat{x}_1) - \nabla f_i(\hat{x}_1^\ast )), \hat{x}_1- \hat{x}_1^\ast }_1\\ 
&\hspace{20pt} + \gamma_1\dotp{\frac{1}{N}((\nabla f_i(\hat{x}_1) - \nabla f_i(\hat{x}_1^\ast )), \sum_{j=2}^M A_j^\ast(x_j - x_j^\ast)}_1 \\
&\geq \frac{N}{\gamma_1^2L_i}\left\|\frac{\gamma_1}{N} ((\nabla f_i(\hat{x}_1) - \nabla f_i(\hat{x}_1^\ast ))\right\|^2_1 \\
&\hspace{20pt} - \frac{N\sqrt{\delta}}{L_i\gamma_1^2}\left\|\frac{\gamma_1}{N} ((\nabla f_i(\hat{x}_1) - \nabla f_i(\hat{x}_1^\ast ))\right\|^2_1 - \frac{L_i}{4N\sqrt{\delta}}\left\|\gamma_1 \sum_{j=2}^M A_j^\ast(x_j - x_j^\ast)\right\|^2_1 \\
&= \frac{N(1-\sqrt{\delta})}{\gamma_1^2L_i}\left\|(S_i(x))_{1} - (S_i(x^\ast))_{1}\right\|^2_1 - \frac{L_i}{4N\sqrt{\delta}}\left\|(S_{N+1}(x))_1 -(S_{N+1}(x^\ast))_1\right\|^2_1\numberthis\label{eq:RCPD_coh_2} 
\end{align*}
Therefore, to show prove the coherence condition, we average~\eqref{eq:RCPD_coh_1} and \eqref{eq:RCPD_coh_2}:
\begin{align*}
\dotp{S(x), x - x^\ast}_P &= \frac{1}{N+1} \sum_{i=1}^{N+1} \dotp{S_i(x) - S_i(x^\ast), x - x^\ast}_P \\
&\geq \sum_{i=1}^N\frac{N(1-\sqrt{\delta})}{\gamma_1^2L_i}\left\|(S_i(x))_{1} - (S_i(x^\ast))_{1}\right\|^2_1
 + \sum_{j=1}^M\frac{1-\sqrt{\delta}}{\gamma_1}\left\|(S_{N+1}(x))_j -(S_{N+1}(x^\ast))_j\right\|^2_j. 
 \end{align*} 
 
Part~\ref{prop:RCPD:part:roots} (roots): First suppose that $x^\ast \in \zer(S)$.  Then $N^{-1}\sum_{i=1}^N\nabla f_i( x_1- 2\gamma_1\sum_{j=2}^M A_j^\ast x_j) + \sum_{j=2}^M A_j^\ast x_j^\ast = 0$, and moreover, the points $x_j^\ast$ for $j > 2$ satisfy:
\begin{align*}
x_j^\ast = \prox_{\gamma_j g_j^\ast}\left(x_j^\ast + \gamma_j A_j\left(x_1^\ast - 2\gamma_1\sum_{j=2}^MA_j^\ast x_j^\ast\right)\right) &\Leftrightarrow  A_j\left(x_1^\ast - 2\gamma_1\sum_{j=2}^MA_j^\ast x_j^\ast\right)\in \partial g_j^\ast(x_j^\ast) \\
&\Leftrightarrow x_j^\ast \in \partial g_j\left( A_j\left(x_1^\ast - 2\gamma_1\sum_{j=2}^MA_j^\ast x_j^\ast\right)\right)
\end{align*}
Therefore, 
$$
0 = \frac{1}{N}\sum_{i=1}^N\nabla f_i\left(x_1^\ast - 2\gamma_1\sum_{j=2}^MA_j^\ast x_j^\ast\right) + \sum_{j=2}^M A_j^\ast x_j^\ast \in   \frac{1}{N}\sum_{i=1}^N\nabla f_i\left(x_1^\ast - 2\gamma_1\sum_{j=2}^MA_j^\ast x_j^\ast\right) + \sum_{j=2}^M A_j^\ast \partial g_j\left( A_j\left(x_1^\ast - 2\gamma_1\sum_{j=2}^MA_j^\ast x_j^\ast\right)\right);
$$
in particular, $x_1^\ast - 2\gamma_1\sum_{j=2}^MA_j^\ast x_j^\ast$ minimizes~\eqref{eq:RCPD}. 

To show that $\zer(\sum_{j=1}^M A_j^\ast\partial g_j\circ A_j + N^{-1}\sum_{i=1}^N \nabla f_i) \neq \emptyset$ implies that $\zer(S) \neq \emptyset$, reverse the above argument: choose $\hat{x}_1^\ast \in \zer(\sum_{j=1}^M A_j^\ast\partial g_j\circ A_j + N^{-1}\sum_{i=1}^N \nabla f_i)$ and write $0 = N^{-1}\sum_{i=1}^N \nabla f_i + \sum_{j=1}^M A_j^\ast x_j^\ast$, where $x_j^\ast \in \partial g_j( A_j \hat{x}_1^\ast)$. Then $A_j x_1^\ast \in \partial g_1^\ast(x_j^\ast)$, and consequently, we have $x_j^\ast = \prox_{\gamma_j g_j^\ast}(x_j^\ast + A_j\hat{x}_1^\ast)$. Thus, if $ x_1^\ast = \hat{x}_1^\ast + 2\gamma_1 \sum_{j=2}^M A_j x_j^\ast$, then we have $x^\ast = (x_1^\ast, \ldots, x_M^\ast) \in \zer(S)$.

Part~\ref{prop:RNPD:part:demiclosed} (demiclosedness):  In Part~\ref{prop:RNPD:part:coherence}, our assumption that $x^\ast \in \zer(S)$ was not necessary, and if we instead assume that $x^\ast = y$, where $y$ is an arbitrary point in $\cH$, the exact same argument yields that
\begin{align*}
\dotp{S(x) - S(y), x - y}_P &\geq \sum_{i=1}^{N+1}\sum_{j=1}^{M} \beta_{ij}\|(S_i(x))_j - (S_i(y))_j\|^2_j \geq C \sum_{i=1}^{N+1}\|S_i(x) - S_i(y)\|^2_P \geq C(N+1) \|S(x) - S(y)\|^2_P,
\end{align*}
where $C > 0$ is a constant; in other words, the operator $S$ is $C(N+1)$-cocoercive. Thus, by~\cite[Proposition 4.33]{bauschke2011convex}, 
$$
T = I - \frac{1}{C(N+1)}S = \left(1 - \frac{1}{C(N+1)}\right)I + \frac{1}{C(N+1)}(I - S)
$$
is nonexpansive. Therefore, because $I-T$ is demiclosed at $0$, it is easy to see that $S$ is demiclosed at $0$.
\qed\end{proof}

\begin{proposition}[SMART for Monotone Inclusions]\label{prop:MONOopprop}
Assume the setting of Section~\ref{sec:general_monotone_inclusion}, and in particular, that $S$ is as in~\eqref{eq:SMART_mono_operators}
%\begin{align*}
%\left(\forall i < N +1\right) \qquad& S_{i} = \frac{\gamma}{N} B_i\circ J_{\gamma A}&& \text{and}&& S_{N+1} = (I-J_{\gamma A}).
%\end{align*}
Suppose that the operator $B := N^{-1}\sum_{i=1}^N B_i$ is monotone and $A$ is $\mu_A$-strongly monotone. Then 
\begin{enumerate}
\item \textbf{Coherence:}\label{prop:MONOopprop:part:coherence} $S$ satisfies the coherence condition
%\begin{align*}
%\left( \forall x \in \cH\right), \left( \forall x^\ast \in \zer(S)\right) \qquad \dotp{S(x) , x- x^\ast} \geq  \sum_{i=1}^N \beta_{i1}\|S_i(x) - S_i(x^\ast)\|^2.
%\end{align*}
with constants defined as in~\eqref{eq:mono_op_conditions};
%\begin{align*}
%\text{\ref{prop:SAGAopprop:onlyLipschitz} holds} \implies \beta_{i1} :=  \frac{N}{2\gamma L_i (N+1)} \qquad i = 1, \ldots, N && \text{and} && \beta_{(N+1)1} := \frac{1}{(N+1)}\left(1 - \frac{\gamma \overline{L}}{2}\right) \\
%\text{\ref{prop:SAGAopprop:cocoercive} holds} \implies \beta_{i1} :=  \frac{N}{2\gamma L_i (N+1)} \qquad i = 1, \ldots, N && \text{and} && \beta_{(N+1)1} := \frac{1}{(N+1)}\left(1 - \frac{\gamma \overline{L}}{2}\right)
%\end{align*}
\item \textbf{Essential strong quasi-monotonicity:}\label{prop:MONOopprop:part:ess_strong_mono} $S$ is $\mu$-strongly monotone with $\mu$ as in~\eqref{eq:mu_saddle};
\item \textbf{Roots:}\label{prop:MONOopprop:part:roots} with any choice of $\gamma$, we have
%\begin{align*}
$x\in \zer(S) \iff J_{\gamma A} (x) \text{ solves \eqref{eq:general_monotone_inclusion}}$, 
%\end{align*}
%and $\zer(S) \neq \emptyset$ if, and only if, $g + N^{-1}\sum_{j=1}^N f_j$ has a minimizer.
\item \textbf{Demiclosedness:} \label{prop:MONOopprop:part:demiclosed} $S$ is demiclosed at $0$.
\end{enumerate}
\end{proposition}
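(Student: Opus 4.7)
The plan revolves around a single structural identity: $(N+1)\,S = I - T$ with $T := (I - \gamma B)\circ J_{\gamma A}$, so that SMART reduces here to a randomized, operator-split forward-backward iteration in disguise. All four parts then follow from standard facts about $T$, $J_{\gamma A}$, and the monotonicity properties of $A + B$.

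Part~\ref{prop:MONOopprop:part:roots} (roots) is purely manipulative. Writing $y = J_{\gamma A}(x)$, the equation $S(x)=0$ becomes $\gamma B(y) + (x - y) = 0$, i.e., $x - y \in -\gamma B(y)$. Since $(x - y) \in \gamma A(y)$ by definition of the resolvent, this is equivalent to $0 \in A(y) + B(y)$; hence $y = J_{\gamma A}(x)$ solves~\eqref{eq:general_monotone_inclusion}. The reverse direction reassembles $x = y + \gamma \xi$ from any $y$ and $\xi \in A(y)$ with $\xi = -B(y)$, so the two sets of roots are in resolvent bijection.

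The analytical heart is Part~\ref{prop:MONOopprop:part:ess_strong_mono}. I would first observe that $B := N^{-1}\sum_i B_i$ is monotone and $\overline{L}$-Lipschitz (triangle inequality on the $L_i$-Lipschitz $B_i$'s), whence $\|(I - \gamma B)u - (I - \gamma B) v\|^2 \leq \|u-v\|^2 + \gamma^2 \|Bu - Bv\|^2 \leq (1+\gamma^2 \overline L^2)\|u-v\|^2$, using the monotonicity bound $\dotp{Bu - Bv, u-v} \geq 0$. Strong monotonicity of $A$ renders $J_{\gamma A}$ Lipschitz with constant $(1+\gamma \mu_A)^{-1}$ (equivalently, $(1+\gamma\mu_A)$-cocoercive), so $T$ is $L$-Lipschitz with $L := \sqrt{1 + \gamma^2 \overline L^2}/(1 + \gamma \mu_A)$, which is strictly less than $1$ under the standing smallness condition on $\gamma$. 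A Young's-inequality-based decomposition of $\dotp{(I-T)x - (I-T)x^\ast, x - x^\ast}$---handling the $I - J_{\gamma A}$ piece via the inclusion $(x-y)\in \gamma A(y)$ and the $\gamma B \circ J_{\gamma A}$ piece via the Lipschitz bound for $B$---then converts $L^2 < 1$ into the strong-monotonicity constant $\mu$ of~\eqref{eq:mu_saddle} for $S = (N+1)^{-1}(I-T)$. This is the one step I expect to be fiddly: extracting the quadratic-in-$(1+\gamma\mu_A)$ denominator (rather than the weaker Cauchy--Schwarz bound $(1-L)/(N+1)$) requires a parameter-tuned Young's splitting paired with firm nonexpansiveness of $J_{\gamma A}$.

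Parts~\ref{prop:MONOopprop:part:coherence} and~\ref{prop:MONOopprop:part:demiclosed} are short corollaries. For coherence, note $\|S_i(x) - S_i(x^\ast)\| \leq (\gamma L_i/N)\|y - y^\ast\|$ for $i \leq N$ and $S_{N+1}(x) - S_{N+1}(x^\ast) = (x-y) - (x^\ast - y^\ast)$; plugging the stated $\beta_{ij}$ into the weighted sum telescopes cleanly to $(\mu/(N+1))\bigl[N\|y-y^\ast\|^2 + \|(x-y)-(x^\ast-y^\ast)\|^2\bigr]$. Combining firm nonexpansiveness of $J_{\gamma A}$ (which gives $\|y-y^\ast\|^2 + \|(x-y)-(x^\ast-y^\ast)\|^2 \leq \|x - x^\ast\|^2$) with plain nonexpansiveness ($\|y - y^\ast\|^2 \leq \|x - x^\ast\|^2$) yields $N\|y-y^\ast\|^2 + \|(x-y)-(x^\ast-y^\ast)\|^2 \leq (N+1)\|x-x^\ast\|^2$, so the sum is bounded by $\mu\|x-x^\ast\|^2 \leq \dotp{S(x), x-x^\ast}$ by Part~\ref{prop:MONOopprop:part:ess_strong_mono}. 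Finally, demiclosedness at $0$ is immediate from strong monotonicity: if $x^k \rightharpoonup x$ with $S(x^k)\to 0$, then $\dotp{S(x^k) - S(x), x^k - x} \to 0$ (the first factor tends to zero strongly while the second is bounded, and $\dotp{S(x), x^k - x}\to 0$ because $x^k - x \rightharpoonup 0$), which via strong monotonicity forces $x^k \to x$ in norm; Lipschitz continuity of $T$ then yields $S(x) = 0$.
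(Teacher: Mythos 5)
Your proposal follows essentially the same route as the paper's proof: the identity $S = (N+1)^{-1}(I-T)$ with $T = (I-\gamma B)\circ J_{\gamma A}$, the Lipschitz estimate $\|Tx-Ty\|^2 \leq (1+\gamma^2\overline{L}^2)(1+\gamma\mu_A)^{-2}\|x-y\|^2$, strong monotonicity of $S$ from the contraction property, coherence via the Lipschitz constants of the individual $S_i$ combined with that strong monotonicity, the resolvent bijection for the roots, and demiclosedness as a consequence of the contraction/strong monotonicity. The only substantive remark is the one you already flag: the paper's own proof settles for the Cauchy--Schwarz constant $(1-\kappa)/(N+1)$ even though the stated $\mu$ of~\eqref{eq:mu_saddle} is the sharper $(1-\kappa^2)/(N+1)$, so your observation that a parameter-tuned Young splitting is needed to reach the advertised constant is, if anything, more careful than the original.
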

\begin{proof}
Parts~\ref{prop:MONOopprop:part:coherence} (coherence) and~\ref{prop:MONOopprop:part:ess_strong_mono} (essential strong quasi-monotonicity): Let $ T := (I - \gamma B) \circ J_{\gamma A}$, which satisfies $S = (N+1)^{-1}(I-T)$. Noting that $J_{\gamma A}$ is $(1+\gamma\mu_A)^{-1}$ Lipschitz~\cite[Proposition 23.11]{bauschke2011convex} and that $B$ is monotone and $\overline{L}$-Lipschitz, we find that for all $x, y \in \cH$
\begin{align*}
\|Tx - Ty\|^2 &= \|J_{\gamma A}x - J_{\gamma A}y\|^2 - 2\dotp{J_{\gamma A}x - J_{\gamma A}y, \gamma BJ_{\gamma A}x - \gamma B J_{\gamma A}y} + \gamma^2 \|BJ_{\gamma A}x - B J_{\gamma A}y\|^2 \\
&\leq \frac{1 + \gamma^2 \overline{L}}{(1 + \gamma \mu_A)^2} \|x - y\|^2.
\end{align*}
Thus, $T$ is $\kappa$-Lipschitz for some $\kappa \in (0, 1)$. Thus, from the simple bound $(N+1)^{-1}\dotp{(I - T)x - (I-T)y, x - y} \geq (N+1)^{-1}(1-\kappa)\|x-y\|^2$ we conclude that $S$ is $\mu := (N+1)^{-1}(1-\kappa)$-strongly monotone.

To get $\beta_{i1}$, we use the strong monotonicity of $S$ combined with the Lipschitz continuity of $S_i$, which is $(L_i\gamma N^{-1})$-Lipschitz for $i <N+1$ and is $1$-Lipschitz for $i = N+1$:
\begin{align*}
\dotp{S(x) - S(y), x - y} &\geq \mu\|x- y\|^2\\
 &\geq \sum_{i = 1}^n \frac{\mu N^2}{L_i^2 \gamma^2(N+1)}\|S_i(x) - S_i(y)\|^2 + \frac{\mu}{N+1}\|S_{N+1}(x)  - S_{N+1}(y)\|^2.
\end{align*}

Part~\ref{prop:MONOopprop:part:roots} (roots): 
%\begin{align*}
$x \in \zer(S) \Leftrightarrow 0 = (I- J_{\gamma A}) + \gamma N^{-1} \sum_{i=1}^N  B_i(J_{\gamma A}(x)) \in \gamma A J_{\gamma A}(x) + \gamma N^{-1} \sum_{i=1}^N  B_i(J_{\gamma A}(x)) \Leftrightarrow J_{\gamma A}(x) \in \zer \left(A + B\right).$ 
%\end{align*}

Part~\ref{prop:MONOopprop:part:demiclosed} (demiclosedness): The operator $T$ in (displayed in Part~\ref{prop:MONOopprop:part:ess_strong_mono}) is a contraction and $\Fix(T) = \zer(S)$. Therefore, by~\cite[Corollary 4.18]{bauschke2011convex}, $S$ is demiclosed at $0$.
\qed\end{proof}

\newpage
\begin{table}[htbp]\caption{Symbols}
\centering\vspace{-10pt} % to have the caption near the table
\begin{tabular}{r c p{10cm} }
\toprule
\multicolumn{3}{c}{\textbf{Indices}}\\
%\multicolumn{3}{c}{}\\
$n$ & $\in$& $\NN$;  number of functions/operators\\
$m$ & $\in$ & $\NN$; number of coordinates\\
$i$ & $\in$ & $\{1, \ldots, n\}$; index value for operators \\
$j$ & $\in$ & $\{1, \ldots, m \}$; index value for coordinates\\
$k$ & $\in$  & $\NN$; index value for  sequences\\
\multicolumn{3}{c}{}\\
\multicolumn{3}{c}{\textbf{Spaces}}\\
%\multicolumn{3}{c}{}\\
$\cH_j$ & & A separable Hilbert space \\
$\cH$ & := & $\cH_1 \times \cdots \times \cH_m$ \\
\multicolumn{3}{c}{}\\
\multicolumn{3}{c}{\textbf{Variables}}\\
%\multicolumn{3}{c}{}\\
$x, x_j$ & & Primal variables: $x \in \cH$ and $x_j$ is the $j$th coordinate of $x$  \\
%$x_j$ & & An element of $\cH_j$/the $j$-th component of $x$ \\ 
$y_i, y_{i,j}$ & & Dual variables: $y_i \in \cH$ and $y_{i,j}$ is the $j$th coordinate of $y_i$ \\
%$y_{i,j}$ & & The $j$-th component of a dual variable\\
\multicolumn{3}{c}{}\\
\multicolumn{3}{c}{\textbf{Norms and inner products}}\\
%\multicolumn{3}{c}{}\\
$\|\cdot\|_j, \; \dotp{\cdot, \cdot}_j$ & & A norm and an inner product on $\cH_j$ \\
$\|\cdot\|, \; \dotp{\cdot, \cdot}$ & & A norm and an inner product on $\cH$\\
$\|\cdot\|_\pr$ & & $(\forall x \in \cH)\;  \|x\|_\pr^2 := \sum_{j=1}^m \|x_j\|_j^2$.\\
$\underline{M}_j, \overline{M}_j$ & & $\left( \forall x\in \cH\right) \; \sum_{i=1}^m\lmu_j \|x_j\|_j^2 \leq \|x\|^2 \leq \sum_{i=1}^m \umu_j\|x_j\|_j^2  $\\
\multicolumn{3}{c}{}\\
\multicolumn{3}{c}{\textbf{Operators/Functions}}\\
%\multicolumn{3}{c}{}\\
$f_i$ &  &  A convex function with a Lipschitz continuous gradient\\
$g_{j}$ & & A nonsmooth/proximable function \\
$S_i$ & & A map from $\cH$ to $\cH$ \\
$S$ &$:=$ & $n^{-1} \sum_{i=1}^n S_i$ \\
$\cS$ & $:= $ & $\zer(S)$\\
$\mathbf{S}^\ast$ & $:=$ & $((S_i(x^\ast))_j)_{ij}, \qquad \left(x^\ast \in \cS\right)$ \\
\multicolumn{3}{c}{}\\
\multicolumn{3}{c}{\textbf{Operators properties}}\\
%\multicolumn{3}{c}{}\\
$\beta_{ij}$ & & Coherence constants: $$(\forall x\in \cH),(\forall x^\ast \in \cS) \; \sum_{j=1}^m \sum_{i=1}^n \beta_{ij} \|(S_{i}(x))_{j} - (S_{i}(x^\ast))_j\|_j^2  \leq n^{-1}\sum_{i=1}^n\dotp{S_i(x), x - x^\ast}$$\\
$\mu$ & & Essential strong quasi-monotonicity constant: $$(\forall x\in \cH) \; \mu\|x- P_\cS(x)\|^2 \leq  \dotp{S(x), x - P_\cS(x)}$$ \\
%\multicolumn{3}{c}{}\\
\multicolumn{3}{c}{\textbf{Random Variables}}\\
%\multicolumn{3}{c}{}\\
$i_k$ & $\in$ & $\{1, \ldots, n\}$; an IID sequence of random operator indices\\
$\sfS_k$ & $\subseteq$ & $\{1, \ldots, m\}$;  an IID sequence of random sets of coordinates \\ 
$\epsilon_k$ & $\in$ & $\{0, 1\}$; an IID sequence of  random binary decisions, which indicate whether to update the current dual variable ($\epsilon_k = 1$) or to leave it fixed ($\epsilon_k = 0$) \\
\multicolumn{3}{c}{}\\
\multicolumn{3}{c}{\textbf{Graphs}}\\
%\multicolumn{3}{c}{}\\
$G = (V, E)$ & & Trigger graph: vertices $V = \{1, \ldots, n\}$ and edges $E \subseteq \{1, \ldots, n\}^2$; \begin{quote}
\centering $(i,i') \in E$ if, and only if, $i_k = i$ triggers update of dual variable $y_{i'}^k$.
\end{quote}\\
%\multicolumn{3}{c}{}\\
\multicolumn{3}{c}{\textbf{Probabilities}}\\
%\multicolumn{3}{c}{}\\
$q_j$ & $:=$ & $P(j \in \sfS_k)$ \\
$p_{ij}$ & $:=$ & $P( i_k = i \mid j \in \sfS_k)$ \\
$\rho$ & $:= $ & $P(\epsilon_k = 1)$ \\
$p_{ij}^T$ & $:=$ & The probability that the $i$th dual variable is updated and $j \in \sfS^k$: 
\begin{quote}\centering
$P((i_k, i) \in E, j \in \sfS_k) = P((i_k, i) \in E \mid j \in \sfS_k)q_j = \sum_{(i', i) \in E} p_{i'j}q_j $\end{quote} \\
%\multicolumn{3}{c}{}\\
\multicolumn{3}{c}{\textbf{Delays}}\\
%\multicolumn{3}{c}{}\\
$\tau_p$ & $\in $& $\NN$; maximum primal variable delay \\
$\tau_d$ & $\in$ & $\NN$; maximum dual variable delay  \\
$d_k$ & $\in$ &$\{0, 1, \ldots, \tau_p\}^{m}$; the vector of primal variable delays at the $k$th iteration
\\
$\delta$&  $:=$ & $\sup \{ |d_{k,j} - d_{k,j'}|\mid k \in \NN \text{ and }  j, j' \in \{1, \ldots, m\} \}$\\
$e^{i}_{k}$ & $\in$ & $\{0, 1, \ldots, \tau_d\}^m$; the vector of dual variable delays for the $i$th dual variable at the $k$th iteration\\
\bottomrule
\end{tabular}
\label{tab:symbols}
\end{table}
\end{document}